\newtheorem{Theorem}{Theorem}[section]
\newtheorem{Proposition}[Theorem]{Proposition}
\newtheorem{Lemma}[Theorem]{Lemma}
\newtheorem{Remark}[Theorem]{Remark}
\newtheorem{Example}[Theorem]{Example}
\newtheorem{Hypothesis}{Hypothesis}
\def\a{\alpha}
\def\le{\left}
\def\r{\right}
\def\la{\lambda}
\def\La{\Lambda}
\def\ds{\displaystyle}
\def\e{\epsilon}
\newcommand{\hs}{\hspace{-2truecm}}
\newcommand{\hsl}{\hspace{-1truecm}}
\newcommand{\hsp}{\hspace{2truecm}}
\newcommand{\hslp}{\hspace{1truecm}}
\newcommand{\hsllp}{\hspace{0.5truecm}}
\title{\bf Smoluchowski-Kramers diffusion approximation for systems of stochastic damped wave equations with non-constant friction}\date{}
\author[S. Cerrai]{Sandra Cerrai}
\address{Department of Mathematics\\
University of Maryland\\
}
\email{cerrai@umd.edu}
\thanks{S. Cerrai was partially supported by the NSF grant  DMS-1954299 - {\em Multiscale analysis of infinite-dimensional stochastic systems}}
\author[A. Debussche]{Arnaud Debussche}
\address{Univ. de Rennes and IUF, CNRS, IRMAR - UMR 6625\\
}
\email{arnaud.debussche@ens-rennes.fr}
\thanks{A. Debussche benefits from the support of the French government “Investissements d’Avenir” program integrated to France 2030, bearing the following reference ANR-11-LABX-0020-01 and is partially funded by the ANR project ADA}
\subjclass[2010]{}
\keywords{}
\begin{document}

 \begin{abstract}  
We consider  systems of  damped wave equations with a state-dependent damping coefficient and perturbed by a Gaussian multiplicative noise. Initially, we investigate their well-posedness, under quite general conditions on the friction. Subsequently, we study the validity of the so-called Smoluchowski-Kramers diffusion approximation. We show that, under more stringent conditions on the friction, in the small-mass limit the solution of the system of stochastic damped wave equations converge to the solution of a system of stochastic quasi-linear parabolic equations. In this convergence, an additional drift emerges as a result of the interaction between the noise and the state-dependent friction. The identification of this limit is achieved by using a suitable generalization  of the classical method of perturbed test functions, tailored to the current infinite dimensional setting.
 \end{abstract}

 \maketitle
 
 \tableofcontents

\section{Introduction}
\label{sec1}
In  this paper, we are dealing with the following system of stochastic non-linear wave equations, defined on a bounded domain $\mathcal{O}\subset \mathbb{R}^d$ having a smooth boundary and subject to Dirichlet boundary conditions
\begin{equation}\label{SPDE}
  \left\{\begin{array}{l}
\ds{\mu\,\partial_t^2 u_\mu(t) = \Delta u_\mu (t)-\gamma(u_\mu (t))\, \partial_t u_\mu (t) + f(u_\mu (t))+ \sigma(u_\mu(t))\,\partial_tw_t^Q(t),\ \ \ \ \ \ t\geq 0,}	\\[14pt]
\ds{u_\mu(t,x)=0,\ \ \ \ t\geq 0,\ \ \ \ \ x \in\,\partial\mathcal{O},}\\[14pt]
\ds{u_\mu(0,x)=u^\mu_0(x),\ \ \ \partial_t u_\mu(0,x)=v^\mu_0(x),\ \ \ \ x \in\,\mathcal{O}.}
\end{array}\right.
 \end{equation}
Here
$u_\mu(t,x) = (u_{1, \mu}(t,x),u_{2, \mu}(t,x),\cdots,u_{r, \mu}(t,x))$, for $(t,x) \in\,[0,+\infty)\times \mathcal{O}$, and for some $r\geq 2$. The stochastic perturbation $w^Q(t)$, $t\geq 0$, is given by a cylindrical Wiener process taking values in $\mathbb{R}^r$, which is white in time and colored in space, with reproducing kernel $H_Q$. The function $\gamma$ maps the space $H^1:=H^1_0(\mathcal{O};\mathbb{R}^r)$ into the space of bounded linear operators on $H:=L^2(\mathcal{O};\mathbb{R}^r)$, and satisfies suitable regularity properties, while $f:H\to H$ and $\sigma:H^1\to \mathcal{L}_2(H_Q,H^1)$ are some Lipschitz nonlinearities.

After proving that, for every fixed $\mu>0$ and every initial conditions $(u^\mu_0,v^\mu_0) \in\,\mathcal{H}_2:=H^2(\mathcal{O})\times H^1_0(\mathcal{O})$, system \eqref{SPDE} has a unique mild solution $\boldsymbol{u_\mu}=(u_\mu,\partial_t u_\mu) \in\,L^2(\Omega; C([0,T];\mathcal{H}_2))$, our purpose  will be studying the asymptotic behavior of the solution  $u_\mu$, as the mass parameter $\mu$ approaches zero. Namely, we will prove that as $\mu\downarrow 0$ the process $u_\mu$ converges, in an appropriate sense, to the solution of a  system of stochastic quasi-linear parabolic equations. Such limit is known in the literature as the Smoluchowski-Kramers diffusion-approximation.

\medskip

While this paper is the first one to address the case of systems of SPDEs having a state-dependent friction, a series of prior works have explored the validity of the Smoluchowski-Kramers approximation. In the context of finite-dimensional systems, there exists a substantial body of literature. A simplified model of the Smoluchowski-Kramers phenomenon for  stochastic differential equations was already investigated by Nelson in Chapters 9 and 10 of his  book \cite{Nelson_1967}, and, more recently, it was studied in  the following references: \cite{f}, \cite{fh}, \cite{hhv}, \cite{hmdvw}, \cite{nn-20},  \cite{spi} and \cite{XY22}. Additionally, in \cite{CF3}, \cite{CWZ}, \cite{fritz-gassiat}, and \cite{lee} an analogous problem was considered for systems subject to a magnetic field, and in \cite{hu}, \cite{Nguyen} and \cite{Shi}   related multi-scaling problems were investigated.

In recent years, there has been a considerable research activity related to the Smoluchowski-Kramers diffusion approximation for infinite-dimensional systems. The first results in this direction dealt with the case of  constant damping term, with smooth noise and regular coefficients (see \cite{CF1}, \cite{CF2}, \cite{salins}, and \cite{Lv2}). More recently, the case of constant friction has been studied in \cite{Fui} and \cite{zine} for equations perturbed by  space-time white noise in dimension $d=2$, and in \cite{Han} for equations with H\"older continuous coefficients in dimension $d=1$. In all these papers, the fact that the damping coefficient is constant leads to a perturbative result, in the sense that, in the small-mass limit, the solution $u_\mu$ of the stochastic damped wave equation converges to the solution of the stochastic parabolic problem obtained by taking $\mu=0$.
The case of SPDEs with 
state-dependent damping  was considered for the first time in \cite{CX} (see also \cite{CXIE}). Notably, this scenario differs from the previous one, as the non-constant friction leads to a noise-induced term in the small-mass limit. An analogous phenomenon was identified  
 in \cite{BC23}, where  the case of SPDEs constrained to live on the unitary sphere  of the space of square-integrable functions  $L^2(\mathcal{O})$ was considered. Actually, also in this case  the Smoluchowski-Kramers approximation leads to a stochastic parabolic problem, whose solution is constrained to live on the unitary $L^2(\mathcal{O})$-sphere and where an additional drift term appears. Somehow surprisingly, such extra drift does not account for the Stratonovich-to-It\^o correction term.   Finally, in  \cite{CS3}  systems subject to a magnetic field were studied and in this case the small-mass limit was obtained only after a suitable regularization of the problem and in the limit a  stochastic hyperbolic equation was obtained.  

The study of the Smoluchowski-Kramers approximation extends beyond just proving the limit of the solutions $u_\mu$. In several applications, it is important to understand the stability of this approximation with respect to other significant asymptotic features exhibited by the two systems.
To this end, in \cite{CGH},\cite{CF1}, and more recently in \cite{Nguyen2} and \cite{CXie}, it is demonstrated that the statistically invariant states of the stochastic damped wave equation (in cases of constant friction) converge in a suitable sense to the invariant measure of the limiting equation. In a similar vein, the papers \cite{sal}, \cite{sal2}, and \cite{CXIE} focus on analyzing the interplay between the small-mass and small-noise limits. In particular, \cite{CXIE} investigates the validity of a large deviation principle for the trajectories of the solution, while \cite{sal} and \cite{sal2} delve into the study of the convergence of the quasi-potential, which describes, as known, the asymptotics of the exit times of the solutions from a functional domain and the large deviation principle for the invariant measure.

\medskip

As we mentioned above, the present paper is the first one to deal with the study of the small-mass limit for systems of stochastic damped wave equations having a state-dependent friction coefficient. In \cite{CX}, the same problem was considered for one single equation.   
In that case,  it has been shown  that if $\gamma$ is in $C^1(\mathbb{R})$ and $0<\gamma_0\leq \gamma(s)\leq \gamma_1$, for every $s \in\,\mathbb{R}$, then for any initial condition $(u_0, v_0) \in\,H^1(\mathcal{O})\times L^2(\mathcal{O})$ and for any $\delta>0$ and $p<\infty$ 
\begin{equation}
\label{limitefinale}	
\lim_{\mu\to 0} \mathbb{P}\left(\Vert u_\mu-u\Vert_{C([0,T];H^{-\delta}(\mathcal{O}))}+ \Vert u_\mu-u\Vert_{L^p(\mathcal{O})}>\eta\right)=0,\ \ \ \ \eta>0, \end{equation}
where $u$ is the unique solution of the quasilinear stochastic parabolic equation 
\begin{equation}
\label{gs40}
\le\{\begin{array}{l}
\ds{\partial_t u= \frac{1}{\gamma(u)}\Delta u +\frac{f(u)}{\gamma(u)} -\frac{\gamma'(u)}{2\gamma^3(u)} \sum_{i=1}^\infty (\sigma(u)Qe_i)^2 +\frac{\sigma(u)}{\gamma(u)}\partial_t w^Q,\ \  t>0,\ \ \ \ x\in \mathcal{O},}\\[18pt]
\ds{u(0)=u_0, \ \ \ \ \ \ \  u_{|_{\partial\mathcal{O}}}=0.}
\end{array}\r.
\end{equation}
In particular,  it has been shown how, as a consequence of the interplay between the non constant damping and the noise, in the limit an extra drift term is created.

Being $\gamma$   a scalar function, for every function $u:[0,T]\times \mathcal{O}\to\mathbb{R}$ it was possible to write
\begin{equation}
\label{gammag}	
\gamma(u_\mu(t,x))\partial_t u_\mu(t,x)=\partial_t \rho_\mu(t,x),\ \ \ \ t \in\,[0,T],\ \ \ x \in\,\mathcal{O},\end{equation}
where $\rho_\mu=\lambda\circ u_\mu$ and $\lambda^\prime=\gamma$, and it was shown that for every $\mu>0$ the function $\rho_\mu$ solves the equation
\[ \begin{aligned}
 	\rho_\mu(t)+\mu \partial_t u_\mu(t) & =g(u_0)+\mu v_0 +\int_0^t \text{div}[b(\rho_\mu(s))\nabla \rho_\mu(s)] ds\\[10pt]
 	&\quad + \int_0^t f_\lambda(\rho_\mu(s)) ds+ \int_0^t \sigma_\lambda(\rho_\mu(s))dw^Q(s),
\end{aligned}
\]
where $b=1/\gamma \circ \lambda^{-1}$, $f_\lambda =f\circ \lambda^{-1}$, and $\sigma_\lambda(h)=\sigma(\lambda^{-1}\circ h)$.  It turned out that working with this equation, instead of \eqref{SPDE} (with $r=1$), made the proof of the small-mass limit  more direct. 
Actually, once tightness was proved, it was possible to get the weak convergence of  the sequence $\{\rho_\mu\}_{\mu \in\,(0,1)}$ to some $\rho$ that solves the quasilinear parabolic SPDE
\begin{equation}
\label{SPDERho-intro}
\le\{\begin{array}{l}
\ds{\partial_t \rho=\text{div}[b(\rho) \nabla \rho]+F(\rho)+\sigma_g(\rho)dw^Q(t), \ \ \ \  t>0, \ \ \ \ x\in \mathcal{O},}\\[10pt]
\ds{\rho(0,x)=g(u_0),\ \ \ \ \ \ \ \ \rho(t,x)=0,\ \ \ x\in \partial \mathcal{O}.}
\end{array}\r.
\end{equation}
Then, since it was possible to prove  pathwise uniqueness for equation \eqref{SPDERho-intro},   the convergence in probability followed from the weak convergence.
Finally, a generalized It\^o formula allowed  to get the convergence of $u_\mu$ to the solution of equation \eqref{gs40}, where the extra drift appears exactly as a consequence of the It\^o formula.
 
If the case of systems, in general there is no way to define a new function $\rho_\mu$ in such a way that \eqref{gammag} holds. Because of this all the arguments introduced in \cite{CX} and described above cannot be used, and the problem becomes much harder to study. Our aim in the present paper is finding an alternative approach to the proof of the validity of the Smoluchowski-Kramers approximation that applies to systems and provides a suitable generalization of \eqref{gs40} for the limiting  system of stochastic quasi-linear parabolic equations. 

Already the well-posedness of system \eqref{SPDE} for any fixed $\mu>0$ is not known in the existing literature and requires some work. We assume that $\gamma$ maps $H^1$ into $\mathcal{L}(H)$ and satisfies suitable conditions that are verified also  in the local case, when $d\leq 3$ and
\begin{equation}
\label{local-intro}	
[\gamma(h)k](x)=g(x,h(x))k(x),\ \ \ \ x \in\,\mathcal{O},\end{equation}
for some bounded measurable function $g:\mathcal{O}\times \mathbb{R}^r\to \mathbb{R}^{r\times r}$ such that $g(x,\cdot):\mathbb{R}^r\to \mathbb{R}^{r\times r}$ is Lipschitz continuous, uniformly with respect to $x \in\,\mathcal{O}$. By using a generalized splitting method that allows to handle the multiplicative noise, together with suitable localizations that allows to handle the local Lipschitz continuity of the coefficients, we prove that for every
 $(u^\mu_0, v^\mu_0) \in\,\mathcal{H}_2$ and  every $T>0$ and $p\geq 1$,  there exists a unique mild solution $u_\mu$ for equation \eqref{SPDE} such that $\boldsymbol{u_\mu} \in\,L^2(\Omega;C([0,T];\mathcal{H}_2))$.
 
 Once we have proven the well-posedness of system \eqref{SPDE}, we study the small-mass limit. The first fundamental step is proving a-priori bounds for $u_\mu$ and $\sqrt{\mu}\,\partial_t u_\mu$. Due to the nature of the problem, we need uniform bounds in  $\mathcal{H}_2$ and some other bounds in $\mathcal{H}_3:=H^3(\mathcal{O})\times H^2(\mathcal{O})$. The proofs of such bounds are quite challenging and intricate and require some more restrictive assumptions for the friction coefficient $\gamma$. In particular, we cannot consider  the local case \eqref{local-intro}, as we need to assume that
  \[[\gamma(h)k](x)=\mathfrak{g}(h)\,k(x),\ \ \ \ \ x \in\,\mathcal{O},\]
 for some differentiable mapping $\mathfrak{g}:H^1\to\mathbb{R}^{r\times r}$, which is bounded together with its derivative, and such that 
\[\inf_{h \in\,H^1}\,\langle\mathfrak{g}(h)\xi,\xi\rangle_{\mathbb{R}^{r}}\geq \gamma_0\,\Vert\xi\Vert_{\mathbb{R}^r}^2,\ \ \ \ \ \ \xi \in\,\mathbb{R}^r,\]
for some $\gamma_0>0$.

A fundamental consequence of such a-priori bounds is represented by  the tightness of the family $\{u_\mu\}_{\mu \in\,(0,1)}$ in  suitable functional spaces, which, in turn, allows to get the existence of a weak limit point for the family $\{u_\mu\}_{\mu \in\,(0,1)}$ in the same functional spaces. The final crucial step is identifying every possible limit point of $\{u_\mu\}_{\mu \in\,(0,1)}$  with the solution of the same appropriate system of stochastic parabolic equations. To this purpose, 
for every fixed $u, v\in\,H^1$ we introduce the problem
\begin{equation}
\label{sa40-intro}
dy(t)=-\mathfrak{g}(u)	y(t)\,dt+\sigma(u)dw^Q(t),\ \ \ \ \ y(0)=v,
\end{equation}
and we denote by $y^{u,v}$ its solution.
Moreover, we denote by $P^u_t$ the Markov transition semigroup associated with equation \eqref{sa40-intro}, and by $\nu^u$ its unique invariant measure.
This allows defining  
\[
S(u):=\int_{H^1}\left[D\mathfrak{g}^{-1}(u)	z\right]z\,d\nu^u(z),\ \ \ \ \ u \in\,H^1,\] and introducing the problem 
\begin{equation}
\label{lim-eq-intro}
\begin{cases}
\ds{\partial_t u(t,x)=\mathfrak{g}^{-1}(u(t))\Delta u(t,x)+\mathfrak{g}^{-1}(u(t))f(u(t,x))+S(u(t))+\mathfrak{g}^{-1}(u(t))\sigma(u(t))\partial_tw^Q(t),}\\[10pt]
\ds{u(0,x)=u_0(x),\ \ x \in\,\mathcal{O},\ \ \ \ \ \ \ \ u(t,x)=0,\ \ x \in \partial\mathcal{O}.}	
\end{cases}
	\end{equation}
	Our purpose is proving that if $(u_0^\mu,v_0^\mu) \in\,\mathcal{H}_3$, for every $\mu>0$, with \[\sup_{\mu \in\,(0,1)}\Vert(u^\mu_0,\sqrt{\mu}\,v^\mu_0)\Vert_{\mathcal{H}_1}<\infty,\ \ \ \ \ \lim_{\mu\to 0}	\mu^{\delta}\,\Vert(u^\mu_0,\sqrt{\mu}\,v^\mu_0)\Vert^2_{\mathcal{H}_2}=0,\ \ \ \ \  \lim_{\mu\to 0}	\mu^{1+\delta}\,\Vert(u^\mu_0,\sqrt{\mu}\,v^\mu_0)\Vert^2_{\mathcal{H}_3}=0,
\]
for some $\delta \in\,(0,1/2)$, then, for any  $u_0 \in\,H^1$  such that
\[
\lim_{\mu\to 0}\Vert u^\mu_0-u_0\Vert_{H^1}=0,	\]
 and for every $\varrho<1$, $\vartheta<2$,  $p<2/(\vartheta-1)$ and $\eta, T>0$ it holds	\begin{equation}
\label{sa161-intro}
\lim_{\mu\to 0}\,\mathbb{P}\left(\sup_{t \in \,[0,T]}\Vert u_\mu(t)-u(t)\Vert_{H^\varrho}+\int_0^T \Vert u_\mu(t)-u(t)\Vert_{H^\vartheta}^p\,dt>\eta\right)=0,	
\end{equation}
where $u \in\,L^2(\Omega;C([0,T];H^1)\cap L^2(0,T;H^2))$ is the  solution of \eqref{lim-eq-intro}.

	After showing that  $S:H^1\to H^1$ is well-defined and even  differentiable, we   prove the pathwise uniqueness for  system \eqref{lim-eq-intro} in $L^2(\Omega;C([0,T];H^1)\cap L^2(0,T;H^2)$.	Thus, if we can show that any weak limit point for the family $\{u_\mu\}_{\mu \in\,(0,1)}$ solves \eqref{lim-eq-intro} and  belongs to $L^2(\Omega;C([0,T];H^1)\cap L^2(0,T;H^2)$, we can conclude that its limit is uniquely identified as the solution of \eqref{lim-eq-intro} and \eqref{sa161-intro} holds.
	Our proof of the fact that  any weak limit point of $\{u_\mu\}_{\mu \in\,(0,1)}$ solves \eqref{lim-eq-intro} is based on a generalization of the classical perturbed test functions method first introduced in \cite{PSV} and extended in recent years to treat the case of several different types of infinite dimensional systems (see e.g. \cite{dBG}, \cite{DMV}, \cite{DP}, \cite{DV} and \cite{DV2}).
If we define $v_\mu(t):=\sqrt{\mu}\,\partial_t u_\mu(t)$, system \eqref{SPDE} can be written as
\begin{equation}
\label{system-corr}
\left\{\begin{array}{l}
\ds{\partial_t u_\mu(t)=\frac 1{\sqrt{\mu}}	v_\mu(t)}\\[10pt]
\ds{\partial_t v_\mu(t)=\frac 1{\sqrt{\mu}}\,\Delta u_\mu(t)-\frac 1{\mu}\gamma(u_\mu(t))\,v_\mu(t)+\frac 1{\sqrt{\mu}}\,f(u_\mu(t))+\frac 1{\sqrt{\mu}}\sigma(u_\mu(t))\partial_t w^Q(t),}
\end{array}
\right.	
\end{equation}
with the initial conditions $u_\mu(0)=u_0^\mu$ and $v_\mu(0)=\sqrt{\mu}\,v^\mu_0$. 
The Kolmogorov operator associated with the system above is given by
\begin{align*}
\mathcal{K}^{\mu} \varphi(u,v)&=\frac 1{\sqrt{\mu}}\left(\langle D_u\varphi(u,v),v\rangle_{H^1}+\langle D_v\varphi(u,v),\Delta u+f(u)\rangle_{H^1}	\right)\\[10pt]
&\hsllp-\frac 1\mu\langle D_v\varphi(u,v),\gamma(u)v\rangle_{H^1}+\frac 1{2\mu}\text{Tr}\left(D^2_v\varphi(u,v)[\sigma(u)Q][\sigma(u)Q]^\star\right).
\end{align*}
In particular, if we take
\[\varphi_{\mu}(u,v)=\langle u,h\rangle_H+\sqrt{\mu}\,\varphi_1(u,v)+\mu\,\varphi_{2}^{ \mu}(u,v),\ \ \ \ \ (u,v) \in\,H^1,\]
we have
\begin{align*}
\mathcal{K}^{\mu}& \varphi_{\mu}(u,v)=\frac 1{\sqrt{\mu}}\left(\mathcal{M}^{u}\varphi_1(u,v)+\langle h,v\rangle_{H^1}\right)+\mathcal{M}^{u}\varphi_{2}^{\mu}(u,v)+\langle D_u\varphi_1(u,v),v\rangle_{H^1}	\\[10pt]
&\hsl+\langle D_v\varphi_1(u,v),\Delta u+f(u)\rangle_{H^1}+\sqrt{\mu}\left(\langle D_v\varphi_{2}^{ \mu}(u,v),\Delta u+f(u)\rangle_{H^1}+\langle D_u\varphi_{2}^{\mu}(u,v),v\rangle_{H^1}\right),
\end{align*}
where $\mathcal{M}^{u}$ is the Kolmogorov  operator associated with equation \eqref{sa40-intro}. If we take $\varphi_1(u,v):=\langle \gamma^{-1}(u)v,h\rangle_{H}$ we have
\[\mathcal{M}^{u}\varphi_1(u,v)+\langle h,v\rangle_{H^1}=0.\]
Moreover, if we take some $\lambda(\mu)>0$ such that $\lambda(\mu)\downarrow 0$, as $\mu\downarrow 0$, and  define
\[\varphi_{2}^{\mu}(u,v):=\int_0^\infty e^{-\lambda(\mu) t}\left(P^{u}_t\psi(u,\cdot)(v)-\langle S(u),h\rangle_H\right)\,dt,\]
where 
$\psi(u,v):=\langle D_u\varphi_1(u,v),v\rangle_{H^1}$, we get
\[\mathcal{M}^{u} \varphi_{2}^{\mu}(u,v)=\lambda(\mu)\varphi_{2}^{ \mu}(u,v)-\left(\psi(u,v)-\langle S(u),h\rangle_H\right).\]
Therefore, if we apply the It\^o formula to $\varphi_\mu$ and $(u_\mu,v_\mu)$, we get
\begin{align*}   \begin{split}   \langle u_\mu(t)&,h\rangle_H=\langle u^\mu_0,h\rangle_H+\int_0^t\langle \gamma^{-1}(u_\mu(s))\Delta u_\mu(s)+\gamma^{-1}(u_\mu(s))\,f(u_\mu(s))+S(u_\mu(s)),h\rangle_{H}\,ds \\[10pt]
	&\hsp+\int_0^t\langle \gamma^{-1}(u_\mu(s))\sigma(u_\mu(s))\partial_tw^Q(s),h\rangle_{H}+\mathfrak{R}_{\mu}(t),\end{split}
	\end{align*}
where
\begin{align*}
	\mathfrak{R}_{\mu}&(t)=\sqrt{\mu}\,\langle \varphi_1(u^\mu_0,v^\mu_0)-\varphi_1(u_\mu(t),v_\mu(t)),h\rangle_H +\mu\langle \varphi_{2}^{ \mu}(u^\mu_0,v^\mu_0)-\varphi_{2}^{ \mu}(u_\mu(t),v_\mu(t)),h\rangle_H\\[10pt]
	&\hslp+\int_0^t\left(\lambda(\mu)\varphi_{2}^{ \mu}(u_\mu(s),v_\mu(s))+\mu\langle D_u\varphi_{2}^{ \mu}(u_\mu(s),v_\mu(s)),\partial_t u_\mu(s)\rangle_{H^1}\right)\,dt\\[10pt]
	&\hsp+\sqrt{\mu}\int_0^t\langle D_v\varphi_{2}^{ \mu}(u_\mu(s),v_\mu(s)),\Delta u_\mu(t)+f(u_\mu(s))\rangle_{H^1}\,ds\\[10pt]
	&\hsp \hslp+\sqrt{\mu}\int_0^t\langle D_v\varphi_{2}^{ \mu}(u_\mu(s),v_\mu(s)),\sigma(u_\mu(s))\partial_t w^Q(s)\rangle_H.
\end{align*}
Our goal will be proving that under suitable scaling conditions on $\lambda(\mu)$ the reminder $\mathfrak{R}_{\mu}$ converges to zero in $L^1(\Omega;C([0,t]))$, as $\mu\downarrow 0$. This will allow us to obtain that any weak limit of the family $\{u_\mu\}_{\mu \in\,(0,1)}$ converges to the solution of \eqref{lim-eq-intro}. Notice that this task is far from trivial. Actually, first it requires to prove that the two functions $\varphi_1$ and $\varphi_2^\mu$ are sufficiently smooth. Then it requires  to control the dependence of $\varphi_1$ and $\varphi_2^\mu$ and  their derivatives on the parameter $\mu$ and to show that the bounds we find match appropriately with the a priori bounds we have previously found for $u_\mu$ and $\partial_t u_\mu$. Also the reminder depends on derivatives of $u_\mu$ up to order $2$. Since, $D_v\varphi_2^\mu$ acts on $H^1$ we see that we need bounds on $u_\mu$ in $H^3$. As it turns out, we will not be able to prove all these results for system \eqref{system-corr} and we will need to introduce a suitable truncated system.  The arguments we have described above  will allow us to prove \eqref{sa161-intro} for the truncated problem and the truncated limiting problem, and only at that point suitable a-priori bounds for the truncated limiting problem will allow to obtain \eqref{sa161-intro} for the original system.

\section{Notations and assumptions}
\label{sec2}
Let $\mathcal{O}$ be a bounded domain in $\mathbb{R}^d$, with $d\geq 1$. In what follows, we shall denote by $H$ the Hilbert space $L^2(\mathcal{O};\mathbb{R}^r)$, endowed with the scalar product
\[\langle (h_1,\ldots,h_r),(k_1,\ldots,k_r)\rangle_H=\sum_{i=1}^r \int_{\mathcal{O}} h_i(x) k_i(x)\,dx,\]
and the corresponding norm $\|\cdot\|_H$. Moreover, for every $p\geq 1$, we will denote by $L^p$ the space $L^p(\mathcal{O};\mathbb{R}^r)$.

If we denote by $(\hat{L}, D(\hat{L}))$ the realization of the Laplace operator in $L^2(\mathcal{O})$, endowed with Dirichlet boundary conditions, and  define
\[L h:=(\hat{L}h_1,\ldots,\hat{L} h_r),\ \ \ \ \ h \in\,D(L)=D(\hat{L})\times \cdots\times D(\hat{L}),\]
it is possible to show that there exists a complete orthonormal system $\{e_i\}_{i \in\,\mathbb{N}}$ in $H$ and a non-decreasing sequence of positive real numbers $\{\a_i\}_{i \in\,\mathbb{N}}$ such that
\[L e_i=-\a_i e_i,\ \ \ \ i \in\,\mathbb{N}.\]

For every $\delta \in\,\mathbb{R}$, we define $H^\delta$ to be the completion of $C^\infty_0(\mathcal{O};\mathbb{R}^r)$ with respect to the norm
\[\|h\|_{H^\delta}=\sum_{i=1}^\infty \alpha_i^\delta \langle h,e_i\rangle_H^2,\]
and we define
\[\mathcal{H}_\delta:=H^\delta\times H^{\delta-1}.\]
In the case $\delta=0$, we simply set $\mathcal{H}:=\mathcal{H}_0$.

\medskip

The cylindrical Wiener process $w^Q$ is defined as the formal sum
\[w^Q(t)=\sum_{i=1}^\infty Q e_i \beta_i(t),\ \ \ \ t\geq 0,\]
where $Q=(Q_1,\ldots,Q_r)$ is a bounded linear operator in $\mathcal{L}(H)$, $\{\beta_i\}_{i \in\,\mathbb{N}}$ is a sequence of independent standard Brownian motions defined on some filtered probability space $(\Omega,\mathcal{F}, \{\mathcal{F}\}_{t\geq 0}, \mathbb{P})$ and $\{e_i\}_{i \in\,\mathbb{N}}$ is the orthonormal basis of $H$ introduced above.

In what follows we shall denote by $H_Q$ the set $Q(H)$. $H_Q$ is the so-called {\em Reproducing Kernel} of the noise $w^Q$ and is a Hilbert space, endowed with the scalar product
\[\langle h,k\rangle_{H_Q}=\langle Q^{-1} h,Q^{-1}k\rangle_H.\]
In particular, the sequence $\{Q e_i\}_{i \in\,\mathbb{N}}$ is a complete orthonormal system for $H_Q$.
\smallskip

As for the coefficient $\gamma$, we assume the following conditions.

\begin{Hypothesis}\label{as1}
\begin{enumerate}
\item[1.] The function $\gamma$ maps $H^1$ into $\mathcal{L}(H)$ with 
\begin{equation}
\label{gsb15}
   \sup_{h \in\,H^1}\, \Vert\gamma(h)\Vert_{\mathcal{L}(H)} <+\infty.
\end{equation}
and \begin{equation}
\label{gsb102}
\Vert \left[\gamma(h_1)-\gamma(h_2)\right]k\Vert_{H}\leq c\,\Vert h_1-h_2\Vert_{H^1}\Vert k\Vert_{H^1},	
\end{equation}
for every $h_1, h_2, k, \in\,H^1$.

\item[2.] The function $\gamma$ maps $H^2$ into $\mathcal{L}(H^1)$ and for every $h \in\,H^2$ and $k \in\,H^1$
\begin{equation}
\label{gsb76-bis}
\Vert \gamma(h)k
\Vert_{H^1}\leq c\,\Vert k\Vert_{H^1}+c\,\Vert h\Vert_{H^2}^{3/4}\Vert h\Vert_{H^1}^{1/4}\Vert k\Vert_{H^1}^{3/4}\Vert k\Vert_{H}^{1/4}.
\end{equation}
\end{enumerate}
\end{Hypothesis}

Notice that as a consequence of \eqref{gsb76-bis} we have 
\begin{equation}
\label{gsb76}
\Vert \gamma(h)\Vert_{\mathcal{L}(H^1)}\leq c\left(1+\Vert h\Vert_{H^2}\right),\ \ \ \ \ h \in\,H^2.	
\end{equation}

\begin{Example}
\label{Example-1}
{\em For every $h, k:\mathcal{O}\to \mathbb{R}^r$, we define
\[[\gamma(h)k](x)=g(x,h(x))k(x),\ \ \ \ x \in\,\mathcal{O},\]
for some bounded measurable function $g:\mathcal{O}\times \mathbb{R}^r\to \mathbb{R}^{r\times r}$. It is immediate to check that $\gamma$ maps $H^1$ into $\mathcal{L}(H)$ and \eqref{gsb15} holds. Moreover, if we assume that
$g(x,\cdot):\mathbb{R}^r\to\mathbb{R}^{r\times r}$ is Lipschitz continuous, with
\[ [g]_{\text{\tiny{Lip}}}:=\sup_{x \in\,\mathcal{O}}\,\sup_{\xi\neq \eta}\frac{\Vert g(x,\xi)-g(x,\eta)\Vert_{\mathbb{R}^{r\times r}}}{\Vert \xi-\eta\Vert_{\mathbb{R}^r}}<+\infty,\]
 then, for every $h_1, h_2, k \in\,H^1$ we have
\[\Vert \left[\gamma(h_1)-\gamma(h_2)\right]k\Vert_H\leq \Vert g(\cdot,h_1)-g(\cdot,h_2)\Vert_{L^4}\Vert k\Vert_{L^4}\leq [g]_{\text{\tiny{Lip}}}\Vert h_1-h_2\Vert_{L^4}\Vert k\Vert_{L^4}.\]
If we assume that  $d\leq 4$ we have that $H^1\hookrightarrow L^4$ and then  we get \eqref{gsb102}.

Finally, if $g:\mathcal{O}\times \mathbb{R}^r\to \mathbb{R}^{r\times r}$ is differentiable, with bounded derivative,
for every $h \in\,H^2$ and $k \in\,H^1$, we have
\begin{equation}
\label{n40}	
\Vert \gamma(h)k\Vert_{H^1}\leq \Vert Dg(\cdot,h)\nabla h \,k\Vert_H+\Vert g(\cdot,h)\,\nabla k\Vert_H\leq \Vert Dg\Vert_{\infty}\,\Vert \nabla h\Vert_{L^4}\,\Vert k\Vert_{L^4}+c\,\Vert k\Vert_{H^1}.
\end{equation}
If we assume  that $d\leq 3$, we have that $H^1 \hookrightarrow L^6$ and by interpolation for every $l \in\,H^1$ we have
\[\Vert l\Vert_{L^4}\leq \Vert l\Vert_{L^6}^{3/4}\Vert l\Vert_{H}^{1/4}\leq \Vert l\Vert_{H^1}^{3/4}\Vert l\Vert_{H}^{1/4}.\]
Thus, from \eqref{n40} we get
\[\Vert \gamma(h)k\Vert_{H^1}\leq c\,\Vert h\Vert_{H^2}^{3/4}\Vert h\Vert_{H^1}^{1/4}\Vert k\Vert_{H^1}^{3/4}\Vert k\Vert_{H}^{1/4}+c\,\Vert k\Vert_{H^1},\]
and \eqref{gsb76-bis} follows.  

}
\end{Example}

\begin{Example}
\label{Example-2}
{\em For every $h \in\,H^1$ and $k \in\,H$, we define
\[[\gamma(h)k](x)=\mathfrak{g}(h)\,k(x),\ \ \ \ x \in\,\mathcal{O},\]
for some  bounded and Lipschitz function $\mathfrak{g}:H^1\to \mathbb{R}^{r\times r}$. It is easy to check that   $\gamma$ satisfies \eqref{gsb15}, \eqref{gsb102} and \eqref{gsb76-bis}. In fact, in this case \eqref{gsb102} and \eqref{gsb76-bis} are considerably improved, as we have for every $s \in\,\mathbb{R}$
\begin{equation}
\label{xfine109}
\Vert \left[\gamma(h_1)-\gamma(h_2)\right]k\Vert_{H^s}\leq c\,\Vert h_1-h_2\Vert_{H^1}\Vert k\Vert_{H^s}	
\end{equation}
and 
\begin{equation}
\label{xfine110}
\Vert \gamma(h)k
\Vert_{H^s}\leq c\,\Vert k\Vert_{H^s}.
\end{equation}

}	
\end{Example}

As far as  the non-linearity $f$ is concerned, we make the following assumptions.  
\begin{Hypothesis}\label{as3}
The function $f:H\to H$ is Lipschitz continuous. Moreover, $f$ maps $H^1$ into itself with
\begin{equation}
\label{n30}	
\Vert f(h)\Vert_{H^1}\leq c\,(\Vert h\Vert_{H^1}+1),\ \ \ \ \ h \in\,H^1,
\end{equation}
and
\begin{equation}
\label{xfine	100}
\Vert f(h_1)-f(h_2)\Vert_{H^1}\leq c\left(1+\Vert h_1\Vert_{H^2}\right)\,\Vert h_1-h_2\Vert_{H^1}.
\end{equation}

\end{Hypothesis}

\begin{Remark}\label{remx2.3}
{\em Assume that
\[f(h)(x)=\mathfrak{f}(h(x)),\ \ \ \ x \in\,\mathcal{O},\]
for some Lipschitz continuous function $\mathfrak{f}:\mathbb{R}^r\to \mathbb{R}^r$. Then $f:H\to H$ is Lipschitz continuous and \eqref{n30} holds. Moreover, if $f$ is differentiable and its derivative is Lipschitz continuous and bounded, we have 
\begin{align*}
\Vert f(h_1)-f(h_2)&\Vert_{H^1}\leq \Vert \left(Df(h_1)-D f(h_2)\right)\nabla h_1\Vert_{H}+\Vert D f(h_2)\nabla (h_1-h_1)\Vert_{H}\\[10pt]
&	\leq [Df]_{\text{\tiny{Lip}}}\Vert h_1-h_2\Vert_{L^4}\Vert \nabla h_1\Vert_{L^4}+\Vert Df\Vert_\infty \Vert h_1-h_2\Vert_{H^1},
\end{align*}
and if $d\leq 4$, this implies \eqref{xfine	100}.}	\flushright{$\Box$}
\end{Remark}

Next, we assume the following conditions for the diffusion coefficient.
\begin{Hypothesis}\label{as2}
\begin{enumerate}
\item[1.] The function $\sigma$ maps $H^1$ into $\mathcal{L}_2(H_Q,H^1)$, and
\begin{equation}
\label{gsb43}
\Vert \sigma(h)\Vert_{\mathcal{L}_2(H_Q,H^1)}\leq c\,\left(1+\Vert h\Vert_{H^1}\right),\ \ \ \ h \in\,H^1.
\end{equation}

\item[2.] 
It holds
\begin{equation}\label{gsb40}
\sup_{h \in\,H^1}\|\sigma(h)\|_{\mathcal{L}_2(H_Q,H)}<\infty,
\end{equation}
and for every $h_1, h_2 \in\,H^1$
\begin{equation}
\label{gsb7}
	\Vert \sigma(h_1)-\sigma(h_2)\Vert_{\mathcal{L}_2(H_Q,H)}^2\leq c\,\Vert h_1-h_2\Vert_{H^1}^2.
\end{equation}

\item[3.] For every $h \in\,H^2$ and $k \in\,H^1$ we have
\begin{equation}
\label{gsb43-tris}
\Vert \sigma(h+k)-\sigma(h)\Vert_{\mathcal{L}_2(H_Q,H^1)}\leq c\,\Vert k\Vert_{H^1}\,\left(1+\Vert h\Vert_{H^2}\right).	
\end{equation}
 \end{enumerate}
\end{Hypothesis}

\begin{Remark}
\label{rem3.4}
{\em \begin{enumerate}

\item[1.] 
Assume that there exist some measurable mappings $\sigma_i:\mathcal{O}\times \mathbb{R}^r\to \mathbb{R}^r$ such that for every $h \in\,H$ and $i \in\,\mathbb{N}$
\[\left[\sigma(h)Q e_i\right](x)=\sigma_i(x,h(x)),\ \ \ \ x \in\,\mathcal{O}.\]	
Then, if 
\[\sup_{h \in\,H^1}\,\sum_{i=1}^\infty \int_\mathcal{O}\vert\sigma_i(x,h(x))\vert^2\,dx<\infty, \]
\eqref{gsb40} follows. Moreover, if there exists $c>0$ such that
\[	
\sup_{x \in\,\mathcal{O}}\sum_{i=1}^\infty |\sigma_i(x,\xi)-\sigma_i(x,\eta)|^2_{\mathbb{R}^r}\leq c\,|\xi-\eta|^2_{\mathbb{R}^r},\ \ \ \ \xi,\ \eta \in\,\mathbb{R}^r,
\]
then \eqref{gsb7} holds.

\item[2.] If $\sigma$ is constant,  then \eqref{gsb40} means that  $\sigma Q$ is a Hilbert-Schmidt operator in $H$. Equivalently, when $\sigma$ is the identity operator, \eqref{gsb40} means that the noise $w^Q$ lives in $H$, so that 
\begin{equation*}
    w^Q\in C([0,T];H).
\end{equation*}

\item[3.] If $\sigma$ is not constant, then \eqref{gsb40} and \eqref{gsb7} are satisfied when, for example,
\begin{equation}
\label{gsb115}
\sigma_i(x,\xi)=\la(\xi)Qe_i(x),\ \ \ \ (x,\xi) \in\,\mathcal{O}\times \mathbb{R}^r, \ \ \ \ i \in\,\mathbb{N},	
\end{equation}
 for some   $\la:\mathbb{R}^r\to \mathbb{R}^{r\times r}$ bounded and Lipschitz continuous and for some $Q \in\,\mathcal{L}(H)$ such that
\begin{equation}
\label{gsb113}
\sum_{i=1}^\infty \Vert Qe_i\Vert^2_{L^\infty(\mathcal{O})}<\infty.	
\end{equation}
In case $Q$ is diagonalizable with respect the basis $\{e_i\}_{i \in\,\mathbb{N}}$, with $Q e_i=\vartheta_i e_i$,
the condition above reads
\begin{equation}\label{gx005}
    \sum_{i=1}^\infty \vartheta_i^2\Vert e_i\Vert^2_{L^\infty(\mathcal{O})}<\infty.
\end{equation}
Under reasonable assumptions on the domain $\mathcal{O}$, it holds that 
\[\Vert e_i\Vert_{L^\infty(\mathcal{O})} \leq c\, i^\alpha\]
for some $\alpha>0$ depending on $d$, and \eqref{gx005} becomes
\[\sum_{i=1}^\infty \vartheta_i^2 \, i^{2\alpha}<\infty.\]
In particular, when $d=1$ or the domain is a hyper-rectangle and $d\geq 1$, the eigenfunctions $\{e_i\}_{i\in\mathbb{N}}$ are equi-bounded and \eqref{gx005} becomes $\sum_{i=1}^\infty \vartheta_i^2 \, <\infty$.
\item[4.] Concerning condition \eqref{gsb43},  assume as above that all mappings $\sigma_i$ are given by \eqref{gsb115}. Then,  if we assume that  $\lambda$ is differentiable, with bounded derivative, for every $h \in\,H^2$ we have
 \[\begin{array}{l}
\ds{\Vert \sigma_i(\cdot,h)\Vert_{H^1}\leq \Vert D\lambda(h)\nabla h\, Q e_i\Vert_H+\Vert \lambda(h)\nabla (Qe_i)\Vert_H
}\\[14pt]
\ds{\leq \Vert D\lambda\Vert_\infty \Vert \nabla h\Vert_{H}\Vert Q e_i\Vert_{L^\infty}+	\Vert \lambda \Vert_\infty \Vert \nabla (Qe_i)\Vert_H.}
\end{array}\]
Hence, if we assume \eqref{gsb113} and
\begin{equation}
\label{gsb114}
\sum_{i=1}^\infty\Vert Q e_i\Vert_{H^1}^2<\infty,	
\end{equation}
we conclude that
\[\Vert \sigma(h)\Vert_{\mathcal{L}_2(H_Q,H^1)}^2=\sum_{i=1}^\infty \Vert \sigma_i(\cdot,h)\Vert_{H^1}^2\leq c\, \left(1+\Vert h\Vert^2_{H^1}\right).\]
Notice that, if we assume as above that 
$Q$ is diagonalizable with respect the basis $\{e_i\}_{i \in\,\mathbb{N}}$, with 
$Q e_i=\vartheta_i e_i$, then \eqref{gsb114} follows once we assume
\[\sum_{i=1}^\infty \vartheta_i^2 \alpha_i<\infty.\]
\item[5.] If  we assume that $\lambda$ is twice differentiable, with bounded derivatives, and take $d\leq 4$, for every  $h \in\,H^2$ and $k \in\,H^1$ we have
\begin{align*}
&\hslp\Vert \sigma_i(\cdot,h+k)-\sigma_i(\cdot,h)\Vert_{H^1}\leq \Vert D\lambda(h+k)\,\nabla k\,Q e_i\Vert_{H}\\[10pt]
 &\hsp+\Vert \left(D\la(h+k)-D\la(h)\right)\nabla h\, Q e_i\Vert_{H}+ \Vert \left(\la(h+k)-\la(h)\right)\nabla (Q e_i)\Vert_H	\\[10pt]
&\hslp\leq \Vert D\lambda\Vert_\infty \Vert Q e_i\Vert_{L^\infty} \Vert k\Vert_{H^1}+[ D\lambda]_{\text{\tiny{Lip}}}\Vert k\Vert_{H^1}\,\Vert h\Vert_{H^2}\Vert Q e_i\Vert_{L^\infty}+[ \lambda]_{\text{\tiny{Lip}}} \Vert k\Vert_{H^1}\Vert \nabla (Q e_i)\Vert_{L^4}.
\end{align*}
This means that if we assume  \eqref{gsb113}, \eqref{gsb114}, and
\begin{equation}
\label{fx1}	
\sum_{i=1}^\infty \Vert \nabla (Q e_i)\Vert_{L^4}^2<\infty,
\end{equation}
we can conclude that  \eqref{gsb43-tris} holds.
If we assume, as above, $Q e_i=\vartheta_i e_i$, since $H^{d/6}\hookrightarrow L^3$, then \eqref{fx1} holds if
\[\sum_{i=1}^\infty \vartheta_i^2 \alpha_i^{1+\frac{d}{6}}<\infty.\]

\end{enumerate}

	}
	\flushright{$\Box$}

\medskip

\end{Remark}

In the proof of the small-mass limit, we will need to assume some further conditions on the coefficients $\gamma$, $f$ and $\sigma$.
\begin{Hypothesis}
\label{as4}
\begin{enumerate}
\item[1.]	There exists a differentiable mapping $\mathfrak{g}:H^1\to\mathbb{R}^{r\times r}$, which is bounded together with its derivative, such that for every $h \in\,H^1$ and $k \in\,H$
\[[\gamma(h)k](x)=\mathfrak{g}(h)\,k(x),\ \ \ \ \ x \in\,\mathcal{O}.\]
Moreover there exists $\gamma_0>0$ such that
\begin{equation}
\label{gsb15-bis}
\inf_{h \in\,H^1}\,\langle\mathfrak{g}(h)\xi,\xi\rangle_{\mathbb{R}^{r}}\geq \gamma_0\,\Vert\xi\Vert_{\mathbb{R}^r}^2,\ \ \ \ \ \ \xi \in\,\mathbb{R}^r.
\end{equation}
\item[2.]  The function $f$ maps $H^2$ into itself and
 	\begin{equation}
 	\label{fx20}
 	\Vert f(u)\Vert_{H^2}\leq c\left(1+\Vert u\Vert^2_{H^2}\right),\ \ \ \ \ \ u \in\,H^2.	
 	\end{equation}
 	\item[3.] The function $\sigma$ maps $H^2$ into $\mathcal{L}_2(H_Q,H^2)$ and there exists $\bar{\kappa}<2$ such that
 	\begin{equation}
\label{xfine113}
\Vert \sigma(h)\Vert_{\mathcal{L}_2(H_Q,H^2)}\leq c\left(1+\Vert h\Vert_{H^2}+\Vert h\Vert_{H^{\bar{\kappa}}}^2\right).	
\end{equation}
\item[4.] If $\sigma:H^1\to\mathcal{L}_2(H_Q,H^1)$ is not bounded, then 
\smallskip
\begin{enumerate}
\item[a)] $\sigma$ maps $H^\rho$ into $\mathcal{L}_2(H_Q,H^\rho)$ and $f$ maps $H^\rho$ into $H^\rho$, for every $\rho<1$, with
\begin{equation}
\label{xfine204}
\Vert \sigma(u)\Vert_{	\mathcal{L}_2(H_Q,H^\rho)}\leq c_\rho\left(1+\Vert u\Vert_{H^\rho}\right),\ \ \ \ \Vert f(u)\Vert_{H^\rho}\leq c_\rho\left(1+\Vert u\Vert_{H^\rho}\right),\end{equation}
\item[b)] there exists $\bar{s}<1$ such that $\mathfrak{g}:H^{\bar{s}}\to \mathbb{R}^{r\times r}$ is differentiable, with
\begin{equation}
\label{gsb15-tris}
\sup_{h \in\,H^1}\Vert D\mathfrak{g}(h)\Vert_{\mathcal{L}(H^{\bar{s}},	\mathbb{R}^{r\times r})}<+\infty,\ \ \ \ \ \inf_{h \in\,H^{\bar{s}}}\,\langle\mathfrak{g}(h)\xi,\xi\rangle_{\mathbb{R}^{r}}\geq \gamma_0\,\Vert\xi\Vert_{\mathbb{R}^r}^2,\ \ \ \xi \in\,\mathbb{R}^r, 
\end{equation}
\item[c)] there exists $\bar{\kappa}<2$ such that
\begin{equation}
\label{xfine113-f}
\Vert f(h)\Vert_{H^2}\leq c\left(1+\Vert h\Vert_{H^2}+\Vert h\Vert_{H^{\bar{\kappa}}}^2\right).	
\end{equation}
\end{enumerate}

\end{enumerate}
 
\end{Hypothesis}

\begin{Remark}
\label{rem-2}
{\em \begin{enumerate}
\item[1. ] There is no loss of generality in assuming that the constant $\bar{\kappa}<2$ appearing in condition 3 and  condition 4.c  in Hypothesis \ref{as4} is  the same.
 \item[2.]	As we have seen in Example \ref{Example-2},  if $\gamma$ satisfies  condition 1. in Hypothesis \ref{as4}, then it satisfies conditions \eqref{gsb15} and \eqref{gsb76-bis}  in Hypothesis \ref{as1}, even in the stronger versions \eqref{xfine109} and \eqref{xfine110}. Moreover, the fact that $\mathfrak{g}:H^1\to \mathbb{R}^{r\times r}$ is differentiable with bounded derivative, implies that condition \eqref{gsb102} is satisfied as well, even in its stronger version \eqref{xfine109}. 

\item[3.] Condition 2. on $f$ is satisfied in the case of the example described in Remark \ref{remx2.3}, if we assume that $\mathfrak{f}$ is twice differentiable, with bounded derivatives. 

\item[4.] If for some $s\leq 1$, the matrix $\mathfrak{g}(h)$ is invertible for every $h \in\,H^s$ and  $\mathfrak{g}:H^s\to \mathbb{R}^{r\times r}$ is differentiable, then we have
 	\[D\mathfrak{g}^{-1}(h)k=-\mathfrak{g}^{-1}(h)[D\mathfrak{g}(h)k]\mathfrak{g}^{-1}(h),\ \ \ \ h, k \in\,H^s.\]
 	In particular,  we have
 	\begin{equation}
 	\label{sa51}
 	\sup_{h \in\,H^s}\Vert D\gamma^{-1}(h)k\Vert_{\mathbb{R}^{r\times r}}\leq c\,\Vert k\Vert_{H^s},\ \ \ \ \ k \in\,H^s.	
 	\end{equation}
 	
\item[5.] Assume that, as in Remark \ref{rem3.4},
\begin{equation} \label{xfine203}[\sigma(u)h](x)=\lambda(u(x))h(x),\ \ \ \ x \in\,\mathcal{O},\end{equation}	
for some bounded $\lambda:\mathbb{R}^r\to\mathbb{R}^{r\times r}$, which is twice differentiable, with bounded derivatives. If we assume \eqref{gsb113}, \eqref{gsb114} and \eqref{fx1}, then, when $d\leq 3$ we have 
\begin{equation} \label{xsa8}\Vert \sigma(u)\Vert_{\mathcal{L}_2(H_Q,H^2)}\leq c\left(1+\Vert u\Vert_{H^2}+\Vert \nabla u\Vert_{L^4}^2\right)\leq c\left(1+\Vert u\Vert_{H^2}+\Vert u\Vert_{H^{7/4}}^2\right),\end{equation} 
so that \eqref{xfine113} holds with $\bar{\kappa}=7/4$.

\item[6.] By interpolation, for every $r<\bar{\kappa}$ we have 
\[\Vert h\Vert_{H^{\bar{\kappa}}}\leq \Vert h\Vert^{\frac{\bar{\kappa}-r}{3-r}}_{H^3}\Vert h\Vert_{H^r}^{\frac{3-\bar{\kappa}}{3-r}}.\]
Then, thanks to \eqref{xfine113},  for every $R\geq 1$ we get \begin{equation}
\label{xfine120}
\Vert h\Vert_{H^r}\leq R\Longrightarrow \Vert \sigma(h)\Vert_{\mathcal{L}_2(H_Q,H^2)}\leq c_{R,r}\left(1+\Vert h\Vert_{H^2}+\Vert h\Vert_{H^3}^{\frac{2(\bar{\kappa}-r)}{3-r}}\right).	
\end{equation}
\item[7.] With similar arguments, we have that if $f$ is given as in Remark \ref{remx2.3}, then it satisfies \eqref{xfine113-f}. In particular, we have 
\begin{equation}
\label{xfine120-f}
\Vert h\Vert_{H^r}\leq R\Longrightarrow \Vert f(h)\Vert_{H^2}\leq c_{R,r}\left(1+\Vert h\Vert_{H^2}+\Vert h\Vert_{H^3}^{\frac{2(\bar{\kappa }-r)}{3-r}}\right).	
\end{equation} 
\item[8.] Thanks to \eqref{gsb15-bis} we have that for every $h \in\,H^1$ the matrix $\mathfrak{g}(h)$ is invertible and 
\begin{equation}
	\label{xfine160}
	\inf_{h \in\,H^1}\langle \mathfrak{g}^{-1}(h)\xi,\xi\rangle_{\mathbb{R}^r}\geq \tilde{\gamma_0}\,\Vert \xi\Vert^2_{\mathbb{R}^r},\ \ \ \ \ \xi \in\,\mathbb{R}^r.
\end{equation}
\item[9.] As shown in \cite[Theorem 1, Section 5.3.6]{RS}, if $\lambda:\mathbb{R}^r\to\mathbb{R}^r$ is differentiable, with bounded derivative, then for any $0<\rho<1$
\[\Vert\lambda(u)\Vert_{H^\rho}\leq c\,\Vert D\lambda\Vert_{L^\infty}\,\Vert u\Vert_{H^\rho},\]
so that $f$ satisfies \eqref{xfine204} if  it is given as in Remark \ref{rem3.4}, for some $\mathfrak{f}$ which is differentiable with bounded derivative. Moreover, as proven in \cite[Theorem 2, section 4.6.4.]{RS}, we have
\[\Vert \lambda(u)Qe_i\Vert_{H^\rho}\leq \Vert \lambda(u)\Vert_{L^\infty}\Vert Q e_i\Vert_{H^\rho}+\Vert \lambda(u)\Vert_{H^\rho}\Vert Q e_i\Vert_{L^\infty}.\]
hence, if in addition to \eqref{gsb113} we assume
\[\sum_{i=1}^\infty \Vert Q e_i\Vert^2_{H^\rho}<\infty,\]
then $\sigma$ satisfies \eqref{xfine204}.
 \end{enumerate}
\flushright{$\Box$}	

}

\medskip

\end{Remark}
 
This last set of conditions will be required when we introduce the noise-induced drift that appears in the limiting equation, and we need to  prove its well-posedness and identify the limit.  
 \begin{Hypothesis}
 \label{as5} 
 \begin{enumerate}
 \item[1.] 	The mapping $\sigma:H^2\to\mathcal{L}_2(H_Q,H^1)$ is differentiable and
 \begin{equation}  \label{gsb43-bis}\Vert D\sigma(u)h\Vert_{	\mathcal{L}_2(H_Q,H^1)}\leq c\,\left(1+\Vert u\Vert_{H^2}\right)\,\Vert  h\Vert_{H^1},\ \ \ \ \ \ u, h \in\,H^2.\end{equation}
 \item[2.] The mapping $\mathfrak{g}:H^1\to\mathbb{R}^{r\times r}$ is  twice continuously differentiable, with
 \begin{equation}  \label{na-bis}\sup_{u \in\,H^1}\Vert D^2\mathfrak{g}(u)\Vert_{\mathcal{L}(H^1\times H^1;{R}^{r\times r})}<\infty.\end{equation}
 \end{enumerate}

 \end{Hypothesis}

\begin{Remark}
	{\em Assume that, as in Remark \ref{rem3.4},
\[[\sigma(u)h](x)=\lambda(u(x))h(x),\ \ \ \ x \in\,\mathcal{O},\]	
for some bounded $\lambda:\mathbb{R}^r\to\mathbb{R}^{r\times r}$. 
In this case, if $\lambda$ is differentiable, we have
\[	[(D\sigma(u)k)Qe_i](x)=[D\lambda(u(x))k(x)]Q e_i(x),\ \ \ \ x \in\,\mathcal{O}.\]	
Thus, if we assume that $\lambda$ is twice differentiable, with bounded derivatives, we have
\begin{align*}
	\Vert (D\sigma(u)k)&Qe_i\Vert_{H^1}\leq \Vert D^2\lambda(u)(\nabla u,k)Qe_i\Vert_{H}+\Vert [D\lambda(u)\nabla k]\,Qe_i\Vert_{H}+\Vert [D\lambda(u)k] \nabla(Qe_i)\Vert_{H}\\[10pt]
	&\leq c\left(\Vert \nabla u\Vert_{L^4}\Vert  k\Vert_{L^4}\Vert Q e_i\Vert_{L^\infty}+\Vert \nabla k\Vert_{H}\Vert Q e_i\Vert_{L^\infty}+\Vert k\Vert_{L^4}\Vert \nabla(Q e_i)\Vert_{L^4}\right).
\end{align*}
Therefore, if  $d\leq 4$ and we assume	\eqref{gsb113} and \eqref{fx1}, we have
\begin{align*}
	\sum_{i=1}^\infty\Vert (D\sigma(u)k)&Qe_i\Vert^2_{H^1}\leq  c\left(\Vert  u\Vert^2_{H^2}\Vert  k\Vert^2_{H^1}+\Vert  k\Vert^2_{H^1}\right),\end{align*}
	and \eqref{gsb43-bis} follows.
\flushright{$\Box$}
	
	}
\end{Remark}

\medskip

To conclude this section, for every $\mu>0$ and $\delta \in\,\mathbb{R}$, we define on $\mathcal{H}_\delta$ the unbounded linear operator  \[\mathcal{L}_\mu(u,v)=\frac 1\mu\left(\mu\,v, L u\right),\ \ \ \ \ (u,v) \in\,\mathcal{H}_{\delta+1}.\]
It can be proven that $\mathcal{L}_\mu$ is the generator of a  strongly continuous group of 
bounded linear operators $\{S_\mu(t)\}_{t\geq 0}$ on each $\mathcal{H}_\delta$ (for a proof see \cite[Section 7.4]{Pazy}.
Moreover, for every  $z \in\,\mathcal{H}_\delta$
\begin{equation}
	\label{gsb5}
	\Vert S_\mu(t) z\Vert_{\mathcal{H}_\delta}\leq \sqrt{\mu\vee \mu^{-1}}\,\Vert z\Vert_{\mathcal{H}_\delta},\ \ \ \ \ \ t\geq 0.
\end{equation}

Finally, for every $\mu>0$ and $(u,v) \in\,\mathcal{H}_1$, we define
\[B_\mu(u,v)=\frac 1\mu (0,-\gamma(u)v+f(u)),\ \ \ \ \Sigma_\mu(u,v)=\frac 1\mu (0,\sigma(u)).\]

\section{Statement of main results}
In what follows, for every function $u:[0,T]\times \mathcal{O}\to \mathbb{R}^r$ which is differentiable with respect to $t$, we will denote \begin{equation}
\label{gsb95}
{\boldsymbol u}=(u,\partial_t u).
\end{equation}
Moreover, for every $T>0$ we will denote
\begin{equation}
\label{gsb94}
\begin{aligned}
\ds{\mathcal{K}_T:=\left\{ \ \right.}  &  \ds{u \in\,C([0,T];H^2)\ \text{such that}\ u(\cdot,x):[0,T]\to\mathbb{R}^r\ \text{ is differentiable, for a.e.}\ x \in\,\mathcal{O},}\\[10pt]
&\hsp\ds{\left. \text{and}\ \ \boldsymbol{u} \in\,	C([0,T];\mathcal{H}_2)\right\}.}	
\end{aligned}
\end{equation}

By using the notations  introduced in the previous section,  equation \eqref{SPDE} can be rewritten as
\begin{equation} 
\label{SPDE-bis}
\left\{\begin{array}{l}
\ds{d\boldsymbol{u}_\mu(t)=\left[\,\mathcal{L}_\mu\boldsymbol{u}_\mu(t)+B_\mu(\boldsymbol{u}_\mu(t))\right]\,dt+\Sigma_\mu(\boldsymbol{u}_\mu(t))\,dw^Q(t)}\\[10pt]
\ds{\boldsymbol{u}_\mu(0)=(u_0,v_0).}
\end{array}\right.
\end{equation}

\medskip

In section \ref{sec3} we will prove that this equation admits a  mild solution and such solution is unique in a proper space of processes. Namely, we will prove the following result

\begin{Theorem} \label{teo3.3}
Assume Hypotheses \ref{as1}, \ref{as3} and \ref{as2}, and fix $\mu>0$ and  $(u^\mu_0, v^\mu_0) \in\,\mathcal{H}_2$. Then for every $T>0$ and $p\geq 1$,  there exists a unique adapted process $u_\mu$ taking values in $\mathcal{K}_T$, $\mathbb{P}$-a.s., such that $\boldsymbol{u_\mu} \in\,L^2(\Omega;C([0,T];\mathcal{H}_1))$
 and
\begin{equation}
\label{gsb66}	\boldsymbol{u}_\mu(t)=(u^\mu_0,v^\mu_0)+\int_0^t \mathcal{L}_\mu \boldsymbol{u_\mu}(s)\,ds+\int_0^t B_\mu(\boldsymbol{u_\mu}(s))\,ds+\int_0^t\Sigma_\mu(\boldsymbol{u_\mu}(s))\,dw^Q(s).
\end{equation}
	
\end{Theorem}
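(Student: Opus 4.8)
The plan is to realise $\boldsymbol{u}_\mu$ as a mild solution, i.e. a fixed point of the map
\[
\Phi(\boldsymbol{u})(t)=S_\mu(t)(u^\mu_0,v^\mu_0)+\int_0^t S_\mu(t-s)B_\mu(\boldsymbol{u}(s))\,ds+\int_0^t S_\mu(t-s)\Sigma_\mu(\boldsymbol{u}(s))\,dw^Q(s),
\]
and then to check that any such mild solution satisfies the integral identity \eqref{gsb66} with the required regularity. The space dictated by the pathwise regularity in $\mathcal{K}_T$ is $\mathcal{H}_2$, since $B_\mu(u,v)=\frac1\mu(0,-\gamma(u)v+f(u))$ maps $\mathcal{H}_2$ into itself thanks to \eqref{gsb76} and Hypothesis \ref{as3}, while $\Sigma_\mu(u,v)=\frac1\mu(0,\sigma(u))$ is Hilbert--Schmidt from $H_Q$ into $\mathcal{H}_2$ by \eqref{gsb43}. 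The essential obstruction is that a direct contraction in $\mathcal{H}_2$ is \emph{not} available: \eqref{gsb102} provides Lipschitz continuity of $v\mapsto\gamma(u)v$ only with values in $H$ (hence at the level of $\mathcal{H}_1$), no Lipschitz bound in the $H^1$ norm for the increment $[\gamma(u_1)-\gamma(u_2)]v$ is assumed, and the $H^1$-mapping estimate \eqref{gsb76-bis} carries the superlinear factor $\Vert u\Vert_{H^2}$. Two devices are therefore needed: a localization to tame the local Lipschitz and superlinear character of the coefficients, and a splitting scheme that constructs the solution without ever requiring an $H^1$-Lipschitz estimate for the friction.

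First I would \emph{localize}. Fix a smooth cut-off $\theta_R:[0,\infty)\to[0,1]$ equal to $1$ on $[0,R]$ and to $0$ on $[2R,\infty)$, and replace $\gamma,f,\sigma$ by truncated coefficients in which the relevant $H^2$-dependence is multiplied by a cut-off of the running supremum $\sup_{s\leq t}\Vert\boldsymbol u(s)\Vert_{\mathcal H_2}$ (using the running supremum, rather than $\Vert u(t)\Vert_{H^2}$ itself, keeps the coefficients adapted). After truncation, \eqref{gsb15}, \eqref{gsb76}, \eqref{gsb102}, \eqref{gsb43}, \eqref{gsb43-tris}, \eqref{gsb7} and Hypothesis \ref{as3} give coefficients that are globally bounded, globally Lipschitz as maps into $\mathcal{H}_1$, and bounded as maps into $\mathcal{H}_2$. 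The goal is then to solve the truncated equation and to recover the original one by letting $R\to\infty$, using a priori energy estimates to show that the exit time $\tau_R$ from $\{\sup_{s\leq t}\Vert\boldsymbol u_\mu(s)\Vert_{\mathcal H_2}\leq R\}$ satisfies $\tau_R\uparrow T$ almost surely.

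For the truncated equation I would use the \emph{generalized splitting} announced in the introduction, whose point is precisely to bypass the missing $H^1$-Lipschitz property of $\gamma$. On a partition $0=t_0<\dots<t_n=T$ of mesh $\delta_n\to 0$ I define an approximate process $\boldsymbol u^n$ by freezing the friction operator and the diffusion at the left endpoint of each subinterval: on $[t_k,t_{k+1}]$, $\boldsymbol u^n$ solves the \emph{linear} damped wave equation with the constant operator $\gamma(u^n(t_k))$ and the frozen coefficient $\sigma(u^n(t_k))$ (keeping $f$ unchanged, which is harmless since $f$ is Lipschitz), started from $\boldsymbol u^n(t_k)$. Since $\gamma(u^n(t_k))$ is a fixed operator that is also bounded on $H^1$ by \eqref{gsb76} (here the pathwise $H^2$ bound provided by the truncation is used), each such linear problem is well posed in $\mathcal H_2$, its stochastic convolution lands in $H^1$ by \eqref{gsb43}, and, the noise being additive on each subinterval, the group bound \eqref{gsb5} together with standard maximal-inequality and factorization arguments yields uniform-in-$n$ bounds for $\boldsymbol u^n$ in $L^p(\Omega;C([0,T];\mathcal H_1))$ and pathwise in $C([0,T];\mathcal H_2)$. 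The crucial step is to prove that $\{\boldsymbol u^n\}$ is Cauchy in $L^2(\Omega;C([0,T];\mathcal H_1))$: here I estimate the difference of two schemes \emph{only in the $\mathcal H_1$ norm}, where \eqref{gsb102} and \eqref{gsb7} apply, absorbing the frozen-versus-true discrepancies $\gamma(u^n(t_k))-\gamma(u^n(s))$ and $\sigma(u^n(t_k))-\sigma(u^n(s))$ through the uniform higher-order $\mathcal H_2$ bounds and the $O(\delta_n)$ modulus of continuity of $\boldsymbol u^n$ on each subinterval. Passing to the limit identifies a process solving the truncated version of \eqref{gsb66} with the required regularity.

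Finally I would remove the truncation and prove uniqueness. Itô's formula for $\Vert\boldsymbol u_\mu\Vert_{\mathcal H_1}^2$ and for the natural $\mathcal H_2$-energy, combined with the boundedness of $\gamma$ on $H$ and on $H^1$ from \eqref{gsb15} and \eqref{gsb76} and the growth bounds \eqref{gsb43}, \eqref{gsb43-tris} on $\sigma$, gives a priori estimates of Gronwall type; the $\mathcal H_1$ estimate closes in $L^2(\Omega)$ and yields $\boldsymbol u_\mu\in L^2(\Omega;C([0,T];\mathcal H_1))$, whereas the $\mathcal H_2$ estimate closes only pathwise, because the superlinear factor $(1+\Vert u\Vert_{H^2})$ multiplies $\Vert\partial_t u\Vert_{H^1}$ in the friction term, which is exactly why the higher regularity is asserted only $\mathbb P$-a.s.\ in $\mathcal K_T$. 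These bounds force $\tau_R\uparrow T$ and remove the cut-off. Uniqueness follows by writing the difference of two solutions in $\mathcal H_1$ and applying Gronwall: the only delicate term, $[\gamma(u_1)-\gamma(u_2)]v_2$, is controlled in $H$ by \eqref{gsb102} as $c\,\Vert u_1-u_2\Vert_{H^1}\Vert v_2\Vert_{H^1}$, and since both solutions live pathwise in $\mathcal H_2$ the factor $\Vert v_2\Vert_{H^1}$ is finite along each trajectory, so localizing by the stopping times at which $\Vert\boldsymbol u_\mu\Vert_{\mathcal H_2}$ first exceeds a level $R$ makes the Gronwall constant usable and delivers pathwise uniqueness. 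I expect the Cauchy estimate for the splitting scheme in the $\mathcal H_1$ norm — reconciling the low-order metric in which the friction is Lipschitz with the high-order bounds needed to control the frozen operators — to be the main technical obstacle.
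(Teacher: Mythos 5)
Your overall architecture (truncate, approximate, pass to the limit, remove the cut-off via stopping times) is sensible, but be aware that it is not the paper's route, and the point where they diverge is exactly where your outline has a gap. The ``generalized splitting'' in the paper is not a time-discretization with frozen coefficients: it is the decomposition $u=\eta+\Lambda(\eta)$, where $\Lambda(\eta)$ solves the \emph{deterministic} problem \eqref{gsb50} driven by a shift $\eta$, and $\eta$ solves the purely stochastic equation \eqref{gsb123}, whose only nonlinear coefficient is the diffusion $\Sigma(\boldsymbol{\eta}+\boldsymbol{\Lambda(\eta)})$. This design is a response to precisely the obstruction you diagnosed at the start: every estimate involving the friction is performed inside a deterministic problem, where the $\mathcal{H}_2$-bound \eqref{gsb53} and the $\mathcal{H}_1$-Lipschitz continuity \eqref{gsb101} of $\Lambda$ on the sets $\mathcal{K}_T(R)$ hold with honest deterministic constants $c_{T,R}$; the stochastic fixed point, after the running-supremum cutoff $\Psi_R$, can then be run in $L^p(\Omega;C([0,T];\mathcal{H}_2))$ because the composed diffusion coefficient is globally Lipschitz there, via \eqref{gsb43-tris} combined with \eqref{gsb53} and \eqref{gsb101} (this is estimate \eqref{gsb110}).

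The gap in your version sits in the Cauchy step for the frozen-coefficient scheme. You claim convergence in $L^2(\Omega;C([0,T];\mathcal{H}_1))$ on the grounds that the truncation makes the coefficients ``globally bounded and globally Lipschitz as maps into $\mathcal{H}_1$.'' But the truncation bounds the \emph{coefficients}, not the solution: even after multiplying by $\theta_R\bigl(\sup_{s\leq t}\Vert\boldsymbol{u}(s)\Vert_{\mathcal{H}_2}\bigr)$, the stochastic convolution has unbounded paths in $\mathcal{H}_2$, so the factor $\Vert\partial_t u^m(s)\Vert_{H^1}$ that multiplies $\Vert u^n-u^m\Vert_{H^1}$ in the friction cross-term $[\gamma(u^n(t_k))-\gamma(u^m(t_j))]\partial_t u^m$ (controlled in $H$ by \eqref{gsb102}) is a solution quantity with only moment bounds, and plain Gronwall in expectation does not close. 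Worse, the multiplicative cutoff reintroduces the forbidden topology: when you compare two schemes, the difference of the cutoff factors is controlled only by $\sup_{s\leq t}\Vert\boldsymbol{u}^n(s)-\boldsymbol{u}^m(s)\Vert_{\mathcal{H}_2}$ (cf.\ \eqref{gsb117}), and since no Lipschitz estimate for $[\gamma(u_1)-\gamma(u_2)]v$ in $H^1$ is available, you can neither absorb that term in your $\mathcal{H}_1$ estimate nor upgrade the Cauchy estimate to $\mathcal{H}_2$. The paper escapes this trap because in the $\eta$-equation the only coefficient hit by the cutoff-difference is the diffusion, which \emph{is} $H^1$-valued and Lipschitz locally in $H^2$. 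To salvage your scheme you would need either stopping times $\tau^n_R\wedge\tau^m_R$ in place of the multiplicative cutoff (and then to handle the mismatch of the two random intervals), or a stochastic-Gronwall/exponential-weight device of the kind the paper deploys for the limiting equation (the weight $\Gamma_\kappa$ in Lemma \ref{teo7.2}); neither appears in your outline. A smaller inaccuracy: the modulus of continuity of $\boldsymbol{u}^n$ on a subinterval is only H\"older of order $<1/2$ in mean, not $O(\delta_n)$; this still suffices for convergence, just at a slower rate. Your final steps — removing the truncation through a.s.\ finite exit times and proving pathwise uniqueness by Gronwall in $\mathcal{H}_1$ with stopping at $\mathcal{H}_2$-levels — are correct and parallel the paper's, which instead reconstructs $\eta$ from a given solution and invokes uniqueness for \eqref{gsb123}.
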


\begin{Remark}
It is not difficult to prove that in fact we have $\boldsymbol{u_\mu} \in\,L^2(\Omega;C([0,T];\mathcal{H}_2))$.
\end{Remark}
\medskip

Once proved Theorem \ref{teo3.3}, we will study the limiting behavior of $u_\mu$, as $\mu\to 0$ when Hypothesis \ref{as4} and Hypothesis \ref{as5} hold. Namely, we will prove that $u_\mu$ converges in probability to some stochastic quasi-linear parabolic equation.
In order to describe such limiting equation, we need to introduce some notations and preliminary results.

For every fixed $u, v\in\,H^1$ we consider the problem
\begin{equation}
\label{sa40}
dy(t)=-\mathfrak{g}(u)	y(t)\,dt+\sigma(u)dw^Q(t),\ \ \ \ \ y(0)=v,
\end{equation}
where $\mathfrak{g}$ is the mapping defined on $H^1$ and taking values in $\mathbb{R}^{r\times r}$ introduced in Hypothesis \ref{as4}, such that $[\gamma(u)k](x)=\mathfrak{g}(u)k(x)$, for every $u, k \in\,H^1$ and $x \in\,\mathcal{O}$.
Due to Hypotheses  \ref{as2} and \ref{as4}, the process
\[y^{u,v}(t)=e^{-\mathfrak{g}(u)t}v+\int_0^t e^{-\mathfrak{g}(u)(t-s)}\sigma(u)dw^Q(s),\ \ \ \ \ t\geq 0,\] 
is the unique solution of equation \eqref{sa40}, belongs to $L^p(\Omega;C([0,+\infty);H^1))$, for every  $p\geq 1$, and satisfies 
\begin{equation}
\label{sa41-H1}
\sup_{t \geq 0}\mathbb{E}\Vert y^{u,v}(t)\Vert_{H^1}^p\leq c_{p}\left(1+e^{-\gamma_0 t}\Vert v\Vert_{H^1}^p+\Vert u\Vert_{H^1}^p\right),\ \ \ \ \ t\geq 0.
\end{equation}

Now, for every $u \in\,H^1$, we denote by $P^u_t$ the Markov transition semigroup associated with equation \eqref{sa40} and defined by
\[P^u_t\varphi(v)=\mathbb{E}\,\varphi(y^{u,v}(t)),\ \ \ \ \ v \in\,H^1,\ \ \ \ t\geq 0,\]
for every function $\varphi \in\,B_b(H^1)$, where $B_b(H^1)$ is the space of  Borel and bounded functions defined on $H^1$ and taking values in $\mathbb{R}$. 
Moreover, we define
\[\Lambda_u:=\int_0^\infty e^{-\mathfrak{g}(u)s}[\sigma(u)Q][\sigma(u)Q]^\star e^{-\mathfrak{g}^t(u)s}\,ds.\]
Due to Hypotheses  \ref{as2} and \ref{as4}, we have that $\Lambda_u \in\, \mathcal{L}^+_1(H)\cap\mathcal{L}^+_1(H^1)$, with
\begin{equation}
\label{xfine150}
\mbox{Tr}_H\Lambda_u\leq c,\ \ \ \ \ \ \mbox{Tr}_{H^1}\Lambda_u\leq c\,\left(1+\Vert u\Vert_{H^1}\right).	
\end{equation}
 and the Gaussian measure
$\nu^u:=\mathcal{N}(0,\Lambda_u)$ is the unique invariant measure for the semigroup $P^u_t$.

Next, we define
\begin{equation}
\label{sa160}
S(u):=\int_{H^1}\left[D\mathfrak{g}^{-1}(u)	z\right]z\,d\nu^u(z),\ \ \ \ \ u \in\,H^1.
\end{equation}
In Section \ref{sec7}, we will show that  $S:H^1\to H^1$ is well-defined and  we will study  its differentiability properties. Once introduced the mapping $S$, we can state our limiting result.

\begin{Theorem}
\label{teo3.4}
Assume Hypotheses \ref{as1} to \ref{as5} and fix  an arbitrary $\varrho<1$ and $\vartheta \in\,[1,2)$. Moreover, assume that $(u_0^\mu,v_0^\mu) \in\,\mathcal{H}_3$, for every $\mu>0$, with 
\begin{equation}
\label{finex}
\sup_{\mu \in\,(0,1)}\Vert(u^\mu_0,\sqrt{\mu}\,v^\mu_0)\Vert_{\mathcal{H}_1}<\infty,
\end{equation}
and
\begin{equation}
\label{xsa11}
\lim_{\mu\to 0}	\mu^{\delta}\,\Vert(u^\mu_0,\sqrt{\mu}\,v^\mu_0)\Vert^2_{\mathcal{H}_2}=0,\ \ \ \ \ \ \  \lim_{\mu\to 0}	\mu^{1+\delta}\,\Vert(u^\mu_0,\sqrt{\mu}\,v^\mu_0)\Vert^2_{\mathcal{H}_3}=0,
\end{equation}
for some $\delta \in\,(0,1/2)$.
Then, if  $u_0 \in\,H^1$ is such that
\begin{equation}
\label{fx2}
\lim_{\mu\to 0}\Vert u^\mu_0-u_0\Vert_{H^1}=0,	\end{equation}
 for every   $p<2/(\vartheta-1)$ and $\eta, T>0$ we have	
\begin{equation}
\label{sa161}
\lim_{\mu\to 0}\,\mathbb{P}\left(\sup_{t \in \,[0,T]}\Vert u_\mu(t)-u(t)\Vert_{H^\varrho}+\int_0^T \Vert u_\mu(t)-u(t)\Vert_{H^\vartheta}^p\,dt>\eta\right)=0,	
\end{equation}
where $u \in\,L^2(\Omega;C([0,T];H^1)\cap L^2(0,T;H^2))$ is the unique solution of the problem 
\begin{equation}
\label{lim-eq}
\begin{cases}
\ds{\partial_t u(t,x)=\mathfrak{g}^{-1}(u(t))\Delta u(t,x)+\mathfrak{g}^{-1}(u(t))f(u(t,x))+S(u(t))+\mathfrak{g}^{-1}(u(t))\sigma(u(t))\partial_tw^Q(t),}\\[10pt]
\ds{u(0,x)=u_0(x),\ \ x \in\,\mathcal{O},\ \ \ \ \ \ \ \ u(t,x)=0,\ \ x \in \partial\mathcal{O}.}	
\end{cases}
	\end{equation}

\end{Theorem}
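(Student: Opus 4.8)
The plan is to prove \eqref{sa161} by combining three ingredients: uniform a priori bounds that yield tightness of the family $\{u_\mu\}_{\mu\in(0,1)}$ in the relevant topology, pathwise uniqueness for the limiting equation \eqref{lim-eq}, and an identification of every weak limit point of $\{u_\mu\}_{\mu\in(0,1)}$ with the solution of \eqref{lim-eq}. Once these are in place, tightness produces weak limit points, the identification forces each of them to solve \eqref{lim-eq}, and then pathwise uniqueness together with a Gy\"ongy-Krylov argument upgrades the weak convergence to convergence in probability in $C([0,T];H^\varrho)\cap L^p(0,T;H^\vartheta)$, which is exactly \eqref{sa161}.

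First I would establish the a priori estimates. Setting $v_\mu=\sqrt{\mu}\,\partial_t u_\mu$ and using the reformulation \eqref{system-corr}, I would derive uniform energy estimates controlling $(u_\mu,v_\mu)$ in $\mathcal{H}_2$ and, at a higher level, in $\mathcal{H}_3$, exploiting the uniform dissipativity \eqref{gsb15-bis}--\eqref{xfine160}, the growth bounds \eqref{fx20}, \eqref{xfine113}, \eqref{xfine113-f}, and the scaling of the initial data \eqref{finex}--\eqref{xsa11}; the latter is chosen precisely so that the higher-order norms blow up slowly enough in $\mu$. From these bounds, together with a uniform modulus of time-continuity in a weaker Sobolev norm obtained from \eqref{system-corr}, a compactness argument of Aubin-Lions-Simon type combined with interpolation yields tightness of $\{u_\mu\}$ in $C([0,T];H^\varrho)\cap L^p(0,T;H^\vartheta)$ for $\varrho<1$, $\vartheta<2$ and $p<2/(\vartheta-1)$, hence the existence of weak limit points. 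In parallel, having shown in Section \ref{sec7} that $S:H^1\to H^1$ is well defined and differentiable, I would prove pathwise uniqueness for \eqref{lim-eq} in $L^2(\Omega;C([0,T];H^1)\cap L^2(0,T;H^2))$ by an energy estimate on the difference of two solutions, using \eqref{xfine160}, the Lipschitz bounds on $f$ and $\sigma$, and the differentiability of $S$.

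The core of the argument is the identification step, carried out by a generalized method of perturbed test functions. Fixing $h$, I would use $\varphi_\mu=\langle u,h\rangle_H+\sqrt{\mu}\,\varphi_1+\mu\,\varphi_2^\mu$, with $\varphi_1(u,v)=\langle\gamma^{-1}(u)v,h\rangle_H$ chosen to annihilate the leading $\mu^{-1/2}$ term through $\mathcal{M}^u\varphi_1+\langle h,v\rangle_{H^1}=0$, and the corrector $\varphi_2^\mu$ defined as the $\lambda(\mu)$-discounted resolvent of the frozen Ornstein-Uhlenbeck semigroup $P^u_t$ associated with \eqref{sa40}, applied to the centered function $\psi(u,\cdot)-\langle S(u),h\rangle_H$. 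The centering is exactly what makes $\varphi_2^\mu$ well defined: by the ergodicity of $P^u_t$ with invariant measure $\nu^u$ and the exponential decay \eqref{sa41-H1}, the integrand decays and the integral stays bounded as $\lambda(\mu)\downarrow 0$, while the noise-induced drift $S(u)$ enters as precisely this centering constant. Applying It\^o's formula to $\varphi_\mu(\boldsymbol u_\mu)$ produces the weak identity displayed in the introduction, in which the drift coincides with that of \eqref{lim-eq}, the It\^o integral reproduces $\gamma^{-1}(u)\sigma(u)\,\partial_t w^Q$, and all the remaining terms are collected in $\mathfrak{R}_\mu$. It then remains to prove that $\mathfrak{R}_\mu\to 0$ in $L^1(\Omega;C([0,T]))$ under a suitable scaling of $\lambda(\mu)$, which requires quantitative smoothness estimates for $\varphi_1$ and $\varphi_2^\mu$ and their first and second derivatives, with explicit dependence on $\mu$ through $\lambda(\mu)$ and on the $\mathcal{H}_2$- and $\mathcal{H}_3$-norms of the arguments; here Hypothesis \ref{as5}, and in particular \eqref{gsb43-bis} and \eqref{na-bis}, is used to differentiate $S$ and the corrector.

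The main obstacle is precisely this matching of the corrector estimates against the a priori bounds. Because $D_v\varphi_2^\mu$ acts on $H^1$, controlling the terms of $\mathfrak{R}_\mu$ involving $\Delta u_\mu$ forces bounds on $u_\mu$ in $H^3$, which are available only in a form that degenerates as $\mu\downarrow 0$; at the same time $\lambda(\mu)$ must tend to zero slowly enough to kill the $\lambda(\mu)\varphi_2^\mu$ term, yet fast enough to keep the derivatives of $\varphi_2^\mu$ integrable against the a priori bounds. As the authors indicate, these competing requirements cannot be met for the full system, so I would first run the whole construction---a priori bounds, corrector estimates, and the vanishing of $\mathfrak{R}_\mu$---for a truncated version of \eqref{system-corr} and of \eqref{lim-eq}, in which the coefficients are cut off outside a ball in a lower-order norm. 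On the truncated level the argument above yields \eqref{sa161} for the truncated processes, and a final step, using the a priori bounds for the truncated limiting equation together with the pathwise uniqueness for \eqref{lim-eq}, removes the truncation and transfers the convergence to the original system.
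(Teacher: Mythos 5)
Your proposal is correct and follows essentially the same route as the paper: truncation of the coefficients in an $H^{\bar r}$-ball, a priori bounds in $\mathcal{H}_1$, $\mathcal{H}_2$ and ($\mu$-degenerate) $\mathcal{H}_3$, tightness via Simon-type compactness, pathwise uniqueness for \eqref{lim-eq}, identification of limit points through the perturbed test functions $\varphi_1$ and the $\lambda(\mu)$-discounted corrector $\varphi_2^\mu$ centered by $\langle S(u),h\rangle_H$, a Skorokhod--Gy\"ongy--Krylov argument to upgrade to convergence in probability, and finally removal of the truncation by stopping times together with uniform bounds for the truncated limiting equation. The only cosmetic difference is in your description of the tension on $\lambda(\mu)$: the paper's conditions \eqref{sa185} require $\lambda(\mu)\to 0$ but \emph{more slowly} than $\mu^{1/2-\delta}$, which is what makes the $D_u\varphi_2^\mu$- and $\Delta u_\mu$-terms of the remainder vanish against the degenerate $\mathcal{H}_3$ bounds.
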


In what follows, for the sake of simplicity of notations, we shall denote
\begin{equation}
\label{iet1}
\Lambda_6\mu_i:=\Vert (u^\mu_0,\sqrt{\mu} v^\mu_0)\Vert^2_{\mathcal{H}_i},\ \ \ \ \ \mu>0,\ \ \ \ i=1,2,3.	
\end{equation}

\section{Proof of Theorem \ref{teo3.3}}
\label{sec3}

In this section we study the well-posedness of system \eqref{SPDE-bis}, with initial conditions $(u^\mu_0, v^\mu_0) \in\,\mathcal{H}_2$, for every  $\mu>0$. 
Since here $\mu$ is a fixed positive parameter, without any loss of generality we can consider the case $\mu=1$ and  the coefficients $\mathcal{L}_\mu$, $B_\mu$ and $\Sigma_\mu$ and the initial conditions $u^\mu_0$ and $v^\mu_0$ will be just denoted by $\mathcal{L}$, $B$, $\Sigma$, $u_0$ and $v_0$.

\medskip

 In order to study the well-posedness of equation \eqref{SPDE-bis}, we will use a generalized splitting method, similar to the one introduced in \cite{bds} (see also \cite{salins}). For every  $\eta \in\,\mathcal{K}_T$ we consider the  deterministic auxiliary problem
 \begin{equation}
 \label{gsb50}
  \frac{d}{dt}\boldsymbol{\zeta}(t)=\mathcal{L}\,\boldsymbol{\zeta}(t)+B(\boldsymbol{\zeta}(t)+\boldsymbol{\eta}(t)),\ \ \ \ 
\boldsymbol{\zeta}(0)=(u_0,v_0).
\end{equation}
A function  $\zeta \in\,\mathcal{K}_T$ is a solution to problem \eqref{gsb50} if the equality 
\[\boldsymbol{\zeta}(t)=(u_0,v_0)+\int_0^t\mathcal{L} \boldsymbol{\zeta}(s)\,ds +\int_0^t B(\boldsymbol{\zeta}(s)+\boldsymbol{\eta}(s))\,ds \]
holds in $\mathcal{H}_1$. 
  
\begin{Proposition}
\label{teo3.1}
Assume that Hypotheses \ref{as1} and \ref{as3} are satisfied. Then, for every $(u_0,v_0) \in\,\mathcal{H}_2$ and  $\eta \in\,\mathcal{K}_T,$
 problem \eqref{gsb50} admits a unique solution $\zeta \in\,\mathcal{K}_T$.
 Moreover, for every $t \in\,[0,T]$
 \begin{equation}
	\label{gsb100}
\sup_{s \in\,[0,t]}\Vert \boldsymbol{\zeta}(s)\Vert_{\mathcal{H}_1}\leq c_T \left(1+\sup_{s \in\,[0,t]} \Vert \boldsymbol{\eta}(s)\Vert_{\mathcal{H}_1}	\right),
	\end{equation}
	and
 \begin{equation}
 \label{gsb53}
 \sup_{s \in\,[0,t]}\Vert \boldsymbol{\zeta}(s)\Vert_{\mathcal{H}_2}\leq  \exp\left(c\,\left[\Vert (u_0,v_0)\Vert_{\mathcal{H}_1}+1+\sup_{s \in\,[0,t]} \Vert \boldsymbol{\eta}(s)\Vert_{\mathcal{H}_2}	\right]t\right)\left(\Vert (u_0,v_0)\Vert_{\mathcal{H}_2}+1\right).\end{equation}
\end{Proposition}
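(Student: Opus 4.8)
The plan is to recast \eqref{gsb50} in mild (Duhamel) form using the strongly continuous group $\{S(t)\}_{t\geq 0}$ generated by $\mathcal{L}$ (here $\mu=1$, so by \eqref{gsb5} one has $\|S(t)z\|_{\mathcal{H}_\delta}\leq\|z\|_{\mathcal{H}_\delta}$), namely $\boldsymbol{\zeta}(t)=S(t)(u_0,v_0)+\int_0^t S(t-s)B(\boldsymbol{\zeta}(s)+\boldsymbol{\eta}(s))\,ds$, and to construct the solution by a fixed-point argument. Writing $\boldsymbol{\zeta}=(\zeta_1,\zeta_2)$ and $\boldsymbol{\eta}=(\eta_1,\eta_2)$, the map $B$ sends $\mathcal{H}_2$ into $\{0\}\times H^1$ thanks to \eqref{gsb76-bis} and \eqref{n30}. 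The essential structural obstruction is that $\gamma$ is only assumed Lipschitz into $\mathcal{L}(H)$ through \eqref{gsb102}, not into $\mathcal{L}(H^1)$, so $B$ is \emph{not} Lipschitz on bounded sets of $\mathcal{H}_2$. This forces a two-norm (weak--strong) scheme: I would work on balls of $C([0,\tau];\mathcal{H}_2)$ but measure the contraction in the weaker $C([0,\tau];\mathcal{H}_1)$-metric. Indeed, splitting $\gamma(u^1)v^1-\gamma(u^2)v^2=\gamma(u^1)(v^1-v^2)+(\gamma(u^1)-\gamma(u^2))v^2$ and using \eqref{gsb15} and \eqref{gsb102}, the difference $B(\boldsymbol{z}^1)-B(\boldsymbol{z}^2)$ in $\mathcal{H}_1$ is bounded by the $\mathcal{H}_1$-distance times $\|\zeta_2^2+\eta_2\|_{H^1}$, a quantity that is finite precisely because we remain in a ball of $\mathcal{H}_2$.

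Before the fixed point I would establish the two a priori bounds. For \eqref{gsb100} I test \eqref{gsb50} against the natural $\mathcal{H}_1$-energy $\tfrac12\big(\|\zeta_2\|_H^2+\|\zeta_1\|_{H^1}^2\big)$: the contribution of $\mathcal{L}$ vanishes (it is skew-adjoint for this inner product), leaving $-\langle\gamma(\zeta_1+\eta_1)(\zeta_2+\eta_2),\zeta_2\rangle_H+\langle f(\zeta_1+\eta_1),\zeta_2\rangle_H$. Using the uniform bound \eqref{gsb15} on $\gamma$ in $\mathcal{L}(H)$, the linear growth of $f$ in $H$, and Young's inequality produces a \emph{linear} differential inequality, and Gronwall yields \eqref{gsb100}.

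The delicate step, and the one I expect to absorb the bulk of the work, is the $\mathcal{H}_2$-bound \eqref{gsb53}. I would differentiate the equation once in time (equivalently apply $(-L)^{1/2}$) and run the energy estimate at the level of $(\partial_t u,\partial_t^2 u)$, recovering $\|u\|_{H^2}$ from the identity $\Delta u=\partial_t^2 u+\gamma(u)\partial_t u-f(u)$ together with the boundedness of $\gamma$ in $\mathcal{L}(H)$. The main obstacle is the quasilinear damping term: \eqref{gsb76-bis} only controls $\|\gamma(h)k\|_{H^1}$ by $c\|k\|_{H^1}+c\|h\|_{H^2}^{3/4}\|h\|_{H^1}^{1/4}\|k\|_{H^1}^{3/4}\|k\|_H^{1/4}$, and a crude treatment of its contribution produces a super-linear term of order $\|\boldsymbol{\zeta}\|_{\mathcal{H}_2}^{3/2}$, which on its own would only yield local-in-time control. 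Obtaining the global exponential bound \eqref{gsb53} requires exploiting the precise interpolation exponents in \eqref{gsb76-bis}: the low-regularity factors $\|h\|_{H^1}^{1/4}$ and $\|k\|_H^{1/4}$ are kept bounded by the already-established bound \eqref{gsb100}, and, after integrating by parts to place the top-order derivative in the favorable position and applying Young's inequality, the remaining top-order contribution is absorbed into the energy. What should survive is a Gronwall inequality whose rate is linear in $\|(u_0,v_0)\|_{\mathcal{H}_1}+1+\sup_s\|\boldsymbol{\eta}(s)\|_{\mathcal{H}_2}$, matching exactly the structure of \eqref{gsb53}; closing this super-linear gap is the principal technical risk.

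Finally, I would assemble existence, uniqueness and globalization. For $\tau$ small (depending on $\|(u_0,v_0)\|_{\mathcal{H}_2}$), the Duhamel map is a self-map of a ball $\{\|z\|_{C([0,\tau];\mathcal{H}_2)}\leq R\}$ — using the $\mathcal{H}_2$-growth of $B$ via \eqref{gsb76} and \eqref{n30} — and a contraction in the $C([0,\tau];\mathcal{H}_1)$-metric by the difference estimate of the first paragraph. Since this ball is complete for that metric (boundedness in $\mathcal{H}_2$ passes to $\mathcal{H}_1$-limits by weak lower semicontinuity), the contraction produces a unique local fixed point; its strong $\mathcal{H}_2$-continuity, hence membership in $\mathcal{K}_T$, is recovered a posteriori from the Duhamel formula and the a priori estimates. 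Uniqueness in $\mathcal{K}_T$ follows from the same two-norm device: for two solutions the difference obeys an $\mathcal{H}_1$-energy inequality in which the factor $\|\zeta_2^2+\eta_2\|_{H^1}$ is finite by $\mathcal{H}_2$-regularity, so Gronwall forces them to coincide. Because \eqref{gsb53} bounds $\|\boldsymbol{\zeta}(t)\|_{\mathcal{H}_2}$ on all of $[0,T]$ by a quantity depending only on the data and on $\boldsymbol{\eta}$, the local solution cannot blow up and is continued up to the arbitrary time $T$.
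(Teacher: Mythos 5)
Your proof is correct, and on the a priori estimates it coincides with the paper's own argument: the $\mathcal{H}_1$-bound \eqref{gsb100} comes from the skew-adjointness of the wave group together with \eqref{gsb15} and Gronwall, and the $\mathcal{H}_2$-bound \eqref{gsb53} is closed by exactly the mechanism you single out as the main risk --- the superlinear term $\Vert\zeta\Vert_{H^1}^{1/2}\Vert\zeta\Vert_{H^2}^{1/2}\Vert\partial_t\zeta\Vert_{H}^{1/2}\Vert\partial_t\zeta\Vert_{H^1}^{3/2}$ produced by \eqref{gsb76-bis} is tamed by feeding \eqref{gsb100} into the low-regularity factors, after which the remaining exponents $1/2+3/2=2$ make it quadratic in $\Vert\boldsymbol{\zeta}\Vert_{\mathcal{H}_2}$, so Gronwall runs with a linear rate (the paper does this with the $H^1$-inner-product energy, i.e.\ your ``$(-L)^{1/2}$'' variant, with no integration by parts needed); uniqueness and globalization are likewise the paper's arguments. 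Where you genuinely diverge is the fixed point. The paper contracts directly in the $\mathcal{H}_2$-sup metric on the set $\mathcal{K}_{\bar{t}}(R)$ of \eqref{KRT}, invoking the local Lipschitz estimate \eqref{xfine101} for $B$ in $\mathcal{H}_2$; you instead run a two-norm scheme --- self-map of an $\mathcal{H}_2$-ball via the growth bound \eqref{xfine105}, but contraction in the weaker $C([0,\tau];\mathcal{H}_1)$ metric using only \eqref{gsb15} and \eqref{gsb102}, with strong $\mathcal{H}_2$-continuity recovered a posteriori from Duhamel. Your skepticism about the $\mathcal{H}_2$-Lipschitz continuity of $B$ is in fact well placed: the display leading to \eqref{xfine101} silently drops the cross term $\left[\gamma(\zeta_1+\eta)-\gamma(\bar{\zeta}_1+\eta)\right](\partial_t\eta+\bar{\zeta}_2)$, whose $H^1$-norm is not controlled by Hypothesis \ref{as1}, since \eqref{gsb102} is only a Lipschitz bound from $H^1$ into $\mathcal{L}(H^1,H)$, not into $\mathcal{L}(H^1)$. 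So your weak--strong contraction buys robustness: it proves the proposition under the hypotheses as literally stated, at the modest price of the completeness and lower-semicontinuity argument for the $\mathcal{H}_2$-ball and the a posteriori continuity step, both of which you handle correctly (Pettis measurability of $s\mapsto B(s,\boldsymbol{\zeta}(s))$ in the separable space $\mathcal{H}_2$ plus the group representation of the Duhamel integral make the last step airtight).
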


Thanks to Proposition \ref{teo3.1}, we can introduce the mapping 
$\Lambda:\mathcal{K}_T\to \mathcal{K}_T$ that associates to every $\eta \in\,\mathcal{K}_T$ the solution $\Lambda(\eta)\in\,\mathcal{K}_T$ of equation \eqref{gsb50}. With this definition, we consider the  stochastic problem
\begin{equation}
\left\{ \begin{array}{l}
 \ds{d\boldsymbol{\eta}(t)=\mathcal{L} \boldsymbol{\eta}(t)\,dt	+\Sigma(\boldsymbol{\eta}(t)+\boldsymbol{\Lambda(\eta)}(t))\,d w^Q(t),}\\[10pt]
\ds{\boldsymbol{\eta}(0)=0.}
	\end{array}\right.
\end{equation}

\begin{Proposition}
\label{teo3.2}
	Under Hypotheses \ref{as1}, \ref{as3} and \ref{as2}, for every $T>0$ and $p\geq 1$ there exists a unique adapted process 
	$\eta$ such that $\boldsymbol{\eta} \in\,L^2(\Omega;C([0,T];\mathcal{H}_2))$ and
		\begin{equation}
		\label{gsb123}	
		\boldsymbol{\eta}(t)=\int_0^t \mathcal{L} \boldsymbol{\eta}(s)\,ds+\int_0^t \Sigma(\boldsymbol{\eta}(s)+\boldsymbol{\Lambda(\eta)}(s))\,dw^Q(s).
		\end{equation}

\end{Proposition}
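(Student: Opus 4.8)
The plan is to recast \eqref{gsb123} as a fixed point equation and to solve it by a Banach contraction argument. Since, for $\mu=1$, $\mathcal{L}$ generates a strongly continuous group $\{S(t)\}_{t\geq 0}$ on every $\mathcal{H}_\delta$ and $\boldsymbol{\eta}(0)=0$, the integral identity \eqref{gsb123} is equivalent to the mild formulation
\[
\boldsymbol{\eta}(t)=\int_0^t S(t-s)\,\Sigma\bigl(\boldsymbol{\eta}(s)+\boldsymbol{\Lambda(\eta)}(s)\bigr)\,dw^Q(s)=:\mathcal{T}(\eta)(t),
\]
the passage between the two forms being justified, once a solution is known to be $\mathcal{H}_2$-valued (so that $\mathcal{L}\boldsymbol{\eta}\in\mathcal{H}_1$), by a stochastic Fubini argument together with the group property of $\{S(t)\}$. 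Because $\Lambda$ is only defined on $\mathcal{K}_T$, the fixed point must be sought in the space $\mathcal{Z}_T$ of adapted processes $\eta$ with $\eta\in\mathcal{K}_T$ $\mathbb{P}$-a.s. and $\boldsymbol{\eta}\in L^p(\Omega;C([0,T];\mathcal{H}_2))$.

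\emph{Mapping property.} First I would show that $\mathcal{T}$ maps $\mathcal{Z}_T$ into itself. The decisive point is that $\{S(t)\}$ provides \emph{no} parabolic smoothing, so all the spatial regularity of $\mathcal{T}(\eta)$ has to come from the noise coefficient; this is exactly what Hypothesis \ref{as2} supplies, since $\Sigma(u,v)=(0,\sigma(u))$ and $\mathcal{H}_2=H^2\times H^1$ give
\[
\Vert\Sigma(u,v)\Vert_{\mathcal{L}_2(H_Q,\mathcal{H}_2)}=\Vert\sigma(u)\Vert_{\mathcal{L}_2(H_Q,H^1)}\leq c\left(1+\Vert u\Vert_{H^1}\right)
\]
by \eqref{gsb43}. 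Writing $u=\eta+\Lambda(\eta)$, combining the bound \eqref{gsb5} for $S(t)$ (which equals $1$ when $\mu=1$), the It\^o isometry and the Burkholder-Davis-Gundy inequality, and the a priori estimate \eqref{gsb100} for $\boldsymbol{\Lambda(\eta)}$ in $\mathcal{H}_1$, I would bound $\mathbb{E}\sup_{t\leq T}\Vert\boldsymbol{\mathcal{T}(\eta)}(t)\Vert_{\mathcal{H}_2}^p$ for every $p\geq 1$; the supremum in time, as well as pathwise continuity in $\mathcal{H}_2$, would be obtained through the factorization method. Pathwise membership in $\mathcal{K}_T$ is then automatic: the noise enters only the second component of the system, so the first component of $\boldsymbol{\mathcal{T}(\eta)}$ is the time-integral of the (continuous) second one and is therefore differentiable in time.

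\emph{Contraction.} For $\eta_1,\eta_2$ in a ball $B_R\subset\mathcal{Z}_T$, I would estimate $\mathcal{T}(\eta_1)-\mathcal{T}(\eta_2)$ in the \emph{weaker} $\mathcal{H}_1$-metric. This is forced by the hypotheses: the only Lipschitz bound available for $\sigma$ in the strong topology is \eqref{gsb7}, which maps into $\mathcal{L}_2(H_Q,H)$, so, with $u_i=\eta_i+\Lambda(\eta_i)$ and $\mathcal{H}_1=H^1\times H$,
\[
\Vert\Sigma(\boldsymbol{u_1})-\Sigma(\boldsymbol{u_2})\Vert_{\mathcal{L}_2(H_Q,\mathcal{H}_1)}=\Vert\sigma(u_1)-\sigma(u_2)\Vert_{\mathcal{L}_2(H_Q,H)}\leq c\,\Vert u_1-u_2\Vert_{H^1}.
\]
Together with the $\mathcal{H}_1$-Lipschitz continuity of $\Lambda$ — which I would establish by the same Gronwall argument that underlies Proposition \ref{teo3.1}, using \eqref{gsb102} for $\gamma$ and the Lipschitz continuity of $f$ from Hypothesis \ref{as3} — this yields the contraction. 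Two points must be watched: via \eqref{gsb102} the Lipschitz constant of $\Lambda$ involves the $H^1$-norm of the velocity component, hence depends on the $\mathcal{H}_2$-size of the inputs, so closing the estimate requires H\"older's inequality together with the higher-moment bounds (valid for every $p\geq 1$) and the a priori bound \eqref{gsb53}; and the supremum in time is again handled by the factorization method. The resulting estimate
\[
\mathbb{E}\sup_{s\leq t}\Vert\boldsymbol{\mathcal{T}(\eta_1)}(s)-\boldsymbol{\mathcal{T}(\eta_2)}(s)\Vert_{\mathcal{H}_1}^2\leq C_R\,t^{\beta}\,\mathbb{E}\sup_{s\leq t}\Vert\boldsymbol{\eta_1}(s)-\boldsymbol{\eta_2}(s)\Vert_{\mathcal{H}_1}^2,
\]
valid for some $\beta>0$, makes $\mathcal{T}$ a contraction on $B_R$ for the $\mathcal{H}_1$-metric once $T$ is small; since $B_R$ is closed, hence complete, in this weaker metric (the $\mathcal{H}_2$-norm being lower semicontinuous with respect to $\mathcal{H}_1$-convergence), Banach's theorem provides a unique fixed point. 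The radius $R$ is fixed by the uniform a priori bound from the mapping step, uniqueness in the whole class follows from the same estimate, and an arbitrary horizon $T$ is reached by concatenation over finitely many subintervals.

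\emph{Main obstacle.} The crux is the mismatch between the regularity the solution must carry and the norm in which contraction is available. On the one hand, the solution is forced to live in $\mathcal{H}_2$: the map $\Lambda$ is only defined on $\mathcal{K}_T$, and, $\{S(t)\}$ being a group without smoothing, the $H^2$-regularity of the first component can come only from the noise, i.e. from $\sigma(u)\in\mathcal{L}_2(H_Q,H^1)$. On the other hand, $\sigma$ is Lipschitz only into $\mathcal{L}_2(H_Q,H)$ — estimate \eqref{gsb43-tris} being a Lipschitz bound in $H^1$ but with a constant growing like $\Vert h\Vert_{H^2}$ — so the contraction is genuinely an $\mathcal{H}_1$-estimate. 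Reconciling these two facts, by running the fixed point on a ball in the strong space that is complete for the weak metric, is the technical heart of the proof.
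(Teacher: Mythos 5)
Your plan correctly isolates the central difficulty (solution forced into $\mathcal{H}_2$, Lipschitz structure only in $\mathcal{H}_1$), but the device you propose to resolve it — contraction in the $\mathcal{H}_1$-metric on a ball of $L^p(\Omega;C([0,T];\mathcal{H}_2))$, closed via H\"older and higher moments — does not close. The Lipschitz estimate \eqref{gsb101} for $\Lambda$ holds with the constant $L_{T,R}$ only when the inputs lie in the \emph{pathwise} ball $\mathcal{K}_T(R)$: its proof passes through $M(\eta)\leq m_{T,R}$, and by \eqref{gsb53} this constant grows \emph{exponentially} in $\sup_{t}\Vert\boldsymbol{\eta}(t)\Vert_{\mathcal{H}_2}$. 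On a ball defined by moment bounds, the pathwise $\mathcal{H}_2$-norm is an unbounded random variable, so the Lipschitz "constant" is random with exponential dependence on it; since $\sigma$ has linear growth \eqref{gsb43}, the solution carries polynomial moments of all orders but not exponential moments, so H\"older cannot absorb this factor. Even granting polynomial growth of the constant, H\"older yields an estimate of the form $d_{L^2}(\mathcal{T}\eta_1,\mathcal{T}\eta_2)\leq C\,d_{L^{2q}}(\eta_1,\eta_2)$ with $q>1$ — a mismatch of exponents that is not a contraction in any single metric. Nor can you retreat to the pathwise ball $\mathcal{K}_T(R)$ itself: the stochastic convolution is not pathwise bounded, so $\mathcal{T}$ does not map it into itself. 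This is a genuine gap, not a technical loose end.

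The paper circumvents exactly this obstruction by truncating the \emph{coefficient} rather than the state space: with $\Psi_R(t,\eta)=\Phi_R\bigl(\sup_{s\leq t}\Vert\boldsymbol{\eta}(s)\Vert_{\mathcal{H}_2}\bigr)$ it considers the regularized equation \eqref{gsb104-R}, for which the composed coefficient satisfies the honest, deterministic Lipschitz bound \eqref{gsb110} in $\mathcal{L}_2(H_Q,\mathcal{H}_2)$ (using \eqref{gsb117}, \eqref{gsb43-tris}, \eqref{gsb53} and \eqref{gsb101}, all now activated by the cutoff). This permits a standard Banach fixed point directly in $L^p(\Omega;C([0,t_0];\mathcal{H}_2))$ — no two-norm trick needed — together with the $R$-independent moment bound \eqref{gsb120}, which comes from the linear-growth estimate \eqref{gsb116} in the $\mathcal{H}_1$-norm, the point you did observe. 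The truncation is then removed by the stopping times $\tau_R$ and a consistency argument, and uniqueness in the full class is proved separately by an $\mathcal{H}_1$-estimate via \eqref{gsb7}, \eqref{gsb101} and Gronwall, much as you suggest. If you replace your moment ball by this coefficient truncation, your scheme becomes essentially the paper's proof; as written, the contraction step fails.
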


Once proved Proposition \ref{teo3.2}, we can conclude the proof of Theorem \ref{teo3.3}.
Actually, thanks to Proposition \ref{teo3.2}, there exists $\eta$ such that $\boldsymbol{\eta}$ belongs to  $L^2(\Omega;C([0,T];\mathcal{H}_2))$ and solves equation \eqref{gsb123}.
Thus, if we define
\begin{equation}
\label{gsb122}
u=\eta+\Lambda(\eta),	
\end{equation}
due to Theorem \ref{teo3.1} it is immediate to check  that $u$ takes values in $\mathcal{K}_T$, $\mathbb{P}$-a.s.,  $\boldsymbol{u}$ satisfies equation \eqref{gsb66} and belongs to $L^2(\Omega;C([0,T];\mathcal{H}_1)$. 

Finally, concerning uniqueness, if $\boldsymbol{u}$ is any solution of equation \eqref{gsb66}, we consider the problem
\[d\boldsymbol{\eta}(t)=\mathcal{L}\boldsymbol{\eta}(t)\,dt+\Sigma(\boldsymbol{u}(t))\,dw^Q(t),\ \ \ \ \boldsymbol{\eta}(0)=\boldsymbol{0}.\]
This problem admits a unique solution $\boldsymbol{\eta} \in\,L^p(\Omega;C([0,T];\mathcal{H}_2))$. Then, if we define $\boldsymbol{\zeta}:=\boldsymbol{u}-\boldsymbol{\eta}$, we have that $\boldsymbol{\zeta}$ solves equation \eqref{gsb50}. Since uniqueness holds for equation \eqref{gsb50}, we have that $\zeta=\Lambda(\eta)$ and hence $u=\eta+\Lambda(\eta)$, with $\boldsymbol{\eta}$ being the unique solution of \eqref{gsb123}.

\subsection{Proof of Proposition \ref{teo3.1}}

In this subsection, we assume that Hypotheses \ref{as1} and \ref{as3} are always satisfied. Moreover, $(u_0,v_0) \in\,\mathcal{H}_2$ and $\eta \in\,\mathcal{K}_T$ are fixed once for all.

As a consequence of \eqref{gsb15} and the Lipschitz continuity of $f$ in $H$, for every $\zeta=(\zeta_1,\zeta_2) \in\,\mathcal{H}_1$ we have
\[
\begin{aligned}
\ds{	\Vert B(t,\zeta)\Vert_{\mathcal{H}_1}}  &  \ds{=\Vert -\gamma(\zeta_1+\eta(t))(\partial_t\eta(t)+\zeta_2)+f(\eta(t)+\zeta_1)\Vert_{H}}\\[10pt]
&  \ds{\leq c\left(\Vert \partial_t\eta(t)\Vert_{H}+\Vert \zeta_2\Vert_{H}\right)+c\left(1+\Vert \eta(t)\Vert_{H}+\Vert \zeta_1\Vert_{H}\right), }
\end{aligned}
	\]
	and this implies that $B(t,\cdot):\mathcal{H}_1\to \mathcal{H}_1$ is well defined, for all $t \in\,[0,T]$, and
\begin{equation}
\label{gsb80-bis}
\Vert B(t,\zeta)\Vert_{\mathcal{H}_1} \leq c\,\Vert \zeta\Vert_{\mathcal{H}_1}+c\left(1+\Vert \boldsymbol{\eta}(t)\Vert_{\mathcal{H}_1}\right).
\end{equation}

Due to \eqref{gsb76} and \eqref{n30}, we have
\[
\begin{aligned}
\ds{	\Vert B(t,\zeta)\Vert_{\mathcal{H}_2}}  &  \ds{=\Vert -\gamma(\zeta_1+\eta(t))(\partial_t\eta(t)+\zeta_2)+f(\eta(t)+\zeta_1)\Vert_{H^1}}\\[10pt]
&  \ds{\leq c\left(\Vert \zeta_1+\eta(t)\Vert_{H^2}+1\right)\left(\Vert \partial_t\eta(t)\Vert_{H^1}+\Vert \zeta_2\Vert_{H^1}\right)+c\left(1+\Vert \eta(t)\Vert_{H^1}+\Vert \zeta_1\Vert_{H^1}\right), }
\end{aligned}
	\]
so that $B(t,\cdot)$ maps $\mathcal{H}_2$ into $\mathcal{H}_2$, for every $t \in\,[0,T]$, and
\begin{equation}
\label{gsb80}
\Vert B(t,\zeta)\Vert_{\mathcal{H}_2} \leq c\left(1+\Vert \boldsymbol{\eta}(t)\Vert^2_{\mathcal{H}_2}+\Vert \zeta\Vert^2_{\mathcal{H}_2}\right).
\end{equation}
Moreover, due to \eqref{gsb76} and \eqref{xfine	100} we have
\begin{align*}
	\Vert B(t,\zeta)-B(t,&\bar{\zeta})\Vert_{\mathcal{H}_2}=\Vert -\gamma(\zeta_1+\eta(t))(\zeta_2-\bar{\zeta}_2)+f(\eta(t)+\zeta_1)-f(\eta(t)+\bar{\zeta}_1)\Vert_{H^1}\\[10pt]
	&\hsl\leq c\left(1+\Vert \zeta_1\Vert_{H^2}+\Vert \eta(t)\Vert_{H^2}\right)\Vert \zeta_2-\bar{\zeta}_2\Vert_{H^1}+c\left(1+\Vert \eta(t)\Vert_{H^2}+\Vert \zeta_1\Vert_{H^2}\right)\Vert \zeta_1-\bar{\zeta}_1\Vert_{H^1},\end{align*}
	so that
	\begin{equation}
	\label{xfine101}	
	\Vert B(t,\zeta)-B(t,\bar{\zeta})\Vert_{\mathcal{H}_2}\leq 
	c\left(1+\Vert \eta(t)\Vert_{H^2}+\Vert \zeta_1\Vert_{H^2}\right)\Vert \zeta-\bar{\zeta}\Vert_{\mathcal{H}_2}.
	\end{equation}

Now, for every $\zeta \in\,\mathcal{K}_T$ we define
\[\mathcal{A}(\zeta)(t):=S(t)(u_0,v_0)+\int_0^t S(t-s)B(s,\boldsymbol{\zeta}(s))\,ds,\ \ \ \ t \in\,[0,T].\]
Due to to \eqref{gsb5} and \eqref{gsb80}, we have
\begin{align}\begin{split} \label{xfine105}
	\Vert \mathcal{A}(\zeta)(t)&\Vert_{\mathcal{H}_2}\leq \Vert(u_0,v_0)\Vert_{\mathcal{H}_2}+\int_0^t\Vert B(s,\boldsymbol{\zeta}(s))\Vert_{\mathcal{H}_2}\,ds\\[10pt]
	&\hsp\leq \Vert(u_0,v_0)\Vert_{\mathcal{H}_2}+c\,\left(\chi_t (\eta)+\sup_{s \in\,[0,t]}\Vert \boldsymbol{\zeta}(s)\Vert_{\mathcal{H}_2}^2\right)t,\end{split}
\end{align}
where
\[\chi_t(\eta):=\sup_{s \in\,[0,t]}\Vert \boldsymbol{\eta}(s)\Vert_{\mathcal{H}_2}^2.\]
Hence, if for  any $t \in\,(0,T]$ and $R>0$ we define 
\begin{equation} \label{KRT}	\mathcal{K}_t(R):=\left\{ \zeta \in\,\mathcal{K}_T\,:\,\sup_{s \in\,[0,t]}\Vert \boldsymbol{\zeta}(s)\Vert_{\mathcal{H}_2}\leq R\right\},\end{equation}
we have
\[\zeta \in\,\mathcal{K}_t(R)\Longrightarrow \sup_{s \in\,[0,t]}\Vert \mathcal{A}(\zeta)(s)\Vert_{\mathcal{H}_2}\leq  \Vert(u_0,v_0)\Vert_{\mathcal{H}_2}+c\,\left(\chi_T (\eta)+R^2\right)t.\]
This implies that, if we take 
\[R>\Vert(u_0,v_0)\Vert_{\mathcal{H}_2},\ \ \ \ \ \ \ \ t_1\leq \frac{R-\Vert(u_0,v_0)\Vert_{\mathcal{H}_2}}{c\,\left(\chi_T (\eta)+R^2\right)},\]
we have that $\mathcal{A}$ maps $\mathcal{K}_{t_1}(R)$ into itself. Moreover, due to \eqref{xfine101}
\[\zeta, \bar{\zeta} \in\,\mathcal{K}_t(R)\Longrightarrow \sup_{s \in\,[0,t]}\Vert \mathcal{A}(\zeta)(s)-\mathcal{A}(\bar{\zeta})(s)\Vert_{\mathcal{H}_2}\leq c\,\left(\chi_T (\eta)+R\right)t\sup_{s \in\,[0,t]}\Vert\boldsymbol{\zeta}-\boldsymbol{\bar{\zeta}}\Vert_{\mathcal{H}_2}.\]
Hence, taking 
\[\bar{t}:=t_1\wedge \left(2c\,\left(\chi_T (\eta)+R\right)\right)^{-1},\]
we conclude that $\mathcal{A}$ maps $\mathcal{K}_{\bar{t}}(R)$ into itself as a contraction. This means that there exists a unique $\zeta \in\,\mathcal{K}_{\bar{t}}(R)$ such that 
\[\boldsymbol{\zeta}(t)=S(t)(u_0,v_0)+\int_0^t S(t-s)B(s,\boldsymbol{\zeta}(s))\,ds,\ \ \ \ t \in\,[0,\bar{t}].\]

This means that there exists a local solution to problem \eqref{gsb50}. Thus, in order to conclude the proof of Proposition \ref{teo3.1}, we have to prove that such solution  can be extended to a unique global solution defined in $[0,T]$,  satisfying \eqref{gsb100} and \eqref{gsb53}.

Let $\zeta$ be a local solution defined on a maximal interval $(0,T^\prime)$. We show that it does not blow up at $T^\prime$. For every $t \in\,[0,T^\prime)$, due to \eqref{gsb15}
we have 
\[\begin{aligned}
\ds{\frac 12\,\frac d{dt}\Vert \boldsymbol{\zeta}(t)\Vert_{\mathcal{H}_1}^2}  &  \ds{=-\langle \gamma(\zeta(t)+\eta(t))\left(\partial_t\zeta(t)+\partial_t\eta(t)\right),\partial_t\zeta(t)\rangle_{H}+\langle f(\zeta(t)+\eta(t)),\partial_t\zeta(t)\rangle_H}\\[10pt]
& \hsp \ds{\leq c\,\Vert \boldsymbol{\zeta}(t)\Vert_{\mathcal{H}_1}^2+c\left(1+\Vert \boldsymbol{\eta}(t)\Vert_{\mathcal{H}_1}^2\right),}
	\end{aligned}\]
	and this implies \eqref{gsb100}, that is
	\[
\Vert \boldsymbol{\zeta}(t)\Vert_{\mathcal{H}_1}\leq c_T \left(\Vert (u_0,v_0)\Vert_{\mathcal{H}_1}+1+\sup_{t \in\,[0,T]} \Vert \boldsymbol{\eta}(t)\Vert_{\mathcal{H}_1}	\right).\]
	
Moreover, due to \eqref{gsb76-bis}  and \eqref{n30} 
we have
\[\begin{array}{l}
\ds{\frac 12\,\frac d{dt}\Vert \boldsymbol{\zeta}\Vert_{\mathcal{H}_2}^2=-\langle\gamma(\zeta(t)+\eta(t))(\partial_t\zeta(t)+\partial_t \eta(t)),\partial_t\zeta(t)\rangle_{H^1}+\langle f(\zeta(t)+\eta(t)),\partial_t\zeta(t)\rangle_{H^1}}	\\[14pt]
\ds{\hslp\leq c\,\Vert \zeta(t)+\eta(t)\Vert_{H^1}^{1/4}\Vert \zeta(t)+\eta(t)\Vert_{H^2}^{3/4}\Vert \partial_t\zeta(t)+\partial_t\eta(t)\Vert_{H}^{1/4}\Vert \partial_t\zeta(t)+\partial_t\eta(t)\Vert_{H^1}^{3/4}}\\[14pt]
\ds{\hsp+c\left(\Vert \partial_t \zeta(t)\Vert_{H^1}+\Vert \partial_t\eta(t)\Vert_{H^1}+\Vert \zeta(t)\Vert_{H^1}+\Vert\eta(t)\Vert_{H^1}+1\right)\Vert \partial_t \zeta(t)\Vert_{H^1},}
 \end{array}\]
so that
\[\begin{array}{ll}
\ds{\frac d{dt}\Vert \boldsymbol{\zeta}(t)\Vert_{\mathcal{H}_2}^2\leq} & \ds{ c\,\left(1+\Vert \boldsymbol{\eta}(t)\Vert_{\mathcal{H}_2}^2\right)\Vert \boldsymbol{\zeta}(t)\Vert_{\mathcal{H}_2}^2+c\,\Vert \boldsymbol{\eta}(t)\Vert_{\mathcal{H}_2}^2}\\[14pt]
& \ds{+c\Vert \zeta(t)\Vert_{H^1}^{1/2}\Vert \zeta(t)\Vert_{H^2}^{1/2}\Vert \partial_t\zeta(t)\Vert_{H}^{1/2}\Vert \partial_t\zeta(t)\Vert_{H^1}^{3/2}.	}
\end{array}
\]
Now, thanks to \eqref{gsb100} we have
\[\begin{array}{l}
\ds{ \Vert\zeta(t)\Vert_{H^1}^{1/2}\Vert \zeta(t)\Vert_{H^2}^{1/2}\Vert \partial_t\zeta(t)\Vert_{H}^{1/2}\Vert \partial_t \zeta(t)\Vert_{H^1}^{3/2}}\\[14pt]
\ds{\hslp\leq c\left(\Vert (u_0,v_0)\Vert_{\mathcal{H}_1}+1+\sup_{s \in\,[0,t]} \Vert \boldsymbol{\eta}(s)\Vert_{\mathcal{H}_1}	\right)\Vert \zeta(t)\Vert_{H^2}^{1/2}\Vert \partial_t \zeta(t)\Vert_{H^1}^{3/2}}\\[14pt]
\ds{\hsp\leq c\left(\Vert (u_0,v_0)\Vert_{\mathcal{H}_1}+1+\sup_{s \in\,[0,t]} \Vert \boldsymbol{\eta}(s)\Vert_{\mathcal{H}_1}	\right)\,\Vert \boldsymbol{\zeta}(t)\Vert^2_{\mathcal{H}_2}.}
\end{array}
\]
This implies 
\[\frac d{dt}\Vert \boldsymbol{\zeta}(t)\Vert_{\mathcal{H}_2}^2\leq c\,\left(\Vert (u_0,v_0)\Vert_{\mathcal{H}_1}+1+\sup_{s \in\,[0,t]} \Vert \boldsymbol{\eta}(s)\Vert_{\mathcal{H}_2}^2	\right)\le(\Vert \boldsymbol{\zeta}(t)\Vert_{\mathcal{H}_2}^2+1\r),\]
and hence
\[\Vert \boldsymbol{\zeta}(t)\Vert_{\mathcal{H}_2}^2\leq \exp\left(c\,\left(\Vert (u_0,v_0)\Vert_{\mathcal{H}_1}+1+\sup_{s \in\,[0,t]} \Vert \boldsymbol{\eta}(s)\Vert_{\mathcal{H}_2}^2	\right)t\right)\left(\Vert (u_0,v_0)\Vert_{\mathcal{H}_2}^2+1\right).\]

Finally, as far as uniqueness is concerned, if $\zeta_1$ and $\zeta_2$ are two solutions and if we denote $\rho:=\zeta_1-\zeta_2$, due to \eqref{gsb102} we have
\[\begin{aligned}
	&\ds{\frac 12 \frac d{dt}\Vert \boldsymbol{\rho}\Vert^2_{\mathcal{H}_1}}\\[10pt]
	&\ds{=-\langle \gamma(\zeta_1(t)+\eta(t))\partial_t \rho(t),\partial_t \rho(t)\rangle_H-\langle \left[\gamma(\zeta_1(t)+\eta(t))-\gamma(\zeta_2(t)+\eta(t))\right]\partial_t \zeta_2(t),\partial_t \rho(t)\rangle_H}\\[10pt]
	&\hsllp\ds{-\langle \left[\gamma(\zeta_1(t)+\eta(t))-\gamma(\zeta_2(t)+\eta(t))\right]\partial_t \eta(t)+\left[f(\zeta_1(t)+\eta(t))-f(\zeta_2(t)+\eta(t))\right],\partial_t \rho(t)\rangle_H}\\[10pt]
	&\hsp\ds{\leq c\left(1+\Vert \partial_t\eta(t)\Vert_{H^1}+\Vert \partial_t\zeta_2(t)\Vert_{H^1}\right)\Vert \partial_t\rho(t)\Vert_{H}\Vert \rho(t)\Vert_{H^1}.}
\end{aligned}\]
Therefore,
\[ \frac d{dt}\Vert \boldsymbol{\rho}\Vert^2_{\mathcal{H}_1}\leq 
c\left(1+\sup_{s \in\,[0,t]}\Vert \partial_t\eta(s)\Vert_{H^1}+\sup_{s \in\,[0,t]}\Vert \partial_t\zeta_2(s)\Vert_{H^1}\right)\Vert \boldsymbol{\rho}(t)\Vert_{\mathcal{H}_1}^2,\]
and this allows to conclude that $\rho=0$, that is $\zeta_1=\zeta_2$.

\subsection{Proof of Proposition \ref{teo3.2}}

As a consequence of Proposition \ref{teo3.1}, we can introduce the mapping 
$\Lambda:\mathcal{K}_T\to\mathcal{K}_T$ that associates to every $\eta$ the solution $\Lambda(\eta)$ of equation \eqref{gsb50}. 
In what follows, we want to prove that for every $T>0$ the stochastic equation
\begin{equation}
\label{gsb104}	
d\boldsymbol{\eta}(t)=\mathcal{L} \boldsymbol{\eta}(s)\,dt+\Sigma(\boldsymbol{\eta}(s)+\boldsymbol{\Lambda(\eta)}(s))\,dw^Q(s),\ \ \ \ \boldsymbol{\eta}(0)=0,
\end{equation}
admits a unique mild solution $\eta$ such that $\boldsymbol{\eta} \in\,L^p(\Omega;C([0,T];\mathcal{H}_2))$, for every $p\geq 1$.

To this purpose, we start by showing that the mapping $\Lambda$ 
is Lipschitz continuous with respect to the $\mathcal{H}_1$-norm, locally in  $\mathcal{H}_2$.
\begin{Lemma}
	\label{lem3.11}
	For every $T, R>0$ there exists some $L_{T, R}>0$ such that if $\eta_1,  \eta_2 \in\,\mathcal{K}_{T}(R)$ (as defined in \eqref{KRT}) then for every $t \in\,[0,T]$
	\begin{equation}
	\label{gsb101}
	\sup_{s \in\,[0,t]}\Vert \boldsymbol{\Lambda(\eta_1)}(s)-\boldsymbol{\Lambda(\eta_2)}(s)\Vert_{\mathcal{H}_1}\leq L_{T, R} \int_0^t\Vert \boldsymbol{\eta_1}(s)-\boldsymbol{\eta_2}(s)\Vert_{\mathcal{H}_1}\,ds.	
	\end{equation}
	\end{Lemma}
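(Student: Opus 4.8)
The plan is to derive the contraction-type bound \eqref{gsb101} from an energy estimate in $\mathcal{H}_1$ for the difference of the two solutions, exploiting the fact that the unbounded operator $\mathcal{L}$ carries no $\mathcal{H}_1$-energy. Set $\boldsymbol{\zeta_i}:=\boldsymbol{\Lambda(\eta_i)}$, $i=1,2$, write $\rho:=\zeta_1-\zeta_2$ and $\theta:=\eta_1-\eta_2$, and note that, since both $\boldsymbol{\zeta_i}$ solve \eqref{gsb50} with the same initial datum $(u_0,v_0)$, the difference satisfies $\boldsymbol{\rho}(0)=0$ and
\[\frac{d}{dt}\boldsymbol{\rho}(t)=\mathcal{L}\boldsymbol{\rho}(t)+B(\boldsymbol{\zeta_1}(t)+\boldsymbol{\eta_1}(t))-B(\boldsymbol{\zeta_2}(t)+\boldsymbol{\eta_2}(t))\]
in $\mathcal{H}_1$.

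First I would compute $\frac{d}{dt}\|\boldsymbol{\rho}(t)\|_{\mathcal{H}_1}^2$, justifying the differentiation exactly as in the proof of Proposition \ref{teo3.1}. Writing $\boldsymbol{\rho}=(\rho,\partial_t\rho)$ and using $\langle a,b\rangle_{H^1}=\langle -La,b\rangle_H$, the first-component pairing and the $L\rho$ contribution coming from $\mathcal{L}\boldsymbol{\rho}$ cancel by self-adjointness of $L$, so that the only surviving contribution is the pairing of the second component of $B(\boldsymbol{\zeta_1}+\boldsymbol{\eta_1})-B(\boldsymbol{\zeta_2}+\boldsymbol{\eta_2})$ with $\partial_t\rho$ in $H$, namely
\[\frac12\frac{d}{dt}\|\boldsymbol{\rho}(t)\|_{\mathcal{H}_1}^2=\langle -\gamma(\zeta_1+\eta_1)(\partial_t\zeta_1+\partial_t\eta_1)+\gamma(\zeta_2+\eta_2)(\partial_t\zeta_2+\partial_t\eta_2)+f(\zeta_1+\eta_1)-f(\zeta_2+\eta_2),\partial_t\rho\rangle_H.\]
I would then split the $\gamma$-part as $-[\gamma(\zeta_1+\eta_1)-\gamma(\zeta_2+\eta_2)](\partial_t\zeta_1+\partial_t\eta_1)-\gamma(\zeta_2+\eta_2)(\partial_t\rho+\partial_t\theta)$, recalling $(\zeta_1+\eta_1)-(\zeta_2+\eta_2)=\rho+\theta$.

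Now to the estimate of each term. The diagonal term $-\langle\gamma(\zeta_2+\eta_2)\partial_t\rho,\partial_t\rho\rangle_H$ and the term $-\langle\gamma(\zeta_2+\eta_2)\partial_t\theta,\partial_t\rho\rangle_H$ are controlled by \eqref{gsb15}, yielding $c\|\partial_t\rho\|_H^2+c\|\partial_t\theta\|_H\|\partial_t\rho\|_H$; the $f$-difference is handled by the Lipschitz continuity of $f$ in $H$, giving $c(\|\rho\|_H+\|\theta\|_H)\|\partial_t\rho\|_H$. The crucial term is $-\langle[\gamma(\zeta_1+\eta_1)-\gamma(\zeta_2+\eta_2)](\partial_t\zeta_1+\partial_t\eta_1),\partial_t\rho\rangle_H$, which by \eqref{gsb102} is bounded by $c\|\rho+\theta\|_{H^1}\|\partial_t\zeta_1+\partial_t\eta_1\|_{H^1}\|\partial_t\rho\|_H$. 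Here the main obstacle is to control $\|\partial_t\zeta_1+\partial_t\eta_1\|_{H^1}$: this is precisely where the restriction $\eta_i\in\mathcal{K}_T(R)$ enters, for then $\|\partial_t\eta_i\|_{H^1}\leq R$ and, by the a priori bound \eqref{gsb53} of Proposition \ref{teo3.1}, $\sup_{s\in[0,T]}\|\boldsymbol{\zeta_i}(s)\|_{\mathcal{H}_2}$ is dominated by a constant $C_{T,R}$ depending only on $T$, $R$ and the fixed datum $(u_0,v_0)$; hence $\|\partial_t\zeta_i+\partial_t\eta_i\|_{H^1}\leq C_{T,R}$. This is the reason the Lipschitz estimate is only local in $\mathcal{H}_2$ and why the constant $L_{T,R}$ depends on $R$.

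Collecting the bounds and using $\|\rho\|_{H^1},\|\partial_t\rho\|_H\leq\|\boldsymbol{\rho}\|_{\mathcal{H}_1}$ together with $\|\theta\|_{H^1},\|\partial_t\theta\|_H\leq\|\boldsymbol{\theta}\|_{\mathcal{H}_1}$, I expect to arrive at
\[\frac12\frac{d}{dt}\|\boldsymbol{\rho}(t)\|_{\mathcal{H}_1}^2\leq C_{T,R}\|\boldsymbol{\rho}(t)\|_{\mathcal{H}_1}^2+C_{T,R}\|\boldsymbol{\rho}(t)\|_{\mathcal{H}_1}\|\boldsymbol{\theta}(t)\|_{\mathcal{H}_1}.\]
Finally, regularizing $\|\boldsymbol{\rho}\|_{\mathcal{H}_1}$ by $\sqrt{\|\boldsymbol{\rho}\|_{\mathcal{H}_1}^2+\varepsilon}$ and letting $\varepsilon\downarrow 0$ converts this into $\frac{d}{dt}\|\boldsymbol{\rho}(t)\|_{\mathcal{H}_1}\leq C_{T,R}\|\boldsymbol{\rho}(t)\|_{\mathcal{H}_1}+C_{T,R}\|\boldsymbol{\theta}(t)\|_{\mathcal{H}_1}$; Gronwall's inequality together with $\boldsymbol{\rho}(0)=0$ then yields $\|\boldsymbol{\rho}(t)\|_{\mathcal{H}_1}\leq C_{T,R}e^{C_{T,R}T}\int_0^t\|\boldsymbol{\theta}(s)\|_{\mathcal{H}_1}\,ds$, and taking the supremum over $[0,t]$ gives \eqref{gsb101} with $L_{T,R}:=C_{T,R}e^{C_{T,R}T}$.
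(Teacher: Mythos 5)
Your proof is correct and follows essentially the same route as the paper: the same energy estimate in $\mathcal{H}_1$ for the difference $\boldsymbol{\rho}$, the same splitting of the $\gamma$-terms, the bound of the critical term via \eqref{gsb102} combined with the a priori bound \eqref{gsb53} on $\sup_{s}\Vert \boldsymbol{\Lambda(\eta_i)}(s)\Vert_{\mathcal{H}_2}$ for $\eta_i \in\,\mathcal{K}_T(R)$, and a Gronwall argument. The only deviation is the final step, where the paper applies Young's inequality and runs Gronwall on $\Vert\boldsymbol{\rho}\Vert_{\mathcal{H}_1}^2$ (yielding the bound with $\int_0^t\Vert\boldsymbol{\eta_1}(s)-\boldsymbol{\eta_2}(s)\Vert_{\mathcal{H}_1}^2\,ds$, which still needs the uniform $\mathcal{K}_T(R)$-bound to be converted to the stated form), while your regularize-and-divide trick gives a first-order Gronwall inequality and produces \eqref{gsb101} with the first power of the integral directly, which is, if anything, slightly cleaner.
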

	
\begin{proof}
	We fix $R>0$ and $\eta_1, \eta_2$ in $\mathcal{K}_{T}(R)$ and we define $\rho:=\Lambda(\eta_1)-\Lambda(\eta_2)$.
	We have $\rho(0)=\partial_t \rho(0)=0$ and
\[\begin{aligned}
&\ds{\frac 12 \frac d{dt}\Vert \boldsymbol{\rho}(t)\Vert^2_{\mathcal{H}_1}=-\langle \left[\gamma(\Lambda(\eta_1)(t)+\eta_1(t))-	\gamma(\Lambda(\eta_2)(t)+\eta_2(t))\right]	\left(\partial_t \Lambda(\eta_1)(t)+\partial_t\eta_1(t)\right),\partial_t\rho(t)\rangle_H}\\[10pt]
&\hslp\ds{-\langle\gamma(\Lambda(\eta_2)(t)+\eta_2(t))\partial_t\rho(t),\partial_t\rho(t)\rangle_H-\langle\gamma(\Lambda(\eta_2)(t)+\eta_2(t))\partial_t(\eta_1-\eta_2)(t),\partial_t\rho(t)\rangle_H}\\[10pt]
&\hsp\ds{+\langle f(\Lambda(\eta_1)(t)+\eta_1(t))-f(\Lambda(\eta_2)(t)+\eta_2(t)),\partial_t\rho(t)\rangle_H.}
\end{aligned}\]
Then, in view of  \eqref{gsb15} and \eqref{gsb102}, we have
\[\begin{aligned}
\ds{ \frac d{dt}\Vert \boldsymbol{\rho}(t)\Vert^2_{\mathcal{H}_1}\leq}  &  \ds{\, c\,\Vert \partial_t\rho(t)\Vert_H\left(\Vert \rho(t)\Vert_{H^1}+\Vert (\eta_1-\eta_2)(t)\Vert_{H^1}\right)\left(\Vert \partial_t\eta_1(t)+ \partial_t \Lambda(\eta_1)(t)\Vert_{H^1}\right)}\\[10pt]
&  \ds{+c\,\Vert \partial_t\rho(t)\Vert_{H}\left(\Vert \rho(t)\Vert_{H}+\Vert \partial_t\rho(t)\Vert_{H}+\Vert (\eta_1-\eta_2)(t)\Vert_{H}+\Vert \partial_t\left(\eta_1-\eta_2\right)(t)\Vert_{H}\right).}
\end{aligned}\]
This implies
\[\frac d{dt}\Vert \boldsymbol{\rho}(t)\Vert^2_{\mathcal{H}_1}\leq M(\eta_1)\Vert \boldsymbol{\rho}(t)\Vert^2_{\mathcal{H}_1}+M(\eta_1)\Vert \boldsymbol{\eta_1}(t)-\boldsymbol{\eta_2}(t)\Vert^2_{\mathcal{H}_1},\]
where
\[M(\eta)=c\,\left(1+\sup_{t \in\,[0,T]}\Vert \partial_t(\eta+\Lambda(\eta))(t)\Vert_{H^1}\right).\]
According to \eqref{gsb53}, we have that there exists some  constant $m_{T,R}>0$ such that
\[\eta \in\, \mathcal{K}_{T}(R)\Longrightarrow M(\eta)\leq m_{T, R}.\]
Hence
\[\frac d{dt}\Vert \boldsymbol{\rho}(t)\Vert^2_{\mathcal{H}_1}\leq m_{T, R}\, \Vert \boldsymbol{\rho}(t)\Vert^2_{\mathcal{H}_1}+m_{T, R}\,\Vert \boldsymbol{\eta_1}(t)-\boldsymbol{\eta_2}(t)\Vert^2_{\mathcal{H}_1},\]
so that
\[\Vert \boldsymbol{\rho}(t)\Vert^2_{\mathcal{H}_1}\leq m_{T, R}\, e^{m_{T, R} T}\int_0^t\Vert \boldsymbol{\eta_1}(s)-\boldsymbol{\eta_2}(s)\Vert^2_{\mathcal{H}_1}\,ds,\]
and \eqref{gsb101} follows.

\end{proof}
	
Now,  for every $R>0$, we fix a smooth function $\Phi_R$, with $0\leq \Phi_R\leq 1$,  such that $\Phi_R(r)=1$, if $r\leq R$ and $\Phi_R(r)=0$, if $r\geq R+1$.
Moreover, for every $\eta \in\,\mathcal{K}_T$ and $t \in\,[0,T]$ we define
\[\Psi_R(t,\eta)=\Phi_R\left(\,\sup_{s\leq t}\Vert \boldsymbol{\eta}(s)\Vert_{\mathcal{H}_2}\right).\]
It is immediate to check that there exists some $c$, independent of $R$, such that for every $\eta_1, \eta_2 \in\,\mathcal{K}_T$ and $t \in\,[0,T]$
\begin{equation}
\label{gsb117}
\left|\Psi_R(t,\eta_1)-\Psi_R(t,\eta_2)\right|\leq c\,\sup_{s\leq t}\Vert \boldsymbol{\eta}_1(t)-\boldsymbol{\eta}_2(t)\Vert_{\mathcal{H}_2}.
	\end{equation}

Next, we introduce the regularized problem
\begin{equation}
\label{gsb104-R}	
d\boldsymbol{\eta}(t)=\mathcal{L} \boldsymbol{\eta}(s)\,dt+\Psi_R^2(t,\eta)\Sigma(\boldsymbol{\eta}(t)+\boldsymbol{\Lambda(\eta)}(t))\,dw^Q(t)\ \ \ \ \boldsymbol{\eta}(0)=0.
\end{equation}

\begin{Lemma}
\label{lem3.12}

For every $T, R>0$ and $p\geq 2$, problem 	\eqref{gsb104-R} admits a unique  $\mathcal{K}_T$-valued weak solution $\eta_R$, such that
\begin{equation}
\label{gsb120}
\mathbb{E} \,\sup_{t \in\,[0,T]} \Vert \boldsymbol{\eta_R}(t)\Vert^p_{\mathcal{H}_2}\leq c_{T,p},
\end{equation}
for some constant $c_{T, p}>0$ independent of $R$.

\end{Lemma}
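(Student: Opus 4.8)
The plan is to decouple the statement into two essentially independent tasks: an existence-and-uniqueness part, handled by a contraction mapping argument in the space of adapted processes $\eta$ with $\boldsymbol\eta\in L^p(\Omega;C([0,T];\mathcal H_2))$, and the a priori bound \eqref{gsb120}, obtained through an energy estimate. The cutoff $\Psi_R$ intervenes only in the first task, where it compensates for the fact that, by Lemma \ref{lem3.11}, $\Lambda$ is merely \emph{locally} Lipschitz; the energy estimate will not feel $R$ at all, which is what makes the constant $c_{T,p}$ uniform in $R$.

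For existence and uniqueness I would set up the mild formulation
\[\boldsymbol{\mathcal T(\eta)}(t)=\int_0^t S(t-s)\,\Psi_R^2(s,\eta)\,\Sigma\big(\boldsymbol{\eta}(s)+\boldsymbol{\Lambda(\eta)}(s)\big)\,dw^Q(s),\]
and exploit that, by \eqref{gsb5} with $\mu=1$, $\{S(t)\}$ is a group of isometries on $\mathcal H_2$. Consequently $\boldsymbol{\mathcal T(\eta)}(t)=S(t)M(t)$ with $M(t)=\int_0^t S(-s)\Psi_R^2\Sigma(\cdots)\,dw^Q(s)$ a genuine $\mathcal H_2$-valued martingale and $\Vert\boldsymbol{\mathcal T(\eta)}(t)\Vert_{\mathcal H_2}=\Vert M(t)\Vert_{\mathcal H_2}$, so Burkholder--Davis--Gundy reduces every bound on $\mathcal T$ to one on $\int_0^{\cdot}\Vert\Psi_R^2\Sigma(\cdots)\Vert^2_{\mathcal L_2(H_Q,\mathcal H_2)}\,ds$, with no loss from the semigroup. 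To get the contraction I would estimate the difference of integrands by distinguishing whether each cutoff vanishes: the product $\Psi_R^2(s,\eta_i)\Sigma(\boldsymbol{\eta_i}+\boldsymbol{\Lambda(\eta_i)})$ is nonzero only where $\sup_{r\leq s}\Vert\boldsymbol{\eta_i}(r)\Vert_{\mathcal H_2}\leq R+1$, and there \eqref{gsb53} bounds $\boldsymbol{\Lambda(\eta_i)}$ in $\mathcal H_2$ by some $c_{T,R}$; on that set \eqref{gsb43-tris} controls $\Sigma(\boldsymbol{\eta_1}+\boldsymbol{\Lambda(\eta_1)})-\Sigma(\boldsymbol{\eta_2}+\boldsymbol{\Lambda(\eta_2)})$ in $\mathcal L_2(H_Q,\mathcal H_2)$ by $c_{T,R}\Vert(\boldsymbol{\eta_1}-\boldsymbol{\eta_2})+(\boldsymbol{\Lambda(\eta_1)}-\boldsymbol{\Lambda(\eta_2)})\Vert_{\mathcal H_1}$, the difference of cutoffs is handled by \eqref{gsb117}, and $\boldsymbol{\Lambda(\eta_1)}-\boldsymbol{\Lambda(\eta_2)}$ is absorbed via \eqref{gsb101}. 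Since $\Vert\cdot\Vert_{\mathcal H_1}\leq\Vert\cdot\Vert_{\mathcal H_2}$, this yields an integrand difference bounded by $c_{T,R}\sup_{r\leq s}\Vert\boldsymbol{\eta_1}(r)-\boldsymbol{\eta_2}(r)\Vert_{\mathcal H_2}$, whence Burkholder--Davis--Gundy produces a factor $c_{T,R,p}\,T_0^{p/2}$; for $T_0$ small $\mathcal T$ is a contraction on $L^p(\Omega;C([0,T_0];\mathcal H_2))$, and since the constant depends only on $T,R,p$, concatenating finitely many intervals gives the unique solution on $[0,T]$. Membership $\eta_R\in\mathcal K_T$ then follows exactly as in Proposition \ref{teo3.1}.

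For \eqref{gsb120} I would apply It\^o's formula to $\Vert\boldsymbol{\eta_R}(t)\Vert_{\mathcal H_2}^p$. The drift contribution $\langle\boldsymbol{\eta_R},\mathcal L\boldsymbol{\eta_R}\rangle_{\mathcal H_2}$ vanishes because $\mathcal L$ is skew-symmetric in $\mathcal H_2$ (equivalently, \eqref{gsb5} with $\mu=1$ gives isometries), leaving only the It\^o correction and the stochastic integral. The decisive observation is that the $\mathcal H_2$-size of the diffusion is governed solely by the $H^1$-size of its argument: by \eqref{gsb43}, $\Vert\Sigma(\boldsymbol{\eta_R}+\boldsymbol{\Lambda(\eta_R)})\Vert^2_{\mathcal L_2(H_Q,\mathcal H_2)}=\Vert\sigma(\eta_R+\Lambda(\eta_R))\Vert^2_{\mathcal L_2(H_Q,H^1)}\leq c(1+\Vert\eta_R+\Lambda(\eta_R)\Vert^2_{H^1})$, and since $\Psi_R\leq1$ and \eqref{gsb100} gives $\Vert\boldsymbol{\Lambda(\eta_R)}\Vert_{\mathcal H_1}\leq c_T(1+\sup_s\Vert\boldsymbol{\eta_R}(s)\Vert_{\mathcal H_1})$, this is dominated by $c_T(1+\sup_{s\leq t}\Vert\boldsymbol{\eta_R}(s)\Vert^2_{\mathcal H_2})$. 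After a Burkholder--Davis--Gundy estimate on the martingale term, Young's inequality to absorb the resulting multiple of $\mathbb E\sup\Vert\boldsymbol{\eta_R}\Vert^p_{\mathcal H_2}$ into the left-hand side, and Gronwall's lemma, one arrives at $\mathbb E\sup_{t\leq T}\Vert\boldsymbol{\eta_R}(t)\Vert^p_{\mathcal H_2}\leq c_{T,p}$. The only information on $\Lambda$ used here is the linear, $R$-free bound \eqref{gsb100}, so the cutoff never enters the size of $c_{T,p}$, yielding the required uniformity in $R$.

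The hard part will be the contraction, because the coefficient is a non-anticipating, non-globally-Lipschitz functional of the entire path $\eta|_{[0,s]}$, through both $\Lambda$ and the running supremum inside $\Psi_R$. The delicate point is the bookkeeping with the cutoffs: one must arrange the splitting so that whenever $\Sigma(\boldsymbol{\eta_i}+\boldsymbol{\Lambda(\eta_i)})$ could be large -- that is, when $\eta_i$ exits the ball of radius $R+1$ -- the accompanying factor $\Psi_R^2(\cdot,\eta_i)$ is exactly zero, so that this potentially large term is never actually estimated; the Lipschitz bound \eqref{gsb43-tris}, which carries the growing factor $1+\Vert h\Vert_{H^2}$, may then be invoked only inside that ball, where \eqref{gsb53} keeps $\Vert\Lambda(\eta_i)\Vert_{\mathcal H_2}$ under control. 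Once this is in place, and once the skew-symmetry of $\mathcal L$ together with the purely $H^1$-nature of the noise bound are exploited, the remaining estimates are routine.
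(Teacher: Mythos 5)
Your proposal is correct and shares the paper's overall architecture: the same fixed-point map for the truncated equation \eqref{gsb104-R}, the same three-term bookkeeping for the Lipschitz estimate on the cutoff coefficient (difference of cutoffs via \eqref{gsb117} against each $\sigma$-term, plus the product of cutoffs against the $\sigma$-difference, controlled by \eqref{gsb43-tris} together with \eqref{gsb53} and \eqref{gsb101}), and the same source of $R$-uniformity for \eqref{gsb120}, namely that $\Psi_R\leq 1$ and the $\mathcal{L}_2(H_Q,\mathcal{H}_2)$-size of the diffusion is governed only by the $H^1$-size of its argument through \eqref{gsb43} and the $R$-free bound \eqref{gsb100}; this is exactly the content of the paper's estimates \eqref{gsb116} and \eqref{gsb110}. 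You deviate in two technical implementations. First, where the paper invokes stochastic factorization (as in \cite[Proposition 7.3]{DPZ}) to pass from the integrand bound to a bound on $\mathbb{E}\sup_t\Vert\cdot\Vert^p_{\mathcal{H}_2}$ of the stochastic convolution, you exploit that for $\mu=1$ the bound \eqref{gsb5} makes $\{S(t)\}$ a unitary group on $\mathcal{H}_2$, write the convolution as $S(t)M(t)$ with $M$ a genuine martingale, and apply Burkholder--Davis--Gundy directly; this is legitimate and arguably cleaner, since it yields path continuity and the sup-estimate in one stroke with no loss from the semigroup, at the price of using the group (invertibility) structure that factorization does not need. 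Second, for \eqref{gsb120} the paper simply estimates the mild formulation via \eqref{gsb116} and closes with Gronwall, whereas you propose an It\^o energy identity for $\Vert\boldsymbol{\eta_R}(t)\Vert^p_{\mathcal{H}_2}$ with the drift killed by skew-symmetry of $\mathcal{L}$.

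On that last point, one caveat: as stated, applying It\^o's formula to $\Vert\boldsymbol{\eta_R}(t)\Vert^p_{\mathcal{H}_2}$ and writing the drift term $\langle\boldsymbol{\eta_R},\mathcal{L}\boldsymbol{\eta_R}\rangle_{\mathcal{H}_2}$ is not justified, because $\boldsymbol{\eta_R}$ is only a mild solution with values in $\mathcal{H}_2$, not in $D(\mathcal{L})$ relative to $\mathcal{H}_2$ (that would require $\mathcal{H}_3$-regularity). This is a routine repair, not a gap: either regularize (Yosida/Galerkin) and pass to the limit, or better, reuse your own device and note that $\Vert\boldsymbol{\eta_R}(t)\Vert_{\mathcal{H}_2}=\Vert M_R(t)\Vert_{\mathcal{H}_2}$ for the martingale $M_R(t)=\int_0^t S(-s)B_R(s,\eta_R)\,dw^Q(s)$, to which Burkholder--Davis--Gundy applies directly; combined with \eqref{gsb116} and Gronwall this gives \eqref{gsb120} with a constant independent of $R$, exactly as in the paper.
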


\begin{proof}
Equation \eqref{gsb104-R}, can be rewritten as the following integral equation 
\begin{equation}
\label{gsb105}
\boldsymbol{\eta}(t)=\int_0^t S(t-s) 	B_R(s,\eta)\,dw^Q(s),
\end{equation}
where $S(t)$ is the group generated by the operator $\mathcal{L}$ in $\mathcal{H}_2$  and 
\[B_R(t,\eta)=\Psi_R^2(t,\eta)\,\Sigma(\boldsymbol{\eta}(t)+\boldsymbol{\Lambda(\eta)}(t)).\]

As a first step, we  prove that there exists $c_T>0$, independent of $R$, such that for every $\eta \in\,\mathcal{K}_T$ and $t \in\,[0,T]$
\begin{equation}
\label{gsb116}
\Vert B_R(t,\eta)\Vert_{\mathcal{L}_2(H_Q,\mathcal{H}_2)}\leq c_T\,\left(1+\sup_{s\leq t}\Vert \boldsymbol{\eta}(s)\Vert_{\mathcal{H}_1}\right),
	\end{equation}
and there exists $c_{T, R}>0$ such that for every $\eta_1, \eta_2 \in\,\mathcal{K}_T$ and $t \in\,[0,T]$
\begin{equation}
\label{gsb110}
\begin{array}{l}\ds{	\Vert B_R(t,\eta_1)-B_R(t,\eta_2)\Vert_{\mathcal{L}_2(H_Q,\mathcal{H}_2)}\leq c_{T, R}\,\sup_{s\leq t}\Vert \boldsymbol{\eta_1}(s)-\boldsymbol{\eta_2}(s)\Vert_{\mathcal{H}_2}.}
\end{array}
	\end{equation}

Due to \eqref{gsb43} and \eqref{gsb100}, we have
\[\begin{array}{l}
\ds{	\Vert B_R(t,\eta)\Vert_{\mathcal{L}_2(H_Q,\mathcal{H}_2)}= \Psi_R^2(t,\eta)\,\Vert \sigma(\eta(t)+\Lambda(\eta)(t)) \Vert_{\mathcal{L}_2(H_Q,H^1)}}\\[10pt]
\hsp\ds{\leq c\,\left(1+\Vert \eta(t)+\Lambda(\eta)(t)\Vert_{H^1}\right)\leq c_T\,\left(1+\sup_{s\leq t}\Vert \boldsymbol{\eta}(s)\Vert_{\mathcal{H}_1}\right),}
\end{array}
\]
and this gives \eqref{gsb116}.
As for \eqref{gsb110}, we have
\[\begin{array}{l}
\ds{\Vert B_R(t,\eta_1)-B_R(t,\eta_2)\Vert_{\mathcal{L}_2(H_Q,\mathcal{H}_2)}}\\[10pt]
\hsllp\ds{\leq  \left|\Psi_R(t,\eta_1)-\Psi_R(t,\eta_2)\right|\,\Psi_R(t,\eta_1)\,\Vert \sigma(\eta_1(t)+\Lambda(\eta_1)(t))\Vert_{\mathcal{L}_2(H_Q,H^1)}}	\\[10pt]
\hslp\ds{+  \left|\Psi_R(t,\eta_1)-\Psi_R(t,\eta_2)\right|\,\Psi_R(t,\eta_2)\,\Vert \sigma(\eta_2(t)+\Lambda(\eta_2)(t))\Vert_{\mathcal{L}_2(H_Q,H^1)}}	\\[10pt]
\hslp\ds{+  \Psi_R(t,\eta_1)\Psi_R(t,\eta_2)\,\Vert \sigma(\eta_1(t)+\Lambda(\eta_1)(t))-\sigma(\eta_2(t)+\Lambda(\eta_2)(t))\Vert_{\mathcal{L}_2(H_Q,H^1)}}\\[10pt]
\hsp\ds{=:I_{R,1}(t)+I_{R,2}(t)+I_{R,3}(t).}	
\end{array}\]

Thanks to  \eqref{gsb43} and \eqref{gsb100}, there exists $c_R>0$ such that 
\[\sup_{t \in\,[0,T]}\Psi_R(t,\eta_i)\,\Vert \sigma(\eta_i(t)+\Lambda(\eta_i)(s))\Vert_{\mathcal{L}_2(H_Q,H^1)}\leq c_R,\ \ \ \ i=1, 2.\]
Therefore, thanks to \eqref{gsb117}, we get
\begin{equation}
\label{gsb118}	
I_{R,1}(t)+I_{R,2}(t)\leq c_R\, \sup_{s\leq t}\,\Vert \boldsymbol{\eta_1}(s)-\boldsymbol{\eta_2}(s)\Vert_{\mathcal{H}_2}.
\end{equation}
As for $I_{R, 3}(t)$, according to \eqref{gsb43-tris} we have
\[
\begin{aligned}
\ds{I_{R, 3}(t)\leq}  &  \ds{ \Psi_R(t,\eta_1)\Psi_R(t,\eta_2)\,\left(\Vert \eta_1(t)-\eta_2(t)\Vert_{H^1}+\Vert \La(\eta_1)(t)-\La(\eta_2)(t)\Vert_{H^1}\right)}\\[10pt]
&  \ds{\times \left(1+\Vert \eta_2(t)\Vert_{H^2}+\Vert \La(\eta_2)(t)\Vert_{H^2}\right),}
	\end{aligned}	\]
and then, from  \eqref{gsb53} and \eqref{gsb101}, we get
\[I_{R, 3}(t)\leq c_{T,R} \sup_{s\leq t}\Vert \boldsymbol{\eta_1}(s)-\boldsymbol{\eta_2}(s)\Vert_{\mathcal{H}_1}.\]
Combining this together with \eqref{gsb118}, we obtain \eqref{gsb110}.

Next, for every $\mathcal{K}_T$-valued $\eta$ such that $\boldsymbol{\eta} \in\,L^p(\Omega;\mathcal{H}_2)$,  we define 
\[\mathcal{F}_R(\eta)(t)=\int_0^tS(t-s)B_R(s,\eta)\,dw^Q(s),\ \ \ \ t \in\,[0,T].	\]
If we can prove that once fixed $p\geq 2$
there exists $\alpha \in\,(0,1)$ and $t_0>0$ such that
for every $\eta_1, \eta_2$ 
 \begin{equation}
 \label{gsb119}
 \mathbb{E}\,\sup_{t\leq t_0}\Vert \mathcal{F}_R(\eta_1)(t)-\mathcal{F}_R(\eta_2)(t)\Vert_{\mathcal{H}_2}^p\leq \alpha\, 
\mathbb{E}\,\sup_{t\leq t_0}\Vert \boldsymbol{\eta_1}(t)-\boldsymbol{\eta_2}(t)\Vert_{\mathcal{H}_2}^p,	
 \end{equation}
then  equation \eqref{gsb104-R} has a unique solution $\eta_R \in\,L^p(\Omega;C([0,t_0];\mathcal{H}_2))$. Since we can repeat the same argument in all intervals $[t_0,2t_0]$, $[2t_0, 3t_0]$ and so on, we get a unique solution $\eta_R$ defined in the whole interval $[0,T]$.

As shown for example in \cite[Proposition 7.3]{DPZ}, by using a stochastic factorization argument we can prove that $\mathcal{F}_R(\eta)$ is continuous in time with values in $\mathcal{H}_2$ and for every $t>0$ and $p>2$
\[\mathbb{E}\,\sup_{s\leq t}\Vert \mathcal{F}_R(\eta_1)(s)-\mathcal{F}_R(\eta_2)(s)\Vert_{\mathcal{H}_2}^p\leq c_{T, p}\,\mathbb{E}\int_0^t 	\Vert B_R(s,\eta_1)-B_R(s,\eta_2)\Vert^p_{\mathcal{L}_2(H_Q,\mathcal{H}_2)}\,ds.\]
Hence, thanks to \eqref{gsb110}, we obtain
\[\begin{array}{l}
\ds{	\mathbb{E}\,\sup_{s\leq t}\Vert \mathcal{F}_R(\eta_1)(s)-\mathcal{F}_R(\eta_2)(s)\Vert_{\mathcal{H}_2}^p\leq c_{T,R, p}\,t\,\mathbb{E}\,\sup_{s\leq t}\Vert \boldsymbol{\eta_1}(s)-\boldsymbol{\eta_2}(s)\Vert^p_{\mathcal{H}_2}.
}
\end{array}\]
This implies that, if we fix some $\a<1$ and take $t_0>0$ such that
\[c_{T,R, p}\,t_0\leq \alpha,\]
 \eqref{gsb119} follows. Finally, we obtain estimate \eqref{gsb120} from \eqref{gsb116} and the Gronwall lemma, since
 \[\begin{aligned}
\ds{	\mathbb{E}\,\sup_{s\leq t}\Vert \boldsymbol{\eta_R}(s)\Vert_{\mathcal{H}_2}^p\leq c_{T,p}\,\mathbb{E}\int_0^t 	\Vert B_R(s,\eta_R)\Vert^p_{\mathcal{L}_2(H_Q,\mathcal{H}_2)}\,ds\leq c_{T,p}\int_0^t \left(1+\mathbb{E}\,\sup_{r\leq s}\Vert \boldsymbol{\eta_R}(r)\Vert_{\mathcal{H}_1}^p\right)\,ds.}
\end{aligned}\]

\end{proof}

In order to conclude the proof of Proposition \ref{teo3.2}, for every $R>0$ we define
\[\tau_R:=\inf \left\{ t \in\,[0,T]\,:\, \Vert \boldsymbol{\eta_R}(t)\Vert_{\mathcal{H}_2}> R \right\},\]
with the  convention that $\inf \emptyset=+\infty$.
Due to \eqref{gsb120}, we have
\[\mathbb{P}\left(\tau_R < +\infty\right)\leq \mathbb{P}\left(\sup_{t \in\,[0,T]}\Vert \boldsymbol{\eta_R}(t)\Vert_{\mathcal{H}_2}\geq R\right)\leq \frac 1{R^2}\,\mathbb{E}\,\sup_{t \in\,[0,T]}\Vert \boldsymbol{\eta_R}(t)\Vert_{\mathcal{H}_2}^2\leq \frac{c_T}{R^2},\]
so that, if we set 
\[\tau:=\lim_{R\to\infty} \tau_R,\]
we get 
$\mathbb{P}\left(\tau=+\infty\right)=1$.
Now, if we fix $\omega \in\,\{\tau=+\infty\}$ and $t \in\,[0,T]$, there exists some $R>0$ such that $t\leq \tau_R(\omega)$ and then we define
\[\eta(t)(\omega)=\eta_R(t)(\omega).\]
This is a good definition, because we can prove that if $t\leq \sigma(\omega):=\tau_{R_1}(\omega)\wedge \tau_{R_2}(\omega)$, then
\begin{equation}
\label{gsb121}
\eta_{R_1}(t)(\omega)=\eta_{R_2}(t)(\omega).	
\end{equation}
Actually, if we assume that $R_1\leq R_2$, for every $t \in\,[0,T]$ we have
\[\begin{array}{l}
\ds{\boldsymbol{\eta_{R_1}}(t\wedge \sigma)-\boldsymbol{\eta_{R_2}}(t\wedge \sigma)=\int_0^t \mathbb{I}_{\{s\leq \sigma\}}\,S(t-s)\left[B_{R_1}(s\wedge \sigma,\eta_{R_1})-B_{R_{2}}(s\wedge\sigma,\eta_{R_2})\right]\,dw^Q(s).}
\end{array}
\]
Then, by using \eqref{gsb110} as in the proof of Lemma \ref{lem3.12}, we conclude that 
\[\boldsymbol{\eta_{R_1}}(t\wedge \sigma)=\boldsymbol{\eta_{R_2}}(t\wedge \sigma),\ \ \ \ t \in\,[0,T],\] which implies \eqref{gsb121}.
Once we have that $\eta$ is a well  defined process and  $\boldsymbol{\eta}\in\,L^2(\Omega,C[0,T];\mathcal{H}_2)$, by noticing that
\[B_R(t,\eta)=\Sigma(\boldsymbol{\eta}(t)+\Lambda(\boldsymbol{\eta})(t)),\ \ \ \ t\leq \tau_R,\]
we have that $\eta$ solves equation \eqref{gsb104}.

Finally, we show that such solution is unique. If $\eta_1$ and $\eta_2$ are two solutions,  we have
\[\boldsymbol{\eta_1}(t)-\boldsymbol{\eta_2}(t)=\int_0^t S(t-s)\left[\Sigma(\boldsymbol{\eta_1}(s)+\Lambda(\boldsymbol{\eta_1})(s))-\Sigma(\boldsymbol{\eta_2}(s)+\Lambda(\boldsymbol{\eta_2})(s))\right]\,dw^Q(s).\]
Thanks to \eqref{gsb7}, this yields
\[\begin{array}{l}
\ds{\mathbb{E}\sup_{s \in\,[0,t]} \Vert \boldsymbol{\eta_1}(s)-\boldsymbol{\eta_2}(s)\Vert_{\mathcal{H}_1}^p\leq c_{T,p} \int_0^t\mathbb{E}\,\Vert \sigma(\eta_1(s)+\Lambda(\eta_1)(s))-\sigma(\eta_1(s)+\Lambda(\eta_1)(s))\Vert_{\mathcal{L}_2(H_Q,H)}^p\,ds}	\\[14pt]
\hsp\ds{\leq c_{T, p}\int_0^t \mathbb{E}\left(\Vert \eta_1(s)-\eta_2(s)\Vert_{H^1}^p+\Vert \Lambda(\eta_1)(s)-\Lambda(\eta_2)(s)\Vert_{H^1}^p\right)\,ds.}
\end{array}\]
Thus, thanks to \eqref{gsb101} we get
\[\mathbb{E}\sup_{s \in\,[0,t]} \Vert \boldsymbol{\eta_1}(s)-\boldsymbol{\eta_2}(s)\Vert_{\mathcal{H}_1}^p\leq c_{T, p}\int_0^t \mathbb{E}\sup_{r \in\,[0,s]} \Vert \boldsymbol{\eta_1}(r)-\boldsymbol{\eta_2}(r)\Vert_{\mathcal{H}_1}^p\,ds,\]
and we conclude that $\eta_1=\eta_2$.

\color{black}

\section{Uniform estimates in $\mathcal{H}_1$}
\label{sec4}

 In Theorem \ref{teo3.3} we have seen that, under Hypotheses \ref{as1}, \ref{as3} and \ref{as2}, for every $\mu>0$ and every initial condition $(u^\mu_0, v^\mu_0) \in\,\mathcal{H}_2$, equation \eqref{SPDE} admits a unique solution $\boldsymbol{u}_\mu=(u_\mu,\partial_t u_\mu)$ in   $L^2(\Omega;C([0,T];\mathcal{H}_1))$, taking values  in the functional space $\mathcal{K}_T$ defined in \eqref{gsb94}, $\mathbb{P}$-a.s. In particular,  $\boldsymbol{u}_\mu \in\,C([0,T];\mathcal{H}_2)$, $\mathbb{P}$-a.s. In what follows, by using the notations introduced in \eqref{iet1}, we denote
 \begin{equation}
 \label{iet2}
 \Lambda_1:=\sup_{\mu \in\,(0,1)}\Lambda_1.	
 \end{equation}
 Due to \eqref{finex}, we have $\Lambda_1<\infty$.

In this section assuming Hypothesis \ref{as4}, we are going to prove that for every $T>0$ there exists a  constant  $c_T>0$ depending on $\Lambda_1$ such that 
\begin{equation}
\label{sg31-tris}
\mathbb{E}\sup_{s \in\,[0,T]}\Vert  u_\mu(s) \Vert _{H}^2 +\int_0^T \mathbb{E}\Vert  u_\mu(s)\Vert _{H^1}^2 ds\leq c_{T}.
\end{equation}

Moreover, we will prove that 
\begin{align}
 \begin{split}
 \label{sa1-4th-mom-H}
 \mu^3\,&\mathbb{E}\sup_{t \in\,[0,T]}\Vert \partial_t u_\mu(t)\Vert_{H}^4+\mu\,\mathbb{E} \sup_{t \in\,[0,T]}\Vert u_\mu(t)\Vert_{H^1}^4+\mu\int_0^T\mathbb{E}\,\Vert u_\mu(t)\Vert_{H^1}^2\Vert \partial_t u_\mu(t)\Vert_{H}^2\,dt\\[10pt]
 &\hsp +\mu^2\int_0^T\mathbb{E}\,\Vert \partial_t u_\mu(t)\Vert_{H}^4\,dt+ \mu\int_0^T \mathbb{E}\Vert  \partial_t u_\mu(s)\Vert _{H}^2 ds\leq c_{T}.\end{split}
 \end{align}

 \begin{Lemma}
 \label{lemma1-bis}
 Assume Hypotheses \ref{as1} to \ref{as4} and fix $(u^\mu_0,v^\mu_0) \in\,\mathcal{H}_2$ satisfying condition \eqref{finex}. Then for every $T>0$ there exists $c_T>0$, depending on $\Lambda_1$, such that for every $\mu \in\,(0,1)$ and $t \in\,[0,T]$
 \begin{align}
 \begin{split}
 \label{sa1-bis}
 \mu^2\,\mathbb{E}&\sup_{s \in\,[0,t]}\Vert \partial_t u_\mu(s)\Vert_H^4+\mathbb{E} \sup_{s \in\,[0,t]}\Vert u_\mu(s)\Vert_{H^1}^4+\int_0^t\mathbb{E}\Vert \partial_t u_\mu(s)\Vert_H^2\,\Vert u_\mu(s)\Vert_H^2\,ds\\[10pt]
 &+\mu\,\int_0^t\mathbb{E}\,\Vert \partial_t u_\mu(s)\Vert_H^4\,ds\leq \frac{c_T}{\mu}\,\left(\int_0^t \mathbb{E}\,\Vert u_\mu(s)\Vert_{H^1}^2\,ds+1\right)+c_T\,	\mu\,\mathbb{E}\sup_{s \in\,[0,t]}\Vert \partial_t u_\mu(s)\Vert_H^2.	
 \end{split}
 \end{align}
 	
 \end{Lemma}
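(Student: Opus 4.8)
The natural object to work with is the mass-weighted energy
\[
E_\mu(t):=\frac{\mu}{2}\Vert\partial_t u_\mu(t)\Vert_H^2+\frac12\Vert u_\mu(t)\Vert_{H^1}^2=\frac12\Vert(u_\mu(t),\sqrt{\mu}\,\partial_t u_\mu(t))\Vert_{\mathcal{H}_1}^2,
\]
whose square already encodes the three supremum-type quantities on the left of \eqref{sa1-bis}, since $E_\mu^2=\tfrac{\mu^2}{4}\Vert\partial_t u_\mu\Vert_H^4+\tfrac{\mu}{2}\Vert\partial_t u_\mu\Vert_H^2\Vert u_\mu\Vert_{H^1}^2+\tfrac14\Vert u_\mu\Vert_{H^1}^4$. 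The plan is to apply the It\^o formula to $E_\mu^2$ along the solution written in the form \eqref{system-corr}. The decisive structural feature is that the two cross terms involving $\Delta u_\mu$ cancel (the operator driving the system is skew-adjoint in the weighted $\mathcal{H}_1$ inner product), so that
\[
dE_\mu=\Big[-\langle\partial_t u_\mu,\gamma(u_\mu)\partial_t u_\mu\rangle_H+\langle\partial_t u_\mu,f(u_\mu)\rangle_H+\tfrac1{2\mu}\Vert\sigma(u_\mu)\Vert_{\mathcal{L}_2(H_Q,H)}^2\Big]dt+\langle\partial_t u_\mu,\sigma(u_\mu)\,dw^Q\rangle_H.
\]
I would justify this identity rigorously through a Galerkin approximation, using that $\boldsymbol{u_\mu}\in L^2(\Omega;C([0,T];\mathcal{H}_2))$ from Theorem \ref{teo3.3}.

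Next, writing $d(E_\mu^2)=2E_\mu\,dE_\mu+d\langle E_\mu\rangle$ and invoking the coercivity \eqref{gsb15-bis}, the dissipative term is bounded by
\[
-2E_\mu\langle\partial_t u_\mu,\gamma(u_\mu)\partial_t u_\mu\rangle_H\leq -\gamma_0\,\mu\,\Vert\partial_t u_\mu\Vert_H^4-\gamma_0\,\Vert u_\mu\Vert_{H^1}^2\Vert\partial_t u_\mu\Vert_H^2,
\]
which I keep on the left; combined with $\Vert u_\mu\Vert_H\leq c\Vert u_\mu\Vert_{H^1}$ this reproduces the two integral terms of \eqref{sa1-bis}. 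The remaining drift is controlled as follows: the nonlinearity is handled by the $H$-Lipschitz bound $\Vert f(u_\mu)\Vert_H\leq c(1+\Vert u_\mu\Vert_H)$ (Hypothesis \ref{as3}) and Young's inequality, absorbing a factor $\varepsilon E_\mu\Vert\partial_t u_\mu\Vert_H^2$ into the dissipation and leaving a contribution $\leq c\,E_\mu^2+c\,E_\mu$; the It\^o correction is bounded by $\mu^{-1}E_\mu\Vert\sigma(u_\mu)\Vert_{\mathcal{L}_2(H_Q,H)}^2\leq c\,\mu^{-1}E_\mu$ via \eqref{gsb40}; and the quadratic variation satisfies $d\langle E_\mu\rangle\leq c\,\Vert\partial_t u_\mu\Vert_H^2\,dt$, again by \eqref{gsb40}.

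The delicate part is the bookkeeping of the powers of $\mu$. Using $E_\mu=\tfrac{\mu}{2}\Vert\partial_t u_\mu\Vert_H^2+\tfrac12\Vert u_\mu\Vert_{H^1}^2$, the borderline quantity $\mu^{-1}\int_0^t E_\mu\,ds$ splits into $\tfrac12\int_0^t\Vert\partial_t u_\mu\Vert_H^2\,ds$ and $\tfrac1{2\mu}\int_0^t\Vert u_\mu\Vert_{H^1}^2\,ds$; the second is exactly the $\mu^{-1}$ integral on the right of \eqref{sa1-bis}, while the first, together with the quadratic-variation term, is absorbed into the left-hand dissipation $\mu\int_0^t\Vert\partial_t u_\mu\Vert_H^4\,ds$ by Young's inequality, at the price of a constant $c_T/\mu$. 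For the martingale, the Burkholder--Davis--Gundy inequality and \eqref{gsb40} give
\[
\mathbb{E}\sup_{s\leq t}\Big|\int_0^s 2E_\mu\langle\partial_t u_\mu,\sigma(u_\mu)\,dw^Q\rangle_H\Big|\leq c\,\mathbb{E}\Big(\sup_{s\leq t}E_\mu\int_0^t E_\mu\Vert\partial_t u_\mu\Vert_H^2\,ds\Big)^{1/2}\leq \varepsilon\,\mathbb{E}\sup_{s\leq t}E_\mu+\frac{c}{\varepsilon}\,\mathbb{E}\int_0^t E_\mu\Vert\partial_t u_\mu\Vert_H^2\,ds,
\]
where the integral is absorbed into the dissipation and $\varepsilon\,\mathbb{E}\sup_{s\leq t}E_\mu=\tfrac{\varepsilon\mu}{2}\mathbb{E}\sup_{s\leq t}\Vert\partial_t u_\mu\Vert_H^2+\tfrac{\varepsilon}{2}\mathbb{E}\sup_{s\leq t}\Vert u_\mu\Vert_{H^1}^2$: the first summand is precisely the retained term $c_T\mu\,\mathbb{E}\sup\Vert\partial_t u_\mu\Vert_H^2$ on the right of \eqref{sa1-bis}, while the second is absorbed into $\tfrac14\mathbb{E}\sup\Vert u_\mu\Vert_{H^1}^4$ on the left. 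I would then close the estimate with Gronwall's lemma, the only Gronwall contribution $c\int_0^t\mathbb{E}\sup_{r\leq s}E_\mu^2\,ds$ coming from the nonlinear term, noting that $E_\mu(0)^2=\tfrac14\Vert(u_0^\mu,\sqrt{\mu}\,v_0^\mu)\Vert_{\mathcal{H}_1}^4\leq c$ by \eqref{finex}, with $e^{cT}$ absorbed into $c_T$.

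The main obstacle is twofold. First, before any supremum-type term may be moved to the left one must know a priori that $\mathbb{E}\sup_{s\leq t}E_\mu^2$ is finite; since Theorem \ref{teo3.3} supplies only second moments in $\mathcal{H}_2$, I would carry out the whole computation up to the stopping times $\tau_N:=\inf\{t:E_\mu(t)>N\}$, derive the inequality with constants independent of $N$, and let $N\to\infty$ by monotone convergence. Second, the scaling is sharp: every estimate must be arranged so that the intermediate velocity terms land exactly in the combination $c_T\mu^{-1}$ and $c_T\mu\,\mathbb{E}\sup\Vert\partial_t u_\mu\Vert_H^2$, and not in a worse negative power of $\mu$; this is what dictates the particular use of Young's inequality against the quartic dissipation term and the particular Burkholder--Davis--Gundy splitting described above, and is where most of the care in the argument is concentrated.
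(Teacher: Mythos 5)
Your proof is correct and is, in substance, the paper's own argument: the functional the paper assembles in \eqref{sa3-bis}, namely $\Vert\partial_t u_\mu\Vert_H^4+\tfrac{2}{\mu}\Vert\partial_t u_\mu\Vert_H^2\Vert u_\mu\Vert_{H^1}^2+\tfrac{1}{\mu^2}\Vert u_\mu\Vert_{H^1}^4$, is exactly $4\mu^{-2}E_\mu^2$, so your single It\^o computation on the squared energy reproduces the paper's two-step It\^o computations (first for $\Vert \partial_t u_\mu\Vert_H^4$, then for $\Vert\partial_t u_\mu\Vert_H^2\,\Vert u_\mu\Vert_{H^1}^2$), and your dissipation--Young--BDG--Gronwall bookkeeping matches theirs term by term, including the precise origin of the two right-hand terms $c_T\mu^{-1}\bigl(\int_0^t\mathbb{E}\Vert u_\mu\Vert_{H^1}^2\,ds+1\bigr)$ and $c_T\,\mu\,\mathbb{E}\sup_s\Vert\partial_t u_\mu\Vert_H^2$. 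The only differences are presentational and in your favor: you make explicit the stopping-time localization needed before taking expectations of the suprema (the paper does this silently, and Theorem \ref{teo3.3} indeed only supplies second moments), and you correctly note that in the BDG step the Young parameter multiplying the dissipation integral must be fixed of size $\gamma_0$, which is harmless precisely because the resulting $\mathbb{E}\sup_s E_\mu$ term splits into the retained $\mu\,\mathbb{E}\sup_s\Vert\partial_t u_\mu\Vert_H^2$ and a quadratic term absorbed into the quartic by a second application of Young's inequality.
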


\begin{proof}
If we define $K(v)=\Vert v\Vert^4_H$, we have
\[DK(v)=4\,\Vert v\Vert_H^2 v,\ \ \ \ \ \ D^2 K(v)=8\, v\otimes v+4\,\Vert v\Vert_H^2\,I.\]
In particular, according to \eqref{gsb40}, we get
\begin{align*}
\sum_{k=1}^\infty \langle D^2K(v)\sigma(u)Q e_k,\sigma(u)Q e_k\rangle_H=	8\,\Vert [\sigma(u)Q]^\star v\Vert_H^2+4\,\Vert v\Vert_H^2 \Vert\sigma(u)\Vert^2_{\mathcal{L}_2(H_Q,H)}\leq c\,\Vert v\Vert_H^2.
\end{align*}
Hence, if we apply
It\^o's formula to $K$ and $\partial_t u_\mu(t)$, we get	
\begin{align*}
	d\Vert \partial_t u_\mu(t)\Vert_H^4&\leq \frac 4\mu \Vert \partial_t u_\mu(t)\Vert_H^2\langle \partial_t u_\mu(t),\Delta u_\mu(t)-\gamma(u_\mu(t))\partial_t u_\mu(t)+f(u_\mu(t))\rangle_H\,dt\\[10pt]
	&\hsp+\frac c{\mu^2}\,\Vert \partial_t u_\mu(t)\Vert_H^2\,dt+\frac 4\mu\langle\sigma(u_\mu(t))dw^Q(t),\partial_t u_\mu(t)\rangle_H\Vert \partial_t u_\mu(t)\Vert_H^2\\[10pt]
	&\leq -\frac 2\mu\,\Vert \partial_t u_\mu(t)\Vert_H^2\,d\Vert u_\mu(t)\Vert_{H^1}^2-\frac{4\gamma_0}\mu\,\Vert \partial_t u_\mu(t)\Vert_H^4\,dt+\frac{c}\mu\left(1+\Vert u_\mu(t)\Vert_H\right)\Vert \partial_t u_\mu(t)\Vert_H^3\,dt\\[10pt]
	&\hsp +\frac c{\mu^2}\,\Vert \partial_t u_\mu(t)\Vert_H^2\,dt+\frac {4}\mu\langle\sigma(u_\mu(t))dw^Q(t),\partial_t u_\mu(t)\rangle_H\Vert \partial_t u_\mu(t)\Vert_H^2,
\end{align*}
so that
\begin{align}
\begin{split}   \label{sa2-bis}
	&d\Vert \partial_t u_\mu(t)\Vert_H^4+\frac{2\gamma_0}{\mu}\,\Vert \partial_t u_\mu(t)\Vert_H^4\,dt+\frac 2\mu\,d\left(\Vert \partial_t u_\mu(t)\Vert_H^2\,\Vert u_\mu(t)\Vert_{H^1}^2\right)\leq  \frac 2\mu\,d\,\Vert \partial_t u_\mu(t)\Vert_H^2\,\Vert u_\mu(t)\Vert_{H^1}^2\\[10pt]
	&+\frac{c}\mu\,\left(1+\Vert u_\mu(t)\Vert_H^4\right)\,dt+\frac c{\mu^3}\,dt+\frac 4\mu\langle\sigma(u_\mu(t))dw^Q(t),\partial_t u_\mu(t)\rangle_H\Vert \partial_t u_\mu(t)\Vert_H^2.
	\end{split}
\end{align}
Now, thanks again to the It\^o formula, we have
\begin{align*}
d\,\Vert \partial_t &u_\mu(t)\Vert_H^2=\frac 2\mu\,\langle \partial_t u_\mu(t),\Delta u_\mu(t)-\gamma(u_\mu(t))\partial_t u_\mu(t)+f(u_\mu(t))\rangle_H\,dt\\[10pt]
&\hsp+\frac 1{\mu^2}\,\Vert\sigma(u_\mu(t))\Vert_{\mathcal{L}_2(H_Q,H)}^2\,dt+\frac2\mu\langle \sigma(u_\mu(t))dw^Q(t),\partial_t u_\mu(t)\rangle_H\\[10pt]
&\leq -\frac 1\mu d\,\Vert u_\mu(t)\Vert_{H^1}^2-\frac{\gamma_0}\mu\,\Vert \partial_t u_\mu(t)\Vert_H^2\,dt+\frac{c}\mu\left(1+\Vert u_\mu(t)\Vert_H^2\right)\,dt\\[10pt]
&\hsp +\frac c{\mu^2}\,dt+\frac2\mu\langle \sigma(u_\mu(t))dw^Q(t),\partial_t u_\mu(t)\rangle_H,
\end{align*}
and this gives
\begin{align*}
	\frac 2\mu\,d\,&\Vert \partial_t u_\mu(t)\Vert_H^2\,\Vert u_\mu(t)\Vert_{H^1}^2\leq -\frac 1{\mu^2}\,d\,\Vert u_\mu(t)\Vert_{H^1}^4-\frac{2\gamma_0}{\mu^2}\,\Vert \partial_t u_\mu(t)\Vert_H^2 \Vert u_\mu(t)\Vert_{H^1}^2\,dt\\[10pt]
	&\hslp+\frac{c}{\mu^2}\,dt+\frac{c}{\mu^2}\,\Vert u_\mu(t)\Vert_{H^1}^4\,dt+\frac c{\mu^3}\,\Vert u_\mu(t)\Vert_{H^1}^2\,dt+\frac 4{\mu^2}\langle \sigma(u_\mu(t))dw^Q(t),\partial_t u_\mu(t)\rangle_H\Vert u_\mu(t)\Vert_{H^1}^2.
\end{align*}
Therefore, if we plug the inequality above into \eqref{sa2-bis}, we get
\begin{align}
\begin{split}   \label{sa3-bis}
	&d\Vert \partial_t u_\mu(t)\Vert_H^4+\frac {2}{\mu}\,d\left(\Vert \partial_t u_\mu(t)\Vert_H^2\,\Vert u_\mu(t)\Vert_H^2\right)+\frac 1{\mu^2}\,d\,\Vert u_\mu(t)\Vert_{H^1}^4+\frac{2\,\gamma_0}{\mu}\,\Vert \partial_t u_\mu(t)\Vert_H^4\,dt\\[10pt]
	&\hsllp+\frac{2\gamma_0}{\mu^2}\,\Vert \partial_t u_\mu(t)\Vert_H^2 \Vert u_\mu(t)\Vert_{H^1}^2\,dt\leq\frac{c}{\mu^2}\,\Vert u_\mu(t)\Vert_{H^1}^4\,dt+\frac c{\mu^3}\,\Vert u_\mu(t)\Vert_{H^1}^2\,dt+\frac{c}{\mu^3}\,dt\\[10pt]
	&\hsllp +\frac 4{\mu^2}\langle \sigma(u_\mu(t))dw^Q(t),\partial_t u_\mu(t)\rangle_H\Vert u_\mu(t)\Vert_{H^1}^2+\frac 4\mu\langle\sigma(u_\mu(t))dw^Q(t),\partial_t u_\mu(t)\rangle_H\Vert \partial_t u_\mu(t)\Vert_H^2.  
		\end{split}
\end{align}
Thanks to \eqref{gsb40}, we have
\begin{align*}\frac 4{\mu^2}\,\mathbb{E}&\sup_{s \in\,[0,t]}\left\vert\int_0^s \langle \sigma(u_\mu(r))dw^Q(r),\partial_t u_\mu(r)\rangle_H\Vert u_\mu(s)\Vert_{H^1}^2\right\vert\leq \frac c{\mu^2}\,\mathbb{E}\left(\int_0^t\Vert \partial_t u_\mu(s)\Vert_H^2\Vert u_\mu(s)\Vert_{H^1}^4\,ds\right)^{\frac 12}\\[10pt]
&\leq \frac c{\mu^2}\,	 \mathbb{E}\left(\sup_{s \in\,[0,t]}\Vert u_\mu(s)\Vert_{H^1}^2\int_0^t\Vert \partial_t u_\mu(s)\Vert_H^2\Vert u_\mu(s)\Vert_{H^1}^2\,ds\right)^{1/2}\\[10pt]
&\leq \frac{\gamma_0}{\mu^2}\,\mathbb{E}\int_0^t\Vert \partial_t u_\mu(s)\Vert_H^2\Vert u_\mu(s)\Vert_{H^1}^2\,ds+\frac 1{2\mu^2}\,\mathbb{E}\sup_{s \in\,[0,t]}\Vert u_\mu(s)\Vert_{H^1}^4+\frac c{\mu^2},
\end{align*}
and
\begin{align*}
\frac 4\mu\,\mathbb{E}&\sup_{s \in\,[0,t]}\left\vert\int_0^s\Vert \partial_t u_\mu(r)\Vert_H^2\langle\sigma(u_\mu(r))dw^Q(r),\partial_t u_\mu(r)\rangle_H\right\vert	\\[10pt]
&\leq \frac c\mu\, \mathbb{E}\left(\int_0^t\Vert \partial_t u_\mu(s)\Vert_H^6\,ds\right)^{\frac 12}\leq \frac c\mu\, \mathbb{E}\left(\sup_{s \in\,[0,t]}\Vert \partial_t u_\mu(s)\Vert_H^2\int_0^t\Vert \partial_t u_\mu(s)\Vert_H^4\,ds\right)^{\frac 12}\\[10pt]
&\leq \frac{\gamma_0}{\mu} \mathbb{E}\int_0^t\Vert \partial_t u_\mu(s)\Vert_H^4\,ds+\frac c\mu\,\mathbb{E}\sup_{s \in\,[0,t]}\Vert \partial_t u_\mu(s)\Vert_H^2.
\end{align*}
Hence, if we  first integrate with respect to time both sides in \eqref{sa3-bis}, then  take the supremum with respect to time and finally take the expectation, we get
\begin{align*}
	\mathbb{E}\sup_{s \in\,[0,t]}&\Vert \partial_t u_\mu(s)\Vert_H^4+\frac 1{2\mu^2}\mathbb{E}\sup_{s \in\,[0,t]}\Vert u_\mu(s)\Vert_{H^1}^4+\frac {\gamma_0}{\mu^2}\,\int_0^t\mathbb{E}\Vert \partial_t u_\mu(s)\Vert_H^2\,\Vert u_\mu(s)\Vert_H^2\,ds\\[10pt]
	&\hsl+\frac{\gamma_0}{\mu}\,\int_0^t\mathbb{E}\Vert \partial_t u_\mu(s)\Vert_H^4\,ds\leq \Vert v^\mu_0\Vert_{H}^4+\frac 2\mu\,\Vert u^\mu_0\Vert_{H^1}^2\Vert v^\mu_0\Vert_H^2+\frac 1{\mu^2}\,\Vert u^\mu_0\Vert_{H^1}^4    \\[10pt]
	&+\frac{c}{\mu^2}\int_0^t\mathbb{E}\,\Vert u_\mu(s)\Vert_{H^1}^4\,ds+\frac c{\mu^3}\,\int_0^t\mathbb{E}\,\Vert u_\mu(s)\Vert_{H^1}^2\,dr+\frac{c}{\mu^3}\,t+\frac c\mu\,\mathbb{E}\sup_{s \in\,[0,t]}\Vert \partial_t u_\mu(s)\Vert_H^2.
\end{align*}
Thanks to \eqref{finex} we have
\[\Vert v^\mu_0\Vert_{H}^4+\frac 2\mu\,\Vert u^\mu_0\Vert_{H^1}^2\Vert v^\mu_0\Vert_H^2+\frac 1{\mu^2}\,\Vert u^\mu_0\Vert_{H^1}^4\leq \frac{c}{\mu^2}\,\Lambda_1.\]
Hence, the Gronwall lemma allows to conclude that 
\begin{align*}
	\mathbb{E}\sup_{s \in\,[0,t]}&\Vert \partial_t u_\mu(s)\Vert_H^4+\frac 1{\mu^2}\mathbb{E}\sup_{s \in\,[0,t]}\Vert u_\mu(s)\Vert_{H^1}^4+\frac {1}{\mu^2}\,\int_0^t\mathbb{E}\Vert \partial_t u_\mu(s)\Vert_H^2\,\Vert u_\mu(s)\Vert_H^2\,ds\\[10pt]
	&+\frac{1}{\mu}\,\int_0^t\mathbb{E}\Vert \partial_t u_\mu(s)\Vert_H^4\,ds\leq\frac{c_T}{\mu^3}+\frac{c}{\mu^2}\Lambda_1+\frac {c_T}{\mu^3}\int_0^t\mathbb{E}\,\Vert u_\mu(s)\Vert_{H^1}^2\,ds+\frac {c_T}\mu\,\mathbb{E}\sup_{s \in\,[0,t]}\Vert \partial_t u_\mu(s)\Vert_H^2, 
\end{align*}
and \eqref{sa1-bis} follows, once we multiply both sides by $\mu^2$.

\end{proof}

\begin{Lemma}
\label{prop4.1-tris}
Assume Hypotheses \ref{as1} to  \ref{as4}, and fix $T>0$ and $(u^\mu_0,v^\mu_0) \in\,\mathcal{H}_2$ satisfying \eqref{finex}. Then, there exists $c_{T}>0$, depending on $\Lambda_1$,  such that for every $\mu \in\,(0,1)$ and $t \in\,[0,T]$
\begin{equation}
\label{sg27-tris-bis}
\begin{aligned}
&\mathbb{E}\sup_{s \in\,[0,t]}\Vert  u_\mu(s) \Vert _{H}^2 +\int_0^t \mathbb{E}\Vert  u_\mu(s)\Vert _{H^1}^2 ds\\[10pt]
& \hsp\leq c_{T} \le(1+ \mu \int_0^t \mathbb{E}\Vert  \partial_t u_\mu(s)\Vert _{H^1}^2 ds +\mu^2\,  \mathbb{E}\sup_{s \in\,[0,t]}\Vert  \partial_t u_\mu(s)\Vert _{H}^2\right). 
\end{aligned}
\end{equation}
\end{Lemma}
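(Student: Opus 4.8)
The plan is to test the velocity equation against $u_\mu$ itself, which is the only pairing that produces the dissipative quantity $\int_0^t\|u_\mu\|_{H^1}^2$ on the left. Concretely, I would apply It\^o's formula to the functional $(u,v)\mapsto\langle\mu v,u\rangle_H$ along $(u_\mu(t),\partial_t u_\mu(t))$. Since the noise enters only the $v$--component while $u_\mu$ has finite variation, there is no It\^o cross-correction, and one obtains
\begin{align*}
\int_0^t\|u_\mu(s)\|_{H^1}^2\,ds&=-\mu\langle\partial_t u_\mu(t),u_\mu(t)\rangle_H+\mu\langle v^\mu_0,u^\mu_0\rangle_H+\mu\int_0^t\|\partial_t u_\mu(s)\|_H^2\,ds\\
&\quad-\int_0^t\langle\gamma(u_\mu)\partial_t u_\mu,u_\mu\rangle_H\,ds+\int_0^t\langle f(u_\mu),u_\mu\rangle_H\,ds+\int_0^t\langle u_\mu,\sigma(u_\mu)\,dw^Q\rangle_H.
\end{align*}
In parallel I would use the elementary identity $\|u_\mu(t)\|_H^2=\|u^\mu_0\|_H^2+2\int_0^t\langle u_\mu,\partial_t u_\mu\rangle_H\,ds$ to feed the $\sup_{[0,t]}\|u_\mu\|_H^2$ part of the left-hand side.

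Most terms are harmless and fall within the announced budget. The boundary term $-\mu\langle\partial_t u_\mu(t),u_\mu(t)\rangle_H$ is split by Young's inequality into $\tfrac18\sup_{[0,t]}\|u_\mu\|_H^2+2\mu^2\sup_{[0,t]}\|\partial_t u_\mu\|_H^2$; the initial term $\mu\langle v^\mu_0,u^\mu_0\rangle_H$ is bounded through \eqref{finex} by a constant depending on $\Lambda_1$; the term $\mu\int_0^t\|\partial_t u_\mu\|_H^2\,ds$ is $\le c\,\mu\int_0^t\|\partial_t u_\mu\|_{H^1}^2\,ds$ by the Poincar\'e inequality $\|\cdot\|_H\le c\|\cdot\|_{H^1}$; the reaction term is controlled by the $H$--Lipschitz, linear growth of $f$ (Hypothesis \ref{as3}), producing $c\,t+c\int_0^t\|u_\mu\|_H^2\,ds$ to be fed into Gronwall; and the stochastic integral is handled by the Burkholder--Davis--Gundy inequality together with the uniform bound \eqref{gsb40} on $\|\sigma(u_\mu)\|_{\mathcal{L}_2(H_Q,H)}$, giving $\tfrac18\,\mathbb E\sup_{[0,t]}\|u_\mu\|_H^2+c_T$ after taking $\mathbb E\sup_{[0,t]}$.

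The whole difficulty is concentrated in the friction term $\int_0^t\langle\gamma(u_\mu)\partial_t u_\mu,u_\mu\rangle_H\,ds$. In the scalar/constant case this is a perfect time derivative and reduces to a boundary term, but here, with $\gamma(u)$ acting as the spatially constant matrix $\mathfrak g(u)$ of Hypothesis \ref{as4}, the pairing $\langle\mathfrak g(u_\mu)\partial_t u_\mu,u_\mu\rangle_H$ is neither dissipative (the coercivity \eqref{gsb15-bis} only controls $\langle\mathfrak g(u_\mu)\partial_t u_\mu,\partial_t u_\mu\rangle$) nor exact. I would split $\mathfrak g=\mathfrak g_{\mathrm{sym}}+\mathfrak g_{\mathrm{skew}}$. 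The symmetric part is captured by the potential $\Gamma(u):=\tfrac12\langle\mathfrak g(u)u,u\rangle_H$, since $\tfrac{d}{ds}\Gamma(u_\mu)=\langle\mathfrak g_{\mathrm{sym}}(u_\mu)\partial_s u_\mu,u_\mu\rangle_H+\tfrac12\langle[D\mathfrak g(u_\mu)\partial_s u_\mu]u_\mu,u_\mu\rangle_H$, so this part becomes the $O(1)$ boundary contribution $\Gamma(u_\mu(t))-\Gamma(u^\mu_0)$ plus a correction driven by $D\mathfrak g$; the gyroscopic part $\int_0^t\langle\mathfrak g_{\mathrm{skew}}(u_\mu)\partial_s u_\mu,u_\mu\rangle_H$ admits no potential. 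The essential obstacle is that both the $D\mathfrak g$--correction and the skew term are of size $O(\mu^{-1/2})$ when estimated by absolute values (because $\|\partial_t u_\mu\|_H\sim\mu^{-1/2}$), which overshoots the admissible budget by a full power of $\mu^{-1/2}$: they cannot be bounded termwise and must be treated by exploiting the fast averaging of $\partial_t u_\mu$. I would do this by adding a momentum corrector of the form $\mu\langle\partial_t u_\mu,\zeta(u_\mu)\rangle_H$ to the test functional, with $\zeta$ chosen so that the $-\tfrac1\mu\langle\mathfrak g(u_\mu)\partial_t u_\mu,\zeta\rangle$ produced by its It\^o drift reproduces exactly these dangerous terms; this trades them for the $O(1)$ drift $\langle\Delta u_\mu+f(u_\mu),\zeta\rangle$, an $O(\mu^{1/2})$ boundary term and a martingale, using the boundedness and differentiability of $\mathfrak g$ and $D\mathfrak g$ (\eqref{gsb15-tris}, \eqref{na-bis}) and the interpolation $\|\partial_t u_\mu\|_{H^{\bar{s}}}\le\|\partial_t u_\mu\|_{H^1}^{\bar{s}}\|\partial_t u_\mu\|_H^{1-\bar{s}}$ with $\bar{s}<1$ to place the residual velocity norms into $\mu\int_0^t\|\partial_t u_\mu\|_{H^1}^2\,ds$ and $\mu^2\sup_{[0,t]}\|\partial_t u_\mu\|_H^2$.

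Finally, collecting all contributions, absorbing $\tfrac14\sup_{[0,t]}\|u_\mu\|_H^2$ and a small multiple of $\int_0^t\|u_\mu\|_{H^1}^2$ back into the left-hand side, and applying Gronwall's lemma to the $\int_0^t\|u_\mu\|_H^2\,ds$ term coming from $f$, one arrives at \eqref{sg27-tris-bis} after taking $\mathbb E\sup_{[0,t]}$. I expect the friction term -- specifically its gyroscopic (skew) component and the $D\mathfrak g$--correction -- to be the main obstacle: it is the one place where the naive energy estimate fails by a power of $\mu$, and matching the residual velocity norms with the precise $\mu$--weights on the right-hand side is the delicate point.
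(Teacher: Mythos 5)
Your opening identity and the peripheral estimates (boundary term via Young, initial data via \eqref{finex}, $f$ via linear growth, the stochastic integral via Burkholder--Davis--Gundy and \eqref{gsb40}) are all sound, and you have correctly located the crux in the friction term; but your treatment of that term has two genuine gaps. First, the skew part: the cancellation forces $\zeta(u)=(\mathfrak{g}^t(u))^{-1}\mathfrak{g}_{\mathrm{skew}}(u)u$, and the traded drift $\langle\Delta u_\mu,\zeta(u_\mu)\rangle_H=-\langle\nabla u_\mu,(\mathfrak{g}^t)^{-1}\mathfrak{g}_{\mathrm{skew}}\nabla u_\mu\rangle_H$ is an $O(1)$ gradient form with no sign, of exactly the same order as your unit dissipation $-\Vert u_\mu\Vert_{H^1}^2$. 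The combined weight is $I+(\mathfrak{g}^t)^{-1}\mathfrak{g}_{\mathrm{skew}}=(\mathfrak{g}^t)^{-1}\mathfrak{g}_{\mathrm{sym}}$, whose symmetric part is congruent to $\mathfrak{g}_{\mathrm{sym}}^2+\tfrac12[\mathfrak{g}_{\mathrm{sym}},\mathfrak{g}_{\mathrm{skew}}]$, and this can be indefinite: for the constant matrix $\mathfrak{g}=\bigl(\begin{smallmatrix}1&10\\-10&100\end{smallmatrix}\bigr)$, which satisfies \eqref{gsb15-bis} with $\gamma_0=1$, one gets $\bigl(\begin{smallmatrix}1&-495\\-495&10^4\end{smallmatrix}\bigr)$, indefinite — so your scheme loses dissipativity even when $D\mathfrak{g}\equiv 0$ and the $\Gamma$-correction is absent. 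Second, the $D\mathfrak{g}$-correction from the potential $\Gamma$: since $D\mathfrak{g}(u)$ is only bounded as a map on $H^1$, the functional $w\mapsto\langle[D\mathfrak{g}(u)w]u,u\rangle_H$ has its Riesz representative with respect to the $H$-pairing only in $H^{-1}$; the corrector $\zeta$ you would need therefore lives in $H^{-1}$, and the traded term $\langle\Delta u_\mu,\zeta\rangle$ then requires $u_\mu\in H^3$, which is not available at this stage (the $\mathcal{H}_3$ bounds come only later, blow up in $\mu$, and hold only for the truncated system). You also cannot appeal to the interpolation $\Vert\partial_t u_\mu\Vert_{H^{\bar{s}}}\leq\Vert\partial_t u_\mu\Vert_{H^1}^{\bar{s}}\Vert\partial_t u_\mu\Vert_{H}^{1-\bar{s}}$: condition \eqref{gsb15-tris} with $\bar{s}<1$ is assumed only when $\sigma$ is unbounded, so under the lemma's hypotheses only $D\mathfrak{g}\in\mathcal{L}(H^1,\mathbb{R}^{r\times r})$ is guaranteed.

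The paper avoids both problems at a stroke by choosing the full corrector from the outset: it tests $\mu\,\partial_t u_\mu$ against $\eta(u_\mu):=\gamma^{-1}(u_\mu)u_\mu$ instead of $u_\mu$ (with the transpose inserted if $\mathfrak{g}$ is not symmetric). Then the friction term collapses algebraically to $\langle u_\mu,\partial_t u_\mu\rangle_H$ — an exact derivative that supplies the $\sup_{[0,t]}\Vert u_\mu\Vert_H^2$ control for free, with no skew leftover and no order-one $D\mathfrak{g}$ term — while the Laplacian pairs as $-\langle\gamma^{-1}(u_\mu)\nabla u_\mu,\nabla u_\mu\rangle_H$, coercive by \eqref{xfine160}. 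Crucially, the only $D\mathfrak{g}$-term now arises from $\partial_t[\eta(u_\mu)]$ hitting the slot $\mu\,\partial_t u_\mu$, so it carries a factor $\mu$ and \emph{two} velocity factors, $\mu\langle[D\gamma^{-1}(u_\mu)\partial_t u_\mu]u_\mu,\partial_t u_\mu\rangle_H$; Young's inequality splits it into $\e\,\mu\int\Vert u_\mu\Vert_H^2\Vert\partial_t u_\mu\Vert_H^2+c_\e\,\mu\int\Vert\partial_t u_\mu\Vert_{H^1}^2$, and the first piece is converted by the fourth-moment estimate \eqref{sa1-bis} of Lemma \ref{lemma1-bis} into $c_T\int\mathbb{E}\Vert u_\mu\Vert_{H^1}^2$ (absorbed for small $\e$) plus the admissible $\mu^2\,\mathbb{E}\sup\Vert\partial_t u_\mu\Vert_H^2$. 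This closing device — Lemma \ref{lemma1-bis} — is entirely absent from your proposal, and without it your Gronwall step cannot close even if the corrector issues were repaired; conversely, no averaging or perturbed-test-function machinery is needed anywhere in this lemma.
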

\begin{proof}
We have
\begin{equation}
\label{E3-tris-bis}
\begin{aligned}
\ds{\langle \eta(u_\mu(t)),\mu\,   \partial_t u_\mu(t)\rangle_H =}  &  \ds{\langle \eta(u_0),\mu\,  v_0\rangle_H +\int_0^t\langle \partial_t \eta(u_\mu(s)),\mu\,\partial_t u_\mu(s)\rangle_H\,ds}\\[10pt]
 &\ds{\hs+\int_0^t\langle \eta(u_\mu(s)),\Delta u_\mu(s)\rangle_H\,ds-\int_0^t\langle \eta(u_\mu(s)),\gamma(u_\mu(s))\partial_t u_\mu(s)\rangle_H\,ds}\\[10pt]
  &\ds{\hs+\int_0^t\langle \eta(u_\mu(s)),f(u_\mu(s))\rangle_H\,ds+\int_0^t\langle \eta(u_\mu(s)),\sigma(u_\mu(s))\,dw^Q(s)\rangle_H}\\[10pt]
 \ds{=:}  &  \ds{\langle \eta(u_0),\mu\,  v_0\rangle_H+\sum_{k=1}^4 I_k(t),}
	\end{aligned} 
 \end{equation}
 where
\[\eta(h):=\gamma^{-1}(h)h,\ \ \ \ h \in\,H,\]

We have
\[\partial_t\,\eta(u_\mu(t))=[D\gamma^{-1}(u_\mu(t))\partial_t u_\mu(t)	]u_{\mu}(t)+\gamma^{-1}(u_\mu(t))\partial_t u_\mu(t).
\]
Then, since
\[D\gamma^{-1}(u_\mu(t))\partial_t u_\mu(t) \in\,\mathbb{R}^{r\times r},\ \ \ \ \ \gamma^{-1}(u_\mu(t))\in\,\mathbb{R}^{r\times r},\]
and
\[D\mathfrak{g}^{-1}(u)=-\mathfrak{g}^{-1}(u)\,D\mathfrak{g}(u)\mathfrak{g}^{-1}(u),\]
 we have
\begin{equation*}
\begin{array}{l}
\ds{|I_1(t)|\leq\mu\,\int_0^t \left|\langle [D\gamma^{-1}(u_\mu(s))\partial_t u_\mu(s)]u_\mu(s),\partial_t u_\mu(s)\rangle_{H}+\langle\gamma^{-1}(u_\mu(s))\partial_t u_\mu(s),\partial_t u_\mu(s)\rangle_{H^1}\right| \,ds}\\[18pt]
\ds{\leq c\,\mu\int_0^t \Vert D\mathfrak{g}(u_\mu(s))\Vert_{\mathcal{L}(H^1,\mathbb{R}^{r\times r})}\Vert \partial_t u_\mu(s)\Vert_{H^1}\Vert \partial_t u_\mu(s)\Vert_{H}\Vert u_\mu(s)\Vert_{H}\,ds+c\,\mu\int_0^t\Vert \partial_t u_\mu(s)\Vert_{H}^2\,ds.}
\end{array}
\end{equation*}
Hence,  for every $\e \in\,(0,1)$ there exists $c_\e>0$ such that 
\begin{align}
	\begin{split}
	\label{sg10-tris}
|I_1(t)|\leq 	\e \,\mu\int_0^t\Vert u_\mu(s)\Vert_{H}^2\Vert \partial_t u_\mu(s)\Vert_{H}^2\,ds+c_\e \,\mu\int_0^t\Vert \partial_t u_\mu(s)\Vert_{H^1}^2\,ds.
	\end{split}
\end{align}
Concerning $I_2(t)$, due to \eqref{xfine160}, we have
\begin{equation}
\label{sg11-tris}
I_2(t)=-\int_0^t\langle \gamma^{-1}(u_\mu(s))\nabla u_\mu(s),\nabla u_\mu(s)\rangle_H\,ds\leq -\tilde{\gamma_0}\int_0^t \Vert u_\mu(s)\Vert^2_{H^1}\,ds.	\end{equation}
As for $I_3(t)$,  we have
\begin{equation}
\label{sg 12-tris}
I_3(t)=	-\int_0^t\langle u_\mu(t),\partial_t u_\mu(s))\rangle_H\,ds=\frac12(-\Vert  u_\mu(t)\Vert_{H}^2+\Vert u^\mu_0\Vert_{H}^2).
\end{equation}
Next, concerning $I_4(t)$,  we have
\begin{equation}
\label{sg21-tris}
|I_4(t)|\leq 	c\int_0^t \Vert u_\mu(s)\Vert_{H}^2\,ds+c\,t.
\end{equation}
Finally,  we have
\begin{equation}
	\label{n11-tris}
\begin{array}{ll}
\ds{\left|\langle\eta(u_\mu(t)),\mu\,   \partial_t u_\mu(t)\rangle_H\right|} & \ds{\leq c\,\mu\,\Vert u_\mu(t)\Vert_{H}\,\Vert \partial_t u_\mu(t)\Vert_{H}\leq \frac 14\,\Vert u_\mu(t)\Vert_{H}^2+c\,\mu^2\,\Vert \partial_tu_\mu(t)\Vert_{H}^2.}
	\end{array}
	\end{equation}

Therefore, collecting together \eqref{sg10-tris}, \eqref{sg11-tris}, \eqref{sg 12-tris}, \eqref{sg21-tris} and \eqref{n11-tris}, from \eqref{E3-tris-bis} we get
\[\begin{array}{l}
\ds{\frac 14\Vert u_\mu(t)\Vert_{H}^2+\tilde{\gamma_0} \int_0^t \Vert u_\mu(s)\Vert_{H^1}^2\,ds}\\[14pt]
\ds{\hslp\leq  \langle \eta(u^\mu_0),\mu\,  v^\mu_0\rangle_H+\frac14\Vert u^\mu_0\Vert_{H}^2+\e\,\mu\int_0^t \,\Vert u_\mu(s)\Vert^2_{H}\Vert \partial_t u_\mu(s)\Vert^2_{H}\,ds+c_\e\, \mu \int_0^t\Vert \partial_t u_\mu(s)\Vert_{H^1}^2\,ds}\\[14pt]
\ds{\hsp+c\int_0^t \Vert u_\mu(s)\Vert_{H}^2\,ds+c\,t+c\,\mu^2\,\Vert \partial_tu_\mu(t)\Vert_{H}^2+\left|\int_0^t\langle \eta(u_\mu(s)),\sigma(u_\mu(s))\,dw^Q(s)\rangle_H\right|,}
\end{array}\]
so that, thanks to Gronwall's lemma and \eqref{finex} there exists $c_T>0$, depending on $\Lambda_1$, such that
\begin{equation}
\label{sg25-tris}
\begin{aligned}
\ds{\frac 12\sup_{s \in\,[0,t]}}&\ds{\Vert u_\mu(s)\Vert^2_{H}+	\frac{\tilde{\gamma_0}}{2}\int_0^t \Vert u_\mu(s)\Vert_{H^1}^2\,ds}\\[10pt]
&\hsl \leq c_{T}+\e\,c_T\,\mu\int_0^t \,\Vert u_\mu(s)\Vert^2_{H}\Vert \partial_t u_\mu(s)\Vert^2_{H}\,ds+c_{T,\e}\, \mu \int_0^t\Vert \partial_t u_\mu(s)\Vert_{H^1}^2\,ds\\[10pt]
&\ds{\hslp +c_T\,\mu^2\sup_{s \in\,[0,t]}\Vert \partial_t u_\mu(s)\Vert_{H}^2+c_T\sup_{s \in\,[0,t]}	\left|\int_0^s\langle \eta(u_\mu(r)),\sigma(u_\mu(r))\,dw^Q(r)\rangle_H\right|.}
\end{aligned}
 \end{equation}
 According to \eqref{gsb40}, 
 we have
\[\begin{aligned}
&\ds{\mathbb{E}\sup_{r \in\,[0,t]}
\left|\int_0\langle \eta(u_\mu(s)),\sigma(u_\mu(s))\,dw^Q(s)\rangle_H\right|}\\[10pt]
& \ds{\hsp\leq  c\,\mathbb{E}\left(\int_0^t\left(1+\Vert u_\mu(s)\Vert_{H}^2\right)\,ds\right)^\frac 12\leq  \frac{\tilde{\gamma_0}}{2}\,\mathbb{E}\int_0^t\Vert u_\mu(s)\Vert_{H^1}^2\,ds+c_T,}	
\end{aligned}\]
and then, if we take  the expectation in \eqref{sg25-tris}, we get
\begin{equation}
\begin{aligned}
\label{sa25-tris}
\ds{\mathbb{E}}&\ds{\sup_{s \in\,[0,t]}\Vert u_\mu(s)\Vert^2_{H}+	\int_0^t \mathbb{E}\Vert u_\mu(s)\Vert_{H^1}^2\,ds\leq c_T+\e\,c_T\,\mu\int_0^t \mathbb{E}\,\Vert u_\mu(s)\Vert^2_{H}\Vert \partial_t u_\mu(s)\Vert^2_{H}\,ds}\\[10pt]
& \hsp\ds{+c_{T,\e}\, \mu \int_0^t\mathbb{E}\,\Vert \partial_t u_\mu(s)\Vert_{H^1}^2\,ds+c_T\,\mu^2\,\mathbb{E}\sup_{s \in\,[0,t]}\Vert \partial_t u_\mu(s)\Vert_{H}^2.}\end{aligned}
 \end{equation}
According to \eqref{sa1-bis},  we have
\begin{align*}
\mu\int_0^t \mathbb{E}\,&\Vert u_\mu(s)\Vert^2_{H}\Vert \partial_t u_\mu(s)\Vert^2_{H}\,ds\leq c_T\,\left(\int_0^t \mathbb{E}\,\Vert u_\mu(s)\Vert_{H^1}^2\,ds+1\right) +c_T\,\mu^2\,\mathbb{E}\sup_{s \in\,[0,t]}\Vert \partial_t u_\mu(s)\Vert_H^2,	
\end{align*}
and if we plug this inequality into \eqref{sa25-tris} we get
\begin{align*}
\mathbb{E}&\sup_{s \in\,[0,t]}\Vert u_\mu(s)\Vert^2_{H}+	\int_0^t \mathbb{E}\Vert u_\mu(s)\Vert_{H^1}^2\,ds\\[10pt]
&\leq c_T+\e\,c_T\int_0^t \mathbb{E}\,\Vert u_\mu(s)\Vert_{H^1}^2\,ds+c_{T,\e}\, \mu \int_0^t\mathbb{E}\,\Vert \partial_t u_\mu(s)\Vert_{H^1}^2\,ds+c_{T,\e}\,\mu^2\,\mathbb{E}\sup_{s \in\,[0,t]}\Vert \partial_t u_\mu(s)\Vert_{H}^2.
 \end{align*} 
 Therefore, if we pick $\bar{\e}\, \in\,(0,1)$ such that $\bar{\e}c_T\leq 1/2$, we get
 \eqref{sg27-tris-bis}.

\end{proof}

\begin{Lemma}
\label{prop4.2-tris}
Under Hypotheses \ref{as1} to \ref{as4}, for every $T>0$ and  $(u^\mu_0, v^\mu_0) \in\,\mathcal{H}_2$ satisfying \eqref{finex},  there exists some constant $c_{T}>0$, depending on $\Lambda_1$, such that for every $ \mu \in\,(0,1)$ and $t \in\,[0,T]$
\begin{equation}
\label{sg30-tris-bis}
\begin{aligned}
\begin{split}
 \mathbb{E}\sup_{s \in\,[0,T]}&\Vert  u_\mu(s) \Vert _{H^1}^2+ \int_0^T \mathbb{E}\Vert  \partial_t u_\mu(s)\Vert _{H}^2 ds +\mu\, \mathbb{E}\sup_{s \in\,[0,T]}\Vert   \partial_t u_\mu(s) \Vert _{H}^2\leq \frac{c_T}\mu.\end{split}\end{aligned}
\end{equation}

\end{Lemma}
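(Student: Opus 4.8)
The plan is to run a single energy estimate on the scaled Hamiltonian
\[
E_\mu(t):=\mu\,\Vert\partial_t u_\mu(t)\Vert_H^2+\Vert u_\mu(t)\Vert_{H^1}^2,
\]
which is exactly $\Vert(u_\mu(t),\sqrt\mu\,\partial_t u_\mu(t))\Vert_{\mathcal H_1}^2$, so that $E_\mu(0)=\Vert(u^\mu_0,\sqrt\mu\,v^\mu_0)\Vert_{\mathcal H_1}^2\le\Lambda_1$ by \eqref{finex}. Working with the first-order system \eqref{system-corr} for $(u_\mu,v_\mu)$, $v_\mu=\sqrt\mu\,\partial_t u_\mu$, I would apply the It\^o formula to $\Vert v_\mu(t)\Vert_H^2$ (legitimate since $v_\mu$ is an $H$-valued It\^o process: its drift $\mu^{-1/2}(\Delta u_\mu-\gamma(u_\mu)v_\mu+f(u_\mu))$ lies in $H$ because $u_\mu\in H^2$, and its diffusion $\mu^{-1/2}\sigma(u_\mu)\in\mathcal L_2(H_Q,H)$), and simply differentiate the absolutely continuous map $t\mapsto\Vert u_\mu(t)\Vert_{H^1}^2$ using $\langle u_\mu,\partial_t u_\mu\rangle_{H^1}=-\langle\Delta u_\mu,\partial_t u_\mu\rangle_H$.

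The key algebraic point is that the two Laplacian contributions, $\tfrac2{\sqrt\mu}\langle v_\mu,\Delta u_\mu\rangle_H$ from $d\Vert v_\mu\Vert_H^2$ and $-\tfrac2{\sqrt\mu}\langle\Delta u_\mu,v_\mu\rangle_H$ from $\tfrac d{dt}\Vert u_\mu\Vert_{H^1}^2$, cancel exactly, leaving
\[
dE_\mu(t)=-2\langle\gamma(u_\mu)\partial_t u_\mu,\partial_t u_\mu\rangle_H\,dt+2\langle f(u_\mu),\partial_t u_\mu\rangle_H\,dt+\frac1\mu\Vert\sigma(u_\mu)\Vert^2_{\mathcal L_2(H_Q,H)}\,dt+2\langle\partial_t u_\mu,\sigma(u_\mu)\,dw^Q\rangle.
\]
The coercivity \eqref{gsb15-bis} of $\mathfrak g$ gives $\langle\gamma(u_\mu)\partial_t u_\mu,\partial_t u_\mu\rangle_H\ge\gamma_0\Vert\partial_t u_\mu\Vert_H^2$; the Lipschitz bound on $f:H\to H$ (Hypothesis \ref{as3}) with Young's inequality absorbs the $f$-term into $\gamma_0\Vert\partial_t u_\mu\Vert_H^2+c(1+\Vert u_\mu\Vert_H^2)$; and the It\^o correction is controlled by $\tfrac1\mu\Vert\sigma(u_\mu)\Vert^2_{\mathcal L_2(H_Q,H)}\le c/\mu$ thanks to \eqref{gsb40}. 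This single $c/\mu$ term, integrated in time, is precisely the source of the $c_T/\mu$ on the right-hand side of \eqref{sg30-tris-bis}; every other term is $\mu$-uniform.

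Integrating, taking the supremum over $[0,t]$ and then expectations, I would estimate the martingale by Burkholder--Davis--Gundy: its quadratic variation is bounded by $c\int_0^t\Vert\partial_t u_\mu\Vert_H^2\,ds$ (again by \eqref{gsb40}), so $2\,\mathbb E\sup_{s\le t}|\cdots|\le c\bigl(\int_0^t\mathbb E\Vert\partial_t u_\mu\Vert_H^2\,ds\bigr)^{1/2}\le\tfrac{\gamma_0}2\int_0^t\mathbb E\Vert\partial_t u_\mu\Vert_H^2\,ds+c$, with the dissipation part reabsorbed on the left. The Poincar\'e bound $\Vert u_\mu\Vert_H^2\le c\Vert u_\mu\Vert_{H^1}^2\le cE_\mu$ turns $c\int_0^t\mathbb E\Vert u_\mu\Vert_H^2\,ds$ into $c\int_0^t\mathbb E\sup_{r\le s}E_\mu(r)\,ds$, and Gronwall's lemma yields $\mathbb E\sup_{s\le t}E_\mu(s)\le(\Lambda_1+c+ct/\mu)e^{ct}\le c_T/\mu$ for $\mu\in(0,1)$; feeding this back controls $\int_0^t\mathbb E\Vert\partial_t u_\mu\Vert_H^2\,ds$ as well. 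Since $\mathbb E\sup\Vert u_\mu\Vert_{H^1}^2+\mu\,\mathbb E\sup\Vert\partial_t u_\mu\Vert_H^2\le2\,\mathbb E\sup E_\mu$, estimate \eqref{sg30-tris-bis} follows.

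The only genuinely delicate point is the $\mu$-bookkeeping: one must verify that the cancellation of the Laplacian terms is exact, so that no negative power of $\mu$ survives beyond the single $1/\mu$ It\^o correction. Technically, I would insert a localizing stopping time $\tau_N=\inf\{t:\Vert\partial_t u_\mu(t)\Vert_H>N\}$ before taking expectations to legitimise the BDG and Gronwall steps, and then let $N\to\infty$ at the end, which is justified because $\boldsymbol{u_\mu}\in L^2(\Omega;C([0,T];\mathcal H_2))$ guarantees the finiteness needed for the dominated passage to the limit.
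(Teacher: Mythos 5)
Your proposal is correct and follows essentially the same route as the paper: the paper's proof is precisely an energy estimate on $\Vert u_\mu(t)\Vert_{H^1}^2+\mu\,\Vert\partial_t u_\mu(t)\Vert_H^2$ (your $E_\mu$), with the same exact cancellation of the Laplacian terms, the coercivity \eqref{gsb15-bis}, the bound \eqref{gsb40} producing the single $c/\mu$ It\^o correction, a Burkholder--Davis--Gundy estimate absorbed into the dissipation, and Gronwall. Your explicit stopping-time localization is a harmless technical refinement that the paper leaves implicit, justified, as you note, by $\boldsymbol{u_\mu}\in L^2(\Omega;C([0,T];\mathcal{H}_2))$.
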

\begin{proof}
We have
\[\begin{array}{l}
\ds{\frac 12 d \le(\Vert  u_\mu(t)\Vert _{H^1}^2+\mu\, \Vert  \partial_t u_\mu(t)\Vert _{H}^2 \r)=-\langle \gamma(u_\mu(t))\partial_t u_\mu(t),\partial_tu_\mu(t)\rangle_{H}\,dt}\\[14pt]
\ds{\hsllp+\langle f(u_\mu(t)),\partial_tu_\mu(t)\rangle_{H}\,dt+\frac1{2\mu}\Vert \sigma(u_\mu(t))\Vert^2_{\mathcal{L}_2(H_Q,H)}\,dt+\langle \sigma(u_\mu(t))dw^Q(t),\partial_tu_\mu(t)\rangle_{H}.}\end{array}\]
Due to  \eqref{gsb15} and \eqref{gsb40}, for every $\mu \in\,(0,1)$  this gives
\[    \begin{aligned}
    \ds{\frac 12 d } & \ds{\le(\Vert  u_\mu(t)\Vert _{H^1}^2+\mu\, \Vert  \partial_t u_\mu(t)\Vert _{H}^2 \r) }\\[10pt]
    &\ds{\hsllp \leq -\frac {\gamma_0}2 \Vert  \partial_t u_\mu(t)\Vert _{H}^2\,dt+c\,\Vert  u_\mu(t)\Vert _{H}^2\,dt+\frac c\mu\,dt+\langle \partial_t u_\mu(t),\sigma(u_\mu(t))dw^Q(t)\rangle_{H},}
\end{aligned}
\]
so that, thanks to \eqref{finex},
\begin{equation}  \label{sa26-tris}
    \begin{array}{l}
 \ds{  \Vert  u_\mu(t)\Vert _{H^1}^2+\mu\, \Vert  \partial_t u_\mu(t)\Vert _{H}^2+\gamma_0\int_0^t\Vert  \partial_t u_\mu(s)\Vert _{H}^2\,ds}\\[14pt]
 \ds{\hsp\leq \frac{\Lambda_1}2+\frac{c_T}\mu +c\int_0^t \Vert u_\mu(s)\Vert_{H}^2\,ds+\int_0^t \langle \partial_t u_\mu(s),\sigma(u_\mu(s))dw^Q(s)\rangle_{H}.}
    \end{array}
\end{equation}
In particular, 
since
\begin{align*}  
\mathbb{E}\sup_{s \in\,[0,t]}&\left|\int_0^t \langle \partial_t u_\mu(s),\sigma(u_\mu(s))dw^Q(s)\rangle_{H}\right|\\[10pt]
&\leq c\,\mathbb{E}\left(\int_0^t\Vert \partial_t u_\mu(s)\Vert_{H}^2\,ds\right)^{\frac 12}\leq \frac{\gamma_0}{2}\int_0^t\mathbb{E}\,\Vert  \partial_t u_\mu(s)\Vert _{H}^2\,ds+c_T,	
\end{align*}
by taking the supremum with respect to time and the expectation in both sides of \eqref{sa26-tris}, we obtain
\[\begin{array}{l}
\ds{ \mathbb{E}\sup_{s \in\,[0,t]}\Vert  u_\mu(s)\Vert _{H^1}^2+\mu\, \mathbb{E}\sup_{s \in\,[0,t]}\Vert  \partial_t u_\mu(s)\Vert _{H}^2+\frac{\gamma_0}{2}\int_0^t\mathbb{E}\,\Vert  \partial_t u_\mu(s)\Vert _{H}^2\,ds\leq \frac{c_T}\mu +c\int_0^t \mathbb{E}\Vert u_\mu(s)\Vert_{H}^2\,ds},	
\end{array}	
\]
and the Gronwall lemma allows to obtain \eqref{sg30-tris-bis}.
\end{proof}

 From the combination of Lemma  \ref{prop4.1-tris} and Lemma \ref{prop4.2-tris},  we obtain \eqref{sg31-tris}. Moreover, if we replace \eqref{sg31-tris} and \eqref{sg30-tris-bis} into \eqref{sa1-bis} we obtain \eqref{sa1-4th-mom-H}.

\section{Uniform estimates in $\mathcal{H}_2$}  \label{ssec5.2}

We have different bounds in $\mathcal{H}_2$ depending on the fact that the mapping $\sigma:H^1\to\mathcal{L}_2(H_Q,H^1)$ is bounded or not and for this reason we will have to consider separately the two  cases. 
First, we  construct a family of suitable truncations for the  coefficients $\gamma$ and  $\sigma$ and introduce a family of approximating problems.

For every $R\geq 1$, we fix a continuously differentiable function $\Phi_R:\mathbb{R}\to[0,1]$ having a bounded derivative and   such that 
$\Phi_R(t)=1$, if $t\leq R, $and $, \Phi_R(t)=0$, 
if $t\geq R+1$.
Once introduced $\Phi_R$, we fix
\begin{equation}
\label{xfine130}	
\bar{r} \in\,\left(\frac{2\bar{s}}{1+\bar{s}}\vee (2\bar{\kappa}-3)\vee \varrho,1\right)
\end{equation}
where $\bar{s}<1$ and $\bar{\kappa}<2$ are the constants introduced in  Hypothesis \ref{as4} and $\varrho<1$ is the constant introduced in the statement of Theorem \ref{teo3.4}.
Next,
for every $R\geq 1$ and $h \in\,H^1$ we define
\begin{equation}\label{xsa60}
\gamma_R(h):=\Phi_R(\Vert h\Vert_{H^{\bar{r}}})\,\gamma(h),\ \ \ \ \ \,f_R(h):=\Phi_R(\Vert h\Vert_{H^{\bar{r}}})\,f(h),\ \ \ \ \ \ \  \sigma_R(h):=\Phi_R(\Vert h\Vert_{H^{\bar{r}}})\,\sigma(h).\end{equation}

 Clearly, 
\begin{equation}
	\label{xfine125}
\Vert h\Vert_{H^{\bar{r}}}\leq R\Longrightarrow \gamma_R(h)=\gamma(h),\ f_R(h)=f(h),\ \sigma_R(h)=\sigma(h).	
\end{equation}
Moreover, it is immediate to check that all the mappings $\gamma_R:H^1\to\mathcal{L}(H)$ satisfy Hypothesis \ref{as1} and Condition 1 in Hypothesis \ref{as4}, with
\[\mathfrak{g}_R(h):=\gamma_0+\Phi_R(\Vert h\Vert_{H^{\bar{r}}})\left(\mathfrak{g}(h)
-\gamma_0\right),\\ \ \ \ h \in\,H^1,\] 
and all constants involved  are independent of $R\geq 1$.  In the same way, it is easy to check that all  mappings $\sigma_R:H^1\to\mathcal{L}_2(H_Q,H^1)$ satisfy Hypotheses \ref{as2}, with all constants that are independent of $R\geq 1$. Finally, if $\sigma_R:H^1\to\mathcal{L}_2(H_Q,H^1)$ is not bounded, then $\sigma:H^1\to\mathcal{L}_2(H_Q,H^1)$ is not bounded, so that, due to Condition 3 in Hypothesis \ref{as4}, $\mathfrak{g}$ and $\sigma$ satisfy \eqref{gsb15-tris} and \eqref{xfine113}. We can check that this implies that also $\mathfrak{g}_R$ and $\sigma_R$ satisfy  \eqref{gsb15-tris} and \eqref{xfine113}, with constants independent of $R\geq 1$. Finally, all $f_R$ satisfy Hypothesis \ref{as3}, and Condition 2 in Hypothesis \ref{as4}, with all constants independent of $R\geq 1$.

 	\medskip

 Now, for every $R\geq 1$ and and $\mu>0$, we consider the problem
\begin{equation}\label{SPDE-R}
 \left\{\begin{array}{l}
\ds{	\mu\, \partial_t^2 u^R_\mu(t) = \Delta u^R_\mu (t)-\gamma_R(u^R_\mu (t)) \partial_t u^R_\mu (t) + f_R(u^R_\mu (t))+ \sigma_R(u^R_\mu (t,x))dw_t^Q(t),}\\[14pt]
\ds{u^R_\mu(0,x)=u^\mu_0(x),\ \ \ \ \partial_t u^R_\mu(0,x)=v^\mu_0(x),\ \ \ \ \ \ \ \ \ u^R_\mu(t,x)=0,\ \ \ \ x \in\,\partial \mathcal{O}.}\end{array}\right.
   \end{equation}
   In view of what we have discussed above for $\gamma_R$, $f_R$ and $\sigma_R$,   we have that  for every $\mu>0$ and $(u^\mu_0, v^\mu_0) \in\,\mathcal{H}_2$, and for every $R\geq 1$, $T>0$ and $p\geq 1$ there exists a unique adapted process $u^R_\mu$ taking values in $\mathcal{K}_T$, $\mathbb{P}$-a.s., and solving equation \eqref{SPDE-R}, such that $\boldsymbol{u}^R_\mu \in\,L^2(\Omega;C([0,T];\mathcal{H}_1))$. 
 Moreover, all $\boldsymbol{u}^R_\mu$ satisfy \eqref{sg31-tris}, for some constant $c_T>0$ independent of $R\geq 1$.
 In what follows, we shall denote
 \[\Lambda_2:=\sup_{\mu \in\,(0,1)}\mu^\delta\,\Lambda_2^\mu,\]
 where $\Lambda_2^\mu$ has been defined in \eqref{iet1}. 
 
 \medskip
 In this section, we are going to prove that  if $(u^\mu_0,v^\mu_0)$ satisfy \eqref{finex} and \eqref{xsa11}, then there exists some $c_{T, R}>0$, depending on $\Lambda_1$ and $\Lambda_2$,  such that for every $\mu\in\,(0,1)$ 
 \begin{equation}
\label{sg31-bis}
\mathbb{E}\sup_{s \in\,[0,T]}\Vert  u^R_\mu(s) \Vert _{H^1}^2 +\int_0^T \mathbb{E}\Vert  u^R_\mu(s)\Vert _{H^2}^2 ds\leq c_{T,R},
\end{equation}
and
\begin{equation}
\label{sa29}
\mu\,\mathbb{E}\sup_{s \in\,[0,T]}\Vert  u^R_\mu(s) \Vert _{H^2}^2 +\mu^2\, \mathbb{E}\sup_{s \in\,[0,T]}\Vert   \partial_t u^R_\mu(s) \Vert _{H^1}^2+ \mu\int_0^T \mathbb{E}\Vert  \partial_t u^R_\mu(s)\Vert _{H^1}^2 ds\leq c_{T,R}.	
\end{equation}
We will also prove that for every $\mu \in\,(0,1)$
\begin{equation}
 \label{sg32}\sqrt{\mu}\left(\mathbb{E}\sup_{t \in\,[0,T]}\Vert  u^R_\mu(t)\Vert^2_{H^{1+\bar{r}}}	+\mu\,\mathbb{E}\sup_{t \in\,[0,T]}\Vert \partial_t u^R_\mu(t)\Vert^2_{H^{\bar{r}}}\right)\leq c_{T,R},
\end{equation}
 where $\bar{r}$ is the constant introduced in \eqref{xfine130}.

\subsection{Proof of the $\mathcal{H}_2$-bounds in the case of bounded $\sigma$}
\label{ssec6.1}
Throughout this subsection, we assume that
\begin{equation}\label{xfine127}\sup_{h \in\,H^1}\Vert \sigma(h)\Vert_{\mathcal{L}_2(H_Q,H^1)}<+\infty.\end{equation}
We will prove that in this case $\boldsymbol{u}_\mu$ itself satisfies estimates \eqref{sg31-bis}, and even a stronger version of \eqref{sg32}. Namely 
\begin{equation}
 \label{sg32-bis}\sqrt{\mu}\left(\mathbb{E}\sup_{t \in\,[0,T]}\Vert  u_\mu(t)\Vert^2_{H^{2}}	+\mu\,\mathbb{E}\sup_{t \in\,[0,T]}\Vert \partial_t u_\mu(t)\Vert^2_{H^{1}}\right)\leq c_{T}.
\end{equation}
In particular,  all $\boldsymbol{u}^R_\mu$ satisfy the same bounds as well, for some constant  $c_T>0$ independent of $R\geq 1$.

Moreover, we will also prove  that for all $\mu \in\,(0,1)$
\begin{align}
 \begin{split}
 \label{sa1-final}
 \mu\,&\mathbb{E} \sup_{t \in\,[0,T]}\Vert u_\mu(t)\Vert_{H^2}^4+\mu^2\, \mathbb{E}\sup_{r \in\,[0,T]}\Vert   \partial_t u_\mu(r) \Vert _{H^1}^2+\mu^3\,\mathbb{E}\sup_{t \in\,[0,T]}\Vert \partial_t u_\mu(t)\Vert_{H^1}^4\\[10pt]
 &+ \mu\int_0^T \mathbb{E}\Vert  \partial_t u_\mu(s)\Vert _{H^1}^2 ds+\mu^2\int_0^T\mathbb{E}\,\Vert \partial_t u_\mu(t)\Vert_{H^1}^4\,dt+\mu\int_0^T\mathbb{E}\,\Vert u_\mu(t)\Vert_{H^2}^2\Vert \partial_t u_\mu(t)\Vert_{H^1}^2\,dt\leq c_{T}.\end{split}
 \end{align}

We start with the following lemma, that is an analogous to Lemma \ref{lemma1-bis}.

 \begin{Lemma}
 \label{lemma1}
 Assume Hypotheses \ref{as1} to \ref{as4} and fix $T>0$ and $(u^\mu_0,v^\mu_0) \in\,\mathcal{H}_2$ satisfying \eqref{finex} and \eqref{xsa11}. Then, there exists $c_T>0$, depending on $\Lambda_1$ and $\lambda_2$,  such that for every $\mu \in\,(0,1)$ and $t \in\,[0,T]$
 \begin{align} \begin{split}
 \label{sa1}
 \mu^2\,&\mathbb{E}\sup_{s \in\,[0,t]}\Vert \partial_t u_\mu(s)\Vert_{H^1}^4+\mathbb{E} \sup_{s \in\,[0,t]}\Vert u_\mu(s)\Vert_{H^2}^4+\mu\int_0^t\mathbb{E}\,\Vert \partial_t u_\mu(s)\Vert_{H^1}^4\,ds\\[10pt]
 &+\int_0^t\mathbb{E}\,\Vert u_\mu(s)\Vert_{H^2}^2\Vert \partial_t u_\mu(s)\Vert_{H^1}^2\,ds\leq \frac{c_T}\mu\left(\int_0^t\mathbb{E}\Vert u_\mu(s)\Vert_{H^2}^2\,ds+1\right)+ c_T\mu\,\mathbb{E}\sup_{s \in\,[0,t]}\Vert \partial_t u_\mu(s)\Vert_{H^1}^2. \end{split}\end{align}
 	
 \end{Lemma}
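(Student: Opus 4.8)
The plan is to reproduce the argument of Lemma \ref{lemma1-bis} one derivative higher, replacing $H$ by $H^1$ and $H^1$ by $H^2$ throughout. Concretely, I would apply It\^o's formula in $H^1$ to the functional $K(v)=\Vert v\Vert_{H^1}^4$ evaluated along $v=\partial_t u_\mu(t)$, where $\partial_t u_\mu$ solves $\mu\,d(\partial_t u_\mu)=[\Delta u_\mu-\gamma(u_\mu)\partial_t u_\mu+f(u_\mu)]\,dt+\sigma(u_\mu)\,dw^Q$. Since $DK(v)=4\Vert v\Vert_{H^1}^2\,v$ and $D^2K(v)=8\,v\otimes v+4\Vert v\Vert_{H^1}^2\,I$ (all inner products in $H^1$), the It\^o correction is $\sum_k\langle D^2K(v)\sigma(u)Qe_k,\sigma(u)Qe_k\rangle_{H^1}\le c\,\Vert v\Vert_{H^1}^2$, where the boundedness of $\sigma$ in $\mathcal L_2(H_Q,H^1)$ guaranteed by the standing assumption \eqref{xfine127} is used (this is exactly why the present subsection is restricted to bounded $\sigma$). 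This is the $H^1$-analogue of the trace estimate opening the proof of Lemma \ref{lemma1-bis}.

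Three structural identities then drive the computation, each mirroring its $\mathcal H_1$ counterpart. First, a spectral computation gives $\langle\Delta u_\mu,\partial_t u_\mu\rangle_{H^1}=-\langle u_\mu,\partial_t u_\mu\rangle_{H^2}=-\tfrac12\,\partial_t\Vert u_\mu\Vert_{H^2}^2$, which produces the exact-differential terms in $\Vert u_\mu\Vert_{H^2}^2$. Second, and this is the crucial point, the friction is coercive in $H^1$:
\[\langle\gamma(u_\mu)\partial_t u_\mu,\partial_t u_\mu\rangle_{H^1}\geq\gamma_0\,\Vert\partial_t u_\mu\Vert_{H^1}^2.\]
This uses Condition 1 of Hypothesis \ref{as4} in an essential way: because $\gamma(u_\mu)=\mathfrak g(u_\mu)$ acts as a matrix that is constant in $x$, the gradient commutes with it, $\nabla(\mathfrak g(u_\mu)\partial_t u_\mu)=\mathfrak g(u_\mu)\nabla\partial_t u_\mu$, and \eqref{gsb15-bis} then yields coercivity of the full $H^1$-norm. (For the local friction of Example \ref{Example-1} this commutation fails, which is precisely why Hypothesis \ref{as4} is imposed for the small-mass limit.) Third, the forcing is controlled by \eqref{n30} via $|\langle f(u_\mu),\partial_t u_\mu\rangle_{H^1}|\le c(1+\Vert u_\mu\Vert_{H^1})\Vert\partial_t u_\mu\Vert_{H^1}$ together with Young's inequality, which absorbs half of the friction dissipation. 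Applying It\^o also to $\Vert\partial_t u_\mu\Vert_{H^1}^2$, multiplying that identity by $\tfrac2\mu\Vert u_\mu\Vert_{H^2}^2$, and adding, one arrives at the exact $H^1$-transcription of \eqref{sa3-bis}: a differential inequality for $d\Vert\partial_t u_\mu\Vert_{H^1}^4+\tfrac2\mu d(\Vert\partial_t u_\mu\Vert_{H^1}^2\Vert u_\mu\Vert_{H^2}^2)+\tfrac1{\mu^2}d\Vert u_\mu\Vert_{H^2}^4$, with dissipative terms $\tfrac{2\gamma_0}\mu\Vert\partial_t u_\mu\Vert_{H^1}^4+\tfrac{2\gamma_0}{\mu^2}\Vert\partial_t u_\mu\Vert_{H^1}^2\Vert u_\mu\Vert_{H^2}^2$ on the left, and a right-hand side of the form $\tfrac c{\mu^2}\Vert u_\mu\Vert_{H^2}^4+\tfrac c{\mu^3}\Vert u_\mu\Vert_{H^2}^2+\tfrac c{\mu^3}$ plus the two stochastic integrals. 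I would then integrate in time, take the supremum and expectation, and treat the two martingale terms by the Burkholder--Davis--Gundy inequality exactly as in Lemma \ref{lemma1-bis}: using again \eqref{xfine127} to bound the quadratic variations, the first integral is dominated by $\tfrac{\gamma_0}{\mu^2}\mathbb E\int_0^t\Vert\partial_t u_\mu\Vert_{H^1}^2\Vert u_\mu\Vert_{H^2}^2\,ds+\tfrac1{2\mu^2}\mathbb E\sup_{s\le t}\Vert u_\mu\Vert_{H^2}^4+\tfrac c{\mu^2}$ and the second by $\tfrac{\gamma_0}\mu\mathbb E\int_0^t\Vert\partial_t u_\mu\Vert_{H^1}^4\,ds+\tfrac c\mu\mathbb E\sup_{s\le t}\Vert\partial_t u_\mu\Vert_{H^1}^2$; both absorb into the dissipation on the left. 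A Gronwall argument against $\tfrac c{\mu^2}\int_0^t\mathbb E\Vert u_\mu\Vert_{H^2}^4\,ds$ (bounding $\mathbb E\Vert u_\mu(s)\Vert_{H^2}^4$ by $2\mu^2$ times the running left-hand side) then closes the estimate, up to the initial-data contribution and the two reserved quantities $\int_0^t\mathbb E\Vert u_\mu\Vert_{H^2}^2\,ds$ and $\mathbb E\sup_{s\le t}\Vert\partial_t u_\mu\Vert_{H^1}^2$ that are allowed on the right of \eqref{sa1}.

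The one genuinely new point, and the step I expect to be the most delicate, is matching the singular powers of $\mu$ in the initial data with the $\tfrac{c_T}\mu$ budget of \eqref{sa1}. The initial value of the energy is $\Vert v_0^\mu\Vert_{H^1}^4+\tfrac2\mu\Vert u_0^\mu\Vert_{H^2}^2\Vert v_0^\mu\Vert_{H^1}^2+\tfrac1{\mu^2}\Vert u_0^\mu\Vert_{H^2}^4$; using $\Vert u_0^\mu\Vert_{H^2}^2\le\Lambda_2^\mu\le\mu^{-\delta}\Lambda_2$ and $\Vert v_0^\mu\Vert_{H^1}^2\le\mu^{-1}\Lambda_2^\mu\le\mu^{-1-\delta}\Lambda_2$, each of the three summands is bounded by $c\,\mu^{-2-2\delta}\Lambda_2^2$. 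After the final multiplication by $\mu^2$ that converts the Gronwall output into \eqref{sa1}, this contributes $c\,\mu^{-2\delta}\Lambda_2^2$, and precisely because $\delta<1/2$ one has $\mu^{-2\delta}\le\mu^{-1}$ for $\mu\in(0,1)$, so it is absorbed into $\tfrac{c_T}\mu\cdot 1$ with $c_T$ depending on $\Lambda_2$. This is exactly where the hypothesis $\delta\in(0,1/2)$ of \eqref{xsa11} is consumed. Finally, since a priori only $\boldsymbol u_\mu\in\mathcal H_2$ is known, the pairing $\langle\Delta u_\mu,\partial_t u_\mu\rangle_{H^1}$ and the It\^o formula in $H^1$ are formal; as usual they are justified by carrying out the computation on the Galerkin projections onto $\mathrm{span}\{e_1,\dots,e_N\}$, where everything is finite-dimensional and smooth, deriving \eqref{sa1} uniformly in $N$, and passing to the limit by lower semicontinuity.
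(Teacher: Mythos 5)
Your proposal is correct and follows essentially the same route as the paper: the same Lyapunov functional $\Vert \partial_t u_\mu\Vert_{H^1}^4+\tfrac2\mu\Vert \partial_t u_\mu\Vert_{H^1}^2\Vert u_\mu\Vert_{H^2}^2+\tfrac1{\mu^2}\Vert u_\mu\Vert_{H^2}^4$ built from two applications of It\^o's formula, the same $H^1$-coercivity of the friction (which indeed rests on Condition 1 of Hypothesis \ref{as4}, exactly as you note), the same absorption of the stochastic integrals into the dissipation, the same Gronwall step and final multiplication by $\mu^2$, and the same consumption of $\delta<1/2$ on the initial-data term $\mu^{-2(1+\delta)}\Lambda_2^2$. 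The only deviation is cosmetic: in the martingale bounds you invoke the boundedness \eqref{xfine127} directly, whereas the paper uses the linear growth \eqref{gsb43} and then eliminates the resulting $\mathbb{E}\sup_s\Vert u_\mu(s)\Vert_{H^1}^4$ term via \eqref{xfine140} (a choice that lets the computation be recycled later, in Section \ref{ssec5.3}, for unbounded $\sigma_R$); both are valid under the standing assumption of the subsection.
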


\begin{proof}
If we define $K(v)=\Vert v\Vert^4_{H^1}$, we have
\[DK(v)=4\,\Vert v\Vert_{H^1}^2 (-\Delta)v,\ \ \ \ \ \ D^2 K(v)=8\, (-\Delta)v\otimes (-\Delta)v+4\,\Vert v\Vert_{H^1}^2\,(-\Delta),\]
and hence
\begin{align*}
\sum_{k=1}^\infty D^2K(v)(\sigma(u)Q e_k,\sigma(u)Q e_k)=	8\,\Vert [\sigma(u)Q]^\star (-\Delta)v\Vert_{H}^2+4\,\Vert v\Vert_{H^1}^2 \Vert\sigma(u)\Vert^2_{\mathcal{L}_2(H_Q,H^1)}.
\end{align*}
For every $h \in\,H$, we have
\[\left|\langle [\sigma(u)Q]^\star (-\Delta)v,h\rangle_H\right|=\left|\langle v,[\sigma(u)Q] h\rangle_{H^1}\right|\leq \Vert \sigma(u)\Vert_{\mathcal{L}(H_Q,H^1)}\Vert v\Vert_{H^1}\Vert h\Vert_H,\]
so that, thanks  to \eqref{xfine127}, we get
\[\sup_{u \in\,H^1}\Vert[\sigma(u)Q]^\star (-\Delta)v\Vert_{H}\leq c\,\Vert v\Vert_{H^1},\]
and
\[\sum_{k=1}^\infty D^2K(v)(\sigma(u)Q e_k,\sigma(u)Q e_k)\leq c\,\Vert v\Vert_{H^1}^2.\]

Now, if we apply
It\^o's formula to $K$ and $\partial_t u_\mu(t)$, by proceeding as in the proof of Lemma \ref{lemma1-bis} we have	
\begin{align}
\begin{split}   \label{sa2}
	&d\Vert \partial_t u_\mu(t)\Vert_{H^1}^4+\frac{2\gamma_0}{\mu}\,\Vert \partial_t u_\mu(t)\Vert_{H^1}^4\,dt+\frac 2\mu\,d\left(\Vert \partial_t u_\mu(t)\Vert_{H^1}^2\,\Vert u_\mu(t)\Vert_{H^2}^2\right)\leq  \frac 2\mu\,d\,\Vert \partial_t u_\mu(t)\Vert_{H^1}^2\,\Vert u_\mu(t)\Vert_{H^2}^2\\[10pt]
	&+\frac{c}\mu\,\left(1+\Vert u_\mu(t)\Vert_{H^1}^4\right)\,dt+\frac 4\mu\langle\sigma(u_\mu(t))dw^Q(t),\partial_t u_\mu(t)\rangle_{H^1}\Vert \partial_t u_\mu(t)\Vert_{H^1}^2+\frac c{\mu^3}\,dt.
	\end{split}
\end{align}
Next, as we did in the proof of Lemma \ref{lemma1-bis} for $\Vert \partial_t u_\mu(t)\Vert_{H}^2$, we apply It\^o's formula to $\Vert \partial_t u_\mu(t)\Vert_{H^1}^2$ and we get
\begin{align*}
	\frac 2\mu\,d\,&\Vert \partial_t u_\mu(t)\Vert_{H^1}^2\,\Vert u_\mu(t)\Vert_{H^2}^2\leq -\frac 1{\mu^2}\,d\,\Vert u_\mu(t)\Vert_{H^2}^4-\frac{2\gamma_0}{\mu^2}\,\Vert \partial_t u_\mu(t)\Vert_{H^1}^2 \Vert u_\mu(t)\Vert_{H^2}^2\,dt\\[10pt]
	&\hsllp+\frac{c}{\mu^2}\,\Vert u_\mu(t)\Vert_{H^2}^4\,dt+\frac c{\mu^3}\,\Vert u_\mu(t)\Vert_{H^2}^2\,dt+\frac 4{\mu^2}\langle \sigma(u_\mu(t))dw^Q(t),\partial_t u_\mu(t)\rangle_{H^1}\Vert u_\mu(t)\Vert_{H^2}^2.
\end{align*}
Therefore,  we replace the inequality above into \eqref{sa2} and we get
\begin{align}
\begin{split}   \label{sa3}
	&d\Vert \partial_t u_\mu(t)\Vert_{H^1}^4+\frac {2}{\mu}\,d\left(\Vert \partial_t u_\mu(t)\Vert_{H^1}^2\,\Vert u_\mu(t)\Vert_{H^2}^2\right)+\frac 1{\mu^2}\,d\,\Vert u_\mu(t)\Vert_{H^2}^4\\[10pt]
	&\hslp+\frac{2\gamma_0}{\mu}\,\Vert \partial_t u_\mu(t)\Vert_{H^1}^4\,dt+\frac{2\gamma_0}{\mu^2}\,\Vert \partial_t u_\mu(t)\Vert_{H^1}^2 \Vert u_\mu(t)\Vert_{H^2}^2\,dt\\[10pt]
	&\leq\frac{c}{\mu^2}\,\Vert u_\mu(t)\Vert_{H^2}^4\,dt+\frac c{\mu^3}\,\Vert u_\mu(t)\Vert_{H^2}^2\,dt+\frac c{\mu^3}\,dt\\[10pt]
	&+\frac 4{\mu^2}\langle \sigma(u_\mu(t))dw^Q(t),\partial_t u_\mu(t)\rangle_{H^1}\Vert u_\mu(t)\Vert_{H^2}^2+\frac 4\mu\langle\sigma(u_\mu(t))dw^Q(t),\partial_t u_\mu(t)\rangle_{H^1}\Vert \partial_t u_\mu(t)\Vert_{H^1}^2.  
		\end{split}
\end{align}
Thanks to \eqref{gsb43}, we have
\begin{align*}\frac 4{\mu^2}\,\mathbb{E}&\sup_{s \in\,[0,t]}\left\vert\int_0^s \langle \sigma(u_\mu(r))dw^Q(r),\partial_t u_\mu(r)\rangle_{H^1}\Vert u_\mu(r)\Vert_{H^2}^2\right\vert\\[10pt]
&\leq \frac c{\mu^2}\,\mathbb{E}\left(\int_0^t\Vert \partial_t u_\mu(r)\Vert_{H^1}^2\left(\Vert u_\mu(r)\Vert_{H^1}^2+1\right)\Vert u_\mu(r)\Vert_{H^2}^4\,dr\right)^{\frac 12}\\[10pt]
&\hsl\leq \frac {\gamma_0}{\mu^2}\,	 \int_0^t \mathbb{E}\Vert \partial_t u_\mu(r)\Vert_{H^1}^2\Vert u_\mu(r)\Vert_{H^2}^2\,dr+\frac{c}{\mu^2}\mathbb{E}\sup_{r \in\,[0,t]}\left(\Vert u_\mu(r)\Vert_{H^1}^2+1\right)\Vert u_\mu(r)\Vert_{H^2}^2,\\[10pt]
&\leq \frac {\gamma_0}{\mu^2}\,	 \int_0^t \mathbb{E}\Vert \partial_t u_\mu(r)\Vert_{H^1}^2\Vert u_\mu(r)\Vert_{H^2}^2\,dr+\frac 1{2\mu^2}\mathbb{E}\sup_{r \in\,[0,t]}\mathbb{E}\,\Vert u_\mu(r)\Vert_{H^2}^4+\frac c{\mu^2}.
\end{align*}

Moreover,
\begin{align*}
\frac 4\mu\,\mathbb{E}&\sup_{s \in\,[0,t]}\left\vert\int_0^s\Vert \partial_t u_\mu(r)\Vert_{H^1}^2\langle\sigma(u_\mu(r))dw^Q(r),\partial_t u_\mu(r)\rangle_{H^1}\right\vert	\\[10pt]
&\leq \frac c\mu\, \mathbb{E}\left(\int_0^t\Vert \partial_t u_\mu(r)\Vert_{H^1}^6\left(\Vert u_\mu(r)\Vert_{H^1}^2+1\right)\,dr\right)^{\frac 12}\\[10pt]
&\leq \frac c\mu\, \mathbb{E}\left(\sup_{r \in\,[0,t]}\Vert \partial_t u_\mu(r)\Vert_{H^1}^2\left(\Vert u_\mu(r)\Vert_{H^1}^2+1\right)\int_0^t\Vert \partial_t u_\mu(r)\Vert_{H^1}^4\,dr\right)^{\frac 12}\\[10pt]
&\leq \frac{\gamma_0}{4\mu} \mathbb{E}\int_0^t\Vert \partial_t u_\mu(r)\Vert_{H^1}^4\,dr+\frac c\mu\,\mathbb{E}\sup_{r \in\,[0,t]}\Vert \partial_t u_\mu(r)\Vert_{H^1}^2\left(\Vert u_\mu(r)\Vert_{H^1}^2+1\right),
\end{align*}
so that we have
\begin{align*}
\frac 4\mu\,\mathbb{E}&\sup_{s \in\,[0,t]}\left\vert\int_0^s\Vert \partial_t u_\mu(r)\Vert_{H^1}^2\langle\sigma(u_\mu(r))dw^Q(r),\partial_t u_\mu(r)\rangle_{H^1}\right\vert	\\[10pt]
&\leq \frac{\gamma_0}{\mu} \mathbb{E}\int_0^t\Vert \partial_t u_\mu(r)\Vert_{H^1}^4\,dr+\frac {1}{2}\,\mathbb{E}\sup_{r \in\,[0,t]}\Vert \partial_t u_\mu(r)\Vert_{H^1}^4+\frac c{\mu^2}\,\mathbb{E}\sup_{r \in\,[0,t]}\Vert  u_\mu(r)\Vert_{H^1}^4+\frac c{\mu^2}.
\end{align*}
In particular, if, as we did in the proof of Lemma \ref{lemma1-bis}, we  first integrate with respect to time both sides in \eqref{sa3}, then  take the supremum with respect to time,  take the expectation and finally apply Gronwall's lemma, thanks to \eqref{xsa11} we get
\begin{align}\begin{split}  \label{xsa2}
	\mathbb{E}\sup_{s \in\,[0,t]}&\Vert \partial_t u_\mu(s)\Vert_{H^1}^4+\frac 1{\mu^2}\mathbb{E}\sup_{s \in\,[0,t]}\Vert u_\mu(s)\Vert_{H^2}^4+\frac{1}{\mu}\,\int_0^t\mathbb{E}\Vert \partial_t u_\mu(s)\Vert_{H^1}^4\,ds\\[10pt]
	&\hsl+\frac 1{\mu^2}\int_0^t\mathbb{E}\,\Vert u_\mu(s)\Vert_{H^2}^2\Vert \partial_t u_\mu(s)\Vert_{H^1}^2\,ds\leq \frac{\Lambda_2^2}{\mu^{2(1+\delta)}}+\frac{c_T}{\mu^3}\\[10pt]
	&+ \frac {c_T}{\mu^3}\int_0^t\mathbb{E}\Vert u_\mu(s)\Vert_{H^2}^2\,ds+\frac {c_T}\mu\,\mathbb{E}\sup_{s \in\,[0,t]}\Vert \partial_t u_\mu(s)\Vert_{H^1}^2+\frac {c_T}{\mu^2}\,\mathbb{E}\sup_{s \in\,[0,t]}\Vert  u_\mu(s)\Vert_{H^1}^4. \end{split}
\end{align}
According to \eqref{sa1-bis} and \eqref{sg31-tris} we have
\begin{equation}   \label{xfine140}\mathbb{E}\sup_{s \in\,[0,t]}\Vert  u_\mu(s)\Vert_{H^1}^4\leq \frac{c_T}\mu\left(1+\mu^2\, \mathbb{E}\,\sup_{s \in\,[0,t]}\Vert \partial_t u_\mu(s)\Vert_{H}^2\right).\end{equation}
Thus, recalling that $\delta<1/2$, \eqref{sa1} follows once we replace the inequality above in \eqref{xsa2} and multiply both sides by $\mu^2$.
\end{proof}

\begin{Lemma}
\label{prop4.1-bis}
Assume Hypotheses \ref{as1} to  \ref{as4}, and fix $T>0$ and $(u^\mu_0,v^\mu_0) \in\,\mathcal{H}_2$ satisfying \eqref{finex} and \eqref{xsa11}. Then, there exists $c_{T}>0$, depending on $\Lambda_1$ and $\Lambda_2$,  such that for every $\mu \in\,(0,1)$ and $t \in\,[0,T]$
\begin{equation}
\label{sg27-bis}
\begin{aligned}
&\mathbb{E}\sup_{s \in\,[0,t]}\Vert  u_\mu(s) \Vert _{H^1}^2 +\int_0^t \mathbb{E}\Vert  u_\mu(s)\Vert _{H^2}^2 ds\\[10pt]
& \hsp\leq c_{T} \le(1+  \mu \int_0^t \mathbb{E}\Vert  \partial_t u_\mu(s)\Vert _{H^1}^2 ds +\mu^2\,  \mathbb{E}\sup_{r \in\,[0,t]}\Vert  \partial_t u_\mu(r)\Vert _{H^1}^2\r). 
\end{aligned}
\end{equation}
\end{Lemma}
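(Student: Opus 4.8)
The plan is to prove \eqref{sg27-bis} by lifting the argument of Lemma \ref{prop4.1-tris} up by one order of regularity, replacing every $H$-inner product there by an $H^1$-inner product. Concretely, I would again use the test function $\eta(h):=\gamma^{-1}(h)h=\mathfrak{g}^{-1}(h)h$ and apply Itô's formula to $\langle\eta(u_\mu(t)),\mu\,\partial_t u_\mu(t)\rangle_{H^1}$. Since $s\mapsto\eta(u_\mu(s))$ has finite variation in time (only $\mu\,\partial_t u_\mu$ carries the martingale part) and $d(\mu\,\partial_t u_\mu)=(\Delta u_\mu-\gamma(u_\mu)\partial_t u_\mu+f(u_\mu))\,dt+\sigma(u_\mu)\,dw^Q$, this is really the product rule and produces exactly the $H^1$-analogue of the identity \eqref{E3-tris-bis}: a $\partial_t\eta$-term $I_1$, a Laplacian term $I_2$, a friction term $I_3$, a nonlinearity term $I_4$, a terminal boundary contribution, and a stochastic integral, with \emph{no} extra second-order correction because of the finite-variation factor.

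The decisive step is the Laplacian term $I_2=\int_0^t\langle\eta(u_\mu),\Delta u_\mu\rangle_{H^1}\,ds$, and here the restriction in Hypothesis \ref{as4} to an $x$-independent friction matrix $\mathfrak{g}$ is essential. Writing the $H^1$-pairing spectrally as $\langle a,b\rangle_{H^1}=\langle(-L)a,b\rangle_H$ and using that the matrix $\mathfrak{g}^{-1}(u_\mu)$ is constant in $x$ and hence commutes with $L$, one gets $\langle\eta(u_\mu),\Delta u_\mu\rangle_{H^1}=-\langle\mathfrak{g}^{-1}(u_\mu)(-L)u_\mu,(-L)u_\mu\rangle_H\leq-\tilde{\gamma_0}\,\|u_\mu\|_{H^2}^2$ by the coercivity \eqref{xfine160}, with no commutator remainder. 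This yields the good dissipative term $-\tilde{\gamma_0}\int_0^t\|u_\mu\|_{H^2}^2\,ds$ on the left. By the same commutation, the friction term collapses to $I_3=-\tfrac12(\|u_\mu(t)\|_{H^1}^2-\|u_0^\mu\|_{H^1}^2)$, exactly as in Lemma \ref{prop4.1-tris}, supplying the $\|u_\mu\|_{H^1}^2$ on the left, while $I_4\leq c\int_0^t(1+\|u_\mu\|_{H^1}^2)\,ds$ follows from \eqref{n30} and the bound $\|\eta(u_\mu)\|_{H^1}\leq c\|u_\mu\|_{H^1}$.

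The remaining terms must be fit to the right-hand side of \eqref{sg27-bis}. The terminal evaluation is bounded by $\tfrac14\|u_\mu(t)\|_{H^1}^2+c\mu^2\|\partial_t u_\mu(t)\|_{H^1}^2$, and the initial datum by a constant depending on $\Lambda_1,\Lambda_2$ through \eqref{finex} and \eqref{xsa11}; the stochastic integral is treated by Burkholder--Davis--Gundy together with \eqref{gsb43}, the estimate $\|\eta(u_\mu)\|_{H^1}\leq c\|u_\mu\|_{H^1}$, and Young's inequality, absorbing a fraction of $\mathbb{E}\sup_{s\leq t}\|u_\mu\|_{H^1}^2$. The genuinely delicate term is $I_1=\int_0^t\langle[D\mathfrak{g}^{-1}(u_\mu)\partial_t u_\mu]u_\mu+\mathfrak{g}^{-1}(u_\mu)\partial_t u_\mu,\mu\,\partial_t u_\mu\rangle_{H^1}\,ds$. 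Its second summand is bounded by $c\mu\int_0^t\|\partial_t u_\mu\|_{H^1}^2\,ds$, while for the first I would use that $D\mathfrak{g}^{-1}(u_\mu)\partial_t u_\mu$ is an $x$-independent matrix with $\|D\mathfrak{g}^{-1}(u_\mu)\partial_t u_\mu\|\leq c\|\partial_t u_\mu\|_{H^1}$ by \eqref{sa51}, so that this summand is bounded by $c\mu\int_0^t\|\partial_t u_\mu\|_{H^1}^2\|u_\mu\|_{H^1}\,ds\leq c\mu\int_0^t\|\partial_t u_\mu\|_{H^1}^2\|u_\mu\|_{H^2}\,ds$. Applying Young's inequality to the factor $\|u_\mu\|_{H^2}$ then splits this as $\varepsilon\mu\int_0^t\|\partial_t u_\mu\|_{H^1}^2\|u_\mu\|_{H^2}^2\,ds+c_\varepsilon\mu\int_0^t\|\partial_t u_\mu\|_{H^1}^2\,ds$.

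The first product is precisely what Lemma \ref{lemma1} controls: after inserting \eqref{sa1} it becomes $\varepsilon c_T\int_0^t\mathbb{E}\|u_\mu\|_{H^2}^2\,ds+\varepsilon c_T+\varepsilon c_T\,\mu^2\,\mathbb{E}\sup_{s\leq t}\|\partial_t u_\mu\|_{H^1}^2$, so choosing $\varepsilon$ with $\varepsilon c_T<\tilde{\gamma_0}/2$ absorbs the $H^2$-integral into the dissipation on the left and leaves on the right only the admissible quantities $\mu\int_0^t\mathbb{E}\|\partial_t u_\mu\|_{H^1}^2\,ds$ and $\mu^2\,\mathbb{E}\sup_{s\leq t}\|\partial_t u_\mu\|_{H^1}^2$. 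Taking $\mathbb{E}\sup_{s\leq t}$ and applying Gronwall's lemma then yields \eqref{sg27-bis}. I expect the principal obstacle to be exactly this matching of the $\partial_t\eta$-term $I_1$ with the $\mathcal{H}_2$ fourth-moment bound of Lemma \ref{lemma1}, which itself relies on the bounded-$\sigma$ hypothesis \eqref{xfine127} of the present subsection; the rigorous justification of the Itô expansion (via a stopping-time localization ensuring the $H^2$-valued integrands are integrable) is a routine but necessary accompanying point.
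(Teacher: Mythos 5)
Your proposal is correct and follows essentially the same route as the paper: the product rule applied to $\langle \gamma^{-1}(u_\mu)u_\mu,\mu\,\partial_t u_\mu\rangle_{H^1}$, the commutation of the $x$-independent matrix $\mathfrak{g}^{-1}(u_\mu)$ with $L$ to extract the dissipative term $-\tilde{\gamma_0}\int_0^t\Vert u_\mu(s)\Vert_{H^2}^2\,ds$ and collapse the friction term, and absorption of the delicate $\partial_t\eta$-contribution via an $\varepsilon$-Young splitting fed into the fourth-moment estimate \eqref{sa1} of Lemma \ref{lemma1}. The only cosmetic difference is that you split $I_1$ so as to invoke the mixed term $\int_0^t\mathbb{E}\Vert u_\mu(s)\Vert_{H^2}^2\Vert \partial_t u_\mu(s)\Vert_{H^1}^2\,ds$ on the left of \eqref{sa1}, whereas the paper's \eqref{sg10-bis} produces $\varepsilon\,\mu^2\int_0^t\Vert\partial_t u_\mu(s)\Vert_{H^1}^4\,ds$ and invokes the fourth-moment term instead; both are controlled by the same lemma and lead to the identical absorption argument.
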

\begin{proof}
We have
\begin{equation}
\label{E3-bis}
\begin{aligned}
\ds{\langle \eta(u_\mu(t)),\mu\,   \partial_t u_\mu(t)\rangle_{H^1} =}  &  \ds{\langle \eta(u_0),\mu\,  v_0\rangle_{H^1} +\int_0^t\langle \partial_t \eta(u_\mu(s)),\mu\,\partial_t u_\mu(s)\rangle_{H^1}\,ds}\\[10pt]
 &\ds{\hs+\int_0^t\langle \eta(u_\mu(s)),\Delta u_\mu(s)\rangle_{H^1}\,ds-\int_0^t\langle \eta(u_\mu(s)),\gamma(u_\mu(s))\partial_t u_\mu(s)\rangle_{H^1}\,ds}\\[10pt]
  &\ds{\hs+\int_0^t\langle \eta(u_\mu(s)),f(u_\mu(s))\rangle_{H^1}\,ds+\int_0^t\langle \eta(u_\mu(s)),\sigma(u_\mu(s))\,dw^Q(s)\rangle_{H^1}}\\[10pt]
 \ds{=:}  &  \ds{\langle \eta(u_0),\mu\,  v_0\rangle_{H^1}+\sum_{k=1}^4 I_k(t),}
	\end{aligned} 
 \end{equation}
 where, as in Lemma \ref{prop4.1-tris}, we defined 
$\eta(h):=\gamma^{-1}(h)h$ for every $h \in\,H$.

By proceeding as in the proof of Proposition \ref{prop4.1-tris},  for every $\e \in\,(0,1)$ there exists $c_\e>0$ such that 
\begin{equation}
\label{sg10-bis}	
\begin{array}{l}
\ds{|I_1(t)|\leq\mu\,\int_0^t \left|\langle [D\gamma^{-1}(u_\mu(s))\partial_t u_\mu(s)]u_\mu(s),\partial_t u_\mu(s)\rangle_{H^1}+\langle \gamma^{-1}(u_\mu(s))\partial_t u_\mu(s), \partial_t u_\mu(s)\rangle_{H^1}\right| \,ds}\\[18pt]
\ds{\leq 	\e\, \mu^2 \int_0^t\Vert \partial_t u_\mu(s)\Vert_{H^1}^4\,ds+ c_\e\int_0^t\Vert u_\mu(s)\Vert_{H^1}^2\,ds+c\, \mu \int_0^t\Vert \partial_t u_\mu(s)\Vert_{H^1}^2\,ds.}
\end{array}
\end{equation}

For $I_2(t)$ and $I_3(t)$,  we have
\begin{equation}
\label{sg11-bis}
I_2(t)\leq -\tilde{\gamma_0}\int_0^t \Vert u_\mu(s)\Vert^2_{H^2}\,ds,	\end{equation}
and
\begin{equation}
\label{sg 12-bis}
I_3(t)=	-\int_0^t\langle u_\mu(t),\partial_t u_\mu(s))\rangle_{H^1}\,ds=-\Vert  u_\mu(t)\Vert_{H^1}^2+\Vert u^\mu_0\Vert_{H^1}^2.
\end{equation}

For $I_4(t)$, due Hypothesis \ref{as4} we have
\[\begin{array}{l}
\ds{I_4(t)=\int_0^t\langle \eta(u_\mu(s)),f(u_\mu(s))\rangle_{H^1}\,ds=-\int_0^t\langle \gamma^{-1}(u_\mu(s))\nabla u_\mu(s),\nabla [f(u_\mu(s))]\rangle_H\,ds,}
\end{array}\]
and this yields
\begin{equation}
\label{sg21-bis}
|I_4(t)|\leq 	c\int_0^t \Vert u_\mu(s)\Vert_{H^1}^2\,ds+c\,t.
\end{equation}

Therefore, since
\[
\begin{array}{ll}
\ds{\left|\langle\eta(u_\mu(t)),\mu\,   \partial_t u_\mu(t)\rangle_{H^1}\right|\leq \frac 12\,\Vert u_\mu(t)\Vert_{H^1}^2+c\,\mu^2\,\Vert \partial_tu_\mu(t)\Vert_{H^1}^2,}
	\end{array}
	\]
	collecting together \eqref{sg10-bis}, \eqref{sg11-bis}, \eqref{sg 12-bis}, and \eqref{sg21-bis},  from \eqref{E3-bis} we get
\begin{equation}
\label{sg25-bis-bis}\begin{array}{l}
\ds{\frac 12\Vert u_\mu(t)\Vert_{H^1}^2+\tilde{\gamma_0} \int_0^t \Vert u_\mu(s)\Vert_{H^2}^2\,ds}\\[14pt]
\ds{\leq  \langle \eta(u^\mu_0),\mu\,  v^\mu_0\rangle_{H^1}+\Vert u^\mu_0\Vert_{H^1}^2+c\,\mu\int_0^t \,\Vert \partial_t u_\mu(s)\Vert^2_{H^1}\,ds+\e\, \mu^2 \int_0^t\Vert \partial_t u_\mu(s)\Vert_{H^1}^4\,ds}\\[14pt]
\ds{\hsp\hslp+c_\e\int_0^t \Vert u_\mu(s)\Vert_{H^1}^2\,ds+c\,\mu^2\,\Vert \partial_tu_\mu(t)\Vert_{H^1}^2+\left|I_5(t)\right|.}
\end{array}\end{equation}
As a consequence of \eqref{gsb43} and \eqref{sg31-tris}, 
 we have
\begin{align*}
\mathbb{E}&\sup_{r \in\,[0,t]}
\left|I_{5}(t)\right|\leq  c\,\mathbb{E}\left(\int_0^t\left(1+\Vert u_\mu(s)\Vert_{H^1}^2\right)\Vert u_\mu(s)\Vert_{H^1}^2\,ds\right)^\frac 12\leq  \frac 14 \mathbb{E}\sup_{s \in\,[0,t]}\Vert u_\mu(s)\Vert_{H^1}^2+c_T,	
\end{align*}
and then, if we take  first the supremum with respect to $t$ and then the expectation in both sides of \eqref{sg25-bis-bis}, thanks to \eqref{finex}, \eqref{xsa11} and  \eqref{sg31-tris} we get
\begin{equation}
\begin{aligned}
\label{sa25}
\ds{\mathbb{E}}&\ds{\sup_{s \in\,[0,t]}\Vert u_\mu(s)\Vert^2_{H^1}+	\int_0^t \mathbb{E}\Vert u_\mu(s)\Vert_{H^2}^2\,ds}\\[10pt]
& \ds{\leq c_T+c\,\mu\int_0^t \mathbb{E}\Vert\partial_t u_\mu(s)\Vert^2_{H^1}\,ds+\e\, \mu^2 \int_0^t\mathbb{E}\Vert \partial_t u_\mu(s)\Vert_{H^1}^4\,ds+c\,\mu^2\mathbb{E}\sup_{s \in\,[0,t]}\Vert \partial_t u_\mu(s)\Vert_{H^1}^2,}\end{aligned}
 \end{equation}
 for some constant $c_T>0$ depending on $\Lambda_1$ and $\Lambda_2$.
In view of  \eqref{sa1},  this implies that
\begin{align*}
\mathbb{E}&\sup_{s \in\,[0,t]}\Vert u_\mu(s)\Vert^2_{H^1}+	\int_0^t \mathbb{E}\Vert u_\mu(s)\Vert_{H^2}^2\,ds\\[10pt]
&\leq c_T\,	\mu\,	 \int_0^t\mathbb{E}\,\Vert \partial_t u_\mu(s)\Vert_{H^1}^2\,ds+c_T\,\mu^2\,\mathbb{E}\sup_{s \in\,[0,t]}\Vert \partial_t u_\mu(s)\Vert_H^2+\e\,c_T\int_0^t \mathbb{E}\Vert u_\mu(s)\Vert_{H^2}^2\,ds+c_T.
\end{align*}
Thus, if we take $\bar{\e} \in\,(0,1)$ such that $\bar{\e} c_T<1/2$, we get
\begin{align*}
\mathbb{E}&\sup_{s \in\,[0,t]}\Vert u_\mu(s)\Vert^2_{H^1}+	\int_0^t \mathbb{E}\Vert u_\mu(s)\Vert_{H^2}^2\,ds\\[10pt]
&\hsp\leq c_T\,	\mu\,	 \int_0^t\mathbb{E}\,\Vert \partial_t u_\mu(s)\Vert_{H^1}^2\,ds+c_T\,\mu^2\,\mathbb{E}\sup_{s \in\,[0,t]}\Vert \partial_t u_\mu(s)\Vert_H^2+c_T,
\end{align*}
and  \eqref{sg27-bis} follows.

\end{proof}

\begin{Lemma}
\label{prop4.2-bis}
Under Hypotheses \ref{as1} to \ref{as4}, for every $T>0$ and  $(u^\mu_0, v^\mu_0) \in\,\mathcal{H}_2$ satisfying \eqref{finex} and \eqref{xsa11} there exists $c_{T}>0$, depending on $\Lambda_1$ and $\Lambda_2$,  such that for every $\mu \in\,(0,1)$ and $t \in\,[0,T]$
\begin{equation}
\label{sg30-bis}
\begin{aligned}
\begin{split}
 \mathbb{E}\sup_{s \in\,[0,t]}&\Vert  u_\mu(s) \Vert _{H^2}^2 +\mu\, \mathbb{E}\sup_{s \in\,[0,t]}\Vert   \partial_t u_\mu(s) \Vert _{H^1}^2+ \int_0^t \mathbb{E}\Vert  \partial_t u_\mu(s)\Vert _{H^1}^2 ds\leq \frac{c_{T}}\mu.\end{split}\end{aligned}
\end{equation}
\end{Lemma}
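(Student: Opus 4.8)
The plan is to run a weighted energy estimate for the pair $(u_\mu,\partial_t u_\mu)$ one derivative above the one used in Lemma \ref{prop4.2-tris}, obtained by pairing the equation for $\partial_t u_\mu$ against $(-\Delta)\partial_t u_\mu$. Concretely, I would apply It\^o's formula to $\tfrac12\bigl(\Vert u_\mu(t)\Vert_{H^2}^2+\mu\Vert\partial_t u_\mu(t)\Vert_{H^1}^2\bigr)$. Since $u_\mu$ is pathwise $C^1$ in time with no martingale part, the $\Vert u_\mu\Vert_{H^2}^2$-contribution is an ordinary derivative $\langle u_\mu,\partial_t u_\mu\rangle_{H^2}=\langle(-\Delta)u_\mu,(-\Delta)\partial_t u_\mu\rangle_H$, while the kinetic term carries the stochastic integral and an It\^o correction. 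Exactly as in Lemma \ref{prop4.2-tris}, the $\Delta u_\mu$ terms cancel, leaving
\[
\tfrac12\,d\bigl(\Vert u_\mu\Vert_{H^2}^2+\mu\Vert\partial_t u_\mu\Vert_{H^1}^2\bigr)=-\langle\gamma(u_\mu)\partial_t u_\mu,\partial_t u_\mu\rangle_{H^1}\,dt+\langle f(u_\mu),\partial_t u_\mu\rangle_{H^1}\,dt+\tfrac1{2\mu}\Vert\sigma(u_\mu)\Vert_{\mathcal{L}_2(H_Q,H^1)}^2\,dt+\langle\sigma(u_\mu)\,dw^Q,\partial_t u_\mu\rangle_{H^1}.
\]

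The decisive point, and the only place where Condition 1 of Hypothesis \ref{as4} is genuinely used, is the coercivity of the damping term at the $H^1$-level. Because $\gamma(u_\mu)$ acts as multiplication by the spatially constant matrix $\mathfrak{g}(u_\mu)\in\mathbb{R}^{r\times r}$, it commutes with $\nabla$, so that, writing $w=\partial_t u_\mu$, one has $\langle\gamma(u_\mu)w,w\rangle_{H^1}=\langle\mathfrak{g}(u_\mu)\nabla w,\nabla w\rangle_H\geq\gamma_0\Vert\nabla w\Vert_H^2=\gamma_0\Vert w\Vert_{H^1}^2$ by \eqref{gsb15-bis}. This is precisely the step that would break down for the local friction \eqref{local-intro}, where $\nabla(\mathfrak{g}(\cdot,u)w)$ would produce an uncontrolled term. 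The remaining drift terms are routine: by \eqref{n30} one gets $\langle f(u_\mu),\partial_t u_\mu\rangle_{H^1}\leq\tfrac{\gamma_0}{4}\Vert\partial_t u_\mu\Vert_{H^1}^2+c(1+\Vert u_\mu\Vert_{H^1}^2)$, and since we are in the bounded-$\sigma$ regime \eqref{xfine127} the It\^o correction is simply $\leq c/\mu$.

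I would then integrate in time, take the supremum over $[0,t]$ and the expectation. The stochastic term is handled by Burkholder--Davis--Gundy together with \eqref{xfine127}, giving $\mathbb{E}\sup_{s\le t}\bigl|\int_0^s\langle\sigma(u_\mu)\,dw^Q,\partial_t u_\mu\rangle_{H^1}\bigr|\leq c\,\mathbb{E}\bigl(\int_0^t\Vert\partial_t u_\mu\Vert_{H^1}^2\,ds\bigr)^{1/2}\leq\tfrac{\gamma_0}{4}\int_0^t\mathbb{E}\Vert\partial_t u_\mu\Vert_{H^1}^2\,ds+c_T$; thus two quarters of the dissipation are absorbed and a half survives on the left. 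The initial datum contributes $\Vert u_0^\mu\Vert_{H^2}^2+\mu\Vert v_0^\mu\Vert_{H^1}^2=\Lambda_2^\mu\leq\Lambda_2\mu^{-\delta}\leq\Lambda_2/\mu$, using $\delta<1$, \eqref{xsa11} and the definition \eqref{iet1}. Crucially, the lower-order source $c\int_0^t\mathbb{E}\Vert u_\mu\Vert_{H^1}^2\,ds$ does \emph{not} involve the left-hand side and is already controlled by the $\mathcal{H}_1$-estimate \eqref{sg31-tris} from Section \ref{sec4}, contributing only $c_T$; hence no Gronwall argument on the $H^2$-quantity is needed, and the three quantities on the left are bounded by $c_T/\mu$, which is \eqref{sg30-bis}.

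The main obstacle is not the algebra but the rigorous justification of the It\^o formula. At the available regularity $\boldsymbol{u}_\mu\in C([0,T];\mathcal{H}_2)$ only, so $\Delta u_\mu\in H$ and the pairing $\langle(-\Delta)u_\mu,(-\Delta)\partial_t u_\mu\rangle_H$ is not literally defined. I would therefore carry out the whole computation on the spectral Galerkin approximations $P_N\boldsymbol{u}_\mu$, where every term is finite-dimensional and smooth; the key structural features survive truncation because $P_N$ commutes both with $(-\Delta)$ and with multiplication by the constant matrix $\mathfrak{g}(u_\mu)$, so the cancellation and the $H^1$-coercivity hold verbatim, uniformly in $N$. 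The bound being uniform in $N$, it passes to the limit by lower semicontinuity. Finally, combining \eqref{sg30-bis} with \eqref{sg27-bis} of Lemma \ref{prop4.1-bis}, whose right-hand side becomes $\leq c_T(1+\mu\int_0^t\mathbb{E}\Vert\partial_t u_\mu\Vert_{H^1}^2\,ds+\mu^2\mathbb{E}\sup_{s\le t}\Vert\partial_t u_\mu\Vert_{H^1}^2)\leq c_T$, yields the full $\mathcal{H}_2$-bound \eqref{sg31-bis}.
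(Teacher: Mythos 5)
Your proof is correct and follows essentially the same route as the paper: the same energy functional $\tfrac12\bigl(\Vert u_\mu\Vert_{H^2}^2+\mu\Vert\partial_t u_\mu\Vert_{H^1}^2\bigr)$, the same cancellation of the $\Delta u_\mu$ terms and $H^1$-coercivity of the spatially constant friction via \eqref{gsb15-bis}, absorption of the $f$-term and of the Burkholder--Davis--Gundy estimate into the dissipation, the bound $\Lambda_2^\mu\leq\Lambda_2\,\mu^{-\delta}$ on the initial data, and control of the lower-order source $c\int_0^t\mathbb{E}\Vert u_\mu\Vert_{H^1}^2\,ds$ by \eqref{sg31-tris} with no Gronwall step. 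The only deviations are cosmetic: the paper estimates the stochastic integral using the linear growth bound \eqref{gsb43} (absorbing $\tfrac14\,\mathbb{E}\sup_{s\leq t}\Vert u_\mu(s)\Vert_{H^2}^2$ via interpolation) instead of invoking \eqref{xfine127} directly, and it does not spell out the Galerkin justification of the It\^o formula, which you add for rigor.
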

\begin{proof}
We have
\[\begin{array}{l}
\ds{\frac 12 d \le(\Vert  u_\mu(t)\Vert _{H^2}^2+\mu\, \Vert  \partial_t u_\mu(t)\Vert _{H^1}^2 \r)=-\langle \gamma(u_\mu(t))\partial_t u_\mu(t),\partial_tu_\mu(t)\rangle_{H^1}\,dt}\\[14pt]
\ds{\hsllp+\langle f(u_\mu(t)),\partial_tu_\mu(t)\rangle_{H^1}\,dt+\frac1{2\mu}\Vert \sigma(u_\mu(t))\Vert^2_{\mathcal{L}_2(H_Q,H^1)}\,dt+\langle \sigma(u_\mu(t))dw^Q(t),\partial_tu_\mu(t)\rangle_{H^1}.}\end{array}\]
For every $\mu \in\,(0,1)$  this gives
\begin{equation}\label{xfine200}    \begin{aligned}
    \ds{\frac 12 d } & \ds{\le(\Vert  u_\mu(t)\Vert _{H^2}^2+\mu\, \Vert  \partial_t u_\mu(t)\Vert _{H^1}^2 \r) }\\[10pt]
    &\ds{\hsllp \leq -\frac {\gamma_0}2 \Vert  \partial_t u_\mu(t)\Vert _{H^1}^2\,dt+c\,\Vert  u_\mu(t)\Vert _{H^1}^2\,dt+\frac c\mu\,dt+\langle \partial_t u_\mu(t),\sigma(u_\mu(t))dw^Q(t)\rangle_{H^1},}
\end{aligned}
\end{equation}
so that, thanks to \eqref{xsa11},
\begin{equation}  \label{sa26}
    \begin{array}{l}
 \ds{  \Vert  u_\mu(t)\Vert _{H^2}^2+\mu\, \Vert  \partial_t u_\mu(t)\Vert _{H^1}^2+\gamma_0\int_0^t\Vert  \partial_t u_\mu(s)\Vert _{H^1}^2\,ds}\\[14pt]
 \ds{\hsp\leq \frac{\Lambda_2}{\mu^\delta}+\frac{c}{\mu} t +c\,\int_0^t \Vert u_\mu(s)\Vert_{H^1}^2\,ds+2\int_0^t \langle \partial_t u_\mu(s),\sigma(u_\mu(s))dw^Q(s)\rangle_{H^1}.}
    \end{array}
\end{equation}
Thanks to \eqref{gsb43}, we have
\begin{align*}  
\mathbb{E}\sup_{s \in\,[0,t]}&\left|\int_0^t \langle \partial_t u_\mu(s),\sigma(u_\mu(s))dw^Q(s)\rangle_{H^1}\right|\leq c\,\mathbb{E}\left(\int_0^t\Vert \partial_t u_\mu(s)\Vert_{H^1}^2\left(1+\Vert u_\mu(s)\Vert_{H^1}^2\right)\,ds\right)^{\frac 12}\\[10pt]
&\leq \frac{\gamma_0}{4}\int_0^t\mathbb{E}\,\Vert  \partial_t u_\mu(s)\Vert _{H^1}^2\,ds+\frac 14\, \mathbb{E}\,\sup_{s \in\,[0,t]}\Vert u_\mu(s)\Vert^2_{H^2}+c\,\mathbb{E}\,\sup_{s \in\,[0,t]}\Vert u_\mu(s)\Vert^2_{H}+c_T.
\end{align*}
Hence, by taking the supremum with respect to time and the expectation in both sides of \eqref{sa26}, due to \eqref{sg31-tris} we obtain
\[\begin{array}{l}
\ds{ \frac 12\,\mathbb{E}\sup_{s \in\,[0,t]}\Vert  u_\mu(s)\Vert _{H^2}^2+\mu\, \mathbb{E}\sup_{s \in\,[0,t]}\Vert  \partial_t u_\mu(s)\Vert _{H^1}^2+\frac{\gamma_0}{2}\int_0^t\mathbb{E}\,\Vert  \partial_t u_\mu(s)\Vert _{H^1}^2\,ds}\\[14pt]
\ds{\hsp\leq \frac{c_T}\mu +c\int_0^t \mathbb{E}\Vert u_\mu(s)\Vert_{H^1}^2\,ds+c\,\mathbb{E}\,\sup_{s \in\,[0,t]}\Vert u_\mu(s)\Vert^2_{H}}\leq c_T,	
\end{array}	
\]
and \eqref{sg30-bis} follows. 
\end{proof}

 From the combination of Propositions  \ref{prop4.1-bis} and \ref{prop4.2-bis},  we obtain that for every $T>0$  there exists a positive constant $c_{T}$, depending on $\Lambda_1$ and $\Lambda_2$, such that
\begin{align*}
\mathbb{E}&\sup_{s \in\,[0,T]}\Vert  u_\mu(s) \Vert _{H^1}^2 +\int_0^T \mathbb{E}\Vert  u_\mu(s)\Vert _{H^2}^2 ds\leq c_{T}, 
	\end{align*}
and  we obtain \eqref{sg31-bis}.
In particular, thanks to \eqref{sg30-bis}, this implies that there exists a positive constant $c_T>0$, depending on $\Lambda_1$ and $\Lambda_2$, such that 
	\eqref{sa29} holds, for all $\mu \in\,(0,1)$.
Finally, if we combine  \eqref{sg31-bis} and \eqref{sa29} together with \eqref{sa1}, we get
 \eqref{sa1-final}.
 
 \medskip
 
 By a  suitable  modification of the argument used in the proof of Proposition 4.4 in \cite{CX} we can prove the following result.
\begin{Proposition}
\label{prop4.4}	Assume Hypotheses \ref{as1} to \ref{as4} and fix $T>0$ and $(u^\mu_0, v^\mu_0) \in\,\mathcal{H}_2$ satisfying \eqref{finex} and \eqref{xsa11}. Then there exists a constant $c_T>0$, depending on $\Lambda_1$ and $\Lambda_2$, such \eqref{sg32} holds.

\end{Proposition}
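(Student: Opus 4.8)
The plan is to obtain \eqref{sg32} as an immediate consequence of the stronger bound \eqref{sg32-bis}, and to obtain the latter from the fourth-moment estimate \eqref{sa1-final} established just above by a single application of the Cauchy--Schwarz inequality in $\omega$. The mechanism behind the improvement from the factor $\mu$ in \eqref{sa29} to the factor $\sqrt{\mu}$ in \eqref{sg32-bis} is the following: in the bounded-$\sigma$ regime \eqref{xfine127}, the quantities $\mathbb{E}\sup_{[0,T]}\Vert u_\mu\Vert_{H^2}^4$ and $\mathbb{E}\sup_{[0,T]}\Vert\partial_t u_\mu\Vert_{H^1}^4$ blow up, by \eqref{sa1-final}, only like $\mu^{-1}$ and $\mu^{-3}$ respectively --- that is, with the \emph{same} exponents a crude argument would assign to the corresponding second moments. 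Taking square roots therefore yields second-moment sup-bounds that are better by exactly $\sqrt{\mu}$ than the ones recorded in \eqref{sa29}, and this is precisely \eqref{sg32-bis}.

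Concretely, first I would read off from \eqref{sa1-final} the two inequalities $\mu\,\mathbb{E}\sup_{[0,T]}\Vert u_\mu\Vert_{H^2}^4\leq c_T$ and $\mu^3\,\mathbb{E}\sup_{[0,T]}\Vert\partial_t u_\mu\Vert_{H^1}^4\leq c_T$, with $c_T$ depending only on $\Lambda_1$ and $\Lambda_2$. Applying the elementary bound $\mathbb{E}\,Y\leq(\mathbb{E}\,Y^2)^{1/2}$ with $Y=\sup_{[0,T]}\Vert u_\mu\Vert_{H^2}^2$ and then with $Y=\sup_{[0,T]}\Vert\partial_t u_\mu\Vert_{H^1}^2$ gives $\mathbb{E}\sup_{[0,T]}\Vert u_\mu\Vert_{H^2}^2\leq c_T\,\mu^{-1/2}$ and $\mathbb{E}\sup_{[0,T]}\Vert\partial_t u_\mu\Vert_{H^1}^2\leq c_T\,\mu^{-3/2}$. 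Multiplying the first by $\sqrt{\mu}$ and the second by $\mu^{3/2}=\sqrt{\mu}\cdot\mu$ and summing produces \eqref{sg32-bis}.

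Next I would pass from \eqref{sg32-bis} to \eqref{sg32}. Since $\bar{r}<1$ by the choice \eqref{xfine130}, the continuous embeddings $H^2\hookrightarrow H^{1+\bar{r}}$ and $H^1\hookrightarrow H^{\bar{r}}$ give $\Vert u_\mu\Vert_{H^{1+\bar{r}}}\leq c\,\Vert u_\mu\Vert_{H^2}$ and $\Vert\partial_t u_\mu\Vert_{H^{\bar{r}}}\leq c\,\Vert\partial_t u_\mu\Vert_{H^1}$, so \eqref{sg32-bis} yields \eqref{sg32} for $u_\mu$ itself with a constant depending only on $\Lambda_1$ and $\Lambda_2$. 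To secure the $R$-independence claimed in the statement I would invoke the remark made after \eqref{xfine125}: the truncated coefficients $\gamma_R,f_R,\sigma_R$ satisfy Hypotheses \ref{as1} to \ref{as4} with constants independent of $R\geq1$ (and, in the bounded regime \eqref{xfine127}, $\sigma_R$ is bounded uniformly in $R$ since $0\leq\Phi_R\leq1$). Hence the entire chain leading to \eqref{sa1-final} applies verbatim to each $u^R_\mu$ with an $R$-independent constant, and the two displayed steps then deliver \eqref{sg32} for every $u^R_\mu$ with $c_T$ independent of $R$, as required.

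The genuinely substantial work is the one already carried out upstream in Lemma \ref{lemma1}, namely producing \eqref{sa1-final} with the favourable exponent $\mu^{-1}$ for $\mathbb{E}\sup_{[0,T]}\Vert u_\mu\Vert_{H^2}^4$, rather than the $\mu^{-2}$ one would get by naively squaring \eqref{sa29}; once this is in hand, the gain of $\sqrt{\mu}$ is automatic. The only points requiring care in the present proof are therefore bookkeeping: tracking the power of $\mu$ correctly through Cauchy--Schwarz, and verifying that the constant inherited from \eqref{sa1-final} --- and hence $c_T$ --- does not depend on $R$, which rests on \eqref{xfine127} together with the uniform-in-$R$ control of $\gamma_R,f_R,\sigma_R$. (In the complementary, possibly unbounded-$\sigma$ case one runs the same scheme on the truncated system, but at the fractional level $\bar{r}$, where the cut-off $\Phi_R(\Vert\cdot\Vert_{H^{\bar{r}}})$ keeps the relevant norms of the coefficients controlled independently of $R$, and then harvests \eqref{sg32} by the identical square-root step.)
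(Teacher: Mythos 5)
Your proof is correct, but it takes a genuinely different route from the paper. The paper proves Proposition \ref{prop4.4} by contradiction: assuming \eqref{sg32} fails along a sequence $\mu_k\to 0$, it introduces $L_k(t)=\Vert u_{\mu_k}(t)\Vert_{H^2}^2+\mu_k\Vert\partial_t u_{\mu_k}(t)\Vert_{H^1}^2$, picks a random time $\tau_k$ maximizing $L_k$, and uses the pathwise It\^o inequality $dL_k\leq c\Vert u_{\mu_k}\Vert_{H^1}^2\,dt+\bar c\,\mu_k^{-1}dt+dM_k$ to show that $L_k$ must stay large on an interval of length of order $\mu_k\theta_k$ to the left of $\tau_k$; this forces $\int_0^T L_k(s)\,ds\gtrsim \mu_k\theta_k^2$, which contradicts the uniform bound $\sup_k\mathbb{E}\int_0^T L_k\leq c_T$ coming from \eqref{sg31-bis} and \eqref{sa29}. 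You instead observe that, in the bounded-$\sigma$ regime \eqref{xfine127} where the Proposition lives, the fourth-moment estimate \eqref{sa1-final} (already established before the Proposition, with no circularity: its ingredients are \eqref{sa1}, \eqref{sg31-bis} and \eqref{sa29}, none of which uses \eqref{sg32}) gives $\mu\,\mathbb{E}\sup_t\Vert u_\mu(t)\Vert_{H^2}^4\leq c_T$ and $\mu^3\,\mathbb{E}\sup_t\Vert\partial_t u_\mu(t)\Vert_{H^1}^4\leq c_T$, so a single Cauchy--Schwarz in $\omega$ yields \eqref{sg32-bis} and hence \eqref{sg32}; your bookkeeping of the exponents is right, and the transfer to $u^R_\mu$ with $R$-independent constants is justified exactly as the paper itself justifies it ($0\leq\Phi_R\leq 1$ and the uniform-in-$R$ hypotheses for $\gamma_R,f_R,\sigma_R$). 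What each approach buys: yours is shorter and shows that, given \eqref{sa1-final}, the bounded-$\sigma$ case of the Proposition is a one-line corollary; the paper's maximal-time argument is heavier but consumes only second-moment information (sup bounds, $\mathbb{E}U_k^\star\leq c_T$, $\mathbb{E}M_k^\star\leq c_T\mu_k^{-1/2}$, $L_k(0)\leq\Lambda_2\mu_k^{-\delta}$) plus the drift rate $\bar c/\mu_k$, which is precisely why it survives in the unbounded-$\sigma$ setting.

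One caveat on your closing parenthetical: in the unbounded case the ``identical square-root step'' is not available, because its input is missing --- the paper states explicitly, at the start of Section 7, that the fourth-moment bounds of Lemma \ref{lemma1} \emph{cannot} be proven when $\sigma:H^1\to\mathcal{L}_2(H_Q,H^1)$ is unbounded, and no analogue of \eqref{sa1-final} at the level of $H^{1+\bar r}\times H^{\bar r}$ is established for the truncated system. That is why the paper proves \eqref{sg32} there (Lemma \ref{prop4.2-bis-bis}) by rerunning the maximal-time contradiction scheme on $L^R_\mu(t)=\Vert u^R_\mu(t)\Vert_{H^{1+\bar r}}^2+\mu\Vert\partial_t u^R_\mu(t)\Vert_{H^{\bar r}}^2$, exploiting that the cut-off makes $f_R$ and $\sigma_R$ bounded at the $\bar r$ level. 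If you wanted your scheme to cover that case you would first have to redo the It\^o computation of Lemma \ref{lemma1} for $\Vert\cdot\Vert_{H^{\bar r}}^4$ with the truncated coefficients, which is additional work the paper avoids. Since the Proposition as stated sits inside the bounded-$\sigma$ subsection, this does not affect the validity of your proof of the statement itself.
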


\begin{proof}
Assume that \eqref{sg32} is not true. Then, there exists a sequence $\{\mu_k\}_{k \in\,\mathbb{N}}\subset (0,1)$ converging to zero such that 
\begin{equation}
\label{n21}
\lim_{k\to \infty}\sqrt{\mu_k}\left(\mathbb{E}\sup_{t \in\,[0,T]}\Vert u_{\mu_k}(t)\Vert^2_{H^2}+\mu_k\,\mathbb{E}\sup_{t \in\,[0,T]}\Vert \partial_t u_{\mu_k}(t)\Vert^2_{H^1}\right)	=\infty.
\end{equation}
For every $k \in\,\mathbb{N}$, we denote
\[L_k(t):=\Vert u_{\mu_k}(t)\Vert^2_{H^2}+\mu_k \,\Vert \partial_t u_{\mu_k}(t)\Vert^2_{H^1},\]
and we fix a random time $\tau_k \in\,[0,T]$ such that
\[L_k(\tau_k)=\max_{t \in\,[0,T]}L_k(t).\]

According to \eqref{xfine200}, we have
\[d L_k(t)\leq c\,\Vert u_{\mu_k}(t)\Vert_{H^1}^2\,dt+\frac {\bar{c}}{\mu_k}\,dt+\langle \sigma(u_{\mu_k}(t))\,dw^Q(t),\partial_t u_{\mu_k}(t)\rangle_{H^1}.\]
Therefore,  for every random time $\sigma \in\,[0,T]$ such that $\mathbb{P}(\sigma\leq \tau_k)=1$, we have
\begin{equation}
\label{n23}	L_k(\tau_k)-L_k(\sigma)\leq c\int_\sigma^{\tau_k}\Vert  u_{\mu_k}(s)\Vert _{H^1}^2\,ds +\frac{\bar{c}}{{\mu_k}}(\tau_k-\sigma)+M_k(\tau_k)-M_k(\sigma),
\end{equation}
where 
\[M_k(t):=\int_0^t\langle \partial_t u_{\mu_k}(s),\sigma(u_{\mu_k}(s))dw^Q(s)\rangle_{H^1}.\]	
Thus, if we 
 define the two random variables
\[U^\star_k:=c\int_0^T\Vert u_{\mu_k}(t)\Vert_{H^1}^2\,dt,\ \ \ \ \ M^\star_k:=2\sup_{t \in\,[0,T]}|M_k(t)|,\]
we have
\begin{equation}  \label{xfine207}L_k(\tau_k)-L_k(\sigma)\leq U^\star_k+M^\star_k+\frac{\bar{c}}{{\mu_k}}(\tau_k-\sigma).\end{equation}

According to   \eqref{xsa11}, we have
\[L_k(0)=\Vert u_0^{\mu_k}\Vert_{H^2}^2+\mu_k\,\Vert v_0^{\mu_k}\Vert_{H^1}^2\leq \frac{\Lambda_2}{\mu_k^\delta}.\]
Then, if we take  $\sigma=0$, we get
\[\tau_k\geq \frac{\mu_k}{\bar{c}}\left(L_k(\tau_k)-L_k(0)-U^\star_k-M^\star_k\right)\geq \frac{\mu_k}{\bar{c}}\left(L_k(\tau_k)-\frac{\Lambda_2}{\mu_k^\delta}-U^\star_k-M^\star_k\right)=:\frac{\mu_k}{\bar{c}}\,\theta_k\]
On the set $E_k:=\{\theta_k>0\}$, if we fix  any $s\in[\tau_k-\frac{\mu_k}{2\bar{c}}\,\theta_k,\tau_k]$,  we have
\[L_k(s)\geq  L_k(\tau_k) -U^\star_k-M^\star_k-\frac{\theta_k}2=\frac{\theta_k}2+\frac{\Lambda_2}{\mu_k^\delta},\]
so that
\[I_k := \int_{0}^{T} L_k(s)\,ds  \geq \int_{\tau_k-\frac{\mu_k}{2\bar{c}}\,\theta_k}^{\tau_k} L_k(s)\,ds 
    \geq \frac{\mu_k}{4\bar{c}}\,\theta_k^2 +\frac{\Lambda_2}{2\bar{c}}\,\mu_k^{1-\delta}\,\theta_k.
\end{equation*}
Thus, by taking expectation on both sides,  we get 
\begin{equation}\label{gx007}
\begin{aligned}
    \mathbb{E}(I_k) &\geq \mathbb{E}(I_k \,;\,E_k)\geq \mathbb{E}\le(\frac{\mu_k}{4\bar{c}}\,\theta_k^2 +\frac{c_0}{2\bar{c}}\,\mu_k^{1-\delta}\,\theta_k ; E_k\r)\geq \frac 1{4\bar{c}}\mathbb{E}\le(\mu_k\,\theta_k^2; E_k\r). 
\end{aligned}
\end{equation}

Now, thanks to \eqref{sg31-tris} and \eqref{sa1-final} we have
\begin{equation}
\label{xfine210}
\sup_{k \in\,\mathbb{N}}\,\mathbb{E}U^\star_k\leq c_T,\ \ \ \ \ \ \sup_{k \in\,\mathbb{N}}\,\mathbb{E}M^\star_k	\leq \frac{c_T}{\sqrt{\mu_k}},	
\end{equation}
and, since we are assuming $\delta<1/2$, \eqref{n21} yields
\[\lim_{k\rightarrow \infty}\sqrt{\mu_k}\,\mathbb{E}\,\theta_k=+\infty.\]
Moreover, 
\begin{equation}\label{gx008}
    \sqrt{\mu_k}\,\mathbb{E}\theta_k\leq \left((\mathbb{E}(\mu_k\,\theta_k^2 \,;\,E_k)\right)^{1/2},
\end{equation}
so that 
\begin{equation*}
    \limsup_{k\to\infty}\mathbb{E}(I_k)\geq \frac{1}{4\bar{c}}\limsup_{k\to\infty}\mathbb{E}(\mu_k\,\theta_k^2 \,;\,E_k)\geq \frac{1}{4\bar{c}}\lim_{k\to\infty}\left(\sqrt{\mu_k}\,\mathbb{E}\,\theta_k\right)^2=+\infty.
\end{equation*}

However, this is not possible because, thanks to \eqref{sg31-bis} and \eqref{sa29}, we have
\[\sup_{k \in\,\mathbb{N}}\mathbb{E} I_k\leq c_T.\] 
This means that 
we get to a contradiction of \eqref{n21} and we conclude that \eqref{sg32} has to be true.

\end{proof}
 
\subsection{Proof of the $\mathcal{H}_2$-bounds in the case of unbounded $\sigma$}
Since we are not assuming that $\sigma:H^1\to\mathcal{L}_2(H_Q,H^1)$ is bounded, we cannot prove Lemma \ref{lemma1}, not even for the truncated problem \eqref{SPDE-R}, so that we have to find a different way to prove an analogous of Propositions \ref{prop4.1-bis} and \ref{prop4.2-bis}, that hold at least for each  truncated problems.

To this purpose,  
we notice that, due to the unboundedness of  $\sigma$, according to Hypothesis \ref{as4} we can assume that
there exists $\bar{s}<1$ such that $\mathfrak{g}:H^{\bar{s}}\to \mathbb{R}^{r\times r}$ is differentiable, with
\begin{equation}
\label{gsb15-4th}
\sup_{h \in\,H^1}\Vert D\mathfrak{g}_R(h)\Vert_{\mathcal{L}(H^{\bar{s}},	\mathbb{R}^{r\times r})}<+\infty,\ \ \ \ \ \inf_{h \in\,H^{\bar{s}}}\,\langle\mathfrak{g}_R(h)\xi,\xi\rangle_{\mathbb{R}^{r}}\geq \gamma_0\,\Vert\xi\Vert_{\mathbb{R}^r}^2,\ \ \ \xi \in\,\mathbb{R}^r. 
\end{equation}

\begin{Lemma}
\label{prop4.1-4th}
Assume Hypotheses \ref{as1} to  \ref{as4}, and fix $R\geq 1$,  $T>0$ and $(u^\mu_0,v^\mu_0) \in\,\mathcal{H}_2$ satisfying \eqref{finex}. Then, there exists $c_{T,R}>0$, depending on $\Lambda_1$ and $\Lambda_2$,  such that for every $\mu \in\,(0,1)$ and $t \in\,[0,T]$
\begin{equation}
\label{sg27-tris}
\begin{aligned}
&\mathbb{E}\sup_{s \in\,[0,t]}\Vert  u^R_\mu(s) \Vert _{H^1}^2 +\int_0^t \mathbb{E}\Vert  u^R_\mu(s)\Vert _{H^2}^2 ds\\[10pt]
& \hsp\leq c_{T,R} \le(1+  \mu \int_0^t \mathbb{E}\Vert  \partial_t u^R_\mu(s)\Vert _{H^1}^2 ds +\mu^2\,  \mathbb{E}\sup_{s \in\,[0,t]}\Vert  \partial_t u^R_\mu(s)\Vert _{H^1}^2\r). 
\end{aligned}
\end{equation}
\end{Lemma}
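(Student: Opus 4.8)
The plan is to mirror the structure of the proof of Lemma \ref{prop4.1-bis}, working with the truncated process $u^R_\mu$ and the test function $\eta_R(h):=\gamma_R^{-1}(h)h=\mathfrak{g}_R^{-1}(h)h$. Applying It\^o's formula to $\langle\eta_R(u^R_\mu(t)),\mu\,\partial_t u^R_\mu(t)\rangle_{H^1}$ produces the same five contributions $I_1,\dots,I_5$ as in \eqref{E3-bis}. The terms that can be treated exactly as before are: $I_2$, which by \eqref{xfine160} and the spatial constancy of the matrix $\mathfrak{g}_R(u^R_\mu)$ yields the coercive bound $I_2\leq-\tilde{\gamma_0}\int_0^t\|u^R_\mu(s)\|_{H^2}^2\,ds$; the term $I_3=-\|u^R_\mu(t)\|_{H^1}^2+\|u^\mu_0\|_{H^1}^2$; and $I_4$, bounded by $c\int_0^t(1+\|u^R_\mu(s)\|_{H^1}^2)\,ds$ via \eqref{n30}. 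Since $\sigma$ need not be bounded, the stochastic term $I_5$ is controlled by Burkholder--Davis--Gundy together with \eqref{gsb43} and the $\mathcal{H}_1$-bound \eqref{sg31-tris}, giving $\mathbb{E}\sup_{s\leq t}|I_5(s)|\leq\tfrac14\mathbb{E}\sup_{s\leq t}\|u^R_\mu(s)\|_{H^1}^2+c_T$. The boundary term on the left is absorbed through $|\langle\eta_R(u^R_\mu(t)),\mu\,\partial_t u^R_\mu(t)\rangle_{H^1}|\leq\tfrac12\|u^R_\mu(t)\|_{H^1}^2+c\,\mu^2\|\partial_t u^R_\mu(t)\|_{H^1}^2$, the last summand being one of the quantities permitted on the right-hand side of \eqref{sg27-tris}.

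The crux, and the only genuine departure from Lemma \ref{prop4.1-bis}, is the estimate of
\[
I_1=\mu\int_0^t\langle[D\gamma_R^{-1}(u^R_\mu(s))\partial_t u^R_\mu(s)]\,u^R_\mu(s),\partial_t u^R_\mu(s)\rangle_{H^1}\,ds+\mu\int_0^t\langle\gamma_R^{-1}(u^R_\mu(s))\partial_t u^R_\mu(s),\partial_t u^R_\mu(s)\rangle_{H^1}\,ds.
\]
The second integral is harmless: since $\mathfrak{g}_R^{-1}(u^R_\mu)$ is a spatially constant matrix it commutes with $\nabla$, so this integral is bounded by $c\,\mu\int_0^t\|\partial_t u^R_\mu(s)\|_{H^1}^2\,ds$, a right-hand side term. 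For the first integral one cannot invoke Lemma \ref{lemma1}, which is unavailable for unbounded $\sigma$ and would only produce the uncontrollable fourth moment $\mu^2\int_0^t\|\partial_t u^R_\mu\|_{H^1}^4\,ds$. Instead I would exploit three facts: (i) by Condition 4(b) of Hypothesis \ref{as4}, i.e. \eqref{gsb15-4th}, together with $D\mathfrak{g}_R^{-1}=-\mathfrak{g}_R^{-1}[D\mathfrak{g}_R]\mathfrak{g}_R^{-1}$, the matrix $M:=D\gamma_R^{-1}(u^R_\mu)\partial_t u^R_\mu$ satisfies $\|M\|_{\mathbb{R}^{r\times r}}\leq c_R\,\|\partial_t u^R_\mu\|_{H^{\bar{s}}}$ (the contribution of $\Phi_R'$ being handled by pairing in dual negative-order spaces, which costs only additional powers of $\|u^R_\mu\|_{H^2}$); (ii) $M$ vanishes unless $\|u^R_\mu\|_{H^{\bar{r}}}\leq R+1$, so on the support of $M$ the factor $\|u^R_\mu\|_{H^1}=\|\nabla u^R_\mu\|_H$ may be interpolated between $H^{\bar{r}}$ and $H^2$, producing a power of $\|u^R_\mu\|_{H^2}$ strictly smaller than $1$; and (iii) $\|\partial_t u^R_\mu\|_{H^{\bar{s}}}\leq\|\partial_t u^R_\mu\|_H^{1-\bar{s}}\|\partial_t u^R_\mu\|_{H^1}^{\bar{s}}$.

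Combining (i)--(iii), the first integral is dominated by $c_R\,\mu\int_0^t\|\partial_t u^R_\mu\|_H^{1-\bar{s}}\|\partial_t u^R_\mu\|_{H^1}^{1+\bar{s}}\|u^R_\mu\|_{H^2}^{a}\,ds$ with $a<1$. A Young inequality, with exponents arranged so that the power of $\|u^R_\mu\|_{H^2}$ becomes exactly $2$, leaves a factor $\|\partial_t u^R_\mu\|_{H^1}^{q}$ with $q=(1+\bar{s})\cdot\tfrac{2}{2-a}<2$ and a residual power of $\|\partial_t u^R_\mu\|_H$. The constraint \eqref{xfine130}, in particular $\bar{r}>\tfrac{2\bar{s}}{1+\bar{s}}$, is precisely what guarantees that, after a further splitting, the $\|u^R_\mu\|_{H^2}^2$ term is absorbed into the coercive contribution of $I_2$, the $\|\partial_t u^R_\mu\|_{H^1}$-term reduces to the admissible $\mu\int_0^t\|\partial_t u^R_\mu\|_{H^1}^2\,ds$, and the surviving $\mu$-weighted moments of $\|\partial_t u^R_\mu\|_H$ fall within the range already controlled by the $\mathcal{H}_1$-estimates \eqref{sg31-tris}, \eqref{sa1-4th-mom-H} and \eqref{sg30-tris-bis} of Section \ref{sec4}. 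I would then collect all contributions, take the supremum over $[0,t]$ and expectation, dispose of $I_5$ by Burkholder--Davis--Gundy as above, and apply Gronwall's lemma; together with \eqref{finex} and \eqref{xsa11} this yields \eqref{sg27-tris}, the constant depending on $R$ through the truncation and on $\Lambda_1,\Lambda_2$ through the data. The main obstacle is exactly the estimate of $I_1$: the delicate point is the bookkeeping of the interpolation exponents, which must simultaneously make the $H^2$-power equal to $2$ and keep every surviving moment of $\partial_t u^R_\mu$ (in $H$ and in $H^1$, with its $\mu$-weight) inside the menu of a~priori bounds from Section \ref{sec4}; this is the sole role of the choice of $\bar{r}$ in \eqref{xfine130}.
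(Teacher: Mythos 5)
Your proposal is correct and follows essentially the same route as the paper's proof: the same functional $\eta_R(h)=\gamma_R^{-1}(h)h$, the same isolation of $I_{R,1}$ as the only genuinely new term, the same interpolations $\Vert\partial_t u^R_\mu\Vert_{H^{\bar{s}}}\leq\Vert\partial_t u^R_\mu\Vert_{H}^{1-\bar{s}}\Vert\partial_t u^R_\mu\Vert_{H^1}^{\bar{s}}$ and $\Vert u^R_\mu\Vert_{H^1}\leq c_R\,\Vert u^R_\mu\Vert_{H^2}^{(1-\bar{r})/(2-\bar{r})}$ on the support of the cutoff, with the choice \eqref{xfine130} of $\bar{r}$ playing exactly the role you assign it in closing the Young exponents. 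The paper's version of your ``further splitting'' lands precisely on $\epsilon\,\Vert u^R_\mu\Vert_{H^2}^2+c_{R,\epsilon}\,\mu^2\,\Vert\partial_t u^R_\mu\Vert_{H}^4+c_{R,\epsilon}\,\mu\,\Vert\partial_t u^R_\mu\Vert_{H^1}^2$, the fourth moment then being absorbed through \eqref{sa1-bis}, which is what you invoke via \eqref{sa1-4th-mom-H}.
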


\begin{proof}
As in the proof of Lemma \ref{prop4.1-bis}, we have
\begin{equation}
\label{E3-tris}
\begin{aligned}
\ds{\langle \eta_R(u^R_\mu(t)),\mu\,   \partial_t u^R_\mu(t)\rangle_{H^1} =}  &  \ds{\langle \eta_R(u^\mu_0),\mu\,  v^\mu_0\rangle_{H^1} +\int_0^t\langle \partial_t \eta_R(u^R_\mu(s)),\mu\,\partial_t u^R_\mu(s)\rangle_{H^1}\,ds}\\[10pt]
 &\ds{\hs+\int_0^t\langle \eta_R(u^R_\mu(s)),\Delta u^R_\mu(s)\rangle_{H^1}\,ds-\int_0^t\langle \eta_R(u^R_\mu(s)),\gamma_R(u^R_\mu(s))\partial_t u^R_\mu(s)\rangle_{H^1}\,ds}\\[10pt]
  &\ds{\hs+\int_0^t\langle \eta_R(u^R_\mu(s)),f_R(u^R_\mu(s))\rangle_{H^1}\,ds+\int_0^t\langle \eta_R(u^R_\mu(s)),\sigma_R(u^R_\mu(s))\,dw^Q(s)\rangle_{H^1}}\\[10pt]
 \ds{=:}  &  \ds{\langle \eta_R(u^\mu_0),\mu\,  v^\mu_0\rangle_{H^1}+\sum_{k=1}^4 I_{R,k}(t),}
	\end{aligned} 
 \end{equation}
 where 
$\eta_R(h):=\gamma_R^{-1}(h)h$, for every $h \in\,H$.

Due to \eqref{gsb15-4th} and  an interpolation argument,  we have
\begin{align}
\begin{split}
\Vert [D\gamma_R^{-1}&(u^R_\mu(s))\partial_t u^R_\mu(s)]u^R_\mu(s)\Vert_{H^1}\leq \Vert D\gamma_R^{-1}(u^R_\mu(s))\Vert_{\mathcal{L}(H^{\bar{s}})}\Vert \partial_t u^R_\mu(s)\Vert_{H^{\bar{s}}}\Vert u^R_\mu(s)\Vert_{H^1}\\[10pt]
&\hslp\leq \Vert D\gamma_R^{-1}(u^R_\mu(s))\Vert_{\mathcal{L}(H^{\bar{s}})}	\Vert \partial_t u^R_\mu(s)\Vert^{\bar{s}}_{H^{1}}\Vert \partial_t u^R_\mu(s)\Vert^{1-\bar{s}}_{H}\Vert u^R_\mu(s)\Vert^{\frac{1-\bar{r}}{2-\bar{r}}}_{H^2}\Vert u^R_\mu(s)\Vert^{\frac{1}{2-\bar{r}}}_{H^{\bar{r}}}.
\end{split}
	\end{align}
Recalling the definition \eqref{xfine130} of $\bar{r}$, we have $\bar{r}\geq 2\bar{s}/(1+\bar{s})$, so that
\[\frac{1-\bar{r}}{2-\bar{r}}\leq \frac{1-\bar{s}}2.\]
Hence, the inequality above  gives
\begin{equation}
\Vert [D\gamma_R^{-1}(u^R_\mu(s))\partial_t u^R_\mu(s)]u^R_\mu(s)\Vert_{H^1}\leq 	c_R\,\Vert \partial_t u^R_\mu(s)\Vert^{\bar{s}}_{H^{1}}\Vert \partial_t u^R_\mu(s)\Vert^{1-\bar{s}}_{H}\Vert u^R_\mu(s)\Vert^{\frac{1-\bar{s}}{2}}_{H^2}.
\end{equation}
In particular, since
we have 
\begin{align*}I_{R,1}(t)&=\mu\,\int_0^t \langle [D\gamma_R^{-1}(u^R_\mu(s))\partial_t u^R_\mu(s)]u^R_\mu(s),\partial_t u^R_\mu(s)\rangle_{H^1}\,ds\\[10pt]
&\hsp\hsp+\langle\gamma_R^{-1}(u^R_\mu(s))\partial_t u^R_\mu(s),\partial_t u^R_\mu(s)\rangle_{H^1} \,ds,\end{align*}
for every $\e>0$ we can find $c_{R, \e}>0$ such that
\begin{align}
\begin{split}
\vert I_{R,1}(t)	\vert&\leq 	c_R\,\mu\int_0^t\Vert \partial_t u^R_\mu(s)\Vert^{1+\bar{s}}_{H^{1}}\Vert \partial_t u^R_\mu(s)\Vert^{1-\bar{s}}_{H}\Vert u^R_\mu(s)\Vert^{\frac{1-\bar{s}}{2}}_{H^2}\,ds+c\,\mu \int_0^t\Vert \partial_t u^R_\mu(s)\Vert^{2}_{H^{1}}\,ds\\[10pt]
&\hsllp\leq \e\int_0^t \Vert u^R_\mu(s)\Vert^{2}_{H^2}\,ds+c_{R, \e}\,\mu^2 \int_0^t \Vert \partial_t u^R_\mu(s)\Vert^{4}_{H}\,ds+c_{R, \e}\,\mu \int_0^t \Vert \partial_t u^R_\mu(s)\Vert^{2}_{H^1}\,ds.
\end{split}
	\end{align}
All the other terms $I_{R, k}(t)$, for $k=2,\ldots,5$ can be treated as in the proof of Lemma \ref{prop4.1-bis}, so that
by proceeding as in the proof of Lemma \ref{prop4.1-bis} we get
\begin{equation}
\begin{aligned}
\label{sa25-bis}
\ds{\mathbb{E}}&\ds{\sup_{s \in\,[0,t]}\Vert u^R_\mu(s)\Vert^2_{H^1}+	\int_0^t \mathbb{E}\Vert u^R_\mu(s)\Vert_{H^2}^2\,ds\leq \e\,c\int_0^t \mathbb{E}\Vert u^R_\mu(s)\Vert_{H^2}^2\,ds+c_T+c\int_0^t \mathbb{E}\,\Vert u^R_\mu(s)\Vert_{H^1}^2\,ds}\\[10pt]
& \ds{+c_{R, \e}\,\mu\int_0^t \mathbb{E}\Vert\partial_t u^R_\mu(s)\Vert^2_{H^1}\,ds+c_{R,\e} \mu^2 \int_0^t\mathbb{E}\Vert \partial_t u^R_\mu(s)\Vert_{H}^4\,ds+c\,\mu^2\,\mathbb{E}\sup_{s \in\,[0,t]}\Vert \partial_t u^R_\mu(s)\Vert_{H^1}^2.}\end{aligned}
 \end{equation}
 This means that if we pick $\bar{\e}>0$ sufficiently small, in view of \eqref{sa1-bis} we have
 \begin{equation}
\begin{aligned}
\label{sa25-4th}
\ds{\mathbb{E}}&\ds{\sup_{s \in\,[0,t]}\Vert u^R_\mu(s)\Vert^2_{H^1}+	\int_0^t \mathbb{E}\Vert u^R_\mu(s)\Vert_{H^2}^2\,ds\leq c_T+c\int_0^t \mathbb{E}\,\Vert u^R_\mu(s)\Vert_{H^1}^2\,ds}\\[10pt]
& \ds{+c_{R, \bar{\e}}\,\mu\int_0^t \mathbb{E}\Vert\partial_t u^R_\mu(s)\Vert^2_{H^1}\,ds+c\,\mu^2\,\mathbb{E}\sup_{s \in\,[0,t]}\Vert \partial_t u^R_\mu(s)\Vert_{H^1}^2,}\end{aligned}
 \end{equation}
and we obtain \eqref{sg27-tris}.

\end{proof}

By proceeding as in the proof of Lemma \ref{prop4.2-bis}, we have that there exists  $c_{T}>0$, depending on $\Lambda_1$ and $\Lambda_2$, such that for every $\mu \in\,(0,1)$ and $t \in\,[0,T]$
\begin{equation}
\label{sg30-tris}
\begin{aligned}
\begin{split}
 \mathbb{E}\sup_{s \in\,[0,T]}&\Vert  u^R_\mu(s) \Vert _{H^2}^2 +\mu\, \mathbb{E}\sup_{s \in\,[0,T]}\Vert   \partial_t u^R_\mu(s) \Vert _{H^1}^2+ \int_0^T \mathbb{E}\Vert  \partial_t u^R_\mu(s)\Vert _{H^1}^2 ds\\[10pt]
 &\hsp \leq \frac{c_{T}}\mu\left(1+\int_0^t \mathbb{E}\Vert u^R_\mu(s)\Vert_{H^1}^2\,ds\right).\end{split}\end{aligned}
\end{equation}
As we did in Subsection \ref{ssec6.1}, from the combination of \eqref{sg27-tris} and \eqref{sg30-tris}, we obtain \eqref{sg31-bis}.

In order to prove \eqref{sg32}, we want to proceed as in the proof of Lemma \ref{prop4.4}. However, we see that one of the fundamental facts in such proof is the fact that $\sigma:H^1\to \mathcal{L}_2(H_Q,H^1)$ is bounded. In order to make up with the fact that here $\sigma:H^1\to \mathcal{L}_2(H_Q,H^1)$ is not bounded, we need the following modification of Lemma \ref{prop4.4}.

\begin{Lemma}
\label{prop4.2-bis-bis}
Assume Hypotheses \ref{as1} to \ref{as4} and $(u^\mu_0, v^\mu_0) \in\,\mathcal{H}_2$ satisfying \eqref{finex} and \eqref{xsa11}. Then, for every $R\geq 1$ and $T>0$ there exists a constant $c_{T, R}>0$, depending on $\Lambda_1$ and $\Lambda_2$, such \eqref{sg32} holds.

\end{Lemma}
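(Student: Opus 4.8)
The plan is to repeat the maximal‑time contradiction argument of Proposition \ref{prop4.4}, but carried out at the fractional level $\mathcal{H}_{1+\bar r}=H^{1+\bar r}\times H^{\bar r}$ and with the deterministic blow‑up rate $\bar c/\mu$ — which in the bounded case came from $\sup_h\|\sigma(h)\|_{\mathcal{L}_2(H_Q,H^1)}<\infty$ — replaced by a rate that is random but \emph{constant in time}. First I would derive, for the truncated solution $u^R_\mu$, the energy identity analogous to \eqref{xfine200}. Setting $L^R_\mu(t):=\|u^R_\mu(t)\|^2_{H^{1+\bar r}}+\mu\|\partial_t u^R_\mu(t)\|^2_{H^{\bar r}}$ and applying It\^o's formula, the terms $\langle\partial_t u^R_\mu,\Delta u^R_\mu\rangle_{H^{\bar r}}$ cancel against $\tfrac12 d\|u^R_\mu\|^2_{H^{1+\bar r}}$, leaving
\begin{align*}
\tfrac12\,dL^R_\mu&=-\langle\gamma_R(u^R_\mu)\partial_t u^R_\mu,\partial_t u^R_\mu\rangle_{H^{\bar r}}\,dt+\langle f_R(u^R_\mu),\partial_t u^R_\mu\rangle_{H^{\bar r}}\,dt\\
&\quad+\tfrac1{2\mu}\|\sigma_R(u^R_\mu)\|^2_{\mathcal{L}_2(H_Q,H^{\bar r})}\,dt+\langle\sigma_R(u^R_\mu)\,dw^Q,\partial_t u^R_\mu\rangle_{H^{\bar r}}.
\end{align*}
The first term is where the fractional setting is essential: since $\gamma_R(h)$ acts as multiplication by the $x$‑independent matrix $\mathfrak{g}_R(h)$, it commutes with $(-L)^{\bar r/2}$, so by the coercivity \eqref{gsb15-bis} one gets $\langle\gamma_R(u^R_\mu)\partial_t u^R_\mu,\partial_t u^R_\mu\rangle_{H^{\bar r}}\geq\gamma_0\|\partial_t u^R_\mu\|^2_{H^{\bar r}}$. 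Using this, and using \eqref{xfine204} (legitimate because $\bar r<1$) both to absorb $\langle f_R,\partial_t u^R_\mu\rangle_{H^{\bar r}}$ into the damping and to bound $\|\sigma_R(u^R_\mu)\|^2_{\mathcal{L}_2(H_Q,H^{\bar r})}\leq c(1+\|u^R_\mu\|^2_{H^{\bar r}})\leq c(1+\|u^R_\mu\|^2_{H^1})$, I obtain $dL^R_\mu\leq\tfrac{c}{\mu}(1+\|u^R_\mu\|^2_{H^1})\,dt+dM^R_\mu$, where $M^R_\mu$ denotes the stochastic integral.

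The key new idea is to dominate the time‑dependent coefficient by the \emph{time‑independent} random variable $\Xi:=1+\sup_{t\in[0,T]}\|u^R_\mu(t)\|^2_{H^1}$, which by \eqref{sg31-bis} satisfies $\mathbb{E}\,\Xi\leq c_{T,R}$. This gives $dL^R_\mu\leq\tfrac{c\,\Xi}{\mu}\,dt+dM^R_\mu$, a genuine deterministic‑in‑time rate $c\Xi/\mu$ with only the prefactor $\Xi$ random, and the machinery of Proposition \ref{prop4.4} now applies almost verbatim. Arguing by contradiction, I would assume a sequence $\mu_k\downarrow0$ with $\sqrt{\mu_k}\,\mathbb{E}\sup_t L^R_{\mu_k}\to\infty$, let $\tau_k$ maximise $L^R_{\mu_k}$, use $L^R_{\mu_k}(0)\leq\Lambda_2\mu_k^{-\delta}$ (from \eqref{xsa11} and $1+\bar r<2$) together with the rate bound to get $\tau_k\geq\tfrac{\mu_k}{c\Xi_k}\theta_k$ with $\theta_k:=L^R_{\mu_k}(\tau_k)-\Lambda_2\mu_k^{-\delta}-M^{*}_k$, and on $E_k=\{\theta_k>0\}$ conclude $L^R_{\mu_k}(s)\geq\theta_k/2$ for $s$ in the backward interval of length $\tfrac{\mu_k}{2c\Xi_k}\theta_k$, whence $I_k:=\int_0^T L^R_{\mu_k}\,dt\geq\tfrac{\mu_k\theta_k^2}{4c\Xi_k}$ on $E_k$.

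The main obstacle, and the place where the proof genuinely departs from Proposition \ref{prop4.4}, is the final moment estimate, since the lower bound for $I_k$ now carries the random weight $1/\Xi_k$. I would handle this with a weighted Cauchy--Schwarz inequality,
\begin{equation*}
\mathbb{E}(\theta_k\,;\,E_k)\leq\Big(\mathbb{E}\big(\tfrac{\mu_k\theta_k^2}{\Xi_k}\,;\,E_k\big)\Big)^{1/2}\Big(\tfrac{1}{\mu_k}\,\mathbb{E}\,\Xi_k\Big)^{1/2},
\end{equation*}
which yields $\mathbb{E}\big(\tfrac{\mu_k\theta_k^2}{\Xi_k};E_k\big)\geq\mu_k(\mathbb{E}\theta_k)^2/\mathbb{E}\Xi_k\geq\mu_k(\mathbb{E}\theta_k)^2/c_{T,R}$. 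It then remains to verify $\sqrt{\mu_k}\,\mathbb{E}\theta_k\to\infty$, which follows from the contradiction hypothesis, the assumption $\delta<1/2$, and the martingale bound $\mathbb{E}M^{*}_k\leq c_{T,R}/\sqrt{\mu_k}$; the latter I would get from the Burkholder--Davis--Gundy inequality, the pathwise estimate $\|\sigma_R(u^R_\mu)\|^2_{\mathcal{L}_2(H_Q,H^{\bar r})}\leq c\,\Xi$, and $\int_0^T\mathbb{E}\|\partial_t u^R_\mu\|^2_{H^{\bar r}}\,dt\leq\int_0^T\mathbb{E}\|\partial_t u^R_\mu\|^2_{H^1}\,dt\leq c_{T,R}/\mu$ coming from \eqref{sg31-bis} and \eqref{sg30-tris}. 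Consequently $\mathbb{E}I_k\to\infty$, which contradicts $\sup_k\mathbb{E}I_k\leq c_{T,R}$ — itself a direct consequence of \eqref{sg31-bis} and \eqref{sg30-tris} — and this contradiction establishes \eqref{sg32}.
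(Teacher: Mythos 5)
Your proof is correct, and it shares the paper's overall architecture: the fractional energy functional $L^R_\mu(t)=\Vert u^R_\mu(t)\Vert^2_{H^{1+\bar r}}+\mu\Vert \partial_t u^R_\mu(t)\Vert^2_{H^{\bar r}}$, the coercivity $\langle\gamma_R(u)v,v\rangle_{H^{\bar r}}\geq\gamma_0\Vert v\Vert^2_{H^{\bar r}}$ (valid, as you note, because the $x$-independent matrix $\mathfrak{g}_R(u)$ commutes with $(-L)^{\bar r/2}$), and the maximal-time contradiction scheme of Proposition \ref{prop4.4}. Where you genuinely diverge is at the one point the truncation was designed for. The paper observes that, since the cutoff $\Phi_R$ is evaluated exactly in the $H^{\bar r}$-norm and \eqref{xfine204} applies with $\rho=\bar r<1$, the truncated coefficients are \emph{uniformly bounded} in the fractional scales: $\Vert f_R(u)\Vert_{H^{\bar r}}\leq c_R$ and $\Vert\sigma_R(u)\Vert_{\mathcal{L}_2(H_Q,H^{\bar r})}\leq c_R$ for all $u$. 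This restores the deterministic drift rate $c_R/\mu$ of the bounded-$\sigma$ case, and the proof of Proposition \ref{prop4.4} then applies verbatim. You instead discard the cutoff after the intermediate bound $\Vert\sigma_R(u)\Vert^2_{\mathcal{L}_2(H_Q,H^{\bar r})}\leq c\left(1+\Vert u\Vert^2_{H^1}\right)$ and dominate the drift by the random, time-independent majorant $\Xi$, which forces you to rerun the contradiction argument with the random weight $1/\Xi_k$ and to compensate with the weighted Cauchy--Schwarz step $\mathbb{E}(\theta_k;E_k)\leq\left(\mathbb{E}\left(\mu_k\theta_k^2/\Xi_k;E_k\right)\right)^{1/2}\left(\mu_k^{-1}\mathbb{E}\,\Xi_k\right)^{1/2}$. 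Every step of this checks out: $\mathbb{E}(\theta_k;E_k)\geq\mathbb{E}\,\theta_k$ because $\theta_k\leq 0$ off $E_k$; $L_k(0)\leq\Lambda_2\mu_k^{-\delta}$ since $1+\bar r<2$; $\mathbb{E}\,\Xi_k\leq c_{T,R}$ by \eqref{sg31-bis}; the martingale bound $\mathbb{E}M^{\star}_k\leq c_{T,R}/\sqrt{\mu_k}$ via Burkholder--Davis--Gundy, a pathwise Cauchy--Schwarz splitting off $(\mathbb{E}\,\Xi_k)^{1/2}$, and \eqref{sg30-tris}; and $\sup_k\mathbb{E}I_k\leq c_{T,R}$ from \eqref{sg31-bis} and \eqref{sa29}. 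What your version buys is a mild extra generality: it only needs the drift rate to be dominated by $\Xi/\mu$ with $\mathbb{E}\,\Xi<\infty$, not by a deterministic constant, so it would survive truncations measured in a norm weaker than $H^{\bar r}$. What it costs is the heavier final estimate; the paper's choice of cutoff level makes the whole issue evaporate and reduces the lemma to a one-line citation of the already-proved Proposition \ref{prop4.4}.
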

\begin{proof}
We have
\[\begin{array}{l}
\ds{\frac 12 d \le(\Vert  u^R_\mu(t)\Vert _{H^{1+\bar{r}}}^2+\mu\, \Vert  \partial_t u^R_\mu(t)\Vert _{H^{\bar{r}}}^2 \r)=-\langle \gamma_R(u^R_\mu(t))\partial_t u^R_\mu(t),\partial_t u^R_\mu(t)\rangle_{H^{\bar{r}}}\,dt}\\[14pt]
\ds{\hsllp+\langle f_R(u^R_\mu(t)),\partial_tu_\mu(t)\rangle_{H^{\bar{r}}}\,dt+\frac1{2\mu}\Vert \sigma_R(u^R_\mu(t))\Vert^2_{\mathcal{L}_2(H_Q,H^{\bar{r}})}\,dt+\langle \sigma_R(u^R_\mu(t))dw^Q(t),\partial_t u^R_\mu(t)\rangle_{H^{\bar{r}}}.}\end{array}\]
Thanks to \eqref{xfine204}, 
for every $u \in\,H^{\bar{r}}$ we have
\[\Vert f_R(u)\Vert_{H^{\bar{r}}}\leq c\,\Phi_R(\Vert u\Vert_{H^{\bar{r}}})\Vert u\Vert_{H^{\bar{r}}}\leq c_R,\]
and
\[\Vert \sigma_R(u)\Vert_{\mathcal{L}_2(H_Q,H^{\bar{r}})}\leq c\,\Phi_R(\Vert u\Vert_{H^{\bar{r}}})\left(1+\Vert u\Vert_{H^{\bar{r}}}\right)\leq c_R,\]
and this allows to obtain
\[   \begin{aligned}
    &\ds{\frac 12 d \le(\Vert  u^R_\mu(t)\Vert _{H^{1+\bar{r}}}^2+\mu\, \Vert  \partial_t u^R_\mu(t)\Vert _{H^{\bar{r}}}^2 \r)}\\[10pt]
    &\hsp\ds{\leq -\frac{\gamma_0}{2}\,\Vert \partial_t u^R_\mu(t)\Vert^2_{H^{\bar{r}}}+\frac{c_R}\mu\,dt+\vert \langle \sigma_R(u^R_\mu(t))dw^Q(t),\partial_t u^R_\mu(t)\rangle_{H^{\bar{r}}}\vert.}
\end{aligned}\]
Thus, if we define
\[L^R_\mu(t):=\Vert  u^R_\mu(t)\Vert _{H^{1+\bar{r}}}^2+\mu\, \Vert  \partial_t u^R_\mu(t)\Vert _{H^{\bar{r}}}^2,
\]
for every $0\leq s\leq t\leq T$ we have
\[L^R_\mu(t)-L^R_\mu(s)\leq \frac{c_R}\mu\,(t-s)+2\,M_{R, \mu}^\star,\]
where
\[M_{\mu, R}^\star:=\sup_{t \in\,[0,T]}\left\vert \int_0^t\langle \sigma_R(u^R_\mu(s))dw^Q(s),\partial_t u^R_\mu(s)\rangle_{H^{\bar{r}}}\right\vert.\]
At this point, the rest proof of the proof proceed exactly as in the proof of Lemma  \ref{prop4.4}.
 
\end{proof}

\section{Uniform bounds in $\mathcal{H}_3$}  \label{ssec5.3}
Here we do not prove uniform bounds in $\mathcal{H}_3$ with respect to $\mu \in\,(0,1)$, as we did for the bounds in $\mathcal{H}_1$ and $\mathcal{H}_2$. Nonetheless,  we are able to control the growth of those bounds with respect to $\mu \in\,(0,1)$ in a proper way. Namely, we are going to prove that \begin{equation}
\label{xsa17}
\mathbb{E}\sup_{s \in\,[0,T]}\Vert u^R(s)\Vert_{H^2}^2 +\int_0^T \mathbb{E}\,\Vert  u^R_\mu(s)\Vert _{H^3}^2\, ds\leq \frac{\rho_{T,R}(\mu)}{\mu^\delta},\ \ \ \ \ \mu \in\,(0,1),	\end{equation}
and
\begin{align}\begin{split} \label{xfine141-final}  &\mu^3\,\mathbb{E}\sup_{s \in\,[0,T]}\Vert \partial_t u^R_\mu(s)\Vert_{H^1}^4+\mu\,\mathbb{E}\sup_{s \in\,[0,T]}\Vert u^R_\mu(s)\Vert_{H^2}^4+\mu^2\,\int_0^T\mathbb{E}\Vert \partial_t u^R_\mu(s)\Vert_{H^1}^4\,ds\\[10pt]
	&\hsp+\mu\int_0^T\mathbb{E}\,\Vert u^R_\mu(s)\Vert_{H^2}^2\Vert \partial_t u^R_\mu(s)\Vert_{H^1}^2\,ds\leq \frac{\rho_{T,R}(\mu)}{\mu^\delta},\ \ \ \ \ \mu \in\,(0,1),
	\end{split}\end{align}
where $\delta \in\,(0,1/2)$ is the constant introduced in \eqref{xsa11} and   $\rho_{T,R}:(0,1)\to  [0,+\infty)$ is some function such that
\begin{equation}
\label{xsa19}
\lim_{\mu\to 0}	\rho_{T,R}(\mu)=0.
\end{equation}
Notice that, unlike in the case of the $\mathcal{H}_2$-bounds, here we  do not need to distinguish the case $\sigma$ is bounded and the case it is not.  

In Lemma \ref{lemma1} we have shown that if $\sigma$ is bounded in $\mathcal{L}_2(H_Q,H^1)$, then $\boldsymbol{u}_\mu$, and hence all $\boldsymbol{u}^R_\mu$ satisfy estimate \eqref{sa1}, which involves  fourth moments of $u_\mu$ and $\partial_t u_\mu$ in $\mathcal{H}_2$. The same bounds cannot be proven in case of unbounded $\sigma$. However, going through all steps of the proof of the lemma, we see that even if $\sigma$ is not bounded,  we still have the inequality
\begin{align}\begin{split}  \label{xsa2-ub}
	\mathbb{E}\sup_{s \in\,[0,t]}&\Vert \partial_t u^R_\mu(s)\Vert_{H^1}^4+\frac 1{\mu^2}\mathbb{E}\sup_{s \in\,[0,t]}\Vert u^R_\mu(s)\Vert_{H^2}^4+\frac{1}{\mu}\,\int_0^t\mathbb{E}\Vert \partial_t u^R_\mu(s)\Vert_{H^1}^4\,ds\\[10pt]
	&\hsl+\frac 1{\mu^2}\int_0^t\mathbb{E}\,\Vert u^R_\mu(s)\Vert_{H^2}^2\Vert \partial_t u^R_\mu(s)\Vert_{H^1}^2\,ds\leq \frac{c}{\mu^{1+\delta}}+\frac {c_T}{\mu^2}\int_0^t\mathbb{E}\,\Vert \sigma_R(u^R_\mu(s))\Vert^2_{\mathcal{L}_2(H_Q,H^1)}\Vert \partial_t u^R_\mu(s)\Vert_{H^1}^2\,ds\\[10pt]
	&
	+\frac {c_T}{\mu^3}\int_0^t\mathbb{E}\,\Vert \sigma_R(u^R_\mu(s))\Vert^2_{\mathcal{L}_2(H_Q,H^1)}\Vert u^R_\mu(s)\Vert_{H^2}^2\,ds\\[10pt]
	&\hsp+\frac{c_T}{\mu^2}+\frac {c_T}\mu\,\mathbb{E}\sup_{s \in\,[0,t]}\Vert \partial_t u^R_\mu(s)\Vert_{H^1}^2+\frac {c_T}{\mu^2}\,\mathbb{E}\sup_{s \in\,[0,t]}\Vert  u^R_\mu(ss)\Vert_{H^1}^4.\\[10pt]
	& \end{split}
\end{align}

Now, by interpolation for every $u \in\,H^2$ and $v \in\,H^1$ we have
\begin{align}\begin{split}  \label{xfine133}
\Vert \sigma_R&(u)\Vert^2_{\mathcal{L}_2(H_Q,H^1)}\Vert v\Vert_{H^1}^2\leq c\,\Vert v\Vert_{H^1}^2+c\,\Phi^2_R(	\Vert u\Vert_{H^{\bar{r}}})\,\Vert u\Vert^2_{H^1}\Vert v\Vert_{H^1}^2\\[10pt]
&
\hsp\leq c\,\Vert v\Vert_{H^1}^2+c_R\,\Vert u\Vert_{H^2}\Vert v\Vert_{H^1}^2\leq c_R\,\Vert v\Vert_{H^1}^2+\frac 12 \,\Vert u\Vert^2_{H^2}\Vert v\Vert^2_{H^1},\end{split}
\end{align}
and if $u \in\,H^3$, for every $\e>0$, 
\begin{align*}
\Vert\sigma_R(u)&\Vert^2_{\mathcal{L}_2(H_Q,H^1)}\Vert u\Vert_{H^2}^2\leq c\,\Vert u\Vert_{H^2}^2+c\,\Phi^2_R(	\Vert u\Vert_{H^{\bar{r}}})\,\Vert u\Vert^2_{H^1}\Vert u\Vert_{H^2}^2\\[10pt]
&\leq c\,\Vert u\Vert_{H^2}^2+\Phi_R(	\Vert u\Vert_{H^{\bar{r}}})\,\Vert u\Vert^{}_{H^3}\Vert u\Vert_{H^1}^3\leq c\,\Vert u\Vert_{H^2}^2+\e\,\Vert u\Vert^{2}_{H^3}+c_{R,\e}\,\Vert u\Vert^{\frac{6(1-\bar{r})}{2-\bar{r}}}_{H^2}.
\end{align*}
Since without any loss of generality we can assume that $\bar{r}>1/2$, we have that $6(1-\bar{r})/(2-\bar{r})\leq 2$, so that we conclude
\[\Vert\sigma_R(u)\Vert^2_{\mathcal{L}_2(H_Q,H^1)}\Vert u\Vert_{H^2}^2\leq c_{R,\e}\,\Vert u\Vert_{H^2}^2+\e\,\Vert u\Vert^{2}_{H^3}.\]
By using this and \eqref{xfine133} into \eqref{xsa2-ub}, we obtain
\begin{align*}\begin{split}  \mathbb{E}\sup_{s \in\,[0,t]}&\Vert \partial_t u^R_\mu(s)\Vert_{H^1}^4+\frac 1{\mu^2}\mathbb{E}\sup_{s \in\,[0,t]}\Vert u^R_\mu(s)\Vert_{H^2}^4+\frac{1}{\mu}\,\int_0^t\mathbb{E}\Vert \partial_t u^R_\mu(s)\Vert_{H^1}^4\,ds\\[10pt]
	&\hsl+\frac 1{2\mu^2}\int_0^t\mathbb{E}\,\Vert u^R_\mu(s)\Vert_{H^2}^2\Vert \partial_t u^R_\mu(s)\Vert_{H^1}^2\,ds\leq \frac {c_{T,R}}{\mu^2}\int_0^t\mathbb{E}\,\Vert \partial_t u^R_\mu(s)\Vert_{H^1}^2\,ds+\frac {c_{T,R,\e}}{\mu^3}\int_0^t\mathbb{E}\,\Vert u^R_\mu(s)\Vert_{H^2}^2\,ds\\[10pt]
	&
	+\e\,\frac {c_T}{\mu^3}\int_0^t\mathbb{E}\,\Vert u^R_\mu(s)\Vert^{2}_{H^3}\,ds+\frac{c_T}{\mu^2}+\frac {c_T}\mu\,\mathbb{E}\sup_{s \in\,[0,t]}\Vert \partial_t u^R_\mu(s)\Vert_{H^1}^2+\frac {c_T}{\mu^2}\,\mathbb{E}\sup_{s \in\,[0,t]}\Vert  u^R_\mu(s)\Vert_{H^1}^4+\frac{c}{\mu^{1+\delta}}.\\[10pt]
	& \end{split}
\end{align*}
Therefore, recalling \eqref{sa1-4th-mom-H} and \eqref{sg31-bis}, after we multiply both sides by $\mu^3$ we get
\begin{align}\begin{split} \label{xfine141}  &\mu^3\,\mathbb{E}\sup_{s \in\,[0,t]}\Vert \partial_t u^R_\mu(s)\Vert_{H^1}^4+\mu\,\mathbb{E}\sup_{s \in\,[0,t]}\Vert u^R_\mu(s)\Vert_{H^2}^4+\mu^2\,\int_0^t\mathbb{E}\Vert \partial_t u^R_\mu(s)\Vert_{H^1}^4\,ds\\[10pt]
	&\hsp+\mu\int_0^t\mathbb{E}\,\Vert u^R_\mu(s)\Vert_{H^2}^2\Vert \partial_t u^R_\mu(s)\Vert_{H^1}^2\,ds\\[10pt]
	&\leq c_{T,R,\e}+c_{T,R}\left(1+\mu\int_0^t\mathbb{E}\,\Vert \partial_t u^R_\mu(s)\Vert_{H^1}^2\,ds+\mu^2\,\mathbb{E}\sup_{s \in\,[0,t]}\Vert \partial_t u^R_\mu(s)\Vert_{H^1}^2\right)+\e\,c_T\int_0^t\mathbb{E}\,\Vert u^R_\mu(s)\Vert^{2}_{H^3}\,ds. \end{split}
\end{align}

\begin{Lemma}
\label{prop4.1-4th}
Assume Hypotheses \ref{as1} to  \ref{as4}, and fix $R\geq 1$ and  $(u^\mu_0,v^\mu_0) \in\,\mathcal{H}_3$ satisfying \eqref{finex} and \eqref{xsa11}. Then, for every $T>0$ there exists $c_{T}>0$ such that for every $\mu \in\,(0,1)$ and $t \in\,[0,T]$
\begin{equation}
\label{sg27-4th}
\begin{aligned}
&\mathbb{E}\sup_{s \in\,[0,t]}\Vert  u^R_\mu(s) \Vert _{H^2}^2 +\int_0^t \mathbb{E}\,\Vert  u^R_\mu(s)\Vert _{H^3}^2\, ds\\[10pt]
& \hsp\leq \frac{\rho_{1}(\mu)}{\mu^\delta}+c_{T}\,\mu \le(\int_0^t \mathbb{E}\,\Vert  \partial_t u^R_\mu(s)\Vert _{H^2}^2 ds +\mu\,  \mathbb{E}\sup_{s \in\,[0,t]}\Vert  \partial_t u^R_\mu(s)\Vert _{H^2}^2\r), 
\end{aligned}
\end{equation}
for some $\rho_{1}(\mu)>0$, depending on $\Lambda_1$, $\Lambda^\mu_2$ and $\Lambda^\mu_3$, such that
\begin{equation}
\label{xsa12}
\lim_{\mu\to 0}\rho_{1}(\mu)=0.	
\end{equation}

\end{Lemma}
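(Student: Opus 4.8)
The plan is to mimic the energy argument of Lemma \ref{prop4.1-bis}, raising the regularity by one degree: instead of testing with $\eta_R(h)=\gamma_R^{-1}(h)h$ in the $H^1$ inner product, I would apply It\^o's formula to the functional $t\mapsto\langle\eta_R(u^R_\mu(t)),\mu\,\partial_t u^R_\mu(t)\rangle_{H^2}$, where $\eta_R(h)=\mathfrak{g}_R^{-1}(h)h$. The key structural simplification is that, under Condition 1 of Hypothesis \ref{as4}, $\mathfrak{g}_R(h)$ is a constant matrix in $\mathbb{R}^{r\times r}$ (independent of $x$), so $\gamma_R^{-1}(h)$ is pointwise multiplication by $\mathfrak{g}_R^{-1}(h)$ and hence commutes with $\Delta$ and $\nabla$. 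This produces a decomposition
\[
\langle\eta_R(u^R_\mu(t)),\mu\,\partial_t u^R_\mu(t)\rangle_{H^2}=\langle\eta_R(u^\mu_0),\mu\,v^\mu_0\rangle_{H^2}+\sum_{k=1}^{4}I_{R,k}(t)+I_{R,5}(t),
\]
entirely parallel to \eqref{E3-tris}, with the $H^2$ inner product replacing the $H^1$ one and $I_{R,5}$ the stochastic integral.

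The coercive term is $I_{R,2}(t)=\int_0^t\langle\eta_R(u^R_\mu),\Delta u^R_\mu\rangle_{H^2}\,ds$. Using $\langle a,b\rangle_{H^2}=\langle\Delta a,\Delta b\rangle_H$, the commutation of the constant matrix with $\Delta$, one integration by parts, and the coercivity \eqref{xfine160}, this equals $-\int_0^t\langle\mathfrak{g}_R^{-1}(u^R_\mu)\nabla\Delta u^R_\mu,\nabla\Delta u^R_\mu\rangle_H\,ds\leq-\tilde{\gamma_0}\int_0^t\|u^R_\mu\|_{H^3}^2\,ds$, supplying the positive $H^3$-term on the left. The term $I_{R,3}(t)=-\int_0^t\langle u^R_\mu,\partial_t u^R_\mu\rangle_{H^2}\,ds=\tfrac12(\|u^\mu_0\|_{H^2}^2-\|u^R_\mu(t)\|_{H^2}^2)$ supplies the potential $\|u^R_\mu(t)\|_{H^2}^2$, while $I_{R,4}$ (the $f$-term) is handled by Cauchy--Schwarz in $H^2$, the truncated growth bound \eqref{xfine120-f} (valid since $\|u^R_\mu\|_{H^{\bar r}}\leq R+1$ on the support of $\Phi_R$), and Young's inequality, yielding $\varepsilon\int_0^t\|u^R_\mu\|_{H^3}^2\,ds+c_{R,\varepsilon}\int_0^t(1+\|u^R_\mu\|_{H^2}^2)\,ds$, the latter integral being bounded by \eqref{sg31-bis}. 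The two boundary terms are treated by Young: at time $t$, $|\langle\eta_R(u^R_\mu(t)),\mu\,\partial_t u^R_\mu(t)\rangle_{H^2}|\leq\tfrac14\|u^R_\mu(t)\|_{H^2}^2+c\mu^2\|\partial_t u^R_\mu(t)\|_{H^2}^2$, the first absorbed on the left and the second matching the right-hand side; at time $0$, $|\langle\eta_R(u^\mu_0),\mu\,v^\mu_0\rangle_{H^2}|\leq c\mu\|u^\mu_0\|_{H^2}\|v^\mu_0\|_{H^2}\leq c\sqrt{\mu}\,(\Lambda_2^\mu\Lambda_3^\mu)^{1/2}$, which by \eqref{xsa11} is $o(\mu^{-\delta})$ and thus absorbable into $\rho_1(\mu)/\mu^\delta$ (this is precisely where $\Lambda_3^\mu$ enters). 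The martingale $I_{R,5}$ I would control by Burkholder--Davis--Gundy, bounding its bracket by $\int_0^t\|\eta_R(u^R_\mu)\|_{H^2}^2\|\sigma_R(u^R_\mu)\|^2_{\mathcal{L}_2(H_Q,H^2)}\,ds$ and invoking \eqref{xfine120}; the crucial point is that the exponent $2(\bar\kappa-\bar r)/(3-\bar r)$ there is strictly below $1$ by the choice \eqref{xfine130} of $\bar r$, so a H\"older inequality in time followed by Young gives $\varepsilon\,\mathbb{E}\sup_{[0,t]}\|u^R_\mu\|_{H^2}^2+\varepsilon\,\mathbb{E}\int_0^t\|u^R_\mu\|_{H^3}^2\,ds+c_{R,\varepsilon,T}$, both $\varepsilon$-terms being absorbed on the left.

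The main obstacle is the estimate of $I_{R,1}(t)=\mu\int_0^t\langle\partial_t\eta_R(u^R_\mu),\partial_t u^R_\mu\rangle_{H^2}\,ds$. Writing $\partial_t\eta_R(u^R_\mu)=[D\mathfrak{g}_R^{-1}(u^R_\mu)\partial_t u^R_\mu]u^R_\mu+\mathfrak{g}_R^{-1}(u^R_\mu)\partial_t u^R_\mu$, the second summand is bounded directly by $c\mu\int_0^t\|\partial_t u^R_\mu\|_{H^2}^2\,ds$, matching the right-hand side. For the first summand one can only use the differentiability of $\mathfrak{g}_R$ on $H^{\bar s}$, that is \eqref{gsb15-tris}, giving $\|[D\mathfrak{g}_R^{-1}(u^R_\mu)\partial_t u^R_\mu]u^R_\mu\|_{H^2}\leq c_R\|\partial_t u^R_\mu\|_{H^{\bar s}}\|u^R_\mu\|_{H^2}$; the delicate part is then to interpolate $\|\partial_t u^R_\mu\|_{H^{\bar s}}$ (with $\bar s<1$) between lower-order norms and $\|\partial_t u^R_\mu\|_{H^2}$, apply Young's inequality, and match the resulting powers of $\mu$ against the a priori bounds \eqref{sg30-tris-bis}, \eqref{sg31-bis}, \eqref{sa29} and the fourth-moment estimates \eqref{sa1-4th-mom-H}, \eqref{xfine141}, so that every residual term is either of the admitted form $\mu\int_0^t\|\partial_t u^R_\mu\|_{H^2}^2\,ds$ (or $\mu^2\sup$), an $\varepsilon$-multiple of $\int_0^t\|u^R_\mu\|_{H^3}^2\,ds$, or a quantity vanishing like $\rho_1(\mu)/\mu^\delta$. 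Finally, taking the supremum in time, then expectation, and choosing $\varepsilon$ small enough to absorb the $H^2$- and $H^3$-terms, I obtain \eqref{sg27-4th} with $\rho_1(\mu)$ equal to the sum of the (rescaled) $o(\mu^{-\delta})$ initial-data contribution and a constant multiple of $\mu^{\delta}$, both vanishing as $\mu\to0$, which gives \eqref{xsa12}. Keeping this book-keeping of $\mu$-powers consistent, while the $\mathcal{H}_3$-norms themselves are permitted to blow up, is the technical heart of the argument.
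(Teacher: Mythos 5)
Your proposal follows essentially the same route as the paper's proof: It\^o's formula applied to $\langle \eta_R(u^R_\mu(t)),\mu\,\partial_t u^R_\mu(t)\rangle_{H^2}$ with $\eta_R(h)=\mathfrak{g}_R^{-1}(h)h$, the same decomposition as in \eqref{E3-4th}, coercivity of $I_{R,2}$ from \eqref{xfine160}, the exact cancellation in $I_{R,3}$, Young's inequality on the boundary terms, the growth bound on $f_R$ with exponent strictly below $2$ thanks to the choice \eqref{xfine130} of $\bar r$, and the same identification of $\rho_1(\mu)$ from $\mu^\delta\left(\langle\eta_R(u_0^\mu),\mu v_0^\mu\rangle_{H^2}+\Vert u_0^\mu\Vert_{H^2}^2\right)\leq (\Lambda_2^\mu\Lambda_3^\mu)^{1/2}+\Lambda_2^\mu$ via \eqref{xsa11}.

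The one place where you stop short is exactly the step you yourself call the technical heart, namely $I_{R,1}$, and there your framing contains a small inaccuracy that makes the step look harder than it is. You claim one ``can only use'' the $H^{\bar s}$-differentiability \eqref{gsb15-tris}; but \eqref{gsb15-tris} is the \emph{additional} hypothesis for the unbounded-$\sigma$ case, while Condition 1 of Hypothesis \ref{as4} always gives $D\mathfrak{g}$ bounded from $H^1$ to $\mathbb{R}^{r\times r}$, so $\Vert D\mathfrak{g}_R^{-1}(u)\,\partial_t u\Vert_{\mathbb{R}^{r\times r}}\leq c\,\Vert \partial_t u\Vert_{H^1}$ (and in any case $\Vert\cdot\Vert_{H^{\bar s}}\leq\Vert\cdot\Vert_{H^1}$, so your route collapses to this one). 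With this, as in \eqref{sg10-4th},
\[
\vert I_{R,1}(t)\vert\leq c\,\mu\int_0^t\Vert \partial_t u^R_\mu(s)\Vert_{H^1}\Vert u^R_\mu(s)\Vert_{H^2}\Vert \partial_t u^R_\mu(s)\Vert_{H^2}\,ds+c\,\mu\int_0^t\Vert \partial_t u^R_\mu(s)\Vert_{H^2}^2\,ds,
\]
and Young's inequality leaves, besides the admissible term $c\,\mu\int_0^t\Vert\partial_t u^R_\mu\Vert_{H^2}^2\,ds$, the mixed term $c\,\mu\int_0^t\Vert \partial_t u^R_\mu\Vert_{H^1}^2\Vert u^R_\mu\Vert_{H^2}^2\,ds$ --- which is precisely what the pre-established inequality \eqref{xfine141} was built to control: together with \eqref{sa29} it is bounded by $c_{T,R,\e}+\e\,c_T\int_0^t\mathbb{E}\Vert u^R_\mu(s)\Vert_{H^3}^2\,ds$, and the $\e$-term is absorbed into the left-hand side by choosing $\bar\e\,c_T\leq\tilde{\gamma_0}/4$. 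No interpolation in $\bar s$ and no fresh book-keeping of $\mu$-powers is needed beyond this; so the gap you flag is closable in two lines, by the very tools you list. A final remark in your favor: your BDG treatment of the stochastic integral, with the exponent $2(\bar\kappa-\bar r)/(3-\bar r)<1$ guaranteeing the Young absorption, is actually \emph{needed} to justify the $\mathbb{E}\sup_{s\in[0,t]}$ on the left of \eqref{sg27-4th}, whereas the printed proof only integrates and takes pointwise expectation (so the martingale drops); on this point your write-up is more careful than the paper's.
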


\begin{proof}
As in the proof of Lemma \ref{prop4.1-tris} and Lemma \ref{prop4.1-bis}, we have
\begin{align}\begin{split}
\label{E3-4th}
\langle \eta&(u^R_\mu(t)),\mu\,   \partial_t u^R_\mu(t)\rangle_{H^2} =\langle \eta(u_0),\mu\,  v_0\rangle_{H^2} +\int_0^t\langle \partial_t \eta(u^R_\mu(s)),\mu\,\partial_t u^R_\mu(s)\rangle_{H^2}\,ds\\[10pt]
 &\hsllp+\int_0^t\langle \eta(u^R_\mu(s)),\Delta u^R_\mu(s)\rangle_{H^2}\,ds-\int_0^t\langle \eta(u^R_\mu(s)),\gamma(u^R_\mu(s))\partial_t u^R_\mu(s)\rangle_{H^2}\,ds\\[10pt]
  &\hsllp+\int_0^t\langle \eta(u^R_\mu(s)),f(u^R_\mu(s))\rangle_{H^2}\,ds+\int_0^t\langle \eta(u^R_\mu(s)),\sigma_R(u_\mu(s))\,dw^Q(s)\rangle_{H^2}\\[10pt]
 &\hsp\hsp=:\langle \eta(u_0),\mu\,  v_0\rangle_{H^1}+\sum_{k=1}^4 I^R_k(t).\end{split}
	\end{align} 
 
For $I^R_1$, we have
\begin{equation}
\label{sg10-4th}	
\begin{array}{ll}
\ds{|I^R_1(t)|} & \ds{\leq 	c\,\mu \int_0^t\Vert \partial_t u^R_\mu(s)\Vert_{H^1}\,\Vert \partial_t u^R_\mu(s)\Vert_{H^2}\,\Vert  u^R_\mu(s)\Vert_{H^2}\,ds+ c\,\mu\int_0^t\Vert \partial_t u^R_\mu(s)\Vert_{H^2}^2\,ds}\\[18pt]
&\ds{\hslp\leq 	c\,\mu \int_0^t\Vert \partial_t u^R_\mu(s)\Vert_{H^1}^2\,\Vert  u^R_\mu(s)\Vert_{H^2}^2\,\ds+c\,\mu\int_0^t\Vert   \partial_t u^R_\mu(s)\Vert_{H^2}^2\,ds.}
\end{array}
\end{equation}
For $I^R_2(t)$ and $I^R_3(t)$,  we have
\begin{equation}
\label{sg11-4th}
I^R_2(t)\leq -\tilde{\gamma_0}\int_0^t \Vert u^R_\mu(s)\Vert^2_{H^3}\,ds,	\end{equation}
and
\begin{equation}
\label{sg 12-4th}
I^R_3(t)=	-\int_0^t\langle u^R_\mu(t),\partial_t u^R_\mu(s))\rangle_{H^2}\,ds=-\Vert  u^R_\mu(t)\Vert_{H^2}^2+\Vert u^\mu_0\Vert_{H^2}^2.
\end{equation}
For $I^R_4(t)$, we have
\begin{align*}\begin{split}
|I^R_4(t)|\leq c\,t&+c\int_0^t \Phi_R(	\Vert u^R_\mu(r)\Vert_{H^{\bar{r}}})\Vert u^R_\mu(s)\Vert_{H^2}^3\,ds\leq c\,t+c_R \int_0^t \Vert u^R_\mu(s)\Vert_{H^3}^{\frac{3(2-\bar{r})}{3-\bar{r}}}\,ds.
\end{split} \end{align*}
Then, since $3(2-\bar{r})/(3-\bar{r})<2$,  we have
\begin{equation}
\label{sg21-4th}
|I^R_4(t)|\leq c_{T,R}+	\frac{\tilde{\gamma_0}}{2} \int_0^t \Vert u^R_\mu(s)\Vert_{H^3}^{2}\,ds
\end{equation}
Finally, we have 
\begin{equation}  \label{xsa10}
\left|\langle\eta(u^R_\mu(t)),\mu\,   \partial_t u^R_\mu(t)\rangle_{H^2}\right|\leq \frac 12\,\Vert u^R_\mu(t)\Vert_{H^2}^2+c\,\mu^2\,\Vert \partial_tu^R_\mu(t)\Vert_{H^2}^2.
	\end{equation}
Therefore, due to	\eqref{sg10-4th}, \eqref{sg11-4th}, \eqref{sg 12-4th}, \eqref{sg21-4th} and \eqref{xsa10},  if we integrate both sides in  \eqref{E3-4th} and then take the expectation, we get
\begin{equation}
\label{sg25-bis}\begin{array}{l}
\ds{\frac 12\,\mathbb{E}\,\Vert u^R_\mu(t)\Vert_{H^2}^2+\frac{\tilde{\gamma_0}}{2} \int_0^t \mathbb{E}\,\Vert u^R_\mu(s)\Vert_{H^3}^2\,ds\leq  \langle \eta(u^\mu_0),\mu\,  v^\mu_0\rangle_{H^2}+\Vert u^\mu_0\Vert_{H^2}^2+c_{T,R}}\\[14pt]
\ds{+c\,\mu\int_0^t \mathbb{E}\,\Vert \partial_t u^R_\mu(s)\Vert^2_{H^2}\,ds+c\,\mu \int_0^t\mathbb{E}\,\Vert \partial_t u^R_\mu(s)\Vert_{H^1}^2\,\Vert \partial_t u^R_\mu(s)\Vert_{H^2}^2\,\ds+c\,\mu^2\,\mathbb{E}\,\Vert \partial_tu^R_\mu(t)\Vert_{H^2}^2.}\end{array}\end{equation}
In view of \eqref{xsa11} we have
\[\mu^\delta\left(\langle \eta(u^\mu_0),\mu\,  v^\mu_0\rangle_{H^2}+\Vert u^\mu_0\Vert_{H^2}^2\right)\leq \left(\Lambda_3^\mu \Lambda_2^\mu\right)^{1/2}+\Lambda_2^\mu=:\rho_{1}(\mu)\to 0,\ \ \ \ \ \text{as}\ \mu\to 0.\]
Therefore,  if in \eqref{xfine141} we  fix $\bar{\e}>0$ sufficiently small so that 
$\bar{\e} c_T\leq \tilde{\gamma_0}/4$ we have
\begin{align*}
&\frac 12\,\mathbb{E}\,\Vert u^R_\mu(t)\Vert_{H^2}^2+\frac{\tilde{\gamma_0}}{4} \int_0^t \mathbb{E}\,\Vert u^R_\mu(s)\Vert_{H^3}^2\,ds\\[10pt]	
&\hsp\leq \frac{\rho_1(\mu)}{\mu^\delta}+ c_T\left(1+	 \mu\int_0^t\mathbb{E}\,\Vert \partial_t u^R_\mu(s)\Vert_{H^2}^2\,ds+\mu^2\,\mathbb{E}\sup_{s \in\,[0,t]}\Vert \partial_t u^R_\mu(s)\Vert_{H^2}^2\right),
\end{align*}
and \eqref{sg27-4th} follows.

\end{proof}

In what follows, we shall denote
\[\Lambda_3:=\sup_{\mu \in\,(0,1)} \mu^{1+\delta}\,\Lambda_3^\mu.\]
\begin{Lemma}
\label{prop4.2-4th}
Assume Hypotheses \ref{as1} to \ref{as4}, and fix $R\geq 1$ and  $(u^\mu_0, v^\mu_0) \in\,\mathcal{H}_3$ satisfying \eqref{finex} and \eqref{xsa11}. Then, for every $T>0$ there exists $c_{T}>0$, depending on $\Lambda_1$, $\Lambda_2$ and $\Lambda_3$,  such that for every $\mu \in\,(0,1)$
\begin{equation}
\label{sg30-R}
 \begin{array}{l}
 \ds{\mathbb{E}\sup_{s \in\,[0,T]}\Vert  u^R_\mu(s) \Vert _{H^3}^2 +\mu\, \mathbb{E}\sup_{s \in\,[0,T]}\Vert   \partial_t u^R_\mu(s) \Vert _{H^2}^2+ \int_0^T \mathbb{E}\Vert  \partial_t u^R_\mu(s)\Vert _{H^2}^2 ds}\\[18pt]
 \ds{\hsp\hslp\leq \frac{\rho_{2,T,R}(\mu)}{\mu^{1+\delta}}+\frac{c_R}{\mu}\,\int_0^t \mathbb{E}\,\Vert u^R_\mu(s)\Vert_{H^3}^{\frac{4(\bar{\kappa}-\bar{r})}{3-\bar{r}}}\,ds,}	
 \end{array}
\end{equation} for some $\rho_{2,T,R}(\mu)>0$, depending on $\Lambda_1$, $\Lambda^\mu_2$ and $\Lambda^\mu_3$, such that
 \begin{equation}
 \label{xsa16}
 \lim_{\mu\to 0}\rho_{2,T,R}(\mu)=0.
\end{equation}
\end{Lemma}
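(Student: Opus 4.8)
The plan is to run the same energy argument used for the $\mathcal{H}_1$- and $\mathcal{H}_2$-bounds in Lemmas \ref{prop4.2-tris} and \ref{prop4.2-bis}, but one order of regularity higher, testing the equation against $\partial_t u^R_\mu$ in $H^2$. Concretely, I would apply It\^o's formula to the energy functional
\[\Theta^R_\mu(t):=\Vert u^R_\mu(t)\Vert_{H^3}^2+\mu\,\Vert \partial_t u^R_\mu(t)\Vert_{H^2}^2,\]
noting that, since $(u^\mu_0,v^\mu_0)\in\mathcal{H}_3$, this identity is made rigorous by first deriving it for the spectral Galerkin projections of \eqref{SPDE-R} in the basis $\{e_i\}_{i\in\mathbb{N}}$ (where everything is finite dimensional and the norms are finite), with all bounds uniform in the Galerkin dimension, and then passing to the limit by lower semicontinuity of the $H^3$- and $H^2$-norms together with the already established $\mathcal{H}_2$-bounds \eqref{sg31-bis}.

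Exactly as in the computation leading to \eqref{xfine200}, the $H^3$-pairing of $u^R_\mu$ with $\partial_t u^R_\mu$ arising from $d\Vert u^R_\mu\Vert_{H^3}^2$ cancels the term $\langle \partial_t u^R_\mu,\Delta u^R_\mu\rangle_{H^2}$ (integrate by parts, moving $(-\Delta)$ across the $H^2$-inner product), so that
\[\tfrac12\,d\Theta^R_\mu=-\langle \gamma_R(u^R_\mu)\partial_t u^R_\mu,\partial_t u^R_\mu\rangle_{H^2}\,dt+\langle f_R(u^R_\mu),\partial_t u^R_\mu\rangle_{H^2}\,dt+\tfrac1{2\mu}\Vert \sigma_R(u^R_\mu)\Vert^2_{\mathcal{L}_2(H_Q,H^2)}\,dt+\langle \sigma_R(u^R_\mu)dw^Q,\partial_t u^R_\mu\rangle_{H^2}.\]
The crucial structural point is the coercivity of the damping at the $H^2$-level: since $\gamma_R(h)$ acts as multiplication by the matrix $\mathfrak{g}_R(h)$, which is \emph{constant in} $x$, it commutes with $(-\Delta)^2$, and hence by the truncated coercivity \eqref{gsb15-bis} (equivalently \eqref{gsb15-4th}) one gets $\langle \gamma_R(u)\partial_t u,\partial_t u\rangle_{H^2}=\langle \mathfrak{g}_R(u)\Delta\partial_t u,\Delta\partial_t u\rangle_H\geq \gamma_0\Vert \partial_t u\Vert_{H^2}^2$. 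For the two potentially superlinear terms I would invoke the truncated growth bounds \eqref{xfine120-f} and \eqref{xfine120} with $r=\bar{r}$: on the support of $\Phi_R(\Vert\cdot\Vert_{H^{\bar{r}}})$ one has $\Vert f_R(u)\Vert_{H^2}^2+\Vert \sigma_R(u)\Vert^2_{\mathcal{L}_2(H_Q,H^2)}\leq c_R(1+\Vert u\Vert_{H^2}^2+\Vert u\Vert_{H^3}^{\frac{4(\bar{\kappa}-\bar{r})}{3-\bar{r}}})$, which is precisely the origin of the non-closable term on the right-hand side of \eqref{sg30-R}, the $1/\mu$ factor being inherited from the It\^o correction $\tfrac1{2\mu}\Vert \sigma_R(u^R_\mu)\Vert^2_{\mathcal{L}_2(H_Q,H^2)}$.

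Integrating, taking the supremum in time and then the expectation, the damping produces $\gamma_0\int_0^t\mathbb{E}\Vert\partial_t u^R_\mu\Vert_{H^2}^2\,ds$ on the left, which absorbs the cross term from $f_R$ by Young's inequality; the stochastic integral is handled by Burkholder--Davis--Gundy, $\mathbb{E}\sup_{s\leq t}\big|\int_0^s\langle\sigma_R(u^R_\mu)dw^Q,\partial_t u^R_\mu\rangle_{H^2}\big|\leq \tfrac14\,\mathbb{E}\sup_{s\leq t}\mu\Vert\partial_t u^R_\mu\Vert_{H^2}^2+\tfrac{c}{\mu}\,\mathbb{E}\int_0^t\Vert\sigma_R(u^R_\mu)\Vert^2_{\mathcal{L}_2(H_Q,H^2)}\,ds$, the first term being reabsorbed into $\mathbb{E}\sup\Theta^R_\mu$. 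Every contribution not involving $\Vert u^R_\mu\Vert_{H^3}$ is collected into $\rho_{2,T,R}(\mu)/\mu^{1+\delta}$: the initial datum gives $\Theta^R_\mu(0)=\Lambda_3^\mu=\mu^{-(1+\delta)}\big(\mu^{1+\delta}\Lambda_3^\mu\big)$, while $\tfrac{c_R}{\mu}\int_0^t(1+\mathbb{E}\Vert u^R_\mu\Vert_{H^2}^2)\,ds\leq c_{T,R}/\mu=\mu^{-(1+\delta)}\big(c_{T,R}\,\mu^{\delta}\big)$ by \eqref{sg31-bis}. Since $\mu^{1+\delta}\Lambda_3^\mu\to0$ by \eqref{xsa11} and $\mu^{\delta}\to0$, setting $\rho_{2,T,R}(\mu):=\mu^{1+\delta}\Lambda_3^\mu+c_{T,R}\,\mu^{\delta}$ yields \eqref{xsa16}, and keeping the term $\tfrac{c_R}{\mu}\int_0^t\mathbb{E}\Vert u^R_\mu\Vert_{H^3}^{\frac{4(\bar{\kappa}-\bar{r})}{3-\bar{r}}}\,ds$ gives \eqref{sg30-R}.

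I expect the main obstacle to be twofold. First, one must rigorously justify that $\boldsymbol{u}^R_\mu$ is regular enough for the $\mathcal{H}_3$-energy identity to hold and for the stochastic integral to be a genuine, integrable martingale; the Galerkin route sketched above, combined with a localizing stopping time if needed, is the safe way. Second, and more essentially, unlike the $\mathcal{H}_1$- and $\mathcal{H}_2$-estimates this bound is genuinely \emph{not} self-contained: the superlinear growth of $f$ and $\sigma$ forces the term $\tfrac{c_R}{\mu}\int_0^t\mathbb{E}\Vert u^R_\mu\Vert_{H^3}^{4(\bar{\kappa}-\bar{r})/(3-\bar{r})}\,ds$ on the right, with exponent strictly below $2$ precisely because of the choice \eqref{xfine130} of $\bar{r}$ (indeed $\bar{r}>2\bar{\kappa}-3$ is equivalent to $4(\bar{\kappa}-\bar{r})/(3-\bar{r})<2$). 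Closing the a priori estimate is therefore deferred: \eqref{sg30-R} must be combined with the companion bound \eqref{sg27-4th} of Lemma \ref{prop4.1-4th} and with the sub-quadratic exponent, which is carried out in the subsequent argument rather than inside this lemma.
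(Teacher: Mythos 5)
Your proposal is correct and takes essentially the same route as the paper: It\^o's formula applied to $\Vert u^R_\mu(t)\Vert_{H^3}^2+\mu\,\Vert \partial_t u^R_\mu(t)\Vert_{H^2}^2$, coercivity of the damping at the $H^2$-level via the $x$-independence of $\mathfrak{g}_R$, the truncated growth bounds \eqref{xfine120} and \eqref{xfine120-f} with $r=\bar{r}$ producing the sub-quadratic $\Vert u^R_\mu\Vert_{H^3}^{4(\bar{\kappa}-\bar{r})/(3-\bar{r})}$ term with the $1/\mu$ factor, the same BDG absorption of the stochastic integral, and the identical choice $\rho_{2,T,R}(\mu)=\Lambda_3^\mu\,\mu^{1+\delta}+c_{T,R}\,\mu^\delta$, together with the correct observation that the estimate is closed only later in combination with \eqref{sg27-4th}. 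The only differences are cosmetic: you add a Galerkin justification of the energy identity that the paper leaves implicit, while the paper explicitly restricts its written argument to unbounded $\sigma$ (noting that the bounded case uses \eqref{xfine141} instead of \eqref{xfine120-f}, since the latter is only assumed when $\sigma$ is unbounded), a case distinction your write-up glosses over.
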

\begin{proof}
Here, we assume that $\sigma$ is not bounded, so that $f$ satisfies \eqref{xfine120-f}. We omit the proof in the case $\sigma$ is bounded; it  is analogous to the proof below and relies on estimate \eqref{xfine141} instead of \eqref{xfine120-f}. We have
\[\begin{array}{l}
\ds{\frac 12 d \le(\Vert  u^R_\mu(t)\Vert _{H^3}^2+\mu\, \Vert  \partial_t u^R_\mu(t)\Vert _{H^2}^2 \r)=-\langle \gamma_R(u^R_\mu(t))\partial_t u^R_\mu(t),\partial_tu^R_\mu(t)\rangle_{H^2}\,dt}\\[14pt]
\ds{\hsllp+\langle f_R(u^R_\mu(t)),\partial_tu^R_\mu(t)\rangle_{H^2}\,dt+\frac1{2\mu}\Vert \sigma_R(u^R_\mu(t))\Vert^2_{\mathcal{L}_2(H_Q,H^2)}\,dt+\langle \sigma_R(u^R_\mu(t))dw^Q(t),\partial_tu^R_\mu(t)\rangle_{H^2}.}\end{array}\]
Now, due to \eqref{xfine120} and \eqref{xfine120-f}, for every $\mu \in\,(0,1)$ we have
\begin{align*}
\Vert f_R(u)\Vert_{H^2}^2+\frac1{2\mu}\Vert \sigma_R(u)\Vert^2_{\mathcal{L}_2(H_Q,H^2)}	\leq \frac{c_R}{\mu}\left(1+\Vert u\Vert_{H^2}^2+\Vert u\Vert_{H^3}^{\frac{4(\bar{\kappa}-\bar{r})}{3-\bar{r}}}\right),
\end{align*}
so that
\[    \begin{aligned}
    \ds{\frac 12 d } & \ds{\le(\Vert  u^R_\mu(t)\Vert _{H^3}^2+\mu\, \Vert  \partial_t u^R_\mu(t)\Vert _{H^2}^2 \r)\leq -\frac {\gamma_0}2 \Vert  \partial_t u^R_\mu(t)\Vert _{H^2}^2\,dt+ \Vert  u^R_\mu(t)\Vert _{H^2}^{4}\,dt}\\[10pt]
    &\ds{ +\frac{c_R}{\mu}\,\Vert  u^R_\mu(t)\Vert _{H^2}^{2}\,dt+\frac {c_R}\mu\,dt+\frac{c_R}{\mu}\Vert  u^R_\mu(t)\Vert _{H^3}^{\frac{4(\bar{\kappa}-\bar{r})}{3-\bar{r}}}\,dt+\langle \partial_t u^R_\mu(t),\sigma_R(u^R_\mu(t))dw^Q(t)\rangle_{H^1}.}
\end{aligned}
\]
If we integrate both sides with respect to time, thanks to \eqref{xsa11} we get 
\begin{equation}  \label{sa26-R}
    \begin{array}{l}
 \ds{  \Vert  u^R_\mu(t)\Vert _{H^3}^2+\mu\, \Vert  \partial_t u^R_\mu(t)\Vert _{H^2}^2+\gamma_0\int_0^t\Vert  \partial_t u^R_\mu(s)\Vert _{H^2}^2\,ds\leq \frac{\Lambda_3^\mu\,\mu^{1+\delta}+c_{T,R}\,\mu^\delta}{\mu^{1+\delta}}}\\[14pt]
 \ds{\hslp+\frac {c_R}\mu\int_0^t \Vert u^R_\mu(s)\Vert_{H^2}^{2}\,ds+\frac{c_R}\mu\int_0^t \Vert u^R_\mu(s)\Vert_{H^3}^{\frac{4(\bar{\kappa}-\bar{r})}{3-\bar{r}}}\,ds+2\int_0^t \langle \partial_t u^R_\mu(s),\sigma_R(u^R_\mu(s))dw^Q(s)\rangle_{H^2}.}
    \end{array}
\end{equation}
In view again of  \eqref{xfine120}, we have
\begin{align*}  
\mathbb{E}\sup_{s \in\,[0,t]}&\left|\int_0^t \langle \partial_t u^R_\mu(s),\sigma_R(u^R_\mu(s))dw^Q(s)\rangle_{H^2}\right|\leq c_R\,\mathbb{E}\left(\int_0^t\Vert \partial_t u^R_\mu(r)\Vert_{H^2}^2\left(1+\Vert u^R_\mu(s)\Vert_{H^2}^{2}\right)\,dr\right)^{\frac 12}\\[10pt]
&\hsp\leq \frac {\mu}4\, \mathbb{E}\,\sup_{s \in\,[0,t]}\Vert \partial_t u^R_\mu(s)\Vert^2_{H^2}+\frac{c_R}{\mu}\,\int_0^t \mathbb{E}\,\Vert u^R_\mu(s)\Vert_{H^3}^{\frac{4(\bar{\kappa}-\bar{r})}{3-\bar{r}}}\,ds+\frac{c_T}{\mu}.
\end{align*}
Hence, by taking the supremum with respect to time and the expectation in both sides of \eqref{sa26-R}, according to \eqref{sg31-bis}, we obtain
\[\begin{array}{l}
\ds{ \mathbb{E}\sup_{s \in\,[0,t]}\Vert  u^R_\mu(s)\Vert _{H^3}^2+\mu\, \mathbb{E}\sup_{s \in\,[0,t]}\Vert  \partial_t u^R_\mu(s)\Vert _{H^2}^2+\int_0^t\mathbb{E}\,\Vert  \partial_t u^R_\mu(s)\Vert _{H^2}^2\,ds }\\[14pt]
\ds{\hsp\hslp\leq \frac{1}{\mu^{1+\delta}}\,\left(\Lambda_3^\mu\,\mu^{1+\delta}+c_{T,R}\,\mu^\delta\right)+\frac{c_R}{\mu}\,\int_0^t \mathbb{E}\,\Vert u^R_\mu(s)\Vert_{H^3}^{\frac{4(\bar{\kappa}-\bar{r})}{3-\bar{r}}}\,ds.	}
\end{array}	
\]
This implies \eqref{sg30-R}, with 
$\rho_{2, T,R}(\mu):=\Lambda_3^\mu\,\mu^{1+\delta}+c_{T, R}\,\mu^\delta$.
\end{proof}

Now, thanks to \eqref{sg27-4th}, we have
\begin{align*}
&\mathbb{E}\sup_{r \in\,[0,t]}\Vert u^R(s)\Vert_{H^2}^2 +\int_0^t \mathbb{E}\,\Vert  u^R_\mu(s)\Vert _{H^3}^2\, ds\\[10pt]
&\hsp \leq \frac{\rho_{1}(\mu)}{\mu^\delta}+c_{T}\,\mu \le(\int_0^t \mathbb{E}\,\Vert  \partial_t u^R_\mu(s)\Vert _{H^2}^2 ds +\mu\,  \mathbb{E}\sup_{r \in\,[0,t]}\Vert  \partial_t u^R_\mu(r)\Vert _{H^2}^2\r).\end{align*}
Hence, if we define
\[\hat{\rho}_{T,R}(\mu):=\rho_{1}(\mu)+c_T\,\rho_{2,T,R}(\mu),\]
 we have that $\hat{\rho}_{T,R}(\mu)\to 0$, as $\mu\to 0$, and thanks to \eqref{sg30-R}
\begin{align*}
&\mathbb{E}\sup_{r \in\,[0,t]}\Vert u^R(s)\Vert_{H^2}^2 +\int_0^t \mathbb{E}\,\Vert  u^R_\mu(s)\Vert _{H^3}^2\, ds\leq \frac{\hat{\rho}_{T, R}(\mu)}{\mu^\delta}+c_{T, R}\,\int_0^t \mathbb{E}\,\Vert u^R_\mu(s)\Vert_{H^3}^{\frac{4(\bar{\kappa}-\bar{r})}{3-\bar{r}}}\,ds.\end{align*}
Now, because of the way we have chosen $\bar{r}$ in \eqref{xfine130}, we have
$4(\bar{\kappa}-\bar{r})/(3-\bar{r})<2$, so that
\[c_{T, R}\,\int_0^t \mathbb{E}\,\Vert u^R_\mu(s)\Vert_{H^3}^{\frac{4(\bar{\kappa}-\bar{r})}{3-\bar{r}}}\,ds\leq \frac 12 \int_0^t \mathbb{E}\,\Vert u^R_\mu(s)\Vert_{H^3}^{2}\,ds+c_{T, R},\]
so that \eqref{xsa17} follows  with 
\[\rho_{T, R}(\mu):=2\,\left(\hat{\rho}_{T, R}(\mu)+\,c_{T,R}\,\mu^\delta\right).\]
 
Finally, \eqref{xfine141-final} follows from \eqref{xfine141}, due to \eqref{sa29} and \eqref{xsa17}.

\section{Tightness}
\label{sec6}

In this section, by using  the uniform bounds we have proved in Section \ref{sec4} and Section \ref{ssec5.2}, we will prove the tightness of the solution of the truncated problem \ref{SPDE-R} in  appropriate functional spaces.

\begin{Proposition}
\label{teo-tight}
Assume Hypotheses \ref{as1} to \ref{as4}, and fix any $R\geq 1$ and $T>0$ and any initial conditions $(u^\mu_0, v^\mu_0) \in\,\mathcal{H}_2$ satisfying conditions \eqref{finex} and \eqref{xsa11}. Then,  for any sequence $(\mu_k)_{k \in\,\mathbb{N}}$ converging to zero and for every $\varrho<1$, $\vartheta \in\,[1,2)$ and $p<2/(\vartheta-1)$, the family of probability measures  $\left(\mathcal{L}(u^R_{\mu_k})\right)_{k \in\,\mathbb{N}}$  is tight in 
\[C([0,T];H^\varrho)\cap L^p(0,T;H^\vartheta).\] 
\end{Proposition}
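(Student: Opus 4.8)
The plan is to prove tightness in the two component spaces separately and then combine, using the Aubin--Lions--Simon compactness criterion together with the a priori bounds from Sections \ref{sec4} and \ref{ssec5.2}. For the space $C([0,T];H^\varrho)$, I would first recall that by \eqref{sg31-bis} and \eqref{sg30-bis} (or \eqref{sg30-tris} in the unbounded case) we have, uniformly in $\mu$, the bounds $\mathbb{E}\sup_{t}\Vert u^R_\mu(t)\Vert^2_{H^1}\leq c_{T,R}$ and $\int_0^T\mathbb{E}\Vert \partial_t u^R_\mu(t)\Vert^2_H\,dt\leq c_{T,R}/\mu$, which by themselves do not give an equicontinuity modulus since the $H$-norm of $\partial_t u^R_\mu$ blows up as $\mu\to 0$. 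The resolution is to use the fractional regularity bound \eqref{sg32}, namely $\sqrt{\mu}\,\mathbb{E}\sup_t\Vert u^R_\mu(t)\Vert^2_{H^{1+\bar r}}\leq c_{T,R}$ and $\mu^{3/2}\,\mathbb{E}\sup_t\Vert\partial_t u^R_\mu(t)\Vert^2_{H^{\bar r}}\leq c_{T,R}$, in conjunction with the equation itself written as $\partial_t u^R_\mu = \frac1{\sqrt\mu}v_\mu$ with $v_\mu=\sqrt\mu\,\partial_t u^R_\mu$. The cleanest route to time-continuity tightness is to work at the level of the mild formulation and derive a uniform-in-$\mu$ H\"older bound in a negative-order space combined with a uniform bound in a higher-order space, then interpolate.

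Concretely, for the $C([0,T];H^\varrho)$ part I would establish two ingredients: (i) a uniform bound $\sup_\mu\mathbb{E}\sup_{t}\Vert u^R_\mu(t)\Vert^2_{H^{1+\bar r}}<\infty$ after absorbing the $\sqrt\mu$ via \eqref{sg32}—more carefully, since \eqref{sg32} carries a factor $\sqrt\mu$, I would instead only claim the uniform $H^1$ bound \eqref{sg31-bis} for the compact embedding $H^1\hookrightarrow\hookrightarrow H^\varrho$ (valid since $\varrho<1$); and (ii) a uniform H\"older-in-time estimate in some $H^{-a}$. For (ii) I would use \eqref{SPDE-R} integrated in time: $u^R_\mu(t)-u^R_\mu(s)=\frac1{\sqrt\mu}\int_s^t v_\mu(r)\,dr$, and bound $\mathbb{E}\Vert u^R_\mu(t)-u^R_\mu(s)\Vert^2_{H^{-a}}$ by controlling $v_\mu$ in $H^{-a}$ using the second equation of \eqref{system-corr}; the stochastic integral contributes a H\"older exponent via the Kolmogorov criterion and \eqref{gsb43}, while the drift terms $\Delta u_\mu$, $\gamma v_\mu$, $f$ are controlled in $H^{-1}$ using the $\mathcal H_1$- and $\mathcal H_2$-bounds. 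The interpolation inequality $\Vert\cdot\Vert_{H^\varrho}\leq\Vert\cdot\Vert^{\theta}_{H^{-a}}\Vert\cdot\Vert^{1-\theta}_{H^1}$ with $\varrho=-a\theta+(1-\theta)$, combined with uniform boundedness in $H^1$ and H\"older continuity in $H^{-a}$, yields equicontinuity in $H^\varrho$ and hence tightness in $C([0,T];H^\varrho)$ by Arzel\`a--Ascoli applied pathwise after a standard Markov-inequality truncation.

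For the $L^p(0,T;H^\vartheta)$ part with $\vartheta\in[1,2)$ and $p<2/(\vartheta-1)$, I would invoke the Aubin--Lions--Simon lemma with the Gelfand-type triple $H^2\hookrightarrow\hookrightarrow H^\vartheta\hookrightarrow H^{-a}$: the first embedding is compact since $\vartheta<2$, and I control $\int_0^T\mathbb{E}\Vert u^R_\mu\Vert^2_{H^2}\,dt\leq c_{T,R}$ from \eqref{sg31-bis} together with the fractional-in-time equicontinuity in $H^{-a}$ just derived. The condition $p<2/(\vartheta-1)$ arises from the interpolation $\Vert\cdot\Vert_{H^\vartheta}\leq\Vert\cdot\Vert^{\lambda}_{H^2}\Vert\cdot\Vert^{1-\lambda}_{H^1}$ with $\vartheta=2\lambda+(1-\lambda)$, so $\lambda=\vartheta-1$, giving $\int_0^T\Vert u^R_\mu\Vert^p_{H^\vartheta}\,dt\leq\sup_t\Vert u^R_\mu\Vert^{p(1-\lambda)}_{H^1}\int_0^T\Vert u^R_\mu\Vert^{p\lambda}_{H^2}\,dt$, which is uniformly integrable precisely when $p\lambda\leq 2$, i.e. $p\leq 2/(\vartheta-1)$. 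I expect the main obstacle to be (ii), the uniform-in-$\mu$ H\"older estimate in the negative-order space: because $\partial_t u^R_\mu=\mu^{-1/2}v_\mu$ carries an inverse power of $\mu$, one must show that the singular prefactor is compensated by the $\sqrt\mu$-weighted bounds \eqref{sa29} and \eqref{sg32} so that the time-increment estimate stays uniform; handling the stochastic term and the stiff $\mu^{-1}\gamma v_\mu$ term simultaneously in the correct negative Sobolev scale, so that the compactness triple closes, is the delicate point, and I would treat the two equations of \eqref{system-corr} jointly, estimating the pair $(u^R_\mu,v_\mu)$ in $\mathcal H_{-a}$ rather than $u^R_\mu$ alone.
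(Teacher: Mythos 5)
Your overall architecture --- uniform bounds in $C([0,T];H^1)$ and $L^2(0,T;H^2)$ from \eqref{sg31-bis}, a time-equicontinuity estimate in a weaker norm, and Simon-type interpolation producing exactly the restriction $p<2/(\vartheta-1)$ --- is the same skeleton as the paper's, and your treatment of the $L^p(0,T;H^\vartheta)$ component is essentially correct: the paper interpolates $\Vert\cdot\Vert_{H^\vartheta}\leq\Vert\cdot\Vert_{H^2}^{\alpha}\Vert\cdot\Vert_{H^\varrho}^{1-\alpha}$ and lets $\varrho\uparrow 1$, of which your interpolation against $H^1$ is the limiting case. The genuine gap is in the step you yourself flag as delicate: the uniform-in-$\mu$ time-increment estimate. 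Your proposed fix --- estimating the pair $(u^R_\mu,v_\mu)$ jointly in $\mathcal{H}_{-a}$ --- does not close. The second equation of \eqref{system-corr} carries the stiff drift $\mu^{-1}\gamma_R(u)v$ and noise at intensity $\mu^{-1/2}$, so $v_\mu$ admits no uniform-in-$\mu$ modulus of continuity in any fixed negative Sobolev norm: it oscillates on the time scale $\mu$ and vanishes only in a time-averaged sense, never in $C([0,T];H^{-a})$. Consequently no estimate of the pair can yield equicontinuity of $u^R_\mu$, and writing $u^R_\mu(t)-u^R_\mu(s)=\mu^{-1/2}\int_s^t v_\mu(r)\,dr$ together with the available bound $\int_0^T\mathbb{E}\Vert v_\mu(t)\Vert_H^2\,dt\leq c_T$ leaves an uncompensated factor $\mu^{-1/2}$, exactly as you observed for the naive estimate.

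The missing idea is a cancellation, not a weaker norm. The paper introduces the corrected process $\Phi^R_\mu(t)=u^R_\mu(t)+\mu\,\gamma_R^{-1}(u^R_\mu(t))\,\partial_t u^R_\mu(t)$ --- equivalently, in your formulation, it integrates by parts in $\mu\int_s^t\gamma_R^{-1}(u^R_\mu)\,\partial_t^2 u^R_\mu\,dr$. By It\^o's formula the term $-\partial_t u^R_\mu$ produced by $\mu\gamma_R^{-1}$ acting on the stiff drift cancels the $\partial_t u^R_\mu$ coming from differentiating $u^R_\mu$, and what remains --- $\mu\,[D\gamma_R^{-1}(u^R_\mu)\partial_t u^R_\mu]\,\partial_t u^R_\mu$, $\gamma_R^{-1}\Delta u^R_\mu$, $\gamma_R^{-1}f_R(u^R_\mu)$ and a stochastic convolution --- is controlled uniformly in $\mu$ by the weighted bounds \eqref{sa29}, \eqref{sa1-4th-mom-H} and \eqref{sg31-bis}, yielding the uniform H\"older estimate \eqref{sg57} for $\Phi^R_\mu$ in $H$. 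Note moreover that the conclusion is \emph{not} a modulus of continuity for $u^R_\mu$ itself, which your plan aims at and which would degenerate for increments $h\ll\mu$ since $\Vert u^R_\mu(t+h)-u^R_\mu(t)\Vert_H\leq h\,\sup_t\Vert\partial_t u^R_\mu(t)\Vert_H$ blows up as $\mu\to 0$. Instead, tightness in $C([0,T];H^\varrho)$ follows from the decomposition $u^R_\mu=\Phi^R_\mu-\mu\,\gamma_R^{-1}(u^R_\mu)\,\partial_t u^R_\mu$: the first piece lies with high probability in a set that is $H^1$-bounded and H\"older in $H$, hence compact in $C([0,T];H^\varrho)$ by Simon's criterion, while the second piece tends to zero in $L^1(\Omega;C([0,T];H^\varrho))$ thanks to \eqref{sg32} (this is where the condition $\bar{r}\geq\varrho$ from \eqref{xfine130} enters), so it is tight around compacts by itself; the sum of the two compact families does the job. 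Any correct implementation of your strategy would have to reproduce this corrector step.
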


In order to prove Proposition \ref{teo-tight}, we will need the following preliminary result.

\begin{Lemma}
For every $R\geq 1$, $T>0$  and $\mu>0$, we define
\[\Phi^R_\mu(t):=u^R_\mu(t)+\mu\,\gamma_R^{-1}(u^R_\mu(t))\,\partial_t u^R_\mu(t),\ \ \ \ \ t \in\,[0,T].\]
Then, under the same assumptions of Proposition \ref{teo-tight}, there exist $\theta \in(0,1)$ and $c_{T, R}>0$ such that for every $0<h<T$  
\begin{equation}
\label{sg57}
\sup_{\mu \in\,(0,1)} h^{-\theta}\,\mathbb{E}\sup_{t \in\,[0,T-h]}\Vert \Phi^R_\mu(t+h)-\Phi^R_\mu(t)\Vert_{H}\,dt\leq c_{T, R}.
	\end{equation}
	\end{Lemma}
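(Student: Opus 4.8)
The plan is to apply the Itô formula to the map $(u,v)\mapsto u+\mu\,\gamma_R^{-1}(u)v$ along $(u^R_\mu,\partial_t u^R_\mu)$, exploiting that, by Hypothesis \ref{as4}, $\gamma_R^{-1}(h)$ is pointwise multiplication by the matrix $\mathfrak g_R^{-1}(h)\in\mathbb{R}^{r\times r}$, which is bounded (see \eqref{xfine160}) and, since $u^R_\mu$ is differentiable in time, of finite variation along the trajectory. Inserting $\mu\,d(\partial_t u^R_\mu)=[\Delta u^R_\mu-\gamma_R(u^R_\mu)\partial_t u^R_\mu+f_R(u^R_\mu)]\,dt+\sigma_R(u^R_\mu)\,dw^Q$ and using $\gamma_R^{-1}\gamma_R=\mathrm{Id}$, the singular term $\gamma_R^{-1}\Delta u^R_\mu$ stays of order one while the two occurrences of $\partial_t u^R_\mu\,dt$ cancel; together with the identity $D\mathfrak g_R^{-1}(h)k=-\mathfrak g_R^{-1}(h)[D\mathfrak g_R(h)k]\mathfrak g_R^{-1}(h)$ of Remark \ref{rem-2}, this yields
\[
\Phi^R_\mu(t+h)-\Phi^R_\mu(t)=\int_t^{t+h}\gamma_R^{-1}(u^R_\mu(s))\left[\Delta u^R_\mu(s)+f_R(u^R_\mu(s))\right]ds+\mu\int_t^{t+h}\left[D\gamma_R^{-1}(u^R_\mu(s))\partial_t u^R_\mu(s)\right]\partial_t u^R_\mu(s)\,ds+\int_t^{t+h}\gamma_R^{-1}(u^R_\mu(s))\sigma_R(u^R_\mu(s))\,dw^Q(s).
\]
I would then estimate the $H$-norm of each of the three resulting contributions and show each is bounded by $c_{T,R}\,h^{\theta}$, uniformly in $\mu\in(0,1)$.

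For the first (deterministic) integral I would use $\Vert\gamma_R^{-1}(u^R_\mu)\Vert_{\mathbb{R}^{r\times r}}\le c$, so that $\Vert\gamma_R^{-1}(u^R_\mu)\Delta u^R_\mu\Vert_H\le c\Vert u^R_\mu\Vert_{H^2}$ and $\Vert\gamma_R^{-1}(u^R_\mu)f_R(u^R_\mu)\Vert_H\le c(1+\Vert u^R_\mu\Vert_H)$; a Cauchy–Schwarz in time produces an $h^{1/2}$ factor, and the resulting expectations are controlled by the $\mu$-uniform bounds \eqref{sg31-bis} and \eqref{sg31-tris}.

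The delicate term is the noise-induced drift $\mu\int_t^{t+h}[D\gamma_R^{-1}(u^R_\mu)\partial_t u^R_\mu]\partial_t u^R_\mu\,ds$, and this is where the main obstacle lies. From $D\mathfrak g_R^{-1}=-\mathfrak g_R^{-1}(D\mathfrak g_R)\mathfrak g_R^{-1}$ and \eqref{sa51} (which is available on $H^1$ in both the bounded and unbounded regimes of Hypothesis \ref{as4}), one has $\Vert[D\gamma_R^{-1}(u^R_\mu)\partial_t u^R_\mu]\partial_t u^R_\mu\Vert_H\le c_R\Vert\partial_t u^R_\mu\Vert_{H^1}\Vert\partial_t u^R_\mu\Vert_H$. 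The temptation is to pull a supremum in time out of one factor, but the sup-norms of $\partial_t u^R_\mu$ carry negative powers of $\mu$ and are \emph{not} uniform. The key is instead to distribute the prefactor as $\mu=\sqrt\mu\cdot\sqrt\mu$ onto the two velocity factors, introducing $v_\mu:=\sqrt\mu\,\partial_t u^R_\mu$, so that the integrand is bounded by $c_R\Vert v_\mu\Vert_{H^1}\Vert v_\mu\Vert_H$, and to keep both factors under the time integral. A three-exponent Hölder inequality in time (with exponents $2$, $4$, $4$) then gives
\[
\mu\int_t^{t+h}\Vert\partial_t u^R_\mu\Vert_{H^1}\Vert\partial_t u^R_\mu\Vert_H\,ds\le h^{1/4}\left(\int_0^T\Vert v_\mu\Vert_{H^1}^2\,ds\right)^{1/2}\left(\int_0^T\Vert v_\mu\Vert_H^4\,ds\right)^{1/4},
\]
uniformly in $t$; after taking $\mathbb{E}\sup_t$ and one further Hölder inequality, the right-hand side is controlled by the genuinely $\mu$-uniform, time-integrated bounds $\int_0^T\mathbb{E}\Vert v_\mu\Vert_{H^1}^2\,ds\le c_{T,R}$ (from \eqref{sa29}) and $\int_0^T\mathbb{E}\Vert v_\mu\Vert_H^4\,ds\le c_T$ (from \eqref{sa1-4th-mom-H}), yielding a bound $c_{T,R}\,h^{1/4}$. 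The point worth stressing is that the sup-in-time estimates for $v_\mu$ in $H^1$, and even in $H$, are not uniform in $\mu$, so the whole argument rests on never taking a supremum of a velocity norm and exploiting only the time-averaged a priori estimates — precisely the quantities that the extra factor $\mu$ in the definition of $\Phi^R_\mu$ renders uniform.

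Finally, for the stochastic integral $N(t)=\int_0^t\gamma_R^{-1}(u^R_\mu(s))\sigma_R(u^R_\mu(s))\,dw^Q(s)$ I would use that $\Vert\gamma_R^{-1}(u^R_\mu)\sigma_R(u^R_\mu)\Vert_{\mathcal{L}_2(H_Q,H)}\le c$ is a deterministic bound, by \eqref{gsb40} and \eqref{xfine160}; the Burkholder–Davis–Gundy inequality then gives $\mathbb{E}\Vert N(t+h)-N(t)\Vert_H^p\le c_p\,h^{p/2}$ for every $p$, and a stochastic-factorization argument as in \cite[Proposition 7.3]{DPZ} upgrades this to $\mathbb{E}\sup_{t}\Vert N(t+h)-N(t)\Vert_H\le c\,h^{\theta'}$ for any $\theta'<1/2$. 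Collecting the three estimates and choosing $\theta=1/4$ yields \eqref{sg57}.
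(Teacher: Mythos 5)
Your proof is correct and follows essentially the same route as the paper: the identical It\^o-formula decomposition of $\Phi^R_\mu$ (with the two velocity terms cancelling and no second-order correction, since the map is linear in $v$), the same H\"older-in-time estimate with exponents $(2,4,4)$ yielding $h^{1/4}$ for the drift correction $\mu\int_t^{t+h}[D\gamma_R^{-1}(u^R_\mu)\partial_t u^R_\mu]\partial_t u^R_\mu\,ds$ controlled by the $\mu$-uniform time-averaged bounds \eqref{sa29} and \eqref{sa1-4th-mom-H}, and a stochastic-factorization argument for the stochastic convolution based on the uniform bound on $\gamma_R^{-1}\sigma_R$ in $\mathcal{L}_2(H_Q,H)$. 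The only immaterial difference is presentational: the paper runs the computation weakly against test functions $\psi \in\,H^2$ and keeps the terms $\gamma_R^{-1}\Delta u^R_\mu$ and $\gamma_R^{-1}f_R$ separate (exponents $h^{1/2}$ and $h$), whereas you combine them.
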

\begin{proof}
For every $\psi \in\,H^2$, we define
\[F^R_{\mu}(u,v):=\mu\,\langle \,\gamma_R^{-1}(u)v,\psi\rangle_H,\ \ \ \ \ (u,v) \in\,\mathcal{H}_1.\]
We have
\[D_u F^R_{\mu}(u,v)h=\mu\,\langle \,[D\gamma_R^{-1}(u)h]\,v,\psi\rangle_H,\ \ \ \ \ \ \ D_v F^R_{\mu}(u,v)h=\mu\,\langle \,\gamma_R^{-1}(u)h,\psi\rangle_H,\]
and the  It\^o formula gives
\[\begin{aligned}
&\ds{d F^R_{\mu}(u^R_\mu(t),\partial_t u^R_\mu(t))}	\\[10pt]
 &\ds{=\langle [D\gamma_R^{-1}(u^R_\mu(t))\mu\,\partial_t u^R_\mu(t)]\,\partial_t u^R_\mu(t),\psi\rangle_H\,dt+\langle \gamma_R^{-1}(u^R_\mu(t))\Delta u^R_\mu(t),\psi\rangle_H\,dt-\langle \partial_t u^R_\mu(t),\psi\rangle_H\,dt}\\[10pt]
 &\ds{\hsp+\langle \gamma_R^{-1}(u^R_\mu(t))f_R(u^R_\mu(t)),\psi\rangle_H\,dt+\langle \gamma_R^{-1}(u^R_\mu(t))\sigma_R(u^R_\mu(t))dw^Q(t),\psi\rangle_H.}
\end{aligned}\]
Thus, integrating with respect to time, we obtain
\[\begin{array}{l}
\ds{\langle \Phi^R_\mu(t),\psi\rangle_H=\langle u^R_\mu(t),\psi\rangle_H+F^R_{\mu}(u^R_\mu(t),\partial_t u^R_\mu(t))=\langle u^\mu_0+\mu\,\gamma_R^{-1}(u^\mu_0)v^\mu_0),\psi\rangle_H}\\[10pt]
\ds{\hslp+\mu\int_0^t\langle [D\gamma^{-1}(u^R_\mu(s))\,\partial_t u^R_\mu(s)]\,\partial_t u^R_\mu(s),\psi\rangle_H\,ds+\int_0^t\langle \gamma_R^{-1}(u^R_\mu(s))\Delta u^R_\mu(s),\psi\rangle_H\,ds}\\[14pt]
\ds{\hslp+\int_0^t\langle \gamma_R^{-1}(u^R_\mu(s))f_R(u^R_\mu(s)),\psi\rangle_H\,ds+\int_0^t\langle \gamma_R^{-1}(u^R_\mu(s))\sigma_R(u^R_\mu(s))dw^Q(s),\psi\rangle_H}\\[10pt]
\ds{\hsp=:\langle u^\mu_0+\mu\,\gamma_R^{-1}(u^\mu_0)v^\mu_0),\psi\rangle_H+\sum_{k=1}^4  \langle I^R_{\mu,k}(t),\psi\rangle_H.}
\end{array}\]

Thanks to \eqref{sa51}, we have
\[\begin{array}{l}
\ds{	|\langle I^R_{\mu,1}(t+h)-I^R_{\mu,1}(t),\psi\rangle_H|\leq c\int_{t}^{t+h} \mu\,\Vert \partial_t u^R_\mu(s)\Vert_{H}\Vert \partial_t u^R_\mu(s)\Vert_{H^1}\,ds\,\Vert \psi\Vert_{H}}\\[10pt]
\ds{\leq c\left(\mu\int_0^T \Vert \partial_t u^R_\mu(s)\Vert_{H^1}^2\,ds\right)^{\frac 12}\left(\mu^2\int_0^T \Vert \partial_t u^R_\mu(s)\Vert_{H}^4\,ds\right)^{\frac 14}h^{\frac 14}\Vert \psi\Vert_{H},}
\end{array}
\]
so that, in view of \eqref{sa29} and \eqref{sa1-4th-mom-H}, we get 
\begin{equation}
\label{sg511}	
\sup_{\mu \in\,(0,1)}\mathbb{E}\sup_{t \in\,[0,T-h]}\Vert  I^R_{\mu,1}(t+h)-I^R_{\mu,1}(t)\Vert_{H}\leq c_{T, R}\,h^{\frac 14}\,\Vert \psi\Vert_H.
\end{equation}
Concerning $I^R_{\mu,2}$, we have
\[\vert \langle I^R_{\mu,2}(t+h)-I^R_{\mu,2}(t),\psi\rangle \vert\leq c\int_t^{t+h}\Vert u^R_\mu(s)\Vert_{H^2}\,ds\,\Vert\psi\Vert_H\leq c\left(\int_0^T\Vert u^R_\mu(s)\Vert_{H^2}^2\,ds\right)^{\frac 12} h^{\frac 12}\Vert \psi\Vert_{H},\]
and then, according to \eqref{sg31-bis}, we obtain 
\begin{equation}
\label{sg522}
\sup_{\mu \in\,(0,1)}\mathbb{E}\sup_{t \in\,[0,T-h]}\Vert  I^R_{\mu,2}(t+h)-I^R_{\mu,2}(t)\Vert_{H}\leq c_{T, R}\,h^{\frac 12}\,\Vert\psi\Vert_H.
	\end{equation}
As for $I^R_{\mu,3}$, thanks to  \eqref{n30}, and \eqref{sg31-bis} we have
\begin{equation}
\label{sg533}	
\sup_{\mu \in\,(0,1)}\mathbb{E}\,\sup_{t \in\,[0,T-h]}\Vert I^R_{\mu,3}(t+h)-I^R_{\mu,3}(t)\Vert_{H}\leq c_{T, R}\, h\left(\mathbb{E}\,\sup_{t \in\,[0,T]}\Vert u^R_\mu(t)\Vert_{H^1}^2+1\right)\leq c_{T, R} h.
\end{equation}

Therefore, collecting together \eqref{sg511}, \eqref{sg522}, and \eqref{sg533}, we have
\begin{equation}
\label{sg75}	
\begin{array}{l}
\ds{	\mathbb{E}\sup_{t \in\,[0,T-h]}\Vert \Phi^R_\mu(t+h)-\Phi^R_\mu(t)\Vert_{H}\leq c_{T,R}\,h^{\frac 14}+\mathbb{E}\sup_{t \in\,[0,T-h]}\Vert I^R_{\mu,4}(t+h)-I^R_{\mu,4}(t)\Vert_{H},}
\end{array}
\end{equation}
for every $\mu \in\,(0,1)$ and $h \in\,(0,1)$.

By using a factorization argument as in \cite[Theorem 5.11 and Theorem 5.15]{DPZ}, due to \ the boundedness of $\sigma$ in $H$ we obtain that for some $\theta \in\,(0,1)$
\[
\sup_{\mu \in\,(0,1)} \mathbb{E}	\Vert I^R_{\mu,4}\Vert_{C^\theta([0,T];H)}<\infty.
\]
This and \eqref{sg75} allow  to obtain \eqref{sg57}.

\end{proof}

\begin{proof}[Proof of Proposition \ref{teo-tight}]
For every function $f:[0,T]\to H$ and every $0<h<T$ we define
\[\tau_h f(t) :=f(t+h),\ \ \ \ \ t \in\,[0,T-h].\]
Next,  for every $L>0$ we define
\[K_{L,1}:=\left\{f:[0,T]\times \mathcal{O}\to \mathbb{R}^r\,:\,\sup_{t \in\,[0,T]}\Vert f\Vert_{H^1}\leq L\right\},\]
and
\[K_{L,2}:=\left\{f:[0,T]\times \mathcal{O}\to \mathbb{R}^r\,:\,\sup_{t \in\,[0,T-h]}\Vert \tau_h f(t)-f(t)\Vert_{H}\,dt\leq L\,h^{\theta}\right\},\]
where $\theta \in\,(0,1)$ is the same as the constant  in \eqref{sg57}. Since $H^1$ is compactly embedded in $H^\varrho$, for every $\varrho<1$, due to \cite[Theorem 6]{Simon1986} we have that $K_{L,1}\cap K_{L,2}$ is compact in $C([0,T];H^{\varrho})$, for every  $\varrho<1$.

Now, according to  \eqref{sg31-bis} and \eqref{sa29}, we have 
\[\sup_{\mu \in\,(0,1)}\mathbb{E}\sup_{t \in\,[0,T]}\Vert u^R_\mu(t)+\mu\,\gamma_R^{-1}(u^R_\mu(t))\,\partial_t u^R_\mu(t)\Vert_{H^1}\leq c_{T, R},\]
and hence, for every $\e>0$ we can find $L_{1, R}^{\e}>0$ such that
\begin{equation}
\label{sg60}
\inf_{\mu \in\,(0,1)}\mathbb{P}\left(u^R_\mu+\mu\,\gamma_R^{-1}(u^R_\mu)\,\partial_t u^R_\mu \in\,K_{1,L_{1, R}^{\e}}\right)\geq 1-\e.	
\end{equation}
Moreover, thanks to \eqref{sg57}, we have that there exists $L_{2, R}^\e>0$ such that
\begin{equation}
\label{sg61}
\inf_{\mu \in\,(0,1)}\mathbb{P}\left(u^R_\mu+\mu\,\gamma_R^{-1}(u^R_\mu)\,\partial_t u^R_\mu \in\,K_{2,L_{2, R}^{\e}}\right)\geq 1-\e.\end{equation}
Next, since we are assuming $\bar{r}\geq \rho$,
thanks to \eqref{sg32}, we have 
\[\lim_{\mu\to 0}\, \mu\,\mathbb{E}\sup_{t \in\,[0,T]}\Vert \gamma_R^{-1}(u^R_\mu(t))\,\partial_t u_{\mu}(t)\Vert_{H^\rho}=0.\]
This means in particular that  for every sequence $(\mu_k)_{k \in\,\mathbb{N}}$ converging to zero, there exists a compact set $K^R_{3,\e}$ in $C([0,T];H^\rho)$ such that
\begin{equation}
\label{sg63}
\inf_{k \in\,\mathbb{N}}\mathbb{P}\left(-\mu_k \gamma_R^{-1}(u^R_{\mu_k})\,\partial_t u^R_{\mu_k}\in\,K^R_{3,\e}\right)>1-\e.
\end{equation}
From \eqref{sg60}, \eqref{sg61} and \eqref{sg63}, we have that
\begin{equation}
\label{sg65}
\inf_{k \in\,\mathbb{N}}\mathbb{P}\left(u^R_{\mu_k} \in\,
[K_{1,L_{1, R}^{\e}}\cap K_{2,L_{2, R}^{\e}}]+K^R_{3,\e}\right)\geq 1-3\e.\end{equation}
In particular, since
\[[K_{1,L_{1, R}^{\e}}\cap K_{2,L_{2, R}^{\e}}]+K^R_{3,\e}\subset C([0,T];H^\varrho)\]
is compact, due to the arbitrariness of $\e>0$ we have that the family $(\mathcal{L}(u^R_{\mu_k}))_{k \in\,\mathbb{N}}$ is tight in $C([0,T];H^\varrho) $.

Now, if we define 
\[K_{4,L}:=\left\{ f:[0,T]\times \mathcal{O}\to\mathbb{R}\,:\ \int_0^T\Vert f(t)\Vert_{H^2}^2\,dt\leq L\right\},\]
thanks to \eqref{sg31-bis} we have that there exists $L_{4, R}^\e>0$ such that
\[\inf_{\mu \in\,(0,1)}\mathbb{P}\left(u^R_\mu \in\,K_{4,L^\e_{4,R}}\right)\geq 1-\e.\]
This, together with \eqref{sg65} implies that
\begin{equation}
\label{sg71}\inf_{k \in\,\mathbb{N}}\mathbb{P}\left(u_{\mu_k} \in\,
K^R_\e\right)\geq 1-4\e,\end{equation}
where
\[K^R_\e:=\left([K_{1,L_{1, R}^{\e}}\cap K_{2,L_{2, R}^{\e}}]+K^R_{3,\e}\right)\cap K_{4,L^\e_{4,R}}.\]

Since the set $[K_{1,L_{1, R}^{\e}}\cap K_{2,L_{2, R}^{\e}}]+K^R_{3,\e}$ is compact in $C([0,T];H^\varrho)$,  due to \cite[Theorem 1]{Simon1986} we have that
\[
\lim_{h\to 0}\,\sup_{f \in\,[K_{1,L_{1, R}^{\e}}\cap K_{2,L_{2, R}^{\e}}]+K^R_{3,\e}}\,
\sup_{t \in\,[0,T-h]}\Vert \tau_h f(t)-f(t)\Vert_{H^\varrho}=0.	
\]

Now, for every $\vartheta \in\,[1,2)$, we have
\[ \a:=\frac{\vartheta-\varrho}{2-\varrho}\Longrightarrow \Vert h\Vert_{H^\vartheta}\leq \Vert h\Vert_{H^2}^{\alpha}\Vert h\Vert_{H^\varrho}^{1-\alpha}.\]
Recalling how $K_{4,L^\epsilon_{4, R}}$ was defined, we have that $K^R_\epsilon$ is bounded in $L^2(0,T;H^2)$. Therefore, due to \cite[Theorem 7]{Simon1986}, we have that $K^R_\epsilon$ is compact in $L^{p_{\varrho,\vartheta}}(0,T;H^{\delta})$, where
\[ p_{\varrho,\vartheta}:=\left(\frac \a{2}+\frac{1-\a}\infty\right)^{-1}=\frac{2(2-\varrho)}{\vartheta-\varrho},\]
and, in view of  \eqref{sg71}, this allows to conclude that  $\left(\mathcal{L}(u^R_{\mu_k})\right)_{k \in\,\mathbb{N}}$ is tight in $L^{p_{\varrho,\vartheta}}(0,T;H^\delta)$. Now, since $\varrho<1$ can be choosen arbitrarily close to $1$, we obtain the tightness in $L^p(0,T;H^\vartheta)$, for every $p<2/(\vartheta-1)$.

\end{proof}

\section{About the limiting equation (\ref{lim-eq}) and its truncated version}\label{sec7}

We recall that the function $S$ we have introduced in \eqref{sa160} is defined by
\[S(u)=\int_{H^1}[D\mathfrak{g}^{-1}(u)z]z\,d\nu^u(z),\ \ \ \ \ u \in\,H^1.\] 

\subsection{Properties of the mapping $S$}
\begin{Lemma}
\label{lemma7.1}
Under Hypotheses \ref{as1}, \ref{as2} and  \ref{as4}, we have that $S$ maps $H^1$ into itself, with
\begin{equation}
\label{xsa51}
\Vert S(u)\Vert_{H^1}\leq c\left(1+\Vert u\Vert_{H^1}^2\right),\ \ \ 
\Vert S(u)\Vert_{H}\leq c\left(1+\Vert u\Vert_{H^1}\right),\ \ \ \ \ u \in\,H^1.	
\end{equation}

Moreover, if we also assume Hypothesis \ref{as5}, we have that $S:H^2\to H^1$ is differentiable and 
\begin{equation}
\label{sa162}
\Vert D S(u)h\Vert_{H^1}\leq c\,\left(1+\Vert u\Vert^2_{H^2}\right)\,\Vert h\Vert_{H^1},\ \ \ \ \ u, h \in\,H^2.	
\end{equation}
	\end{Lemma}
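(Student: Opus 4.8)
The plan is to treat the two assertions separately, since the first needs only Hypotheses \ref{as1}, \ref{as2} and \ref{as4}, whereas the second exploits the extra regularity in Hypothesis \ref{as5}. For the first assertion I would start from the identity $D\mathfrak{g}^{-1}(u)=-\mathfrak{g}^{-1}(u)[D\mathfrak{g}(u)\,\cdot\,]\mathfrak{g}^{-1}(u)$, so that the integrand reads $[D\mathfrak{g}^{-1}(u)z]z=-\mathfrak{g}^{-1}(u)[D\mathfrak{g}(u)z]\mathfrak{g}^{-1}(u)z$, acting pointwise on $\mathcal{O}$. Since $\mathfrak g^{-1}(u)$ is bounded uniformly in $u$ (by the coercivity \eqref{xfine160}) and $D\mathfrak g(u)$ is bounded (Hypothesis \ref{as4}), the matrix $D\mathfrak{g}^{-1}(u)z\in\mathbb{R}^{r\times r}$ satisfies $\|D\mathfrak{g}^{-1}(u)z\|_{\mathbb{R}^{r\times r}}\le c\,\|z\|_{H^1}$ uniformly in $u$, as already recorded in \eqref{sa51}. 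Because this matrix is \emph{constant} in $x$, pointwise multiplication commutes with every power of $(-\Delta)$, whence for $s\in\{0,1\}$ one gets $\|[D\mathfrak{g}^{-1}(u)z]z\|_{H^s}\le c\,\|z\|_{H^1}\,\|z\|_{H^s}$. Integrating against $\nu^u=\mathcal N(0,\Lambda_u)$ and using Cauchy--Schwarz together with $\int_{H^1}\|z\|_{H^s}^2\,d\nu^u(z)=\mathrm{Tr}_{H^s}\Lambda_u$ and the bounds \eqref{xfine150}, I obtain $\|S(u)\|_H\le c\,(\mathrm{Tr}_{H^1}\Lambda_u)^{1/2}(\mathrm{Tr}_H\Lambda_u)^{1/2}\le c(1+\|u\|_{H^1})$ and $\|S(u)\|_{H^1}\le c\,\mathrm{Tr}_{H^1}\Lambda_u\le c(1+\|u\|_{H^1}^2)$; the same computation shows the $H^1$-valued Bochner integral converges, so $S(u)\in H^1$.

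For the second assertion the crucial structural remark is that, since the integrand is quadratic in $z$, the Gaussian average is \emph{linear} in the covariance. I would therefore introduce
\[
T(u,\Lambda):=\int_{H^1}[D\mathfrak{g}^{-1}(u)z]z\,d\mathcal N(0,\Lambda)(z),
\]
which is linear in $\Lambda$ and satisfies $\|T(u,\Lambda)\|_{H^1}\le c\,\mathrm{Tr}_{H^1}\Lambda$ by the estimate above, and write $S(u)=T(u,\Lambda_u)$. The chain rule then gives, at least formally,
\[
DS(u)h=(D_uT)(u,\Lambda_u)h+T\big(u,D\Lambda_u h\big),
\]
the second term being legitimate precisely because $T$ is linear in its second slot. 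The $u$-derivative of $T$ only hits the matrix $D\mathfrak g^{-1}(u)$: using that $\mathfrak g$ is twice differentiable with bounded $D^2\mathfrak g$ (Hypothesis \ref{as5}, \eqref{na-bis}), one checks that $D^2\mathfrak g^{-1}(u)$ is a bounded bilinear form on $H^1\times H^1$, uniformly in $u$, so that $\|(D_uT)(u,\Lambda_u)h\|_{H^1}\le c\,\|h\|_{H^1}\,\mathrm{Tr}_{H^1}\Lambda_u\le c(1+\|u\|_{H^1})\|h\|_{H^1}$.

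The analytically substantial point is the differentiability of $u\mapsto\Lambda_u$ in the trace norm of $\mathcal L_1(H^1)$. Writing $\Sigma_u:=[\sigma(u)Q][\sigma(u)Q]^\star$ and $\Lambda_u=\int_0^\infty e^{-\mathfrak g(u)s}\Sigma_u e^{-\mathfrak g^t(u)s}\,ds$, I would differentiate under the integral by the product rule, using Duhamel's formula $D_u e^{-\mathfrak g(u)s}h=-\int_0^s e^{-\mathfrak g(u)\tau}[D\mathfrak g(u)h]e^{-\mathfrak g(u)(s-\tau)}\,d\tau$ for the exponential factors and Hypothesis \ref{as5}, \eqref{gsb43-bis}, for $D\Sigma_u h$. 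The coercivity \eqref{gsb15-bis} yields $\|e^{-\mathfrak g(u)s}\|_{\mathcal L(H^\delta)}\le e^{-\gamma_0 s}$ for every $\delta$ (again because the matrix is constant in $x$), which both guarantees convergence of all the $s$-integrals—even after Duhamel introduces an algebraic factor $s$—and controls the trace norms. The $\sigma$-term contributes to $\mathrm{Tr}_{H^1}$ a quantity proportional to $\|\sigma(u)\|_{\mathcal L_2(H_Q,H^1)}\|D\sigma(u)h\|_{\mathcal L_2(H_Q,H^1)}\le c(1+\|u\|_{H^1})(1+\|u\|_{H^2})\|h\|_{H^1}$ by \eqref{gsb43} and \eqref{gsb43-bis}, and this is exactly the term responsible for the quadratic growth $1+\|u\|_{H^2}^2$; the exponential-derivative terms are bounded similarly by $c\,\|h\|_{H^1}\,\mathrm{Tr}_{H^1}\Sigma_u$. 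Altogether $\mathrm{Tr}_{H^1}(D\Lambda_u h)\le c(1+\|u\|_{H^2}^2)\|h\|_{H^1}$, and combining with the bound on $(D_uT)$ yields \eqref{sa162}.

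I expect the main obstacle to be the rigorous justification of the differentiation of the operator-valued integral defining $\Lambda_u$ in the $\mathcal L_1(H^1)$-norm: one must show that the Duhamel increments converge in trace norm, that the resulting $s$-integrals (now carrying a factor $s$) still converge thanks to the uniform exponential decay from coercivity, and that differentiation may be interchanged with both the $s$-integral and the Gaussian integral, which I would handle by a dominated-convergence argument based on the uniform bounds established above.
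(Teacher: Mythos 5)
Your first assertion is proved exactly as in the paper: you bound the integrand by $c\,\Vert z\Vert_{H^1}\Vert z\Vert_{H^s}$ using the constancy in $x$ of the matrix $D\mathfrak{g}^{-1}(u)z$ (cf.\ \eqref{sa51}), and then integrate against $\nu^u$ via Cauchy--Schwarz and the trace bounds \eqref{xfine150}, which is precisely the computation \eqref{sa110}.

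For the differentiability, your route is correct but genuinely different from the paper's. The paper represents $\nu^u$ through the stationary random variable $\xi(u)=\int_{-\infty}^0 e^{\,\mathfrak{g}(u)s}\sigma(u)\,d\hat{w}^Q(s)$ of \eqref{sa141}, writes $S(u)=\mathbb{E}\bigl(\left[D\mathfrak{g}^{-1}(u)\xi(u)\right]\xi(u)\bigr)$ as in \eqref{sa167}, and differentiates $\xi$ in $L^2(\Omega;H^1)$, so that the chain rule yields three expectation terms controlled by \eqref{sa168} and \eqref{sa166}. You instead exploit the linearity of the Gaussian quadratic average in the covariance and differentiate $u\mapsto\Lambda_u$ in $\mathcal{L}_1(H^1)$. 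In substance the two proofs are reparametrizations of each other: your $D\Lambda_u h$ is $\mathbb{E}\left[D\xi(u)h\otimes\xi(u)+\xi(u)\otimes D\xi(u)h\right]$ in the paper's notation, and both rest on the same two ingredients, namely the Duhamel bound $\Vert D_u(e^{-\mathfrak{g}(u)s})h\Vert\leq c\,s\,e^{-\gamma_0 s}\Vert h\Vert_{H^1}$ (the paper's \eqref{sa165}) and the hypothesis \eqref{gsb43-bis} for $D\sigma$, with the cross term between $\sigma$ and $D\sigma$ producing the quadratic growth $1+\Vert u\Vert_{H^2}^2$ in both cases. What differs is where the justification burden sits: the paper's probabilistic representation converts the trace-norm bookkeeping into mean-square estimates via the It\^o isometry, so the interchange of derivative and integral reduces to standard mean-square differentiability of a stochastic convolution; your deterministic route avoids stochastic analysis entirely, but must rigorously establish the $\mathcal{L}_1(H^1)$-differentiability of the operator-valued integral --- which you correctly flag as the main obstacle, and which your dominated-convergence plan does handle, since the integrands decay like $(1+s)e^{-2\gamma_0 s}$ uniformly in $u$ --- and must in addition extend the Gaussian-average functional $T(u,\cdot)$ from positive to general symmetric trace-class operators, since $D\Lambda_u h$ need not be positive. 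That extension is routine via the spectral decomposition, giving $\Vert T(u,\Lambda)\Vert_{H^1}\leq c\,\Vert\Lambda\Vert_{\mathcal{L}_1(H^1)}$, but you should state it explicitly: your phrase ``legitimate precisely because $T$ is linear in its second slot'' glosses over it. Neither point is a gap, and your argument delivers \eqref{sa162} with the same structure of constants as the paper's.
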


	\begin{proof}
Due to \eqref{xfine150}, we have 
\begin{equation}
\label{sa110}
\Vert S(u)\Vert_{H^1}\leq c\int_{H^1}\Vert z\Vert_{H^1}^{2}\,d\nu^u(z)\leq c\,\text{Tr}_{H^1}\Lambda_u\leq c\,\left(1+\Vert u\Vert_{H^1}^{2}\right),	\end{equation}
and the first inequality in \eqref{xsa51} follows. Moreover, 
\begin{align*}
\Vert S(u)&\Vert_{H}\leq c\int_{H^1}\Vert z\Vert_{H^1}\Vert z\Vert_{H}\,d\nu^u(z)\leq c\,\left(\int_{H^1}\Vert z\Vert_{H^1}^{2}\,d\nu^u(z)\right)^{\frac 12}\left(\int_{H^1}\Vert z\Vert_{H}^{2}\,d\nu^u(z)\right)^{\frac 12}\\[10pt]
&\hsp\leq c\,\left(\text{Tr}_{H^1}\Lambda_u\,\text{Tr}_{H}\Lambda_u\right)^{\frac 12}\leq c\,\left(1+\Vert u\Vert_{H^1}\right),	\end{align*}
so that the second inequality in \eqref{xsa51} follows.

In order to study the differentiability of $S$, for every $u \in\,H^1$ we introduce the random variable 
\begin{equation}
	\label{sa141}
	\xi(u):=\int_{-\infty}^0 e^{\,\mathfrak{g}(u)s}\sigma(u)\,d\hat{w}^Q(s),
\end{equation}
where $\hat{w}^Q(t)$, $t \in\,\mathbb{R}$, is the two-sided version of the cylindrical Wiener process $w^Q$. As known, $\xi(u)$   is distributed like $\nu^\nu$, so that we can give an alternative representation of $S$ as
\begin{equation}  \label{sa167}
S(u)=\mathbb{E}\left(\left[D\mathfrak{g}^{-1}(u)\xi(u)\right]\xi(u)\right).	
\end{equation}
Due to \eqref{sa110} we have
that $\xi$ maps $H^1$ into $L^2(\Omega;H^1)$ and \begin{equation}
\label{sa168}
\mathbb{E}\Vert \xi(u)\Vert_{H^1}^2\leq c\left(1+\Vert u\Vert_{H^1}^2\right).	
\end{equation}
Moreover, we can show  that $\xi:H^2\to L^2(\Omega;H^1)$ is differentiable. In order to prove that, for every $u, h \in\,H^1$ we consider the equation
\[\frac{dx}{dt}(t)=-\mathfrak{g}(u)x(t),\ \ \ \ x(0)=h.\]
Its solution is given by
$x_{h}(u,t):=e^{\,-\mathfrak{g}(u)t}h$, for every $t\geq 0$, and 
\begin{equation}   \label{sa164}
\Vert x_{h}(u,t)\Vert_{H^1}\leq e^{-\gamma_0 t}\Vert h\Vert_{H^1},\ \ \ \ \ \ t\geq 0.	
\end{equation}
For every $h \in\,H^1$ and $t\geq 0$ the mapping $u \in\,H^1\mapsto x_{h}(u,t) \in\,H^1$ is differentiable and if we denote $y_{h,k}(u,t):=D_u x_h(u,t) k$, with $k \in\,H^1$, we have that the function $t\mapsto y_{h,k}(u,t)$ satisfies the equation
\[\frac{dy}{dt}(t)=-\mathfrak{g}(u) y(t)-[D\mathfrak{g}(u)k]\cdot x_h(u,t),\ \ \ \ \ \ y(0)=0.\]
This means that
\[y_{h,k}(u,t)=-\int_0^t e^{\,-\mathfrak{g}(u)(t-s)}[D\mathfrak{g}(u)k] x_h(u,s)\,ds,\]
and, thanks to \eqref{sa164}, 
\begin{align*}
\Vert y_{h,k}(u,t)\Vert_{H^1}&\leq \int_0^t e^{-\gamma_0(t-s)}\Vert x_h(u,s)\Vert_{H^1}\,ds \Vert D\mathfrak{g}(u)\Vert_{\mathcal{L}(H^1;\mathbb{R}^{r\times r})}	\Vert k\Vert_{H^1}\\[10pt]
&\hsl\leq c\int_0^t e^{-\gamma_0 t}\,ds \Vert h\Vert_{H^1}\Vert k\Vert_{H^1}= c\,t e^{-\gamma_0 t}\Vert h\Vert_{H^1}\Vert k\Vert_{H^1}.
\end{align*}
In particular,
\begin{equation}
\label{sa165}
\Vert D_u (e^{\,\mathfrak{g}(u)t} h)\Vert_{H^1}\leq -c\,t\,e^{\gamma_0 t}\Vert h\Vert_{H^1},\ \ \ \ \ t\leq 0.	
\end{equation}

This, together with  Hypothesis \ref{as5} implies that
$\xi:H^2\mapsto \xi(u) \in\,L^2(\Omega;H^1)$ is differentiable and 
\[D \xi(u) h=\int_{-\infty}^0[D_u(e^{\,\mathfrak{g}(u)s}h)]\,\sigma(u)\,dw^Q(s)+\int_{-\infty}^0e^{\,\mathfrak{g}(u)s}[D\sigma(u) h]\,dw^Q(s),\ \ \ \ \ u, h \in\,H^2.\]
Moreover, due to \eqref{gsb43},  \eqref{gsb43-bis} and \eqref{sa165}, we have 
\begin{align}   \begin{split}
\label{sa166}
\mathbb{E}&\Vert D\xi(u) h\Vert_{H^1}^2\leq c\,\int_{-\infty}^0 t^2e^{2\gamma_0 t}\,dt\,\Vert \sigma(u)\Vert^2_{\mathcal{L}_2(H_Q,H^1)}\,\Vert h\Vert_{H^1}^2\\[10pt]
&\hsp \hslp+c\,\int_{-\infty}^0 e^{2\gamma_0 t}\,\,dt\,\Vert D\sigma(u)h\Vert^2_{\mathcal{L}_2(H_Q,H^1)}\leq c\,\left(1+\Vert u\Vert_{H^2}^2\right)\Vert h\Vert_{H^1}^2.
\end{split}   \end{align}
Thus, in view of \eqref{sa167} and Hypothesis \ref{as5}, we have that $S$ is differentiable, with 
\begin{align*}
DS(u)h=&\mathbb{E}\left([D^2 \mathfrak{g}^{-1}(u)(h,\xi(u))]\xi(u)\right)\\[10pt]
&\hslp+\mathbb{E}\left([D\mathfrak{g}^{-1}(u)(D_u\xi(u)h)]\xi(u)\right)+\mathbb{E}\left([D \mathfrak{g}^{-1}(u)\xi(u)]D_u\xi(u)h\right),	
\end{align*}
and
\[\Vert DS(u)h\Vert_{H^1}\leq c\,\mathbb{E}\Vert \xi(u)\Vert_{H^1}^2\,\Vert h\Vert_{H^1}
+c\,\mathbb{E}\Vert D\xi(u)h\Vert_{H^1}\Vert \xi(u)\Vert_{H^1}.\]
Thanks to  \eqref{sa168} and \eqref{sa166}, this implies \eqref{sa162}.
		
	\end{proof}

	\subsection{ Pathwise uniqueness  of equation (\ref{lim-eq})} 
	As a consequence of  \eqref{sa162}, we have that the mapping $S:H^2\to H^1$ is locally-Lipschitz continuous and 
	\begin{equation}
	\label{sa170}
	\Vert S(u_1)-S(u_2)\Vert_{H^1}\leq c\left(1+\Vert u_1\Vert_{H^2}^2+	\Vert u_2\Vert_{H^2}^2\right)\Vert u_1-u_2\Vert_{H^1}.
	\end{equation}
This allows to prove the following uniqueness result for equation \eqref{lim-eq}.
	
	\begin{Lemma}
	\label{teo7.2}
Under Hypotheses \ref{as1} to \ref{as5}, equation \eqref{lim-eq}	admits at most one solution $u$ in the  space $L^2(\Omega;C([0,T];H^1)\cap L^2(0,T;H^2))$.
\end{Lemma}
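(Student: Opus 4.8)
The plan is to run a standard energy/Gronwall argument on the difference of two solutions, but carried out in the $H^1$-norm rather than in $H$, which is what renders the noise-induced drift $S$ harmless. Let $u_1,u_2\in L^2(\Omega;C([0,T];H^1)\cap L^2(0,T;H^2))$ be two solutions of \eqref{lim-eq} with the same initial datum, and set $\rho:=u_1-u_2$, so that $\rho(0)=0$. Subtracting the two equations, $\rho$ solves an SPDE whose drift I would organize around the leading term by writing $\mathfrak g^{-1}(u_1)\Delta u_1-\mathfrak g^{-1}(u_2)\Delta u_2=\mathfrak g^{-1}(u_1)\Delta\rho+[\mathfrak g^{-1}(u_1)-\mathfrak g^{-1}(u_2)]\Delta u_2$, and similarly splitting the $f$- and $\sigma$-terms into a ``$\mathfrak g^{-1}(u_1)\cdot(\text{difference})$'' part and a ``$(\text{matrix difference})\cdot(\text{evaluated at }u_2)$'' part.

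First I would apply the infinite-dimensional It\^o formula to $\|\rho(t)\|_{H^1}^2$ (justified in the variational setting, using $\rho\in L^2(0,T;H^2)$ and that the diffusion coefficient lies in $\mathcal L_2(H_Q,H^1)$). Since each $\mathfrak g^{-1}(u_i)$ is a constant-in-$x$ matrix, it commutes with $-\Delta$, so the leading term pairs as $\langle\mathfrak g^{-1}(u_1)\Delta\rho,\rho\rangle_{H^1}=-\langle\mathfrak g^{-1}(u_1)\Delta\rho,\Delta\rho\rangle_H\le-\tilde\gamma_0\|\rho\|_{H^2}^2$ by the coercivity \eqref{xfine160}; this dissipation will absorb every remaining $H^2$-contribution. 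The commutator term is controlled via $\|\mathfrak g^{-1}(u_1)-\mathfrak g^{-1}(u_2)\|_{\mathbb R^{r\times r}}\le c\|\rho\|_{H^1}$ (a consequence of \eqref{sa51}) together with $\|\Delta u_2\|_H=\|u_2\|_{H^2}$, giving a bound $c\|\rho\|_{H^1}\|u_2\|_{H^2}\|\rho\|_{H^2}$ that Young's inequality splits into $\varepsilon\|\rho\|_{H^2}^2$ plus $c_\varepsilon\|u_2\|_{H^2}^2\|\rho\|_{H^1}^2$. The $f$-terms are handled with Hypothesis \ref{as3} and the boundedness of $\mathfrak g^{-1}$; the crucial $S$-term is where working in $H^1$ pays off, since the local Lipschitz estimate \eqref{sa170} yields directly $\langle S(u_1)-S(u_2),\rho\rangle_{H^1}\le c(1+\|u_1\|_{H^2}^2+\|u_2\|_{H^2}^2)\|\rho\|_{H^1}^2$, with a coefficient only linear in the squared $H^2$-norms. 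Finally the It\^o correction $\|\mathfrak g^{-1}(u_1)\sigma(u_1)-\mathfrak g^{-1}(u_2)\sigma(u_2)\|_{\mathcal L_2(H_Q,H^1)}^2$ is estimated through \eqref{gsb43-tris} (for the $\sigma$-difference in $H^1$) and \eqref{sa51} (for the matrix difference), again of the form $\le\psi_0(t)\|\rho\|_{H^1}^2$ with $\psi_0$ bounded by $1+\|u_1\|_{H^2}^2+\|u_2\|_{H^2}^2$.

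Collecting these bounds and choosing $\varepsilon$ small enough to absorb every $H^2$-term into the dissipation, I arrive at a pathwise inequality $d\|\rho\|_{H^1}^2\le\psi(t)\|\rho\|_{H^1}^2\,dt+dM(t)$, where $M$ is the stochastic-integral martingale and $\psi(t)=c\bigl(1+\|u_1(t)\|_{H^2}^2+\|u_2(t)\|_{H^2}^2+\cdots\bigr)$ belongs to $L^1(0,T)$ almost surely because $u_1,u_2\in L^2(0,T;H^2)$. To conclude I would pass to the exponentially weighted quantity $Z(t):=e^{-\int_0^t\psi(s)\,ds}\|\rho(t)\|_{H^1}^2$, which satisfies $dZ\le e^{-\int_0^t\psi}\,dM$, and localize with the stopping times $\tau_N:=\inf\{t:\int_0^t(\|u_1\|_{H^2}^2+\|u_2\|_{H^2}^2)\,ds\ge N\}\wedge T$, which increase to $T$ a.s. On $[0,\tau_N]$ the stopped stochastic integral is a genuine martingale, so taking expectations gives $\mathbb E\,Z(t\wedge\tau_N)\le Z(0)=0$; hence $\rho(\cdot\wedge\tau_N)\equiv0$ a.s., and letting $N\to\infty$ yields $\rho\equiv0$, i.e. uniqueness.

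The main obstacle, and the point that dictates the whole strategy, is precisely the quasilinear coefficient $\mathfrak g^{-1}(u)$ in front of the Laplacian together with the noise-induced drift $S$: the natural Lipschitz bounds for $S$ and for $\sigma$ in $H^1$ carry factors $\|u_i\|_{H^2}^2$, which are only integrable (not bounded) in time. Performing the energy estimate in $H^1$, so that the dissipation is measured in $H^2$ and the bad coefficient enters only linearly (keeping $\psi\in L^1$), and then replacing the inadmissible ``expectation plus deterministic Gronwall'' by the exponential-weight/stopping-time scheme, is what circumvents this difficulty; one must also verify carefully that the stopped stochastic integrals are true martingales so that the relevant expectations vanish.
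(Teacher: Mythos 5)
Your proposal is correct and follows essentially the same route as the paper: the same drift decomposition around $\mathfrak{g}^{-1}(u_1)\Delta\rho$, the It\^o formula for $\Vert\rho\Vert_{H^1}^2$ with the $H^2$-dissipation absorbing the commutator term, the local Lipschitz bounds \eqref{sa170}--\eqref{sa172} for $S$, $f_{\mathfrak{g}}$ and $\sigma_{\mathfrak{g}}$, and the exponential weight $\exp\bigl(-\kappa\int_0^t(1+\Vert u_1\Vert_{H^2}^2+\Vert u_2\Vert_{H^2}^2)\,ds\bigr)$ chosen to cancel the drift before taking expectations. Your extra stopping-time localization $\tau_N$ to certify that the stopped stochastic integral is a true martingale is a minor technical refinement of a step the paper leaves implicit, not a different argument.
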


\begin{proof}
Assume that $u_1, u_2 \in\,	L^2(\Omega;C([0,T];H^1)\cap L^2(0,T;H^2))$ are two solutions of equation \eqref{lim-eq}. If we define $\rho(t):=u_1(t)-u_2(t)$, we have
\begin{align*}
	\begin{split}
	\partial_t \rho(t,x)&=\left(\mathfrak{g}^{-1}(u_1(t))\Delta \rho(t,x)+\left[\mathfrak{g}^{-1}(u_1(t))-\mathfrak{g}^{-1}(u_2(t))\right]\Delta u_2(t,x)\right.\\[10pt]
	&\hsl\left.+[f_\mathfrak{g}(u_1(t))-f_\mathfrak{g}(u_2(t))]+[S(u_1(t))-S(u_2(t))]\right)\,dt+[\sigma_{\mathfrak{g}}(u_1(t))-\sigma_{\mathfrak{g}}(u_2(t))]\partial_tw^Q(t),	
	\end{split}
\end{align*}
where
\[f_{\mathfrak{g}}(u):=\mathfrak{g}^{-1}(u)f(u),\ \ \ \ \ \ \sigma_{\mathfrak{g}}(u):=\mathfrak{g}^{-1}(u)\sigma(u),\ \ \ \ \ u \in\,H^1.\]
It is immediate to check that
\begin{equation}  \label{sa171}\Vert f_{\mathfrak{g}}(u_1)-f_{\mathfrak{g}}(u_2)\Vert_{H^1}\leq c\Vert u_1-u_2\Vert_{H^1}\left(1+\Vert u_2\Vert_{H^1}\right),\end{equation}
and
\begin{equation}  \label{sa172}\Vert \sigma_{\mathfrak{g}}(u_1)-\sigma_{\mathfrak{g}}(u_2)\Vert_{\mathcal{L}_2(H_Q,H^1)}\leq c\Vert u_1-u_2\Vert_{H^1}\left(1+\Vert u_2\Vert_{H^1}\right).\end{equation}
Moreover, we have
\begin{align}\begin{split} \label{xfine230}
&\left\langle\left[\mathfrak{g}^{-1}(u_1(t))-\mathfrak{g}^{-1}(u_2(t))\right]\Delta u_2(t,x),\rho(t)\right\rangle_{H^1}\leq \Vert\mathfrak{g}^{-1}(u_1(t))-\mathfrak{g}^{-1}(u_2(t))\Vert_{\mathbb{R}^{r\times r}}\Vert u_2(t)\Vert_{H^2}\Vert\rho(t)\Vert_{H^2} \\[10pt]
&\hslp\leq c\,\Vert \rho(t)\Vert_{H^1}\Vert u_2(t)\Vert_{H^2}\Vert \rho(t)\Vert_{H^2}\leq \frac{\tilde{\gamma_0}}2\,\Vert \rho(t)\Vert_{H^2}^2+c\,\rho(t)\Vert^2_{H^1}\Vert u_2(t)\Vert^2_{H^2}.
\end{split}\end{align}
Hence, according to \eqref{sa170}, \eqref{sa171},  \eqref{sa172} and \eqref{xfine230}, from  the It\^o formula we obtain
\begin{align*}
	\frac 12d\Vert \rho(y)\Vert_{H^1}^2&\leq -\frac{\tilde{\gamma_0}}{2}\,\Vert \rho(t)\Vert_{H^2}^2\,dt+c\,\Vert \rho(t)\Vert_{H^1}^2\left(\Vert u_1(t)\Vert_{H^2}^2+\Vert u_2(t)\Vert_{H^2}^2+1\right)\,dt\\[10pt]
	&\hsp+\langle [\sigma_{\mathfrak{g}}(u_1(t))-\sigma_{\mathfrak{g}}(u_2(t))]dw^Q(t),\rho(t)\rangle_{H^1}.	\end{align*}
Next, for an arbitrary $\kappa>0$ we 
define
\[\Gamma_\kappa(t):=\exp\left(-\kappa\int_0^t \left(\Vert u_1(s)\Vert_{H^2}^2+\Vert u_2(s)\Vert_{H^2}^2+1\right)\,ds\right),\]
and apply It\^{o}'s formula to 
the process 
$\Gamma_\kappa(t)\Vert \rho(t)\Vert_{H^1}^2$.
We obtain
\begin{align*}
	d\left(\Gamma_\kappa(t)\Vert \rho(t)\Vert_{H^1}^2\right)&\leq \Gamma_\kappa(t)\left(-\frac{\tilde{\gamma_0}}{2}\,\Vert \rho(t)\Vert_{H^2}^2+\bar{c}\,\Vert \rho(t)\Vert_{H^1}^2\left(\Vert u_1(t)\Vert_{H^2}^2+\Vert u_2(t)\Vert_{H^2}^2+1\right)\right)\,dt\\[10pt]
	&\hslp+\Gamma_\kappa(t)\langle [\sigma_{\mathfrak{g}}(u_1(t))-\sigma_{\mathfrak{g}}(u_2(t))]dw^Q(t),\rho(t)\rangle_{H^1} \\[10pt]
	&\hsp-\Gamma_\kappa(t)\kappa \left(\Vert u_1(t)\Vert_{H^2}^2+\Vert u_2(t)\Vert_{H^2}^2+1\right)\Vert \rho(t)\Vert_{H^1}^2\,dt.
\end{align*}
In particular, if we take $\kappa:=\bar{c}$, we get
\[d\left(\Gamma_{\bar{c}}(t)\Vert \rho(t)\Vert_{H^1}^2\right)\leq \Gamma_\kappa(t)\langle [\sigma_{\mathfrak{g}}(u_1(t))-\sigma_{\mathfrak{g}}(u_2(t))]dw^Q(t),\rho(t)\rangle_{H^1}.\]
Therefore, if we integrate in time and take the expectation of both sides, since $\rho(0)=0$ we get
\[\mathbb{E}\,\left(\Gamma_{\bar{c}}(t)\,\Vert \rho(t)\Vert_{H^1}^2\right)\leq 0,\ \ \ \ \ \ t\geq 0.\]
Since $u_1, u_2 \in\,L^2(\Omega;L^2(0,T;H^2))$, we have    $\mathbb{P}(\Gamma_{\bar{c}}(t)>0,\ t\geq 0)=1$, and this implies that $u_1=u_2$.

\end{proof}

\medskip

Now, for every $R\geq 1$ and $u, v \in\,H^1$,  we denote by $y^{u,v}_R$ the solution of problem \eqref{sa40}, where the mappings $\mathfrak{g}$ and $\sigma$ are replaced by $\mathfrak{g}_R$ and $\sigma_R$. Clearly, we have \ 
\begin{equation}
\label{sa41-H1-R}
\sup_{t \geq 0}\mathbb{E}\Vert y_R^{u,v}(t)\Vert_{H^1}^p\leq c_{p}\left(1+e^{-\gamma_0 t}\Vert v\Vert_{H^1}^p+\Vert u\Vert_{H^1}^p\right),\ \ \ \ \ t\geq 0,\ \ \ \ \ R\geq 1.
\end{equation} 
Next, we define
\[P^{R,u}_t\varphi(v)=\mathbb{E}\,\varphi(y_R^{u,v}(t)),\ \ \ \ \ v \in\,H,\ \ \ \ t\geq 0,\]
for every function $\varphi \in\,B_b(H^1)$, and
\[\Lambda_{R,u}:=\int_0^\infty e^{-\mathfrak{g}_R(u)s}[\sigma_R(u)Q][\sigma_R(u)Q]^\star e^{-\mathfrak{g}_R^t(u)s}\,ds.\]
As we have seen for $\Lambda_u$, we have that  $\Lambda_{R, u} \in\,\mathcal{L}^+_1(H)\cap\mathcal{L}^+_1(H^1)$, 
$\nu_R^u:=\mathcal{N}(0,\Lambda_{R,u})$ is the unique invariant measure for the semigroup $P^{R, u}_t$ and for every $p\geq 1$ \begin{equation}
\label{sa110-R}
\int_{H^1}\Vert z\Vert_{H^1}^{2p}\,d\nu_R^u(z)\leq c_p\left(\text{Tr}_{H^1}\Lambda_u\right)^p\leq c_p\,\left(1+\Vert u\Vert_{H^1}^{2p}\right),\end{equation}
for some constant $c$ independent of $R\geq 1$.

Finally, if we define
\begin{equation}
\label{sa160-R}
S_R(u):=\int_{H^1}\left[D\mathfrak{g}_R^{-1}(u)	z\right]z\,d\nu_R^u(z),\ \ \ \ \ u \in\,H^1,
\end{equation}
it is immediate to check that Lemma \ref{lemma7.1} is true also for $S_R$, so that
\begin{equation}
	\label{sa170-R}
	\Vert S_R(u_1)-S_R(u_2)\Vert_{H^1}\leq c\left(1+\Vert u_1\Vert_{H^1}^2+	\Vert u_2\Vert_{H^1}^2\right)\Vert u_1-u_2\Vert_{H^1},\ \ \ \ \ u_1, u_2 \in\,H^1,
	\end{equation}for some constant $c>0$ independent of $R\geq 1$.
For every $R\geq 1$, we introduce the problem
\begin{equation}
\label{lim-eq-R}
\begin{cases}
\ds{\partial_t u^R(t,x)=\mathfrak{g}_R^{-1}(u^R(t))\Delta u^R(t,x)+\mathfrak{g}_R^{-1}(u^R(t))f_R(u^R(t,x))}\\[14pt]
\ds{\hsp\hslp+S_R(u^R(t))+\mathfrak{g}_R^{-1}(u^R(t))\sigma_R(u^R(t))\partial_tw^Q(t),}\\[10pt]
\ds{u^R(0,x)=u_0(x),\ \ x \in\,\mathcal{O},\ \ \ \ \ \ \ \ u^R(t,x)=0,\ \ x \in \partial\mathcal{O}.}	
\end{cases}
	\end{equation}
	Proposition \ref{teo7.2} applies also to problem \eqref{lim-eq-R}, so that for every $R\geq 1$ there exists at most one solution $u^R \in\,L^2(\Omega;C([0,T];H^1)\cap L^2(0,T;H^2))$.

\section{Proof of Theorem \ref{teo3.4} for the truncated equation (\ref{SPDE-R})}

In what follows we will prove the following analog of Theorem \ref{teo3.4} for  the truncated equation \eqref{SPDE-R}.
\begin{Theorem}
\label{teo3.4-R}
Assume Hypotheses \ref{as1} to \ref{as5} and fix  an arbitrary $\varrho<1$ and $\vartheta \in\,[1,2)$. Moreover, assume that $(u_0^\mu,v_0^\mu) $ belong to $\mathcal{H}_3$, for every $\mu \in\,(0,1)$, and satisfy conditions \eqref{finex}, \eqref{xsa11} and \eqref{fx2}. Then, 
 for every   $R\geq 1$, $p<2/(\vartheta-1)$ and $\eta, T>0$
\begin{equation}
\label{sa161-R}
\lim_{\mu\to 0}\,\mathbb{P}\left(\sup_{t \in \,[0,T]}\Vert u^R_\mu(t)-u^R(t)\Vert_{H^\varrho}+\int_0^T \Vert u^R_\mu(t)-u^R(t)\Vert_{H^\vartheta}^p\,dt>\eta\right)=0,	
\end{equation}
where $u^R \in\,L^2(\Omega;C([0,T];H^1)\cap L^2(0,T;H^2))$ is the unique solution of  problem \eqref{lim-eq-R}.
\end{Theorem}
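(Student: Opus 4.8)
The plan is to combine the tightness established in Proposition~\ref{teo-tight} with the pathwise uniqueness of the truncated limiting equation \eqref{lim-eq-R} (the analogue of Lemma~\ref{teo7.2}, recorded at the end of Section~\ref{sec7}), to identify every limit point through the generalized perturbed test function argument outlined in the Introduction, and finally to upgrade the resulting convergence in law to convergence in probability.

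First I would fix an arbitrary sequence $\mu_k\downarrow 0$. By Proposition~\ref{teo-tight} the laws $\mathcal{L}(u^R_{\mu_k})$ are tight in $C([0,T];H^\varrho)\cap L^p(0,T;H^\vartheta)$, and adjoining the driving noise $w^Q$ the family $\{(u^R_{\mu_k},w^Q)\}_k$ is tight as well. Along a subsequence it converges in law to some $(\bar u,\bar w)$, and by the Skorokhod representation theorem I may realize this convergence almost surely on a new filtered probability space $(\bar\Omega,\bar{\mathcal F},\{\bar{\mathcal F}_t\},\bar{\mathbb P})$, where $\bar w$ has the same law as $w^Q$ and the copies $\bar u_{\mu_k}\to\bar u$ in $C([0,T];H^\varrho)\cap L^p(0,T;H^\vartheta)$, $\bar{\mathbb P}$-a.s. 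The uniform bounds \eqref{sg31-bis} and \eqref{xsa17}, together with lower semicontinuity of the norms, guarantee that $\bar u\in L^2(\bar\Omega;C([0,T];H^1)\cap L^2(0,T;H^2))$, which is precisely the regularity class in which uniqueness holds for \eqref{lim-eq-R}.

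The central step is to show that $\bar u$ solves \eqref{lim-eq-R}. For this I would carry out the perturbed test function construction of the Introduction with the truncated coefficients $\gamma_R,f_R,\sigma_R$ and with $S$ replaced by $S_R$ from \eqref{sa160-R}. Writing $v^R_\mu:=\sqrt\mu\,\partial_t u^R_\mu$ and putting \eqref{SPDE-R} in the form \eqref{system-corr}, I would take, for fixed $h\in H^2$,
\[
\varphi_\mu(u,v)=\langle u,h\rangle_H+\sqrt\mu\,\varphi_1(u,v)+\mu\,\varphi_2^\mu(u,v),\qquad \varphi_1(u,v)=\langle\mathfrak g_R^{-1}(u)v,h\rangle_H,
\]
and, for a suitable $\lambda(\mu)\downarrow 0$,
\[
\varphi_2^\mu(u,v)=\int_0^\infty e^{-\lambda(\mu)t}\bigl(P^{R,u}_t\psi(u,\cdot)(v)-\langle S_R(u),h\rangle_H\bigr)\,dt,\qquad \psi(u,v)=\langle D_u\varphi_1(u,v),v\rangle_{H^1}.
\]
Itô's formula applied to $\varphi_\mu(u^R_\mu,v^R_\mu)$ then shows that $\langle u^R_\mu(t),h\rangle_H$ equals the weak form of \eqref{lim-eq-R} tested against $h$, up to a remainder $\mathfrak R_\mu(t)$ of the structure displayed in the Introduction. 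The heart of the matter is to prove that $\mathfrak R_\mu\to 0$ in $L^1(\Omega;C([0,t]))$: this requires showing that $\varphi_1$ and $\varphi_2^\mu$ are sufficiently smooth, estimating $D_u\varphi_2^\mu$ and $D_v\varphi_2^\mu$ together with their $\mu$-dependence, and matching these with the $\mathcal H_1$-, $\mathcal H_2$- and $\mathcal H_3$-bounds of Sections~\ref{sec4}, \ref{ssec5.2} and \ref{ssec5.3}. Since $D_v\varphi_2^\mu$ acts on $H^1$, controlling the term carrying $\Delta u^R_\mu$ forces the use of the $H^3$-bound \eqref{xsa17}, whose growth rate in $\mu$ must be reconciled with the prefactors $\sqrt\mu$, $\mu$ and with the decay of $\lambda(\mu)$. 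I expect this balancing — the joint choice of $\lambda(\mu)$ and the verification that each piece of $\mathfrak R_\mu$ is $o(1)$ — to be the main obstacle.

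Passing to the limit along the Skorokhod copies, the vanishing of $\mathfrak R_\mu$ identifies $\bar u$ as a weak (martingale-problem) solution of \eqref{lim-eq-R} driven by $\bar w$; a density argument in $h\in H^2$ and a martingale representation then yield the variational formulation, the local Lipschitz estimate \eqref{sa170-R} ensuring the drift $S_R$ is handled on $L^2(0,T;H^2)$. By the pathwise uniqueness of \eqref{lim-eq-R} in $L^2(\Omega;C([0,T];H^1)\cap L^2(0,T;H^2))$, together with a Yamada--Watanabe argument, $\bar u$ coincides with the unique solution $u^R$ and is a measurable functional of $\bar w$ independent of the chosen subsequence. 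Applying the Gyöngy--Krylov characterization to the pairs $(u^R_{\mu_k},u^R_{\mu_l})$ — every joint sub-subsequence converges in law to a measure concentrated on the diagonal, since both coordinates are identified with the same unique solution driven by $w^Q$ — I conclude that $u^R_{\mu}\to u^R$ in probability in $C([0,T];H^\varrho)\cap L^p(0,T;H^\vartheta)$, which is exactly \eqref{sa161-R}.
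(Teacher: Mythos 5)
Your plan is correct and follows the paper's proof almost step for step: tightness from Proposition \ref{teo-tight}, the correctors $\varphi^R_1$ and $\varphi_2^{R,\mu}$ built from the frozen semigroup $P^{R,u}_t$ with the resolvent parameter $\lambda(\mu)$ subject to the scaling \eqref{sa185}, the vanishing of the remainder $\mathfrak{R}_{R,\mu}$ in $L^1(\Omega;C([0,T]))$ via the $\mathcal{H}_1$-, $\mathcal{H}_2$- and $\mathcal{H}_3$-bounds, identification of limit points as solutions of \eqref{lim-eq-R}, pathwise uniqueness, and Gy\"ongy--Krylov.

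The one place where you deviate is the final identification step, and there your appeal to a Yamada--Watanabe argument is both unnecessary and, as stated, a liability: asserting that pathwise uniqueness makes the solution of the quasilinear system \eqref{lim-eq-R} a measurable functional of the noise is exactly the kind of statement that would itself require proof in this infinite-dimensional setting, and the paper never establishes it. The paper sidesteps this entirely by running the Skorokhod representation on \emph{pairs} of subsequences from the start: it takes two sequences $\mu^1_k,\mu^2_k\downarrow 0$, represents $\bigl((u^R_{\mu^1_k},\sqrt{\mu^1_k}\,\partial_t u^R_{\mu^1_k}),(u^R_{\mu^2_k},\sqrt{\mu^2_k}\,\partial_t u^R_{\mu^2_k}),w^Q\bigr)$ jointly, shows both limits $\varrho^1,\varrho^2$ solve \eqref{lim-eq-R} driven by the \emph{same} realized Wiener process $\hat w^Q$ (the martingale identification via vanishing quadratic variation as in \cite[Lemma 4.9]{DHV2016}, with $h\in H^1$ so that the term $\mathfrak{g}_R^{-1}\Delta$ can be integrated by parts), and then concludes $\varrho^1=\varrho^2$ from Proposition \ref{teo7.2} alone; this gives the diagonal support needed for Gy\"ongy--Krylov with no functional-of-the-noise claim. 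You do gesture at this pairing in your last sentence, so your argument can be repaired by making it the primary mechanism and deleting Yamada--Watanabe. Two smaller points: your Skorokhod vector should include the velocity components $\sqrt{\mu_k}\,\partial_t u^R_{\mu_k}$ (controlled in $L^\infty(0,T;H)$ via \eqref{sg32}), since the transferred identity \eqref{sa193} and its remainder involve them; and the paper takes $h\in H$ in the corrector construction, restricting to $h\in H^1$ only when passing to the limit, whereas your choice $h\in H^2$ works but needs the same density argument.
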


In Proposition \ref{teo-tight} we have seen that  $\left(\mathcal{L}(u^R_{\mu_k})\right)_{k \in\,\mathbb{N}}$  is  tight  in 
$C([0,T];H^\varrho)\cap L^p(0,T;H^\vartheta)$, for every $R\geq 1$.
Hence,
since equation \eqref{lim-eq-R} has pathwise uniqueness, Theorem \ref{teo3.4-R} follows once we show that any weak limit point of $\{u^R_\mu\}_{\mu \in\,(0,1)}$ is a solution equation of \eqref{lim-eq-R}. Notice that throughout this section, we will assume that all Hypotheses, from \ref{as1} to \ref{as5} are satisfied.

\subsection{The construction of the corrector functions}

Due to Hypotheses \ref{as4} and \ref{as5},  the mapping 
\[u \in\,H^1\mapsto y_R^{u,v}(t) \in\,L^2(\Omega;H^1),\]
 is differentiable, for any fixed $v \in\,H^1$ and $t\geq 0$, and if we denote $\eta^{u,v}_{R, k}(t):=D_u\, y_R^{u,v}(t)\cdot k$, we have that the process $\eta^{u,v}_{R, k}$ is the solution of the problem
\[d\eta(t)=-\left(\mathfrak{g}_R(u)\eta(t)+[D\mathfrak{g}_R(u)k]\,y_R^{u,v}(t)\right)\,dt+[D\sigma_R(u)k]\,dw^Q(t),\ \ \ \ \eta(0)=0.\]
In particular, we have 
\[\eta^{u,v}_{R, k}(t)=\int_0^t e^{-\mathfrak{g}_R(u)(t-s)}[D\mathfrak{g}_R(u)k]\,y_R^{u,v}(s)\,ds+\int_0^t e^{-\mathfrak{g}_R(u)(t-s)}[D\sigma_R(u)k]\,dw^Q(s).\]
This implies that
\begin{align*}
	\mathbb{E}\,&\Vert \eta^{u,v}_{R, k}(r)\Vert_{H^1}^2\leq c\left(\sup_{s \in\,[0,t]}\mathbb{E}\Vert y_R^{u,v}(s)\Vert_{H^1}^2\int_0^t e^{-2\gamma_0 s}\,ds+c\int_0^te^{-2\gamma_0 (t-s)}\,ds\,\left(1+\Vert u\Vert_{H^2}^2\right)\right)\Vert k\Vert_{H^1}^2,
\end{align*}
so that, thanks to \eqref{sa41-H1-R}, we obtain that for every $T>0$ fixed
\begin{equation}
\label{sa130}
\sup_{t\geq 0}\,	\mathbb{E}\,\Vert \eta^{u,v}_{R, k}(t)\Vert_{H^1}^2\leq c\left(1+\Vert u\Vert_{H^2}^2+\Vert v\Vert_{H^1}^2\right)\Vert k\Vert_{H^1}^2,
\end{equation}
for some constant $c$ independent of $R\geq 1$.

It is immediate to check that $P^{R, u}_t$ is a Feller contraction semigroup in $C_b(H^1)$. Moreover, it is  weakly continuous in $C_b(H^1)$ (for the definition see \cite[Appendix B]{cerrai94}). In particular, this means that there exists a closed operator $M^{R, u}:D(M^{R, u})\subseteq C_b(H^1)\to C_b(H^1)$ such that for every $\la>0$ and $\varphi \in\,C^1_b(H^1)$
\[(\la-M^{R, u})^{-1}\varphi(v)=\int_0^\infty e^{-\la t} P^{R, u}_t\varphi(v)\,dt,\ \ \ \ \ \ v \in\,H^1.\] Notice that $C^2_b(H^1)\subset D(M^{R, u})$ and $M^{R, u}\varphi=\mathcal{M}^{R, u}\varphi$, for every $\varphi \in\,C^2_b(H^1)$,
where
$\mathcal{M}^{R, u}$ is the Kolmogorov operator associated with equation \eqref{sa40}
\begin{equation} \label{sa180}\mathcal{M}^{R, u}\varphi(v)=\frac 12\text{Tr}_{H^1}\left(D^2\varphi(v)[\sigma_R(u)Q][\sigma_R(u)Q]^\star\right)-\langle \mathfrak{g}_R(u) v,D\varphi(v)\rangle_{H^1},\ \ \ \ \varphi \in\,C^2_b(H^1).\end{equation}

\begin{Lemma} \label{lemma6.1}
	Let $\varphi:H^1\to\mathbb{R}$ be any continuous function such that
	\[\vert\varphi(v_1)-\varphi(v_2)\vert\leq c_\varphi\,\Vert v_1-v_2\Vert_{H^1}\left(1+\Vert v_1\Vert^{\kappa_\varphi}_{H^1}+ \Vert v_2\Vert^{\kappa_\varphi}_{H^1}\right),\ \ \ \ \ v_1, v_2 \in\,H^1,\]
	for some $c_\varphi>0$ and $\kappa_\varphi\geq 0$.
	Then, there exists some other constant $\bar{c}_\varphi>0$ such that for every $u, v \in\,H^1$
	\begin{equation} \label{sa111}
	\left\vert P^{R, u}_t\varphi(v)- \int_{H^1} \varphi(z)\,d\nu_R^u(z)\right\vert\leq \bar{c}_\varphi	\left(1+\Vert v\Vert_{H^1}^{{\kappa_\varphi+1}}+\Vert u\Vert_{H^1}^{{\kappa_\varphi+1}}\right)\, e^{-\gamma_0 t}.
	\end{equation}
\end{Lemma}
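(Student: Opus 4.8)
The statement is an exponential-ergodicity estimate for the Ornstein--Uhlenbeck-type semigroup $P^{R,u}_t$ associated with the linear equation \eqref{sa40} (with $\mathfrak{g},\sigma$ replaced by $\mathfrak{g}_R,\sigma_R$). The driving idea is that the process $y^{u,v}_R$ is a Gaussian process whose mean decays exponentially and whose law converges exponentially fast to the invariant Gaussian $\nu^u_R=\mathcal N(0,\Lambda_{R,u})$. The plan is to exploit the explicit representation of $y^{u,v}_R$ and a coupling/synchronization argument: couple the solution started at $v$ with the stationary solution started from $\nu^u_R$ by driving both with the \emph{same} Wiener process $w^Q$, so that the difference solves a deterministic linear ODE and decays like $e^{-\gamma_0 t}$ thanks to the dissipativity \eqref{gsb15-bis}.

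First I would set up the coupling. Let $y^{u,v}_R(t)$ be the solution of \eqref{sa40} with $y(0)=v$, and let $\bar y^u_R(t)$ be the stationary solution driven by the \emph{same} noise, i.e. with initial datum $\bar y^u_R(0)=\xi_R(u)$ distributed as $\nu^u_R$ (using the two-sided representation \eqref{sa141} adapted to $\mathfrak{g}_R,\sigma_R$). Then the difference $w(t):=y^{u,v}_R(t)-\bar y^u_R(t)$ solves the \emph{noiseless} equation $\tfrac{d}{dt}w(t)=-\mathfrak{g}_R(u)\,w(t)$, $w(0)=v-\xi_R(u)$, so that
\[
\|w(t)\|_{H^1}=\|e^{-\mathfrak{g}_R(u)t}\,(v-\xi_R(u))\|_{H^1}\leq e^{-\gamma_0 t}\,\|v-\xi_R(u)\|_{H^1},
\]
by \eqref{gsb15-bis} (equivalently \eqref{gsb15-tris}), exactly as in the bound \eqref{sa164}. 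Since $\bar y^u_R(t)\sim\nu^u_R$ for every $t$, we have $\int_{H^1}\varphi\,d\nu^u_R=\mathbb E\,\varphi(\bar y^u_R(t))$, and therefore
\[
\left|P^{R,u}_t\varphi(v)-\int_{H^1}\varphi\,d\nu^u_R\right|
=\bigl|\mathbb E\bigl(\varphi(y^{u,v}_R(t))-\varphi(\bar y^u_R(t))\bigr)\bigr|.
\]

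Next I would insert the polynomial-Lipschitz hypothesis on $\varphi$ and estimate. Using the assumed modulus of continuity with $\|y^{u,v}_R(t)-\bar y^u_R(t)\|_{H^1}=\|w(t)\|_{H^1}$, one gets
\[
\bigl|\mathbb E(\varphi(y^{u,v}_R(t))-\varphi(\bar y^u_R(t)))\bigr|
\leq c_\varphi\,\mathbb E\Bigl[\|w(t)\|_{H^1}\bigl(1+\|y^{u,v}_R(t)\|_{H^1}^{\kappa_\varphi}+\|\bar y^u_R(t)\|_{H^1}^{\kappa_\varphi}\bigr)\Bigr].
\]
I would then apply H\"older's inequality to split off the factor $\|w(t)\|_{H^1}\leq e^{-\gamma_0 t}\|v-\xi_R(u)\|_{H^1}$, controlling $\mathbb E\|v-\xi_R(u)\|_{H^1}^{p}$ via \eqref{sa168} (the $R$-uniform second-moment bound, raised to the appropriate power using \eqref{sa110-R}) and the high moments $\mathbb E\|y^{u,v}_R(t)\|_{H^1}^{p\kappa_\varphi}$, $\mathbb E\|\bar y^u_R(t)\|_{H^1}^{p\kappa_\varphi}$ via \eqref{sa41-H1-R} and \eqref{sa110-R}. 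All of these grow at most polynomially in $\|u\|_{H^1}$ and $\|v\|_{H^1}$, with constants independent of $R\geq1$. Collecting the powers and bounding each factor of $\|u\|_{H^1}$ and $\|v\|_{H^1}$ by $1+\|u\|_{H^1}^{\kappa_\varphi+1}+\|v\|_{H^1}^{\kappa_\varphi+1}$ yields precisely \eqref{sa111}, with $\bar c_\varphi$ depending only on $c_\varphi$, $\kappa_\varphi$ and the structural constants.

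\textbf{Main obstacle.} The substantive point is bookkeeping of the moment exponents so that the final bound is genuinely of the stated form $\bigl(1+\|v\|_{H^1}^{\kappa_\varphi+1}+\|u\|_{H^1}^{\kappa_\varphi+1}\bigr)e^{-\gamma_0 t}$ and \emph{not} of higher polynomial order: the H\"older split produces a product of an $L^{p}$-norm of $\|v-\xi_R(u)\|_{H^1}$ and an $L^{q}$-norm of the $\kappa_\varphi$-th power, and one must choose the exponents (and interpolate appropriately) so that the combined growth in $v$ is exactly $\kappa_\varphi+1$ rather than $\kappa_\varphi$ plus an extra power coming from the moment estimate. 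The cleanest route, which I would follow, is to keep $\|w(t)\|_{H^1}\le e^{-\gamma_0 t}(\|v\|_{H^1}+\|\xi_R(u)\|_{H^1})$ as a pointwise $\omega$-bound on the coupling difference and to estimate the product inside a single expectation by H\"older with a fixed pair of conjugate exponents, so that the degree is controlled additively; the $R$-uniformity is then automatic since every moment bound invoked (\eqref{sa41-H1-R}, \eqref{sa110-R}, \eqref{sa168}) holds with constants independent of $R$.
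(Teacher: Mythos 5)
Your proposal is correct and is essentially the paper's argument: both rest on the synchronous-coupling observation that two solutions of \eqref{sa40} driven by the same noise differ by the deterministic flow $e^{-\mathfrak{g}_R(u)t}$ applied to the difference of initial data, hence contract like $e^{-\gamma_0 t}$ by \eqref{gsb15-bis}, combined with the invariance of $\nu^u_R$ and the moment bounds \eqref{sa41-H1-R} and \eqref{sa110-R} (all $R$-uniform). The only organizational difference is that the paper first proves the Lipschitz estimate $\vert P^{R,u}_t\varphi(v_1)-P^{R,u}_t\varphi(v_2)\vert\leq c\,e^{-\gamma_0 t}\Vert v_1-v_2\Vert_{H^1}\left(1+\Vert u\Vert_{H^1}^{\kappa_\varphi}+\Vert v_1\Vert_{H^1}^{\kappa_\varphi}+\Vert v_2\Vert_{H^1}^{\kappa_\varphi}\right)$ for \emph{deterministic} initial data, so the contraction factor exits the expectation with no H\"older step, and then integrates the second argument against $\nu^u_R$ using $\int P^{R,u}_t\varphi\,d\nu^u_R=\int\varphi\,d\nu^u_R$, whereas you couple directly with the stationary solution started from the random datum $\xi_R(u)$, which makes the coupling difference random and forces the exponent bookkeeping you describe — bookkeeping that does close exactly as you say.
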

\begin{proof}
Thanks to \eqref{sa41-H1-R}, for every $v_1, v_2 \in\,H^1$, we have
\begin{align*}
\vert P_t^{R, u}\varphi(v_1)-&P_t^{R, u}\varphi(v_2)\vert\leq c_\varphi\,\mathbb{E}\,\left(\Vert y_R^{u,v_1}(t)	-y_R^{u,v_2}(t)\Vert_{H^1}\left(1+\Vert y_R^{u,v_1}(t)\Vert_{H^1}^{\kappa_\varphi}+\Vert y_R^{u,v_2}(t)\Vert_{H^1}^{\kappa_\varphi}\right)\right)\\[10pt]
&\leq c_\varphi\,\Vert e^{-\mathfrak{g}(u)t}(v_1-v_2)\Vert_{H^1}\mathbb{E}\left(1+\Vert y_R^{u,v_1}(t)\Vert_{H^1}^{\kappa_\varphi}+\Vert y_R^{u,v_2}(t)\Vert_{H^1}^{\kappa_\varphi}\right)\\[10pt]
&\leq c_\varphi\,e^{-\gamma_0 t}\Vert v_1-v_2\Vert_{H^1}c_{{\kappa_\varphi}}\left(1+\Vert u\Vert_{H^1}^{\kappa_\varphi}+\Vert v_1\Vert_{H^1}^{\kappa_\varphi}+\Vert v_2\Vert_{H^1}^{\kappa_\varphi}\right),
\end{align*}	
last inequality following from \eqref{sa41-H1}. Due to the invariance of $\nu^u_R$, 
this implies that 
\begin{align*}
	&\left\vert P^{R, u}_t\varphi(v)- \int_{H^1} \varphi(z)\,d\nu_R^u(z)\right\vert\leq
	 \int_{H^1} \left\vert P^{R, u}_t\varphi(v)- P^{R, u}_t\varphi(z)\right\vert d\nu_R^u(z)\\[10pt]
	 &\hslp\leq  c_\varphi\,c_{{\kappa_\varphi}}\,e^{-\gamma_0 t}\int_{H^1}\Vert v-z\Vert_{H^1}\left(1+\Vert u\Vert_{H^1}^{\kappa_\varphi}+\Vert v\Vert_{H^1}^{\kappa_\varphi}+\Vert z\Vert_{H^1}^{\kappa_\varphi}\right)\,d\nu_R^u(z)\\[10pt]
	 &\hsp\leq  c_\varphi\,c_{{\kappa_\varphi}}\,e^{-\gamma_0 t}\left(1+\Vert u\Vert_{H^1}^{\kappa_\varphi+1}+\Vert v\Vert_{H^1}^{\kappa_\varphi+1}+\int_{H^1}\Vert z\Vert_{H^1}^{\kappa_\varphi+1}\,d\nu_R^u(z)\right),
\end{align*}
and thanks to \eqref{sa110-R} we obtain \eqref{sa111}.
\end{proof}

In what follows, we fix $h \in\,H$ and we introduce the function
\[\varphi^R_1(u,v):=\langle \mathfrak{g}_R^{-1}(u)v,h\rangle_{H},\ \ \ \ \ u \in\,H^1,\ \ v \in\,H.\]
It is immediate to check that that $\varphi^R_1(u,\cdot) \in\,C^2_b(H)$ and 
\begin{equation}
\label{sa101}
\mathcal{M}^{R, u} \varphi^R_1(u,v)=-\langle v,h\rangle_H.	
\end{equation}
Thanks to Hypothesis \ref{as4}, for every fixed $v \in\,H$  we have that $\varphi^R_1(\cdot,v) \in\,C^1_b(H^1)$ and 
\begin{equation}
\label{sa102}
\langle D_u\varphi^R_1(u,v),k\rangle_{H^1}=\langle [D\mathfrak{g}_R^{-1}(u)k]v,h\rangle_H=-\langle \mathfrak{g}_R^{-1}(u)[D\mathfrak{g}_R(u)k]\mathfrak{g}_R^{-1}(u)v,h\rangle_H,\ \ \ \ k \in\,H^1.	
\end{equation}

Now,  if we introduce the mapping
\[\psi_R(u,v):=\langle D_u\varphi^R_1(u,v),v\rangle_{H^1}=\langle[D\mathfrak{g}_R^{-1}(u)v]v,h\rangle_H,\ \ \ \ \ u, v \in\,H^1,\]
we have
\[\int_{H^1}\psi_R(u,z)\,d\nu_R^u(z)=\langle S_R(u),h\rangle_H.\]
\begin{Lemma}
For every $u \in\,H^1$, the function $\psi_R(u,\cdot):H^1\to \mathbb{R}$ has quadratic growth and is locally Lipschitz-continuous, uniformly with respect to $u \in\,H^1$. Namely
\begin{equation}
\label{sa112}
\sup_{u \in\,H^1}\vert \psi_R(u,v)\vert\leq c\,\Vert h\Vert_{H}\,\Vert v\Vert_{H^1}\Vert v\Vert_H,	
\end{equation}
and
\begin{align}\begin{split}
\label{sa113}
\sup_{u \in\,H^1}\vert \psi_R(u,v_1)-\psi_R(u,v_2)\vert\leq	 c\,\Vert v_1-v_2\Vert_{H^1}\left(\Vert v_1\Vert_{H^1}+\Vert v_2\Vert_{H^1}\right)\Vert h\Vert_H.
\end{split} \end{align}
Moreover, $\psi_R(u,\cdot)$ is twice continuously differentiable in $H^1$, with
\begin{equation}\label{sa120}
	\sup_{u \in\,H^1}\Vert D_v\psi_R(u,v)\Vert_{H^1}\leq c\,\Vert v\Vert_{H^1}\Vert h\Vert_H,
	\end{equation}
	and
\begin{equation}  \label{sa121}\sup_{u, v \in\,H^1}\Vert D_v^2\psi_R(u,v)\Vert_{\mathcal{L}(H^1)}\leq c\,\Vert h\Vert_{H}.	 
	 \end{equation}
	 Finally, for every $v \in\,H^1$ the mapping $\psi_R(\cdot,v):H^1\to\mathbb{R}$ is differentiable and 
	 \begin{equation}
	 \label{sa133}
	\sup_{u \in\,H^1} \Vert D_u \psi_R(u,v)\Vert_{H^1}\leq c\,\Vert v\Vert_{H^1}\Vert v\Vert_{H}\,\Vert h\Vert_H.	
	 \end{equation}

\end{Lemma}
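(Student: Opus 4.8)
The plan is to reduce all five estimates to a single structural bound on the bilinear form underlying $\psi_R$. The key observation is that, by Hypothesis \ref{as4}, the matrix $\mathfrak{g}_R(u)\in\mathbb{R}^{r\times r}$ acts on functions by pointwise multiplication; consequently every matrix obtained by differentiating $\mathfrak{g}_R^{-1}$ also acts pointwise, so that for any $A\in\mathbb{R}^{r\times r}$ and any $w\in H$ one has $\Vert Aw\Vert_H\leq \Vert A\Vert_{\mathbb{R}^{r\times r}}\Vert w\Vert_H$. Writing, for fixed $u\in H^1$,
\[
B_u(v,w):=\langle [D\mathfrak{g}_R^{-1}(u)v]w,h\rangle_H,\qquad v,w\in H^1,
\]
so that $\psi_R(u,v)=B_u(v,v)$, the bound \eqref{sa51} (valid for $\mathfrak{g}_R$ with constants independent of $R$) together with the pointwise action gives
\[
|B_u(v,w)|\leq \Vert D\mathfrak{g}_R^{-1}(u)v\Vert_{\mathbb{R}^{r\times r}}\,\Vert w\Vert_H\,\Vert h\Vert_H\leq c\,\Vert v\Vert_{H^1}\Vert w\Vert_H\Vert h\Vert_H,
\]
uniformly in $u$ and $R$. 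This single inequality is the engine for everything that follows.

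Estimates \eqref{sa112} and \eqref{sa113} are then immediate: taking $w=v$ yields \eqref{sa112}, while the algebraic identity $B_u(v_1,v_1)-B_u(v_2,v_2)=B_u(v_1-v_2,v_1)+B_u(v_2,v_1-v_2)$ together with two applications of the bound on $B_u$ yields \eqref{sa113}. Since $B_u$ is a bounded bilinear form on $H^1\times H^1$, the quadratic map $\psi_R(u,\cdot)$ is smooth on $H^1$, with $D_v\psi_R(u,v)w=B_u(v,w)+B_u(w,v)$ and second derivative $D^2_v\psi_R(u,v)(w_1,w_2)=B_u(w_1,w_2)+B_u(w_2,w_1)$, independent of $v$. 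Representing these functionals through the $H^1$ Riesz isomorphism and bounding each occurrence of $B_u$ by $c\,\Vert\cdot\Vert_{H^1}\Vert\cdot\Vert_{H^1}\Vert h\Vert_H$ produces \eqref{sa120} and \eqref{sa121}.

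For the $u$-derivative \eqref{sa133} I would first differentiate the relation $D\mathfrak{g}_R^{-1}(u)v=-\mathfrak{g}_R^{-1}(u)[D\mathfrak{g}_R(u)v]\mathfrak{g}_R^{-1}(u)$ in a direction $k\in H^1$, which, invoking the second-order regularity of Hypothesis \ref{as5}, expresses $D^2\mathfrak{g}_R^{-1}(u)(v,k)$ as a sum of three products of the matrices $\mathfrak{g}_R^{-1}(u)$, $D\mathfrak{g}_R^{-1}(u)(\cdot)$, $D\mathfrak{g}_R(u)(\cdot)$ and $D^2\mathfrak{g}_R(u)(v,k)$. Each such product is bounded in $\mathbb{R}^{r\times r}$ by $c\,\Vert v\Vert_{H^1}\Vert k\Vert_{H^1}$, using \eqref{gsb15-bis}--\eqref{xfine160} for the uniform invertibility, \eqref{sa51} for $D\mathfrak{g}_R^{-1}$, the boundedness of $D\mathfrak{g}_R$, and \eqref{na-bis} for $D^2\mathfrak{g}_R$, all uniformly in $u$ and $R$. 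Since $D_u\psi_R(u,v)k=\langle [D^2\mathfrak{g}_R^{-1}(u)(v,k)]v,h\rangle_H$, the pointwise matrix bound gives $|D_u\psi_R(u,v)k|\leq c\,\Vert v\Vert_{H^1}\Vert k\Vert_{H^1}\Vert v\Vert_H\Vert h\Vert_H$, whence \eqref{sa133}.

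The main obstacle is not analytic depth but bookkeeping: the asymmetric right-hand sides of \eqref{sa112} and \eqref{sa133} (a factor $\Vert v\Vert_{H^1}\Vert v\Vert_H$ rather than $\Vert v\Vert_{H^1}^2$) force one to assign, at every step, the weaker norm $\Vert\cdot\Vert_H$ to the argument of $B_u$ that is genuinely evaluated against $h$ and the norm $\Vert\cdot\Vert_{H^1}$ only to the argument playing the role of a test direction, exploiting $H^1\hookrightarrow H$. A subsidiary technical point is to confirm that the truncation $\mathfrak{g}_R(h)=\gamma_0+\Phi_R(\Vert h\Vert_{H^{\bar r}})(\mathfrak{g}(h)-\gamma_0)$ inherits the bound \eqref{na-bis} with a constant independent of $R$; this follows from the product rule once one recalls that $\Phi_R$ and its derivatives are bounded uniformly in $R$.
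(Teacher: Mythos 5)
Your proposal is correct and follows essentially the same route as the paper: both proofs exploit that $\psi_R(u,\cdot)$ is the quadratic form of the bilinear map $(v,w)\mapsto\langle[D\mathfrak{g}_R^{-1}(u)v]w,h\rangle_H$, bound it via \eqref{sa51} with the norm split $\Vert v\Vert_{H^1}\Vert w\Vert_H\Vert h\Vert_H$ (exactly the paper's intermediate estimate for \eqref{sa113}), compute $D_v\psi_R$ and the constant $D_v^2\psi_R$ explicitly, and obtain \eqref{sa133} by differentiating $D\mathfrak{g}_R^{-1}(u)v=-\mathfrak{g}_R^{-1}(u)[D\mathfrak{g}_R(u)v]\mathfrak{g}_R^{-1}(u)$ into the same three-term expression controlled by \eqref{na-bis} and \eqref{xfine160}. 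Your abstraction into $B_u$ and the added check that the truncated $\mathfrak{g}_R$ inherits \eqref{na-bis} uniformly in $R$ are cosmetic refinements of the paper's argument, not a different method.
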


\begin{proof}
Bound \eqref{sa112} is obvious. Moreover, \eqref{sa113} follows immediately, as soon as we see that 	
\[\vert \psi_R(u,v_1)-\psi_R(u,v_2)\vert\leq	c\left(\Vert v_1-v_2\Vert_{H^1}\Vert v_1\Vert_H+\Vert v_1-v_2\Vert_{H}\Vert v_2\Vert_{H^1}\right)\Vert h\Vert_H.\]
As for the differentiability of $\psi^u$, we have
\[\langle D_v\psi_R(u,v),k\rangle_{H^1}=-\langle \mathfrak{g}_R^{-1}(u)[D\mathfrak{g}_R(u)k]\mathfrak{g}_R^{-1}(u)v+\mathfrak{g}_R^{-1}(u)[D\mathfrak{g}_R(u)v]\mathfrak{g}_R^{-1}(u)k,h\rangle_H,\]
and this immediately implies \eqref{sa120}. Moreover, if we differentiate once more, we get
\[\langle D_v^2\psi_R(u,v)k_1,k_2\rangle_{H^1}=-\langle \mathfrak{g}_R^{-1}(u)[D\mathfrak{g}_R(u)k_1]\mathfrak{g}_R^{-1}(u)k_2+\mathfrak{g}_R^{-1}(u)[D\mathfrak{g}_R(u)k_2]\mathfrak{g}_R^{-1}(u)k_1,h\rangle_H,\]
and \eqref{sa121} follows.

Finally, since we are assuming that $\mathfrak{g}_R$ is twice differentiable and $\mathfrak{g}_R^{-1}$ is bounded, we have that $\psi_R(\cdot,v)$ is differentiable and for every $k \in\,H^1$ 
\begin{align*}
\langle D_u\psi_R(u,v),&k\rangle_{H^1}=\langle \mathfrak{g}_R^{-1}(u)\left[D\mathfrak{g}_R(u)k\right]\mathfrak{g}_R^{-1}(u)	\left[D\mathfrak{g}_R(u)v\right]\mathfrak{g}_R^{-1}(u)v,h\rangle_{H}\\[10pt]
&\hsl\hsl-\langle \mathfrak{g}_R^{-1}(u)\left[D^2\mathfrak{g}_R(u)(v,k)\right]\mathfrak{g}_R^{-1}(u)	v,h\rangle_{H}+\langle \mathfrak{g}_R^{-1}(u)\left[D\mathfrak{g}_R(u)v\right]\mathfrak{g}_R^{-1}(u)	\left[D\mathfrak{g}_R(u)k\right]\mathfrak{g}_R^{-1}(u)v,h\rangle_{H}.
\end{align*}
In particular, \eqref{sa133} follows.

\end{proof}

Next, for every $u, v \in\,H^1$ and $\mu \in\,(0,1)$, we define
\[\varphi_{2}^{R, \mu}(u,v):=\int_0^\infty e^{-\lambda(\mu) t}\left(P^{R, u}_t\psi_R(u,\cdot)(v)-\langle S_R(u),h\rangle_H\right)\,dt,\]
where $\la:[0,1)\to \mathbb{R}$ is some continuous increasing function such that $\lambda(0)=0$, to be determined.
As a consequence of Lemma \ref{lemma6.1} and  \eqref{sa113}, we have
\begin{align}\begin{split}
\label{sa114}
\left\vert P_t^{R, u}\psi_R(u,\cdot)(v)-\langle S_R(u),h\rangle_H\right|&=\left\vert P_t^{R, u}\psi_R(u,\cdot)(v)-\int_{H^1}\psi_R(u,z)\,d\nu_R^u(z)\right|\\[10pt]
&\hs\leq 	c	\left(1+\Vert v\Vert_{H^1}^{2}+\Vert u\Vert_{H^1}^{2}\right)\Vert h\Vert_H\, e^{-\gamma_0 t},
\end{split} \end{align}
so that  the function $\varphi_{2}^{R, \mu}:H^1\times H^1\to\mathbb{R}$ is well defined and 
\begin{equation}
	\label{sa140}
	\sup_{\mu \in\,(0,1)}\vert\varphi_{2}^{R, \mu}(u,v)\vert\leq c \left(1+\Vert v\Vert_{H^1}^{2}+\Vert u\Vert_{H^1}^{2}\right)\Vert h\Vert_H.
\end{equation}
\begin{Lemma}\label{lemma7.5}
For every fixed $\mu \in\,(0,1)$ and $u \in\,H^1$, the function $\varphi_{2}^{R, \mu}(u,\cdot):H^1\to\mathbb{R}$ is twice continuously differentiable and	
\begin{equation}
	\label{sa122}
	\mathcal{M}^{R, u} \varphi_{2}^{R, \mu}(u,v)=\lambda(\mu)\,\varphi_{2}^{R, \mu}(u,v)-\left(\psi_R(u,v)-\langle S_R(u),h\rangle_H\right),\ \ \ \ u, v \in\,H^1.
\end{equation}
Moreover, for every $\mu \in\,(0,1)$ and $v \in\,H^1$, the function $\varphi_{2}^{R, \mu}(\cdot,v):H^2\to\mathbb{R}$ is continuously differentiable, with
\begin{equation}   \label{sa180}\vert D_u \varphi_{2}^{R, \mu}(u,v)\,k\vert \leq \frac{c}{\lambda(\mu)}\left(1+\Vert u\Vert_{H^2}^2+\Vert v\Vert_{H^1}^2\right)\Vert k\Vert_{H^1}\Vert h\Vert_H,\ \ \ \ \ k \in\,H^2.\end{equation}
\end{Lemma}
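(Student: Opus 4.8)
The plan is to regard $\varphi_2^{R,\mu}(u,\cdot)$ as the action of the $\lambda(\mu)$-resolvent of the generator $M^{R,u}$ on the $\nu_R^u$-centered observable $g_u:=\psi_R(u,\cdot)-\langle S_R(u),h\rangle_H$, and to extract every regularity statement by differentiating under the integral sign in the defining representation $\varphi_2^{R,\mu}(u,v)=\int_0^\infty e^{-\lambda(\mu)t}\bigl(P^{R,u}_t\psi_R(u,\cdot)(v)-\langle S_R(u),h\rangle_H\bigr)\,dt$. The two ingredients that make this work are the exponential contraction $\Vert e^{-\mathfrak{g}_R(u)t}\Vert_{\mathcal{L}(H^1)}\le e^{-\gamma_0 t}$ from \eqref{sa164} (valid in $H^1$ because $\mathfrak{g}_R(u)$ acts as a pointwise matrix and hence commutes with $-\Delta$) and the explicit affine dependence $y_R^{u,v}(t)=e^{-\mathfrak{g}_R(u)t}v+\zeta_t$ of the Ornstein--Uhlenbeck process on its initial datum, where $\zeta_t$ does not depend on $v$.

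For the regularity in $v$ and the identity \eqref{sa122}, I would differentiate $P^{R,u}_t\psi_R(u,\cdot)(v)=\mathbb{E}\,\psi_R(u,y_R^{u,v}(t))$ in $v$. Since $D_v y_R^{u,v}(t)=e^{-\mathfrak{g}_R(u)t}$ and $D_v^2 y_R^{u,v}(t)=0$, the second derivative equals $\mathbb{E}\langle D_v^2\psi_R(u,y_R^{u,v}(t))\,e^{-\mathfrak{g}_R(u)t}k_1,e^{-\mathfrak{g}_R(u)t}k_2\rangle_{H^1}$, which by \eqref{sa121} is bounded by $c\,e^{-2\gamma_0 t}\Vert h\Vert_H\Vert k_1\Vert_{H^1}\Vert k_2\Vert_{H^1}$; together with the analogous first-order bound coming from \eqref{sa120}, dominated convergence lets me differentiate twice under the integral and shows $\varphi_2^{R,\mu}(u,\cdot)\in C^2(H^1)$ with uniformly bounded Hessian. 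The resolvent identity then follows from the backward equation $\partial_t w=\mathcal{M}^{R,u}w$ for $w(t,\cdot):=P^{R,u}_t g_u$: interchanging $\mathcal{M}^{R,u}$ with the time integral and integrating by parts in $t$, the boundary term at infinity vanishes because $g_u$ is $\nu_R^u$-centered, so that $w(t,\cdot)$ decays like $e^{-\gamma_0 t}$ by Lemma \ref{lemma6.1} (applied with exponent $\kappa_\varphi=1$, as permitted by \eqref{sa113}), while the boundary term at $t=0$ reproduces $-g_u(v)=-(\psi_R(u,v)-\langle S_R(u),h\rangle_H)$.

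For the regularity in $u$ and the bound on $D_u\varphi_2^{R,\mu}$, I would differentiate the centered integrand in $u$ using the chain rule for the composition built from the $L^2(\Omega;H^1)$-valued map $u\mapsto y_R^{u,v}(t)$, which is differentiable with derivative the flow $\eta_{R,k}^{u,v}(t)=D_u y_R^{u,v}(t)\cdot k$ constructed at the start of this section. This produces three contributions to $D_u\bigl(P^{R,u}_t\psi_R(u,\cdot)(v)-\langle S_R(u),h\rangle_H\bigr)\cdot k$, each of which I would estimate uniformly in $t\ge 0$: the direct term $\mathbb{E}\langle D_u\psi_R(u,y_R^{u,v}(t)),k\rangle_{H^1}$ via \eqref{sa133} and \eqref{sa41-H1-R}; the flow term $\mathbb{E}\langle D_v\psi_R(u,y_R^{u,v}(t)),\eta_{R,k}^{u,v}(t)\rangle_{H^1}$ via \eqref{sa120}, the Cauchy--Schwarz inequality, \eqref{sa41-H1-R} and, crucially, the derivative-flow estimate \eqref{sa130}, which is precisely where the $\Vert u\Vert_{H^2}^2$ growth enters; and the centering term $\langle DS_R(u)k,h\rangle_H$ via the $R$-analogue of \eqref{sa162}. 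Each of the three is bounded by $c\,(1+\Vert u\Vert_{H^2}^2+\Vert v\Vert_{H^1}^2)\Vert k\Vert_{H^1}\Vert h\Vert_H$ uniformly in $t$, so integrating against $e^{-\lambda(\mu)t}$ yields the factor $\int_0^\infty e^{-\lambda(\mu)t}\,dt=1/\lambda(\mu)$ and hence the claimed estimate; continuity of $u\mapsto D_u\varphi_2^{R,\mu}(u,v)$ is obtained along the same lines from the continuity of the integrands.

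The main obstacle is this last step. Unlike in the $v$-variable, the $u$-derivative of the centered semigroup does not decay in $t$: it only tends to zero, so one cannot borrow integrability from $e^{-\gamma_0 t}$ and must instead rely on a uniform-in-$t$ bound and absorb the whole decay budget into the resolvent parameter $1/\lambda(\mu)$. Making the differentiation under the integral rigorous therefore hinges on the chain rule for this composition and on \eqref{sa130} providing control of $\eta_{R,k}^{u,v}(t)$ that is uniform over all $t\ge 0$; the emergence of the $H^2$-norm of $u$, rather than merely the $H^1$-norm, is an unavoidable consequence of that flow estimate and is what forces restricting $\varphi_2^{R,\mu}(\cdot,v)$ to $u\in H^2$.
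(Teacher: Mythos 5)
Your proposal is correct, and on the two substantive components it coincides with the paper's argument: for the $v$-regularity you differentiate $\mathbb{E}\,\psi_R(u,y_R^{u,v}(t))$ using $D_vy_R^{u,v}(t)=e^{-\mathfrak{g}_R(u)t}$, $D_v^2y_R^{u,v}(t)=0$ together with \eqref{sa120}, \eqref{sa121} and \eqref{sa41-H1-R}, exactly as in the paper; and for \eqref{sa180} you produce the same three terms (direct term via \eqref{sa133}, flow term via \eqref{sa120} combined with \eqref{sa41-H1-R} and \eqref{sa130}, centering term via the $R$-analogue \eqref{xsa52} of \eqref{sa162}), bounded uniformly in $t$ and integrated against $e^{-\lambda(\mu)t}$ to extract the factor $1/\lambda(\mu)$ — including your correct observation that the $u$-derivative of the centered semigroup admits no exponential decay, which is precisely why the paper also settles for a $t$-uniform bound and the $H^2$-norm of $u$. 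The one place you genuinely diverge is the derivation of \eqref{sa122}: you prove it by integrating by parts in time against the backward equation $\partial_t P_t^{R,u}g_u=\mathcal{M}^{R,u}P_t^{R,u}g_u$, killing the boundary term at infinity with the centered decay of Lemma \ref{lemma6.1} (your use of $\kappa_\varphi=1$, licensed by \eqref{sa113}, matches \eqref{sa114}), whereas the paper invokes the abstract resolvent identity for the weakly continuous semigroup — $\varphi_2^{R,\mu}(u,\cdot)=(\lambda(\mu)-M^{R,u})^{-1}g_u$, with $M^{R,u}$ the generator constructed before Lemma \ref{lemma6.1} — and then uses the established $C^2$ regularity only to replace $M^{R,u}$ by $\mathcal{M}^{R,u}$. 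Your route is more self-contained but carries an extra obligation the paper's does not: since $g_u$ has quadratic growth (it is not in $C_b(H^1)$), you must justify the backward equation and the interchange of $\mathcal{M}^{R,u}$ with the Laplace integral directly, e.g.\ by It\^o's formula applied to $g_u(y_R^{u,v}(t))$, which is legitimate here because $[\sigma_R(u)Q][\sigma_R(u)Q]^\star$ is trace class in $H^1$, $\mathfrak{g}_R(u)$ is a bounded matrix, and $D_v^2\psi_R$ is bounded by \eqref{sa121}; the paper's abstract route packages this interchange into the cited semigroup framework (though, strictly, it too applies the resolvent formula beyond $C_b$ and relies on the same growth control). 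With that justification supplied, your proof is complete and equivalent in content.
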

\begin{proof}
For every $u \in\,H^1$ and $t\geq 0$ the mapping
\[v \in\,H^1\mapsto y_R^{u,v}(t) \in\,L^2(\Omega;H^1),\] is twice differentiable with
\[D_v\, y_R^{u,v}(t)k=e^{-\mathfrak{g}(u) t}k,\ \ \ \ \ D^2_v\, y_R^{u,v}(t)(k_1,k_2)=0.\]
Hence, since the function $\psi_R(u,\cdot)$ is twice continuously differentiable, we have that $P_t^{R, u}\psi_R(u,\cdot)$ is twice continuously differentiable in $H^1$, with
\[\langle D_v P_t^{R, u}\psi_R(u,\cdot)(v),k\rangle_{H^1}=\mathbb{E}\langle D_v\psi_R(u,y_R^{u,v}(t)),e^{-\mathfrak{g}(u) t}k\rangle_{H^1}\]
and
\[\langle D_v^2 P_t^{R, u}\psi_R(u,\cdot)(v)k_1,k_2\rangle_{H^1}=\mathbb{E}\langle D_v^2\psi_R(u,y_R^{u,v}(t))\,e^{-\mathfrak{g}(u) t}k_1,e^{-\mathfrak{g}(u) t}k_2\rangle_{H^1}.\]
These two equalities, together with \eqref{sa41-H1-R}, \eqref{sa120} and \eqref{sa121}, imply that 
\[\Vert D_v P_t^{R, u}\psi_R(u,\cdot)(v)\Vert_{H^1}\leq c\,\Vert h\Vert_H\,\mathbb{E}\Vert y_R^{u,v}(t)\Vert_{H^1}e^{-\gamma_0 t}\leq c\,\Vert h\Vert_H\,\left(1+\Vert v\Vert_{H^1}+\Vert u\Vert_{H^1}\right)e^{-\gamma_0 t},\]
and
\[\Vert D_v^2 P_t^{R, u}\psi_R(u,\cdot)(v)\Vert_{\mathcal{L}(H^1)}\leq c\,\Vert h\Vert_H\,e^{-2\gamma_0 t}.\]
In particular, the function $\varphi_{2}^{R, \mu}(u,\cdot):H^1\to\mathbb{R}$ is twice continuously differentiable, with
\begin{equation}  \label{sa191}\Vert D_v\varphi_{2}^{R, \mu}(u,v)\Vert_{H^1}\leq c\,\left(1+\Vert v\Vert_{H^1}+\Vert u\Vert_{H^1}\right)\Vert h\Vert_H,\end{equation}
and
\[\Vert D^2_v\varphi_{2}^{R, \mu}(u,v)\Vert_{\mathcal{L}(H^1)}\leq c\,\Vert h\Vert_H.\]
Finally, since we have
\[M^{R, u} \varphi_{2}^{R, \mu}(u,v)=\lambda(\mu)\varphi_{2}^{R, \mu}(u,v)-\left(\psi_R(u,v)-\langle S_R(u),h\rangle_H\right),\ \ \ \ u, v \in\,H^1,\]
\eqref{sa122} follows once we notice that, as $\varphi_{2}^{R, \mu}(u,\cdot)$ is twice continuously differentiable, it holds 
\[\mathcal{M}^{R, u}\varphi_{2}^{R, \mu}(u,v)=M^{R, u}\varphi_{2}^{R, \mu}(u,v).\]

Now,  we have already seen that the mapping $S_R:H^2\to H^1$ is differentiable
and 
\begin{equation}
\label{xsa52}
\Vert DS_R(u)h\Vert_{H^1}\leq c\left(1+\Vert u\Vert_{H^2}\right)\,\Vert k\Vert_{H^1},\ \ \ \ u, k \in\,H^2.
\end{equation}
Hence, in order to study the differentiability of $\varphi_{2}^{R, \mu}$ with respect to $u \in\,H^1$, we need to study the differentiability of the mapping
\[u \in\,H^1\mapsto \Psi^R_{v,t}(u):=P_t^{R, u}\psi_R(u,\cdot)(v)=\mathbb{E}\,\psi_R(u,y_R^{u,v}(t)) \in\,\mathbb{R},\]
for every fixed $t\geq 0$ and $v \in\,H^1$.

We have
\[\langle D_u\Psi^R_{v,t}(u),k\rangle_{H^1}=\mathbb{E}\langle D_u\psi_R(u,y_R^{u,v}(t)),k\rangle_{H^1}+\mathbb{E}\langle D_v\psi_R(u,y_R^{u,v}(t)),\eta^{u,v}_{R, k}(t)\rangle_{H^1}.\]
Thus, thanks to \eqref{sa120} and \eqref{sa133}, we obtain
\[\vert \langle D_u\Psi^R_{v,t}(u),k\rangle\vert_{H^1}\leq c\,\left(\mathbb{E}\Vert y_R^{u,v}(t)\Vert_{H^1}^2\Vert k\Vert_{H^1}+\mathbb{E}\Vert y_R^{u,v}(t)\Vert_{H^1}\Vert \eta^{u,v}_{R, k}(t)\Vert_{H^1}\right)\Vert h\Vert_H,\]
and, as a consequence of \eqref{sa41-H1-R} and \eqref{sa130}, we conclude that 
\[
\Vert D_u\Psi^R_{v,t}(u)\Vert_{H^1}\leq c\left(1+\Vert u\Vert_{H^2}^2+\Vert v\Vert_{H^1}^2\right)\Vert h\Vert_H.	
\]
This, together with \eqref{sa162} and \eqref{xsa52} , implies that $\varphi_{2}^{R, \mu}(\cdot,v):H^2\to \mathbb{R}$ is differentiable, for every $v \in\,H^1$, and
\[
\vert D_u\varphi_{2}^{R, \mu}(u,v)\,k\vert\leq c\int_0^\infty e^{-\lambda(\mu)t}\,dt\,\left(1+\Vert u\Vert_{H^2}^2+\Vert v\Vert_{H^1}^2\right)\,\Vert k\Vert_{H^1}\Vert h\Vert_H,
\]
so that \eqref{sa180} follows.
\end{proof}

\subsection{The identification of the limit}

We fix $h \in\,H$, and for every $R\geq 1$ and  $\mu \in\,(0,1)$  we define
\[\varphi_{R, \mu}(u,v)=\langle u,h\rangle_H+\sqrt{\mu}\,\varphi^R_1(u,v)+\mu\,\varphi_{2}^{R, \mu}(u,v),\ \ \ \ \ (u,v) \in\,H^1.\]
We have seen that $\varphi^R_1 \in\,C^2_b(H^1\times H^1)$. Hence, due to Lemma \ref{lemma7.5}, we argue that the function $\varphi_{R, \mu}$ belongs to  $C^2_b(H^1\times H^1)$. In particular, we can apply It\^o's formula to $\varphi_{R, \mu}$ and $(u^R_\mu,\sqrt{\mu} \partial_t \,u^R_\mu)$, and we have
\begin{align}\begin{split} \label{sa183}
	&d \varphi_{R, \mu}(u^R_\mu(t),\sqrt{\mu} \partial_t \,u^R_\mu(t))\\[10pt]
	&\hsllp=\mathcal{K}^{R, \mu}\varphi_{R, \mu}(u^R_\mu(t),\sqrt{\mu} \partial_t \,u^R_\mu(t))\,dt+\frac 1{\sqrt{\mu}}\langle \sigma_R(u^R_\mu(t))\partial_tw^Q(t),D_v \varphi_{R, \mu}(u^R_\mu(t),\sqrt{\mu} \partial_t \,u^R_\mu(t))\rangle_{H^1},\end{split}
\end{align}
where $\mathcal{K}^{R, \mu}$ is the  
Kolmogorov operator defined by
\begin{align*}
\mathcal{K}^{R, \mu} \varphi(u,v)&=\frac 1{\sqrt{\mu}}\left(\langle D_u\varphi(u,v),v\rangle_{H^1}+\langle D_v\varphi(u,v),\Delta u+f_R(u)\rangle_{H^1}	\right)\\[10pt]
&\hsllp-\frac 1\mu\langle D_v\varphi(u,v),\mathfrak{g}_R(u)v\rangle_{H^1}+\frac 1{2\mu}\text{Tr}\left(D^2_v\varphi(u,v)[\sigma_R(u)Q][\sigma_R(u)Q]^\star\right).
\end{align*}
In particular, we have
\begin{align*}
\mathcal{K}^{R, \mu}& \varphi_{R, \mu}(u,v)=\frac 1{\sqrt{\mu}}\left(\mathcal{M}^{R, u}\varphi^R_1(u,v)+\langle h,v\rangle_{H^1}\right)+\mathcal{M}^{R, u}\varphi_{2}^{R, \mu}(u,v)+\langle D_u\varphi^R_1(u,v),v\rangle_{H^1}	\\[10pt]
&\hsl+\langle D_v\varphi^R_1(u,v),\Delta u+f_R(u)\rangle_{H^1}+\sqrt{\mu}\left(\langle D_v\varphi_{2}^{R, \mu}(u,v),\Delta u+f_R(u)\rangle_{H^1}+\langle D_u\varphi_{2}^{R, \mu}(u,v),v\rangle_{H^1}\right),
\end{align*}
where $\mathcal{M}^{R, u}$ is the differential operator defined in \eqref{sa101}.

Thanks to \eqref{sa101} we have
\begin{equation}
\label{sa181}
\mathcal{M}^{R,u}\varphi^R_1(u,v)+\langle D_u\varphi(u),v\rangle_{H^1}=0.	
\end{equation}
Moreover, recalling that $\langle D_u\varphi^R_1(u,v),v\rangle_{H^1}=\psi_R(u,v)$, thanks to \eqref{sa122} we have
\begin{equation}
\label{sa182}
\mathcal{M}^{R,u}\varphi_{2}^{R, \mu}(u,v)+\langle D_u\varphi^R_1(u,v),v\rangle_{H^1}=\lambda(\mu)\varphi_{2}^{R, \mu}(u,v)+\langle S_R(u),h\rangle_H.	
\end{equation}
Therefore, since 
\[\langle D_v\varphi^R_1(u,v),k\rangle_{H^1}=\langle \mathfrak{g}_R^{-1}(u)k,h\rangle_H,\]
if we plug \eqref{sa181} and \eqref{sa182} into \eqref{sa183} and integrate both side with respect to time, 
 we obtain
\begin{align}   \begin{split}   \label{sa193}
\langle u^R_\mu(t)&,h\rangle_H=\langle u^\mu_0,h\rangle_H+\int_0^t\langle \mathfrak{g}_R^{-1}(u^R_\mu(s))\Delta u^R_\mu(s)+f_{R, \mathfrak{g}}(u^R_\mu(s))+S_R(u^R_\mu(s)),h\rangle_{H}\,ds \\[10pt]
	&\hsp+\int_0^t\langle \sigma_{R, \mathfrak{g}}(u^R_\mu(s))\partial_tw^Q(s),h\rangle_{H}+\mathfrak{R}_{R,\mu}(t),\end{split}
	\end{align}
where
\begin{align*}
	\mathfrak{R}_{R, \mu}&(t)=\sqrt{\mu}\,\langle \varphi^R_1(u^\mu_0,v^\mu_0)-\varphi^R_1(u^R_\mu(t),\sqrt{\mu}\,\partial_t u^R_\mu(t)),h\rangle_H\\[10pt]
	&\hslp +\mu\langle \varphi_{2}^{R, \mu}(u^\mu_0,v^\mu_0)-\varphi_{2}^{R, \mu}(u^R_\mu(t),\sqrt{\mu}\,\partial_t u^R_\mu(t)),h\rangle_H\\[10pt]
	&\hsl+\int_0^t\left(\lambda(\mu)\varphi_{2}^{R, \mu}(u^R_\mu(s),\sqrt{\mu} \partial_t \,u^R_\mu(s))+\mu\langle D_u\varphi_{2}^{R, \mu}(u^R_\mu(s),\sqrt{\mu}\, \partial_t u^R_\mu(s)),\partial_t u^R_\mu(s)\rangle_{H^1}\right)\,dt\\[10pt]
	&\hslp+\sqrt{\mu}\int_0^t\langle D_v\varphi_{2}^{R, \mu}(u^R_\mu(s),\sqrt{\mu}\, \partial_t u^R_\mu(s)),\Delta u^R_\mu(t)+f_R(u^R_\mu(s))\rangle_{H^1}\,ds\\[10pt]
	&\hsp+\sqrt{\mu}\int_0^t\langle D_v\varphi_{2}^{R, \mu}(u^R_\mu(s),\sqrt{\mu}\, \partial_t u^R_\mu(s)),\sigma_R(u^R_\mu(s))\partial_t w^Q(s)\rangle_H.
\end{align*}

\begin{Lemma}
Assume that
\begin{equation}  \label{sa185}\lim_{\mu\to 0}\, \lambda(\mu)=0,\ \ \ \ \ \ \ \ \ \ \ \lim_{\mu\to 0}\,\frac{\mu^{\frac 12-\delta}}{\lambda(\mu)}=0,\end{equation}
where $\delta \in\,(0,1/2)$ is the constant introduced in \eqref{xsa11}.
Then, under Hypotheses \ref{as1} to \ref{as5}, for every $T>0$ and $(u^\mu_0,v^\mu_0) \in\,\mathcal{H}_2$ satisfying \eqref{finex} and \eqref{xsa11}, we have
\begin{equation} \label{sa190}
\lim_{\mu\to 0}\mathbb{E}\sup_{t \in\,[0,T]}\vert \mathfrak{R}_{R, \mu}(t)\vert =0.	
\end{equation}
	\end{Lemma}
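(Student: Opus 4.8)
The plan is to decompose $\mathfrak{R}_{R,\mu}$ into the five groups of terms appearing in its definition and to show that each converges to zero in $L^1(\Omega;C([0,T]))$, i.e. $\mathbb{E}\sup_{t\in[0,T]}|\cdot|\to 0$ as $\mu\downarrow 0$. The mechanism is uniform across the terms: bound the corrector functions $\varphi_1^R$, $\varphi_2^{R,\mu}$ and their derivatives by means of \eqref{sa140}, \eqref{sa180} and \eqref{sa191}, evaluate them at $(u^R_\mu(s),\sqrt{\mu}\,\partial_t u^R_\mu(s))$, and then absorb the resulting powers of the Sobolev norms of $u^R_\mu$ and $\partial_t u^R_\mu$ into the a priori estimates of Sections \ref{sec4}--\ref{ssec5.3}. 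The powers of $\mu$ produced by the scaling $v=\sqrt{\mu}\,\partial_t u^R_\mu$ and by the prefactors $\sqrt{\mu},\mu$ in $\varphi_{R,\mu}$ are exactly what compensate for the blow-up in $\mu$ of those bounds.

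First I would treat the two boundary contributions. For the $\varphi_1^R$ term, the identity $\sqrt{\mu}\,\varphi_1^R(u,\sqrt{\mu}\,w)=\mu\langle\mathfrak{g}_R^{-1}(u)w,h\rangle_H$ reduces it to a multiple of $\mu\,\|\partial_t u^R_\mu\|_H$ at time $t$ and of $\mu\|v^\mu_0\|_H$ at time $0$; the former is controlled by $\sqrt{\mu}\,(\mu\,\mathbb{E}\sup_t\|\partial_t u^R_\mu\|_H^2)^{1/2}$ through \eqref{sg30-tris-bis}, the latter by \eqref{finex}, and both vanish. For the $\varphi_2^{R,\mu}$ boundary term I would invoke the quadratic bound \eqref{sa140}: at time $0$ the condition \eqref{xsa11} on the initial data (which controls $\mu\|v^\mu_0\|_{H^1}^2$) makes it $o(1)$, while at time $t$ the only dangerous contribution, $\mu^2\|\partial_t u^R_\mu(t)\|_{H^1}^2$, is handled by the fourth-moment estimate \eqref{xfine141-final} through $\mu^2(\mathbb{E}\sup_t\|\partial_t u^R_\mu\|_{H^1}^4)^{1/2}\le\rho_{T,R}^{1/2}(\mu)\,\mu^{(1-\delta)/2}\to0$.

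Next I would handle the three integral terms. The $\lambda(\mu)\varphi_2^{R,\mu}$ piece is immediate: \eqref{sa140} together with the $\mathcal{H}_2$-bounds \eqref{sa29}, \eqref{sg31-bis} makes its $L^1$-norm of order $\lambda(\mu)$, which vanishes by the first condition in \eqref{sa185}. The term $\sqrt{\mu}\int\langle D_v\varphi_2^{R,\mu},\Delta u^R_\mu+f_R(u^R_\mu)\rangle_{H^1}\,ds$ I would estimate using \eqref{sa191}, the bound $\|\Delta u^R_\mu\|_{H^1}\le\|u^R_\mu\|_{H^3}$, the $H^1$-growth of $f_R$, and Cauchy--Schwarz; this is where the $\mathcal{H}_3$-estimate \eqref{xsa17}, giving $\mathbb{E}\int_0^T\|u^R_\mu\|_{H^3}^2\,ds\le\rho_{T,R}(\mu)\mu^{-\delta}$, is essential, producing an overall factor $\mu^{(1-\delta)/2}\rho_{T,R}^{1/2}(\mu)\to0$. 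The martingale term is controlled by the Burkholder--Davis--Gundy inequality, the boundedness of $\sigma_R$ in $\mathcal{L}_2(H_Q,H)$ from \eqref{gsb40}, and again \eqref{sa191}, yielding $\mathbb{E}\sup_t|\cdot|\le c\sqrt{\mu}\,\bigl(\mathbb{E}\int_0^T(1+\mu\|\partial_t u^R_\mu\|_{H^1}^2+\|u^R_\mu\|_{H^1}^2)\,ds\bigr)^{1/2}=O(\sqrt{\mu})$.

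The main obstacle is the remaining term $\mu\int\langle D_u\varphi_2^{R,\mu},\partial_t u^R_\mu\rangle_{H^1}\,ds$, because the only available bound \eqref{sa180} for $D_u\varphi_2^{R,\mu}$ carries the factor $1/\lambda(\mu)$, which diverges as $\mu\downarrow0$. Here I would use \eqref{sa180} and split the integrand according to the three summands in $1+\|u^R_\mu\|_{H^2}^2+\mu\|\partial_t u^R_\mu\|_{H^1}^2$; applying Cauchy--Schwarz and the a priori bounds \eqref{sa29}, \eqref{sa1-final}, \eqref{xfine141-final} (which scale like $\mu^{-1}$ or $\mu^{-2}$) leaves a prefactor of order $\mu^{1/2}/\lambda(\mu)$, up to harmless powers $\mu^{\delta/2}$ and factors $\rho_{T,R}^{1/2}(\mu)$. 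This is precisely killed by the second condition $\mu^{1/2-\delta}/\lambda(\mu)\to0$ in \eqref{sa185}. The delicate point of the whole lemma is therefore the balancing of the two requirements in \eqref{sa185}: $\lambda(\mu)$ must tend to $0$ to dispose of the $\lambda(\mu)\varphi_2^{R,\mu}$ term yet must dominate $\mu^{1/2-\delta}$ to dispose of the $D_u\varphi_2^{R,\mu}$ term, and it is exactly the $\mathcal{H}_3$ and fourth-moment estimates of Section \ref{ssec5.3} that make this window non-empty.
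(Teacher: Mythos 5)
Your overall architecture coincides with the paper's proof: the same five-way splitting of $\mathfrak{R}_{R,\mu}$, the same corrector estimates \eqref{sa140}, \eqref{sa180}, \eqref{sa191}, the same pairing of the $\lambda(\mu)$-term and the $1/\lambda(\mu)$-term with the two conditions in \eqref{sa185}, and the same use of the $\mathcal{H}_3$-bound \eqref{xsa17} for the $\Delta u^R_\mu$ pairing and of \eqref{xfine141-final} for the terms carrying $\mu^2\Vert \partial_t u^R_\mu\Vert_{H^1}^4$ and $\mu\Vert u^R_\mu\Vert_{H^2}^2\Vert\partial_t u^R_\mu\Vert_{H^1}^2$. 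Your explicit Burkholder--Davis--Gundy treatment of the stochastic-integral piece is actually more complete than the paper, which leaves that term implicit, and your use of the fourth-moment bound for the $\varphi_2^{R,\mu}$ boundary term is a legitimate alternative to the paper's citation of \eqref{sg32} there.

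One step, however, fails as written: the $\varphi_1^R$ boundary term. You bound $\mu\,\mathbb{E}\sup_t\Vert\partial_t u^R_\mu(t)\Vert_H$ by $\sqrt{\mu}\,\bigl(\mu\,\mathbb{E}\sup_t\Vert\partial_t u^R_\mu(t)\Vert_H^2\bigr)^{1/2}$ and invoke \eqref{sg30-tris-bis}; but \eqref{sg30-tris-bis} only gives $\mu\,\mathbb{E}\sup_t\Vert\partial_t u^R_\mu(t)\Vert_H^2\leq c_T/\mu$, so your bound evaluates to $\sqrt{\mu}\,(c_T/\mu)^{1/2}=c_T^{1/2}$: merely bounded, not vanishing. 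This is precisely the reason the paper proves the extra estimate \eqref{sg32} (Proposition \ref{prop4.4} and Lemma \ref{prop4.2-bis-bis}): it yields $\mu\,\mathbb{E}\sup_t\Vert\partial_t u^R_\mu(t)\Vert_{H^{\bar{r}}}^2\leq c_{T,R}\,\mu^{-1/2}$, hence $\mu\,\mathbb{E}\sup_t\Vert\partial_t u^R_\mu(t)\Vert_H\leq c_{T,R}\,\mu^{1/4}\to 0$, which is the paper's \eqref{sa198}. The slip is repairable within your own toolkit: \eqref{xfine141-final} gives $\mathbb{E}\sup_t\Vert\partial_t u^R_\mu(t)\Vert_{H^1}^4\leq \rho_{T,R}(\mu)\,\mu^{-(3+\delta)}$, whence $\mu\,\mathbb{E}\sup_t\Vert\partial_t u^R_\mu(t)\Vert_H\leq \rho_{T,R}^{1/4}(\mu)\,\mu^{(1-\delta)/4}\to 0$ since $\delta<1$. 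A further minor point: \eqref{sa1-final} is established only when $\sigma$ is bounded in $\mathcal{L}_2(H_Q,H^1)$, so in the general truncated setting your estimate of the $D_u\varphi_2^{R,\mu}$ term should rest on \eqref{xfine141-final} alone, as the paper does for \eqref{sa187}; the conclusion there --- a prefactor of order $\mu^{1/2-\delta}/\lambda(\mu)$ killed by the second condition in \eqref{sa185} --- is unaffected.
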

\begin{proof}
Since
\[\vert \varphi^R_1(u,v)|\leq c\,\Vert v\Vert_H\Vert h\Vert_H\]
due to \eqref{finex} we have
\begin{align*}
\sqrt{\mu}\vert &\langle \varphi^R_1(u^\mu_0,v^\mu_0)-\varphi^R_1(u^R_\mu(t),\sqrt{\mu}\,\partial_t u^R_\mu(t)),h\rangle_H\vert\leq c\,\sqrt{\mu}\,\Vert h\Vert_H+c\,\mu\,\Vert \partial_t u^R_\mu(t)\Vert_H\,\Vert h\Vert_H,
	\end{align*}
	so that, from \eqref{sg32}, we obtain
	\begin{equation}
	\label{sa198}
	\lim_{\mu\to 0}\,\sqrt{\mu}\,\mathbb{E}\sup_{t \in\,[0,T]}	\vert \langle \varphi^R_1(u^\mu_0,v^\mu_0)-\varphi^R_1(u^R_\mu(t),\sqrt{\mu}\,\partial_t u^R_\mu(t)),h\rangle_H\vert=0.
	\end{equation}

Next, according to \eqref{sa140}, 
\[\vert \varphi_{2}^{R, \mu}(u,v)\vert\leq c\left(1+\Vert u\Vert_{H^1}^2+\Vert v\Vert_{H^1}^2\right)\Vert h\Vert_H,\]	
so that, thanks to \eqref{xsa11}, \eqref{sg31-bis} and \eqref{sg32}, we get
\begin{equation}
\label{sa199}
\lim_{\mu\to 0}\mu\,\mathbb{E}\sup_{t \in\,[0,T]}	\vert \langle \varphi_{2}^{R, \mu}(u^\mu_0,v^\mu_0)-\varphi_{2}^{R, \mu}(u^R_\mu(t),\sqrt{\mu}\,\partial_t u^R_\mu(t)),h\rangle_H\vert=0,	\end{equation}
and 
thanks to \eqref{sg31-tris},  \eqref{sa29} and \eqref{sa185} we get
\begin{equation}
\label{sa186}
\lim_{\mu\to 0}\int_0^T\lambda(\mu)\,\mathbb{E}\,\vert \varphi_{2}^{R, \mu}(u^R_\mu(t),\sqrt{\mu}\, \partial_t u^R_\mu(t))\vert	\,dt=0.
\end{equation}
Moreover, according to \eqref{sa180}, we have
\begin{align*}
 \mu\vert\langle D_u\varphi_{2}^{R, \mu}&(u^R_\mu(t),\sqrt{\mu}\, \partial_t u^R_\mu(t)),\partial_t u^R_\mu(t)\rangle_{H^1}\vert\\[10pt]
&\leq \frac{c\,\sqrt{\mu}}{\lambda(\mu)}\left(1+\Vert u^R_\mu(t)\Vert_{H^2}^2+\mu\Vert \partial_t u^R_\mu(t)\Vert_{H^1}^2\right)\sqrt{\mu}\Vert \partial_t u^R_\mu(t)\Vert_{H^1}\Vert h\Vert_H\\[10pt]
&\hsl\leq \frac{c\,\sqrt{\mu}}{\lambda(\mu)}\left(1+\Vert u^R_\mu(t)\Vert_{H^2}^2+\mu \Vert u^R_\mu(t)\Vert_{H^2}^2\Vert \partial_t u^R_\mu(t)\Vert_{H^1}^2+\mu^2 \Vert \partial_t u^R_\mu(t)\Vert_{H^1}^4\right)\Vert h\Vert_H.	
\end{align*}
In view of \eqref{sg31-bis}, \eqref{xfine141-final} and \eqref{sa185}, this implies
\begin{equation}
\label{sa187}
\lim_{\mu\to 0}\,\mu\int_0^T	\mathbb{E}\,\vert\langle D_u\varphi_{2}^{R, \mu}(u^R_\mu(t),\sqrt{\mu} \partial_t \,u^R_\mu(t)),\partial_t u^R_\mu(t)\rangle_{H^1}\vert\,dt=0.
\end{equation}
Finally, due to \eqref{sa191}, we have
\begin{align*}
	\vert\langle D_v\varphi_{2}&^{R, \mu}(u^R_\mu(t),\sqrt{\mu} \partial_t \,u^R_\mu(t)),\Delta u^R_\mu(t)+f(u^R_\mu(t))\rangle_{H^1}\vert\\[10pt]
	&\hslp\leq c\,\left(1+\Vert u^R_\mu(t)\Vert_{H^1}+\sqrt{\mu}\,\Vert \partial_t u^R_\mu(t)\Vert_{H^1}\right)\left(\Vert u^R_\mu(t)\Vert_{H^3}+1\right)\Vert h\Vert_H,
\end{align*}
and hence, thanks to \eqref{sg31-tris}, \eqref{sa29} and \eqref{xsa17}, we have
\begin{align*}
\int_0^T	&\mathbb{E}\,\vert\langle D_v\varphi_{2}^{R, \mu}(u^R_\mu(t),\sqrt{\mu}\, \partial_t u^R_\mu(t)),\Delta u^R_\mu(t)+f(u^R_\mu(t))\rangle_{H^1}\vert\,dt\leq c_T\,\left(\frac{\rho_{T,R}(\mu)}{\mu^\delta}\right)^{1/2}.\end{align*}
According to \eqref{xsa19} and \eqref{sa185}, this implies 
\begin{equation}
\label{sa188}
\lim_{\mu\to 0}\int_0^T	\sqrt{\mu}\,\mathbb{E}\,\vert\langle D_v\varphi_{2}^{R, \mu}(u^R_\mu(t),\sqrt{\mu} \,\partial_t u^R_\mu(t)),\Delta u^R_\mu(t)+f_R(u^R_\mu(t))\rangle_{H^1}\vert\,dt=0.
\end{equation}

Therefore, combining together \eqref{sa198}, \eqref{sa199}, \eqref{sa186}, \eqref{sa187} and \eqref{sa188}, we obtain \eqref{sa190}.

\end{proof}

\subsection{Conclusion}

All  subsequences, filtered probability spaces and Wiener processes that we are going to introduce in what follows depend on $R\geq 1$. However, since we keep $R$  fixed in this subsection, for the sake of simplicity of notations we do not  emphasize their dependence on $R$.

In what follows, for every 
\[\varrho<1,\ \ \ \ 1\leq \vartheta<2,\ \ \ \ p<\frac{2}{\vartheta-1},\]
we define
\[\mathcal{X}_{\vartheta, \varrho}^p:=C([0,T];H^\varrho)\cap L^p(0,T;H^\vartheta),\]
and
\[\mathcal{K}_{\vartheta,\varrho}^p:=\left[\mathcal{X}_{\vartheta, \varrho}^p\times L^\infty(0,T;H^\varrho)\right]^2\times C([0,T];U),\]
where $U$ is a Hilbert space containing the reproducing kernel $H_Q$ with Hilbert-Schmidt embedding.

In Proposition \ref{teo-tight} we have seen that the family $\{\mathscr{L}(u^R_\mu)\}_{\mu \in\,(0,1)}$ is tight in $\mathcal{X}_{\vartheta, \varrho}^p$. Due to \eqref{sg32}, this implies that the family
	$\{\mathscr{L}(u^R_\mu, \mu\,\partial_t u^R_\mu) \}_{\mu \in\,(0,1)}$ is tight in  $\mathcal{X}_{\vartheta, \varrho}^p\times L^\infty(0,T;H^\varrho)$.  In particular, thanks to the Skorokhod theorem, for any two sequences $\{\mu^1_k\}_{k \in\,\mathbb{N}}$ and  $\{\mu^2_k\}_{k \in\,\mathbb{N}}$, both converging to zero, there exist two subsequences, still denoted by $\{\mu^1_k\}_{k \in\,\mathbb{N}}$ and  $\{\mu^2_k\}_{k \in\,\mathbb{N}}$, a sequence of random variables
	\[\mathscr{Y}_k:=((\varrho^1_k,\vartheta^1_k),(\varrho^2_k,\vartheta^2_k),\hat{w}_k^Q),\ \ \ \ k \in\,\mathbb{N},\]
	in $\mathcal{K}_{\vartheta,\varrho}^p$ and a random variable $\mathscr{Y}=(\varrho^1,\varrho^2,\hat{w}^Q)$ in $\mathcal{X}_{\vartheta,\varrho}^p\times \mathcal{X}_{\vartheta,\varrho}^p\times C([0,T];U)$, all defined on some probability space $(\hat{\Omega}, \hat{\mathscr{F}}, \hat{\mathbb{P}})$, such that
	\begin{equation}
	\label{sz200}
	\mathscr{L}(\mathscr{Y}_k)=\mathscr{L}((u^R_{\mu^1_{k}}, \sqrt{\mu^1_{k}} \,\partial_t u^R_{\mu^1_k}), (u^R_{\mu^2_{k}}, \sqrt{\mu^2_{k}} \,\partial_t u^R_{\mu^1_k}),\,w^Q),\;\;\; k \in\,\mathbb{N},	
	\end{equation}
and, for $i=1, 2$,
\begin{equation}
\label{sz201-bis}
\lim_{k\to\infty} \Vert \varrho^i_{k}-\varrho^i\Vert_{\mathcal{X}_{\vartheta, \varrho}^p}+	\sqrt{\mu_k^i}\,\Vert \vartheta^i_{k}\Vert_{L^\infty(0,T;H)}+\Vert \hat{w}_k^Q-\hat{w}^Q\Vert_{C([0,T];U)}=0,\;\;\;\hat{\mathbb{P}}-\text{a.s.}	\end{equation}
It is important to stress that, due to  \eqref{sg31-bis}, the sequences $\{\varrho^i_k\}_{k \in\,\mathbb{N}}$ are bounded in the space 
$L^2(\hat{\Omega},C([0,T];H^1)\cap L^2(0,T;H^2))$, for $i=1, 2$, and 
\[\varrho^i \in\,L^2(\hat{\Omega},C([0,T];H^1)\cap L^2(0,T;H^2)),\ \ \ \ \ \ i=1, 2.\]

Next, a filtration $(\hat{\mathscr{F}}_t)_{t\geq 0}$ is introduced in $(\hat{\Omega},\hat{\mathscr{F}},\hat{\mathbb{P}})$, by taking the augmentation of the canonical filtration of the process $(\rho^1, \rho^2,\hat{w}^Q)$, generated by its restrictions  to every interval $[0,t]$. Due to this construction, $\hat{w}^Q$ is a $(\hat{\mathscr{F}}_t)_{t\geq 0}$ Wiener process with covariance $Q^*Q $ (for a proof see \cite[Lemma 4.8]{DHV2016}).

	Now, if we show that $\varrho^1=\varrho^2$, we have that $u^R_\mu$ converges in probability to some 
	\[u^R
\in\,L^p(0,T;H^{\vartheta})\cap C([0,T];H^1)\cap L^2(0,T;H^2),\ \ \ \ \hat{\mathbb{P}}-\text{a.s.}\] Actually, as observed by Gy\"ongy and Krylov in
\cite{gk}, if $E$ is any Polish space equipped with the Borel
$\sigma$-algebra, a sequence $(\xi_n)_{n \in\,\mathbb{N}}$ of $E$-valued random
variables converges in probability if and only if for every pair
of subsequences $(\xi_m)_{m \in\,\mathbb{N}}$ and $(\xi_l)_{l \in\,\mathbb{N}}$ there exists an
$E^2$-valued subsequence $\eta_k:=(\xi_{m(k)},\xi_{l(k)})$
converging weakly to a random variable $\eta$ supported on the
diagonal $\{(h,k) \in\,E^2\ :\ h=k\}$.

In order to show that $\varrho^1=\varrho^2$, we prove that they are both a solution of equation \eqref{lim-eq-R}, which, as proven in Theorem \ref{teo7.2}, has pathwise uniqueness in $L^2(\Omega;C([0,T];H^1)\cap L^2(0,T;H^2))$. Due to \eqref{sz200}, we have that
both $(\varrho^1_k,\vartheta_k^1)$ and $(\varrho^2_k,\vartheta_k^2)$ satisfy equation \eqref{SPDE-R}, with $w^Q$ replaced by $\hat{w}_k^Q$. Therefore, we have
\begin{align}   \begin{split}   \label{sa193}
\langle \varrho^i_k(t)&,h\rangle_H=\langle u^{\mu^i_k}_0,h\rangle_H+\int_0^t\langle \mathfrak{g}_R^{-1}(\varrho^i_k(s))\Delta \varrho^i_k(s)+f_{R, \mathfrak{g}}(\varrho^i_k(s))+S_R(\varrho^i_k(s)),h\rangle_{H}\,ds \\[10pt]
	&\hsp+\int_0^t\langle \sigma_{R, \mathfrak{g}}(\varrho^i_k(s))\,\partial_t \hat{w}_k^Q(s),h\rangle_{H}+\mathfrak{R}^i_{R,k}(t),\end{split}
	\end{align}
where
\begin{align*}
	\mathfrak{R}^i_{R, k}(t)=&\sqrt{\mu^i_k}\,\langle \varphi^R_1(\zeta^i_k)-\varphi^R_1(\zeta^i_k(t)),h\rangle_H+\mu^i_k\,\langle \varphi_{2}^{R, \mu}(\zeta^i_k)-\varphi_{2}^{R, \mu}(\zeta^i_k(t)),h\rangle_H\\[10pt]
	&\hsllp+\int_0^t\left(\lambda(\mu^i_k)\,\varphi_{2}^{R, \mu}(\zeta^i_k(s))+\sqrt{\mu^i_k}\,\langle D_u\varphi_{2}^{R, \mu}(\zeta^i_k(s)),\vartheta^i_k(s)\rangle_{H^1}\right)\,dt\\[10pt]
	&\hslp+\sqrt{\mu^i_k}\int_0^t\langle D_v\varphi_{2}^{R, \mu}(\zeta^i_k(s)),\Delta \varrho^i_k(t)+f_R(\varrho^i_k(s))\rangle_{H^1}\,ds\\[10pt]
	&\hsp+\sqrt{\mu^i_k}\int_0^t\langle D_v\varphi_{2}^{R, \mu}(\zeta^i_k(s)),\sigma_R(\varrho^i_k(s))\,\partial_t \hat{w}_k^Q(s)\rangle_H,
\end{align*}
and
\[\zeta^i_k:= (u^{\mu^i_k}_0,v^{\mu_k^i}_0),\ \ \ \ \ \zeta^i_k(t):=(\varrho^i_k(t),\vartheta^i_k(t)).\]
According to \eqref{sz200} and \eqref{sa190}, we have
\begin{equation}  \label{xsa23-bis}\lim_{k\to\infty}\hat{\mathbb{E}}\sup_{t\in\,[0,T]}\,\vert \mathfrak{R}^i_{R, k}(t)\vert=0,\ \ \ \ \ i=1, 2.\end{equation}

If we assume that $h \in\,H^1$, we have
\[\int_0^t\langle \mathfrak{g}_R^{-1}(\varrho^i_k(s))\Delta \varrho^i_k(s),h\rangle_H\,ds=-\int_0^t\langle  \varrho^i_k(s),\,[\mathfrak{g}_R^{-1}(\varrho^i_k(s))]^th\rangle_{H^1}\,ds.\]
Thus, thanks to \eqref{sz201-bis}, we have
\begin{equation}
	\label{xsa20}
	\lim_{k\to\infty}\int_0^t\langle \mathfrak{g}_R^{-1}(\varrho^i_k(s))\Delta \varrho^i_k(s),h\rangle_H\,ds=\int_0^t\langle \mathfrak{g}_R^{-1}(\varrho^i(s))\Delta \varrho^i(s),h\rangle_H\,ds,\ \ \ \ \hat{\mathbb{P}}-\text{a.s.}\end{equation}
Moreover, due to \eqref{sa171}, we have that $f_{R, 
\mathfrak{g}}$ is locally Lipschitz continuous from $H^1$ into $H$, with  \[\int_0^t\left\vert \langle f_{R,\mathfrak{g}}(\varrho^i_k(s))-f_{R, \mathfrak{g}}(\varrho^i(s)),h\rangle_H\right\vert\,ds\leq c\,\Vert \varrho^i_k-\varrho^i\Vert_{L^2(0,T;H^1)}\left(1+\Vert \varrho^i\Vert_{L^2(0,T;H^1)}\right)\,\Vert h\Vert_H,\]
and \eqref{sz201-bis} implies
\begin{equation}
\label{xsa21}
\lim_{k\to\infty}	\int_0^t \langle f_{R, \mathfrak{g}}(\varrho^i_k(s)),h\rangle_H\,ds=\int_0^t \langle f_{R, \mathfrak{g}}(\varrho^i(s)),h\rangle_H\,ds,\ \ \ \ \hat{\mathbb{P}}-\text{a.s.}\end{equation}
Finally, due to \eqref{sa170-R}, we have
\begin{align*}
	\int_0^t&\left\vert \langle S_R(\varrho^i_k(s))-S_R(\varrho^i(s)),h\rangle_H\right\vert\,ds\\[10pt]
&\hslp	\leq c\,\Vert \varrho^i_k-\varrho^i\Vert_{L^2(0,T;H^1)}\left(1+\Vert \varrho^i_k\Vert_{L^4(0,T;H^1)}^2+\Vert \varrho^i\Vert_{L^4(0,T;H^1)}^2\right)\,\Vert h\Vert_H,
\end{align*}
and due to \eqref{sz201-bis} we conclude
that
\begin{equation}
\label{xsa22}	
\lim_{k\to\infty}	\int_0^t \langle S_R(\varrho^i_k(s)),h\rangle_H\,ds=\int_0^t \langle S_R(\varrho^i(s)),h\rangle_H\,ds,\ \ \ \ \hat{\mathbb{P}}-\text{a.s.}
\end{equation}

Now, for $i=1, 2$ and $t \in\,[0,T]$ we define
\[M_R^i(t):=\langle\varrho^i(t),h\rangle_H-\langle u_0,h\rangle_H-\int_0^t\langle \mathfrak{g}_R^{-1}(\varrho^i(s))\Delta \varrho^i(s)+f_{R, \mathfrak{g}}(\varrho^i(s))+S_R(\varrho^i(s)),h\rangle_{H}\,ds. \]
By proceeding as in the proof of \cite[Lemma 4.9]{DHV2016}, in view of \eqref{xsa23-bis}, \eqref{xsa20}, \eqref{xsa21} and \eqref{xsa22}, thanks to \eqref{fx2} we have for every $t \in\,[0,T]$
\[\left\langle M_R^i-\int_0^\cdot \sigma_{R, \mathfrak{g}}(\varrho^i(s))\,\partial_t \hat{w}^Q(s)\right\rangle_t=0,\]
where $\langle\cdot\rangle_t$ is the quadratic variation process. This implies that both $\varrho^1$ and $\varrho^2$ satisfy equation \eqref{lim-eq-R} and, as we have explained above, this allows to conclude the proof of Theorem \ref{teo3.4-R}.

\section{Proof of Theorem \ref{teo3.4}}

As a consequence of Theorem \ref{teo3.4-R} and Proposition \ref{teo7.2},  for every $R\geq 1$ and $u_0 \in\,H^1$ there exists a unique solution $u^R \,\in\,L^2(\Omega;C([0,T];H^1)\cap L^2(0,T;H^2))$ for equation \eqref{lim-eq-R}.
We start by showing that this allows to prove that equation \eqref{lim-eq} is well-posed in $C([0,T];H^1)\cap L^2(0,T;H^2)$, as well. 

\begin{Proposition}
Under Hypotheses \ref{as1} to \ref{as5}, for every $u_0 \in\,H^1$ equation \eqref{lim-eq} admits a unique solution $u \in\,L^2(\Omega;C([0,T];H^1)\cap L^2(0,T;H^2))$.	
\end{Proposition}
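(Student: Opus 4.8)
The uniqueness part of the statement is already in hand: pathwise uniqueness for \eqref{lim-eq} in $L^2(\Omega;C([0,T];H^1)\cap L^2(0,T;H^2))$ is precisely the content of Lemma \ref{teo7.2}. The whole issue is therefore \emph{existence}, and the plan is to construct the solution of \eqref{lim-eq} by patching together the solutions $u^R$ of the truncated problems \eqref{lim-eq-R} (whose existence and uniqueness have just been recalled) and letting $R\to\infty$ by means of a stopping-time argument.

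The first and central step is an a priori bound for $u^R$ in $L^2(\Omega;C([0,T];H^1)\cap L^2(0,T;H^2))$ that is \emph{uniform in} $R\geq 1$. Applying It\^o's formula to $\Vert u^R(t)\Vert_{H^1}^2$ and using that $\mathfrak{g}_R^{-1}(u^R(t))$ is, for each $t$, a constant $r\times r$ matrix commuting with $L$, I would rewrite every $H^1$-pairing as an $H$-pairing against $-\Delta u^R$, via $\langle a,u^R\rangle_{H^1}=\langle a,-\Delta u^R\rangle_H$. The leading term then becomes $\langle \mathfrak{g}_R^{-1}(u^R)\Delta u^R,-\Delta u^R\rangle_H\leq -\tilde{\gamma_0}\Vert u^R\Vert_{H^2}^2$, by the coercivity \eqref{xfine160} (which holds for $\mathfrak{g}_R$ with $\gamma_0$ independent of $R$). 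The delicate terms are those carrying $f_R$ and $S_R$, whose $H^1$-norms grow only quadratically in $\Vert u^R\Vert_{H^1}$; the key observation is that, after the same rewriting, they involve only the \emph{linearly growing} $H$-norms, namely $\langle S_R(u^R),-\Delta u^R\rangle_H\leq \Vert S_R(u^R)\Vert_H\Vert u^R\Vert_{H^2}\leq c(1+\Vert u^R\Vert_{H^1})\Vert u^R\Vert_{H^2}$ by \eqref{xsa51} (uniformly in $R$), and similarly for $f_R$. By Young's inequality these are absorbed into $\epsilon\Vert u^R\Vert_{H^2}^2$ plus a quadratic remainder, while the stochastic integrand contributes $\Vert \mathfrak{g}_R^{-1}(u^R)\sigma_R(u^R)\Vert_{\mathcal{L}_2(H_Q,H^1)}^2\leq c(1+\Vert u^R\Vert_{H^1}^2)$ by \eqref{gsb43}. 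Choosing $\epsilon=\tilde{\gamma_0}/2$, a Burkholder--Davis--Gundy estimate for the supremum of the martingale and Gronwall's lemma yield
\[
\mathbb{E}\sup_{t\in[0,T]}\Vert u^R(t)\Vert_{H^1}^2+\mathbb{E}\int_0^T\Vert u^R(t)\Vert_{H^2}^2\,dt\leq c_T\left(1+\Vert u_0\Vert_{H^1}^2\right),
\]
with $c_T$ independent of $R\geq 1$.

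Next I would introduce the stopping times $\tau_R:=\inf\{t\in[0,T]:\Vert u^R(t)\Vert_{H^{\bar{r}}}>R\}$, with $\bar{r}<1$ as in \eqref{xfine130}. Since $\gamma_R$, $f_R$, $\sigma_R$, hence $\mathfrak{g}_R$ and $S_R$, coincide with $\gamma$, $f$, $\sigma$, $S$ on $\{\Vert h\Vert_{H^{\bar{r}}}\leq R\}$ by \eqref{xfine125}, the process $u^R$ solves \eqref{lim-eq} on $[0,\tau_R]$; moreover, if $R_1\leq R_2$, then on $[0,\tau_{R_1}]$ both $u^{R_1}$ and $u^{R_2}$ solve \eqref{lim-eq-R} with $R=R_1$, so the pathwise uniqueness of Lemma \ref{teo7.2} (applied to the truncated equation) gives $u^{R_1}=u^{R_2}$ on $[0,\tau_{R_1}]$, $\mathbb{P}$-a.s. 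This consistency lets me define unambiguously a process $u$ by $u(t):=u^R(t)$ for $t\leq \tau_R$. Finally, since $H^1\hookrightarrow H^{\bar{r}}$, the uniform bound gives $\mathbb{E}\sup_{t\in[0,T]}\Vert u^R(t)\Vert_{H^{\bar{r}}}^2\leq c_T$, whence by Chebyshev's inequality $\mathbb{P}(\tau_R<T)\leq \mathbb{P}(\sup_{t\in[0,T]}\Vert u^R(t)\Vert_{H^{\bar{r}}}>R)\leq c_T/R^2\to 0$, so that $\tau:=\lim_{R\to\infty}\tau_R\geq T$, $\mathbb{P}$-a.s. The glued process $u$ is therefore defined on all of $[0,T]$, lies in $C([0,T];H^1)$ (each $u^R$ does), solves \eqref{lim-eq}, and by Fatou's lemma inherits the uniform bound, so that $u\in L^2(\Omega;C([0,T];H^1)\cap L^2(0,T;H^2))$; together with Lemma \ref{teo7.2}, this completes the proof.

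I expect the main obstacle to be the a priori estimate, precisely because the control \eqref{xsa51} of $S$ in $H^1$ is only quadratic, which by itself would make the $H^1$-energy balance cubic and thus non-global. The point that makes everything close, and the one requiring care, is the integration-by-parts pairing of the $S_R$ and $f_R$ terms against $-\Delta u^R$, which trades the $H^1$-norm of $S_R(u^R)$ for its (linearly growing) $H$-norm at the price of an extra factor $\Vert u^R\Vert_{H^2}$ absorbed by the parabolic coercivity coming from $\mathfrak{g}_R^{-1}(u^R)\Delta u^R$; everything else is routine once the uniform bound and the consistency of the $u^R$ are established.
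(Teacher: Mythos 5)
Your proposal is correct and follows essentially the same route as the paper: the same uniform-in-$R$ energy estimate obtained by applying It\^o's formula to $\Vert u^R(t)\Vert_{H^1}^2$, with the crucial step of pairing $S_R(u^R)$ against $-\Delta u^R$ so that only the linearly growing $H$-norm bound in \eqref{xsa51} enters and the cubic term is absorbed by the coercivity \eqref{xfine160}, followed by the identical stopping-time/gluing argument with $\tau^R$ defined through the $H^{\bar{r}}$-norm, the consistency $u^{R_1}=u^{R_2}$ on $[0,\tau^{R_1}]$ via \eqref{xfine125} and pathwise uniqueness, and the final appeal to Lemma \ref{teo7.2}. The only cosmetic difference is that you also pair the $f_R$-term against $-\Delta u^R$, where the paper simply uses the linear $H^1$-growth \eqref{n30}; both variants close identically.
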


\begin{proof} If $u^R$ is the unique solution of equation \eqref{lim-eq-R} with initial condition $u_0$, we have
\begin{align*}
\frac 12 &d\Vert u^R(t)\Vert^2_{H^1}=\langle u^R(t),\mathfrak{g}_R^{-1}(u^R(t))\Delta u^R(t)+f_{R, \mathfrak{g}}	(u^R(t))+S_R(u^R(t))\rangle_{H^1}\,dt\\[10pt]
&+\frac 12\,\Vert \sigma_{R,\mathfrak{g}}(u^R(t))\Vert^2_{\mathcal{L}_2(H_Q,H^1)}\,dt+\langle u^R(t),\sigma_{R,\mathfrak{g}}(u^R(t))dw^Q(t)\rangle_{H^1}.
\end{align*}
Due to \eqref{xsa51} we have
\[\vert\langle u,S_R(u)\rangle_{H^1}\vert\leq \Vert u\Vert_{H^2}\Vert S_R(u)\Vert_H\leq c\,\Vert  u\Vert_{H^2}\left(1+ \Vert u\Vert_{H^1}\right),\ \ \ \ u \in\,H^2.\]
Hence, thanks to \eqref{xfine160} we get
\begin{align*}
\frac 12 &d\Vert u^R(t)\Vert^2_{H^1}\leq -\tilde{\gamma_0}\,\Vert u^R(t)\Vert_{H^2}^2\,dt+c\,\left(1+\Vert u^R(t)\Vert_{H^1}^2\right)\,dt+\\[10pt]
&\hslp+\Vert  u^R(t)\Vert_{H^2}\left(1+ \Vert u^R(t)\Vert_{H^1}\right)\,dt+\langle u^R(t),\sigma_{R,\mathfrak{g}}(u^R(t))dw^Q(t)\rangle_{H^1}\\[10pt]
&\leq -\frac{\tilde{\gamma_0}}2\,\Vert u^R(t)\Vert_{H^2}^2\,dt+c\,\left(1+\Vert u^R(t)\Vert_{H^1}^2\right)\,dt+\langle u^R(t),\sigma_{R,\mathfrak{g}}(u^R(t))dw^Q(t)\rangle_{H^1}.
\end{align*}
After we first integrate both sides with respect to $t$, then take the supremum and finally take the expectation, we get
\begin{align*}
	\mathbb{E}\sup_{s \in\,[0,t]}\Vert u^R(s)\Vert_{H^1}^2+\tilde{\gamma_0}\,\int_0^t \mathbb{E}\Vert u^R(s)\Vert_{H^2}^2\,ds\leq \Vert u_0\Vert_{H^1}^2+c_T+c_T\int_0^t \mathbb{E}\Vert u^R(s)\Vert_{H^1}^2\,ds,
\end{align*}
and Gronwall's lemma gives
\begin{equation}
\label{xfine170}
\sup_{R\geq 1}\left(	\mathbb{E}\sup_{s \in\,[0,T]}\Vert u^R(s)\Vert_{H^1}^2+\int_0^T \mathbb{E}\Vert u^R(s)\Vert_{H^2}^2\,ds\right)\leq c_T.
\end{equation}

Now, we introduce the stopping time
\[\tau^R:=\inf \left\{ t \in\,[0,T]\,:\,\Vert u^R(t)\Vert_{H^{\bar{r}}}\geq R\right\},\]
with the  convention that $\inf \emptyset =+\infty$. Since 
the family of stopping times $\{\tau^R\}_{R\geq 1}$ is non-decreasing,  we can define
\[\tau:=\lim_{R\to\infty}\tau^R.\]
According to \eqref{xfine170} we have 
\begin{equation}
\label{xsa23}
\mathbb{P}\left(\tau=+\infty \right)=1.	
\end{equation}
Hence, if we define $\Omega^\prime:=\{\tau=+\infty\}$, we have that for every $t \in\,[0,T]$ and $\omega \in\,\Omega^\prime$ there exists $R\geq 1$ such that $t\leq \tau^R(\omega)$ and we define 
\[u(t)(\omega)=u^R(t)(\omega).\]
Due to \eqref{xfine125}, by a uniqueness argument we have that
\[R_1\leq R_2\Longrightarrow u^{R_2}(t)=u^{R_1}(t),\ \ \ \ t\leq \tau^{R_1}.\]
This implies that the definition of $u$ is a good definition and $u$ is a solution of equation \eqref{lim-eq}. Moreover, in view of \eqref{xfine170}, we have that  $u$ belongs to $L^2(\Omega;C([0,T];H^1)\cap L^2(0,T;H^2))$ and, in particular,  due to Proposition \ref{teo7.2} is the unique solution of equation \eqref{lim-eq}.
	
\end{proof}

Now, we are finally ready to conclude the proof of Theorem \ref{teo3.4}. In the same spirit of what we have done above for $u^R$, for every  $\mu \in\,(0,1)$ we define
\[\tau^{R}_{\mu}:=\inf \left\{ t \in\,[0,T]\,:\,\Vert u^R_\mu(t)\Vert_{H^{\bar{r}}}\geq R\right\},\]
again with the  convention that $\inf \emptyset =+\infty$. 
For every $\eta>0$ we have
\begin{align*}
\mathbb{P}&\left(\Vert u_\mu-u\Vert_{\mathcal{X}^p_{\vartheta, \varrho}}	
>\eta\right)\leq \mathbb{P}\left(\Vert u_\mu-u^R_\mu\Vert_{\mathcal{X}^p_{\vartheta, \varrho}}>\eta/3\,;\,\tau^R_\mu<\infty\right)\\[10pt]
&\hslp	+  \mathbb{P}\left(\Vert u^R_\mu-u^R\Vert_{\mathcal{X}^p_{\vartheta, \varrho}}>\eta/3\right)+ \mathbb{P}\left(\Vert u^R-u\Vert_{\mathcal{X}^p_{\vartheta, \varrho}}>\eta/3\,;\,\tau^R<\infty\right)\\[10pt]
&\hsp\leq \mathbb{P}(\tau^R_\mu<\infty)+\mathbb{P}(\tau^R<\infty)+\mathbb{P}\left(\Vert u^R_\mu-u^R\Vert_{\mathcal{X}^p_{\vartheta, \varrho}}>\eta/3\right).
\end{align*}
Therefore, since
\begin{align*}
\mathbb{P}&\left(\tau^R_{\mu}<\infty\right)\leq \mathbb{P}\left(\sup_{t \in\,[0,T]}\,\Vert u^R_\mu(t)-u^R(t)\Vert_{H^{\bar{r}}}\geq R/2\right)+\mathbb{P}\left(\sup_{t \in\,[0,T]}\,\Vert u^R(t)\Vert_{H^{\bar{r}}}\geq R/2\right)\\[10pt]
&\hslp\hsp\leq \mathbb{P}\left(\sup_{t \in\,[0,T]}\,\Vert u^R_\mu(t)-u^R(t)\Vert_{H^{\bar{r}}}\geq R/2\right)+\mathbb{P}\left (\tau^{R/2}<\infty\right),
\end{align*} 
we have
\begin{align*}
\mathbb{P}&\left(\Vert u_\mu-u\Vert_{\mathcal{X}^p_{\vartheta, \varrho}}	>\eta\right)\\[10pt]
&\hsllp\leq 2\,\mathbb{P}(\tau^{R/2}<\infty)+\mathbb{P}\left(\Vert u^R_\mu-u^R\Vert_{\mathcal{X}^p_{\vartheta, \bar{r}}}\geq R/2\right)+	\mathbb{P}\left(\Vert u^R_\mu-u^R\Vert_{\mathcal{X}^p_{\vartheta, \varrho}}	
>\eta/3\right).
\end{align*}
Thanks to \eqref{xsa23} for every $\e>0$ we can fix $R_\e\geq 1$ such that
\[\mathbb{P}(\tau^{R_\e/2}<\infty)<\frac \epsilon 2,\]
and, as a consequence of Theorem \ref{teo3.4-R}, we get
\begin{align*}
\liminf_{\mu\to 0}&\,	\mathbb{P}\left(\Vert u_\mu-u\Vert_{\mathcal{X}^p_{\vartheta, \varrho}}	>\eta\right)\\[10pt]
&\leq \epsilon+\lim_{\mu\to 0}\mathbb{P}\left(\Vert u^{R_\epsilon}_\mu-u^{R_\epsilon}\Vert_{\mathcal{X}^p_{\vartheta, \bar{r}}}\geq R_\epsilon/2\right)+	\lim_{\mu\to 0}\mathbb{P}\left(\Vert u^{R_\epsilon}_\mu-u^{R_\epsilon}\Vert_{\mathcal{X}^p_{\vartheta, \varrho}}	
>\eta/3\right)=\epsilon.
\end{align*}
Due to the arbitrariness of $\epsilon>0$, we conclude that \eqref{sa161} holds and Theorem \ref{teo3.4} follows.

\end{document}